\newcommand{\R}{\mathbb{R}}
\newcommand{\Z}{\mathbb{Z}}
\newcommand{\N}{\mathbb{N}}
\newcommand{\GG}[1][]{\Gamma_{#1}\mathcal{G}_{#1}}
\newcommand{\st}{\operatorname{star}}
\newcommand{\lk}{\operatorname{link}}
\newcommand{\link}{\operatorname{link}}
\newcommand{\supp}{\operatorname{supp}}
\newcommand{\INT}{\mathrm{int}}
\newcommand{\llangle}{\left\langle\!\!\left\langle}
\newcommand{\rrangle}{\right\rangle\!\!\right\rangle}
\newcommand{\Stab}{\operatorname{Stab}}
\theoremstyle{definition}
\newtheorem{defn}{Definition}[section]
\newtheorem{rmk}[defn]{Remark}
\newtheorem*{ack}{Acknowledgements}
\theoremstyle{plain}
\newtheorem{lem}[defn]{Lemma}
\newtheorem{prop}[defn]{Proposition}
\newtheorem{cor}[defn]{Corollary}
\newtheorem{thm}[defn]{Theorem}
\newtheorem{thmx}{Theorem}
\newtheorem{corx}[thmx]{Corollary}
\title[Quasi-median graphs: acylindrical hyperbolicity and equations]{Acylindrical hyperbolicity of groups acting on quasi-median graphs and equations in graph products}
\author{Motiejus Valiunas}
\address{Instytut Matematyczny, Uniwersytet Wroc{\l}awski, Plac Grunwaldzki 2/4, 50-384 Wroc{\l}aw, Poland}
\email{valiunas@math.uni.wroc.pl}
\keywords{Acylindrically hyperbolic groups, equationally noetherian groups, graph products, quasi-median graphs}
\subjclass[2010]{Primary: 20F65; Secondary: 20F70, 20E06}
\date{\today}
\begin{document}

\begin{abstract}
In this paper we study group actions on quasi-median graphs, or `CAT(0) prism complexes', generalising the notion of CAT(0) cube complexes. We consider hyperplanes in a quasi-median graph $X$ and define the \emph{contact graph} $\mathcal{C}X$ for these hyperplanes. We show that $\mathcal{C}X$ is always quasi-isometric to a tree, generalising a result of Hagen \cite{hagen}, and that under certain conditions a group action $G \curvearrowright X$ induces an \emph{acylindrical} action $G \curvearrowright \mathcal{C}X$, giving a quasi-median analogue of a result of Behrstock, Hagen and Sisto \cite{bhs}.

As an application, we exhibit an acylindrical action of a graph product on a quasi-tree, generalising results of Kim and Koberda for right-angled Artin groups \cite{kk13,kk}. We show that for many graph products $G$, the action we exhibit is the `largest' acylindrical action of $G$ on a hyperbolic metric space. We use this to show that the graph products of equationally noetherian groups over finite graphs of girth $\geq 6$ are equationally noetherian, generalising a result of Sela \cite{sela}.
\end{abstract}

\maketitle
\setcounter{tocdepth}{1}
\tableofcontents

\section{Introduction} \label{s:intro}

Group actions on CAT(0) cube complexes occupy a central role in geometric group theory. Such actions have been used to study many interesting classes of groups, such as right-angled Artin and Coxeter groups, many small cancellation and 3-manifold groups, and even finitely presented infinite simple groups, constructed by Burger and Mozes in \cite{bm}. Study of CAT(0) cube complexes is aided by their rich combinatorial structure, introduced by Sageev in \cite{sageev}.

In the present paper we study \emph{quasi-median graphs}, which can be viewed as a generalisation of CAT(0) cube complexes; see Definition \ref{d:qm}. In particular, one may think of quasi-median graphs as `CAT(0) prism complexes', consisting of \emph{prisms} -- cartesian products of (possibly infinite dimensional) simplices -- glued together in a non-positively curved way. In his PhD thesis \cite{genthesis}, Genevois introduced cubical-like combinatorial structure and geometry to study a wide class of groups acting on quasi-median graphs, including graph products, certain wreath products, and diagram products.


In particular, given a quasi-median graph $X$, we study \emph{hyperplanes} in $X$: that is, the equivalence classes of edges of $X$, under the equivalence relation generated by letting two edges be equivalent if they induce a square or a triangle. Two hyperplanes are said to \emph{intersect} if two edges defining those hyperplanes are adjacent in a square, and \emph{osculate} if two edges defining those hyperplanes are adjacent but do not belong to a square; see Definition \ref{d:hplanes}. This allows us to define two other graphs related to $X$, which turn out to be useful in the study of groups acting on $X$.

\begin{defn} \label{d:contcross}
Let $X$ be a quasi-median graph. We define the \emph{contact graph} $\mathcal{C}X$ and the \emph{crossing graph} $\Delta X$ as follows. For the vertices, let $V(\mathcal{C}X) = V(\Delta X)$ be the set of hyperplanes of $X$. Two hyperplanes $H,H'$ are then adjacent in $\Delta X$ if and only if $H$ and $H'$ intersect; hyperplanes $H,H'$ are adjacent in $\mathcal{C}X$ if and only if $H$ and $H'$ either intersect or osculate.
\end{defn}

For a CAT(0) cube complex $X$, Hagen has shown that the contact graph $\mathcal{C}X$ is a quasi-tree -- that is, it is quasi-isometric to a tree \cite[Theorem 4.1]{hagen}. Here we generalise this result to quasi-median graphs.

\begin{thmx} \label{t:qtree}
For any quasi-median graph $X$, the contact graph $\mathcal{C}X$ is a quasi-tree, and there exists a $(91,182)$-quasi-isometry from a simplicial tree to $\mathcal{C}X$. In particular, there exists a universal constant $\delta > 0$ such that $\mathcal{C}X$ is $\delta$-hyperbolic for any quasi-median graph $X$.
\end{thmx}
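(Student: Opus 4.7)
The plan is to adapt Hagen's argument from \cite{hagen} for CAT(0) cube complexes to the quasi-median setting, exploiting the combinatorial structure developed by Genevois \cite{genthesis}. The main tool will be Manning's bottleneck criterion: a geodesic metric space is quasi-isometric to a tree precisely when there exists $\Delta \geq 0$ such that, for every pair of points $x, y$, there is a midpoint $m$ of a geodesic from $x$ to $y$ such that every path from $x$ to $y$ meets the ball $B(m, \Delta)$. The quantitative version of this criterion should yield the explicit $(91, 182)$-quasi-isometry.

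First I would recall the quasi-median machinery that ought to be in place by this point in the paper: every hyperplane $H$ has a gated carrier $N(H)$, and removing the edges defining $H$ partitions $X$ into \emph{sectors}. I will call a hyperplane $K$ \emph{separating} for hyperplanes $H$ and $H'$ if (the edges of) $H$ and $H'$ lie in different sectors of $K$; note that two adjacent hyperplanes of $\mathcal{C}X$ cannot be separated by any third hyperplane, since their carriers meet. The key combinatorial lemma I need is: along any path $H = H_0, H_1, \ldots, H_n = H'$ in $\mathcal{C}X$, if $K$ separates $H$ from $H'$, then some $H_i$ is either equal or adjacent to $K$ in $\mathcal{C}X$. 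The proof idea is that the union $N(H_0) \cup \cdots \cup N(H_n)$ is a connected subgraph meeting both sides of $K$, so it must meet $N(K)$ via some $N(H_i)$, which forces $H_i$ and $K$ either to coincide or to intersect/osculate.

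Using this lemma, for hyperplanes $H, H'$ at $\mathcal{C}X$-distance $n$, I would fix a geodesic $H = H_0, \ldots, H_n = H'$ in $\mathcal{C}X$ and argue that the midpoint $H_{\lfloor n/2 \rfloor}$ (or a nearby vertex) separates the endpoints in $X$; since any alternative path from $H$ to $H'$ then contains a hyperplane equal or adjacent to this midpoint, the bottleneck property holds with a universal constant $\Delta$. Applying Manning's theorem yields that $\mathcal{C}X$ is a quasi-tree, and tracking the constants through the standard Manning construction (a spanning subtree obtained by a greedy procedure, whose expansion is bounded in terms of $\Delta$) should give the claimed $(91, 182)$-quasi-isometry from a simplicial tree to $\mathcal{C}X$. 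Universal hyperbolicity follows formally, since trees are $0$-hyperbolic and quasi-isometries coarsely preserve hyperbolicity with explicit constants depending only on $(91, 182)$.

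The main obstacle will be making the separation argument robust in the presence of high-dimensional prisms and hyperplanes with more than two sectors, phenomena absent in the CAT(0) cube complex case. In particular, ensuring that a geodesic in $\mathcal{C}X$ genuinely crosses the midpoint hyperplane — rather than routing around it by exploiting multiple sectors — will require a careful analysis of the prism-closure conditions in quasi-median graphs and of how gate projections onto carriers interact with sector decompositions. I expect most of the technical work to be concentrated here, in packaging Genevois's disc-diagram-style arguments into a form that yields the explicit bottleneck constant needed to unpack Manning's criterion with the numbers $91$ and $182$.
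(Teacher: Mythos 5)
Your overall strategy is the same as the paper's: verify Manning's bottleneck criterion for $\mathcal{C}X$ and feed the resulting constant into the quantitative version of Manning's theorem. Your ``key combinatorial lemma'' is also sound and is exactly how the paper handles arbitrary paths: if a hyperplane $C$ separates a vertex of $\mathcal{N}(H)$ from a vertex of $\mathcal{N}(H')$, then choosing vertices $q_i \in \mathcal{N}(B_{i-1}) \cap \mathcal{N}(B_i)$ along any path $B_1,\ldots,B_n$ in $\mathcal{C}X$ forces $C$ to separate some $q_i$ from $q_{i+1}$, whence $d_{\mathcal{C}X}(C,B_i) \leq 1$. So the ``every path passes near the separator'' half of the bottleneck argument is fine.

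The gap is in the other half: producing a hyperplane \emph{near the midpoint} of a geodesic from $H$ to $H'$ in $\mathcal{C}X$ that actually separates $\mathcal{N}(H)$ from $\mathcal{N}(H')$. Your proposal to ``argue that the midpoint $H_{\lfloor n/2\rfloor}$ (or a nearby vertex) separates the endpoints'' does not work as stated: a hyperplane lying on a $\mathcal{C}X$-geodesic need not separate the endpoint carriers at all. For instance, in $X = T \times I$ with $T$ a tripod with long legs, the ``horizontal'' hyperplane $T \times \{1/2\}$ is the middle vertex of a contact geodesic between two distant ``vertical'' hyperplanes, yet both of their carriers straddle it; the separating hyperplane one needs is a different (vertical) hyperplane that merely crosses the midpoint hyperplane. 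This is precisely why the paper does not use the geodesic's midpoint vertex directly. Instead it first proves the lifting statement (Proposition \ref{p:liftinggeodesics}): a $\mathcal{C}X$-geodesic $A=A_0,\ldots,A_m=B$ can be realised by vertices $p=p_0,\ldots,p_{m+1}=q$ with $p_i \in \mathcal{N}(A_{i-1}) \cap \mathcal{N}(A_i)$ and $d_X(p,q) = \sum d_X(p_i,p_{i+1})$ (this requires Lemma \ref{l:cancellingplane} and the no-interosculation Lemma \ref{l:nointosc}); and it combines this with the gate-pair fact for the gated subgraphs $\mathcal{N}(A)$, $\mathcal{N}(B)$ (\cite[Lemma 2.36]{genthesis}) guaranteeing points $p,q$ such that every hyperplane separating $p$ from $q$ separates the carriers. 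Then \emph{any} hyperplane separating $p_i$ from $p_{i+1}$ at $i=\lfloor m/2 \rfloor$ is within $1$ of $A_i$, hence within $3/2$ of the midpoint (Proposition \ref{p:midpointsep}). This is the technical core of the theorem, and your sketch defers it entirely to ``careful analysis of prism-closure conditions and gate projections'' while resting on a separation claim that is false in general. Relatedly, the explicit constants $(91,182)=(26\cdot\tfrac72, 52\cdot\tfrac72)$ require obtaining the bottleneck constant $7/2$, which comes out of the $3/2$ in the midpoint-separation statement plus bookkeeping for non-vertex points; an unspecified ``universal constant $\Delta$'' would not deliver the stated quasi-isometry constants.
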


We prove Theorem \ref{t:qtree} in Section \ref{ss:qtree}.

In this paper we study acylindrical hyperbolicity of groups acting on quasi-median graphs. 
\begin{defn} \label{d:ah}
Suppose a group $G$ acts on a metric space $(X,d)$ by isometries. Such an action is said to be \emph{acylindrical} if for every $\varepsilon > 0$, there exist constants $D_\varepsilon,N_\varepsilon > 0$ such that for all $x,y \in X$ with $d(x,y) \geq D_\varepsilon$, the number of elements $g \in G$ satisfying
\begin{equation*} 
d(x,x^g) \leq \varepsilon \qquad \text{and} \qquad d(y,y^g) \leq \varepsilon
\end{equation*}
is bounded above by $N_\varepsilon$. Moreover, an action $G \curvearrowright X$ by isometries on a hyperbolic metric space $X$ is said to be \emph{non-elementary} if orbits under this action are unbounded and $G$ is not virtually cyclic.

A group $G$ is then said to be \emph{acylindrically hyperbolic} if it possesses a non-elementary acylindrical action on a hyperbolic metric space.
\end{defn}
Acylindrically hyperbolic groups form a large family, including non-elementary hyperbolic and relatively hyperbolic groups, mapping class groups of most surfaces, and $\mathrm{Out}(F_n)$ for $n \geq 3$ \cite{osinAH}. This family also includes `most' hierarchically hyperbolic groups \cite[Corollary 14.4]{bhs}, and in particular `most' groups $G$ that act properly and cocompactly on a CAT(0) cube complex with a `factor system': see \cite{bhs}. The following result shows that, more generally, many groups acting on quasi-median graphs are acylindrically hyperbolic.

In the following theorem, we say a group action $G \curvearrowright X$ is \emph{special} if there are no two hyperplanes $H,H'$ of $X$ such that $H$ and $H'$ intersect but $H^g$ and $H'$ osculate for some $g \in G$, and there is no hyperplane $H$ that intersects or osculates with $H^g \neq H$ for some $g \in G$. We say a collection $\mathcal{S}$ of sets is \emph{uniformly finite} if there exists a constant $D \in \N$ such that each $S \in \mathcal{S}$ has cardinality $\leq D$.

\begin{thmx} \label{t:main}
Let $G$ be a group acting specially on a quasi-median graph $X$, and suppose vertices in $\Delta X/G$ have uniformly finitely many neighbours.
\begin{enumerate}[label={\normalfont({\roman*})}]
\item \label{i:tmain-qi} If $\Delta X$ is connected and $\Delta X/G$ has finitely many vertices, then the inclusion $\Delta X \hookrightarrow \mathcal{C}X$ is a quasi-isometry.
\item \label{i:tmain-acyl} If stabilisers of vertices under $G \curvearrowright X$ are uniformly finite, then the induced action $G \curvearrowright \mathcal{C}X$ is acylindrical. In particular, if the orbits under $G \curvearrowright \mathcal{C}X$ are unbounded, then $G$ is either virtually cyclic or acylindrically hyperbolic.
\end{enumerate}
\end{thmx}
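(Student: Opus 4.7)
Since $\Delta X$ is a subgraph of $\mathcal{C}X$, the inclusion $\iota\colon \Delta X \hookrightarrow \mathcal{C}X$ is $1$-Lipschitz, so it suffices to produce a uniform constant $C_0$ with $d_{\Delta X}(H,H') \leq C_0$ for every osculating pair $(H,H')$; concatenation then gives $d_{\Delta X} \leq C_0 \cdot d_{\mathcal{C}X}$. Specialness provides the key dichotomy: for each pair of $G$-orbits $([H],[H'])$, the adjacent representative pairs in $\mathcal{C}X$ are either \emph{all} intersecting or \emph{all} osculating, since an intersecting pair coexisting with an osculating pair would yield, after a suitable $G$-translation, an intersecting $H_1,H_1'$ with $H_1^g,H_1'$ osculating, violating the definition of a special action. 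Combined with the hypothesis that $\Delta X/G$ is a finite graph of uniformly bounded valence and a local cocompactness argument in Genevois's quasi-median framework (e.g.\ enumerating the osculating neighbours of $H$ via the link of a vertex on the carrier of $H$), this should yield that $\mathcal{C}X/G$ itself is a finite graph. For each of the finitely many $G$-orbits of osculating pairs, the connectedness of $\Delta X$ gives a representative at finite $d_{\Delta X}$-distance, and $C_0$ is the maximum of these distances.

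\textbf{Part (ii).} Fix $\varepsilon > 0$ and let $\delta$ be the universal hyperbolicity constant from Theorem~\ref{t:qtree}. Consider $H,H' \in V(\mathcal{C}X)$ with $d_{\mathcal{C}X}(H,H') \geq D_\varepsilon$ (with $D_\varepsilon$ chosen later) and any $g \in G$ satisfying $d_{\mathcal{C}X}(H,gH),d_{\mathcal{C}X}(H',gH') \leq \varepsilon$. A standard $\delta$-hyperbolicity argument then shows that $g$ displaces every point on a $\mathcal{C}X$-geodesic from $H$ to $H'$ by at most $\varepsilon + 2\delta$, so for $D_\varepsilon$ large a long chain of hyperplanes $H_0, \dots, H_D$ on this geodesic is coarsely preserved by $g$. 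The plan is then to promote this \emph{coarse} stabilisation to an \emph{exact} fixed vertex of $X$: combining the explicit quasi-tree structure of Theorem~\ref{t:qtree} with specialness (which forbids $g$ from exchanging a hyperplane with a distinct translate of itself), I expect $g$ to setwise stabilise some hyperplane $H_m$ near the middle of the geodesic. Together with a second, far-apart setwise-stabilised hyperplane and Genevois's gate/carrier machinery for quasi-median graphs, this should locate a vertex $v \in V(X)$ fixed by $g$; the uniform finiteness of $\Stab_G(v)$ then bounds the number of such $g$ by some $N_\varepsilon$, establishing acylindricity.

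\textbf{Main obstacle and conclusion.} The most delicate step is this last one: upgrading the coarse stabilisation of a long sub-geodesic in $\mathcal{C}X$ to an exact pointwise fixed vertex in $X$. This requires a careful analysis of hyperplane carriers, projections, and prisms in quasi-median geometry, and is where the specific structure theorems of Genevois's framework will have to be invoked rather than the soft hyperbolicity considerations alone. Once acylindricity is in place, the final dichotomy --- virtually cyclic or acylindrically hyperbolic --- is immediate from Osin's classification of groups admitting an acylindrical action on a hyperbolic space with unbounded orbits.
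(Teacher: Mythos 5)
There are genuine gaps in both parts, and in each case the missing step is precisely the technical heart of the paper's argument.

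For part \ref{i:tmain-qi}, your reduction to a uniform bound $C_0$ on $d_{\Delta X}(H,H')$ over osculating pairs is the right target, and your dichotomy per pair of orbits is a correct reformulation of specialness; but your justification of the bound rests on the claim that there are finitely many $G$-orbits of osculating pairs (equivalently that ``$\mathcal{C}X/G$ is a finite graph'' in the relevant sense). That does not follow: the quotient graph $\mathcal{C}X/G$ having finitely many vertices and edges does not bound the number of $G$-orbits of \emph{edges} of $\mathcal{C}X$, and no cocompactness of $G\curvearrowright X$ (or of hyperplane stabilisers on carriers) is assumed, so the ``local cocompactness argument'' you invoke has nothing to run on. The paper gets the bound without any such finiteness: Proposition \ref{p:liftinggeodesics} (with $\Gamma=\Delta X$, $p=q$) yields Corollary \ref{c:localosc}, that two hyperplanes osculating at a point $p$ are joined in $\Delta X$ by a geodesic whose consecutive terms all intersect \emph{at $p$}; then Lemma \ref{l:nolongosc} uses specialness to show that any two terms of this geodesic lying in the same $G$-orbit force intersections that shortcut the geodesic, giving $d_{\Delta X}(A,B)\le\max\{2,N-1\}$ where $N$ is the number of hyperplane orbits. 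This localisation at a single osculation point is the idea your sketch is missing.

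For part \ref{i:tmain-acyl}, the opening hyperbolicity step (coarse preservation of a long chain of hyperplanes along a geodesic) is fine, but the plan to ``promote coarse stabilisation to an exact fixed vertex of $X$'' cannot work. Elements of the acylindricity set need not fix any vertex: for a graph product the action on $X$ is free on vertices, yet the correct constant $N_\varepsilon$ is exponentially large in $\varepsilon$ and in particular far exceeds the stabiliser bound $D$, so most elements being counted fix nothing in $X$. Nor does setwise stabilisation of a hyperplane help by itself, since hyperplane stabilisers are typically infinite (cosets of $\GG[\st(v)]$ in the graph product case). The actual content of the paper's proof (Theorem \ref{t:acyl}) is a counting scheme that avoids fixed points entirely: using $v$-minimal contact sequences and their orbit sequences, Proposition \ref{p:technical} and Corollary \ref{c:technical} show that the gate vertex $\mathfrak{g}(v;p,\mathcal{H}_0,\ldots,\mathcal{H}_n)$ computed from $p$ and $v$ equals the $g$-translate of the analogous gate computed from $p^g$ and $v^g$, so each $g$ in the set is determined, up to the $\le D$ choices in a vertex stabiliser, by boundedly many combinatorial data (hyperplanes recognised via Lemma \ref{l:fewplanes} and subsets of stars in $\Delta X/G$), giving $N_\varepsilon$ as a function of $\varepsilon$ and $D$. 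You correctly flag this passage as the delicate step, but the route you propose for it is not merely incomplete; it aims at a conclusion (an exact fixed vertex) that is false in the main examples. The final dichotomy via Osin is indeed immediate once acylindricity is established.
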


We prove part \ref{i:tmain-qi} of Theorem \ref{t:main} in Section \ref{ss:qi}, and part \ref{i:tmain-acyl} in Section \ref{s:acyl}.

Note that a large class of examples of group actions on CAT(0) cube complexes with a factor system comes from special actions \cite[Corollaries 8.8 and 14.5]{bhs}. Theorem \ref{t:main} \ref{i:tmain-acyl} generalises this result to quasi-median graphs. We also show that several other hierarchically hyperbolic space-like results on CAT(0) cube complexes generalise to quasi-median graphs: for instance, existence of `hierarchy paths', see \cite[Theorem A (2)]{bhs} and Proposition \ref{p:liftinggeodesics}.

The main application of Theorems \ref{t:qtree} and \ref{t:main} we give is to study graph products of groups. In particular, let $\Gamma$ be a simplicial graph and let $\mathcal{G} = \{ G_v \mid v \in V(\Gamma) \}$ be a collection of non-trivial groups. The \emph{graph product} $\GG$ of the groups $G_v$ over $\Gamma$ is defined as the group
\[
\GG = \left( \mathop{*}_{v \in V(\Gamma)} G_v \right) \bigg/ \llangle g_v^{-1}g_w^{-1}g_vg_w \,\middle|\, g_v \in G_v, g_w \in G_w, (v,w) \in E(\Gamma) \rrangle.
\]
For example, for a complete graph $\Gamma$ we have $\GG \cong \prod_{v \in V(\Gamma)} G_v$, while for discrete $\Gamma$ we have $\GG \cong \mathop{*}_{v \in V(\Gamma)} G_v$. The applicability of the results above to graph products follows from the following result of Genevois.

\begin{thm}[Genevois {\cite[Propositions 8.2 and 8.11]{genthesis}}] \label{t:gengp}
Let $\Gamma$ be a simplicial graph, let $\mathcal{G} = \{ G_v \mid v \in V(\Gamma) \}$ be a collection of non-trivial groups, and let $S = \bigcup_{v \in V(\Gamma)} G_v \setminus \{1\} \subseteq \GG$. Then the Cayley graph $X$ of $\GG$ with respect to $S$ is quasi-median. Moreover, the action of $\GG$ on $X$ is free on vertices and special.
\end{thm}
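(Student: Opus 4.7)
My approach is to exploit the normal form theorem for graph products (due to Green, following Cartier--Foata for trace monoids): every element $g \in \GG$ admits an expression $g = s_1 \cdots s_n$ with each syllable $s_i \in G_{v_i} \setminus \{1\}$, unique up to transpositions of adjacent syllables $s_i, s_{i+1}$ whose labels satisfy $\{v_i, v_{i+1}\} \in E(\Gamma)$, and with $n = |g|_S = d_X(1, g)$. The plan is to reduce each of the three claims to normal-form manipulations.

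For the quasi-median property, I would first assign to each edge of $X$ the natural \emph{type} $v \in V(\Gamma)$ given by the vertex group containing its label, and classify the $3$- and $4$-cycles of $X$. Using normal forms, a triangle $\{g, gs, gt\}$ forces $s, t, s^{-1}t \in G_v \setminus \{1\}$ for a common $v$, so all three edges share one type; a square $\{g, gs, gst, gt\}$ forces $(s, t) \in (G_v \setminus \{1\}) \times (G_w \setminus \{1\})$ with $\{v, w\} \in E(\Gamma)$. These classifications also rule out an induced $K_4^-$, since any such subgraph would contain two triangles sharing an edge, forcing all four vertices to lie in a common coset of $G_v$ and hence creating the missing diagonal as an actual edge. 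To verify the triangle and quadrangle conditions defining a quasi-median graph I would then take a configuration of vertices at the prescribed distances, write each in normal form, and produce the required common ``near-median'' vertex by a concrete shuffle of syllables that truncates a shared prefix. This case analysis is the step I expect to be the most technical, but it is made routine by the normal form theorem.

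Freeness of the action on vertices is immediate from $\GG$ acting on its own Cayley graph by left multiplication. For specialness, the key observation is that the edge equivalence generating hyperplanes preserves the type above, so every hyperplane $H$ carries a well-defined type $v(H) \in V(\Gamma)$, with $v(H^g) = v(H)$ for all $g \in \GG$. For two hyperplanes meeting at a common vertex of $X$, a short argument then yields: if their types coincide, the two edges already lie in a common triangle and the hyperplanes are equal; if their types are distinct and adjacent in $\Gamma$, the edges span a square, so the hyperplanes intersect; if their types are distinct and non-adjacent in $\Gamma$, the edges cannot lie in a common square, so the hyperplanes osculate. Both specialness conditions follow: intersection versus osculation depends only on the $\GG$-invariant pair of types, and a hyperplane cannot intersect or osculate a distinct translate of itself because two hyperplanes of the same type meeting at a common vertex must coincide.
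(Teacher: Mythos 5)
The paper does not prove this statement itself; it is quoted from Genevois's thesis, whose argument does proceed, as you propose, by normal-form computations in the Cayley graph. Your treatment of freeness (the regular action of $\GG$ on itself --- right multiplication with the paper's conventions, a triviality either way) and of specialness (hyperplanes carry a $\GG$-invariant type $v(H)\in V(\Gamma)$; same type meeting at a vertex forces equality, distinct adjacent types force a square, distinct non-adjacent types force osculation) is sound and is essentially the argument one finds in the cited source.

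The gap is in the quasi-median part: the list of conditions you plan to verify does not characterise quasi-median graphs. The triangle and quadrangle conditions define \emph{weakly modular} graphs, and weak modularity together with the absence of an induced $K_{1,1,2}$ (your $K_4^-$) is strictly weaker than being quasi-median. A concrete counterexample is $K_{3,3}$: it is triangle-free (so the triangle condition is vacuous and there is no induced $K_{1,1,2}$), it satisfies the quadrangle condition (its diameter is $2$, and the basepoint itself serves as the required closer common neighbour), yet the triple of vertices forming one side of the bipartition has three distinct medians, so uniqueness of quasi-medians fails and $K_{3,3}$ is not quasi-median. The characterisation of quasi-median graphs via weak modularity (the one in \cite{bmw} which Genevois invokes) requires, in addition to forbidding induced $K_{1,1,2}$, forbidding copies of $K_{3,3}$ minus an edge; this exclusion is not automatic in the Cayley graph of $\GG$ and needs its own normal-form argument (it is the step where one really uses that distinct vertex groups meet trivially and that commutation is governed by edges of $\Gamma$). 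Alternatively, if you intend to verify Definition \ref{d:qm} directly, your sketch addresses neither the \emph{uniqueness} of quasi-medians (weak modularity only produces the required vertices, it does not show they are unique) nor the condition that the convex hull of an isometric $C_6$ is a $3$-cube. Either route can be completed, but as written the proposal's criterion would also certify $K_{3,3}$, so the quasi-median claim does not yet follow.
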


Moreover, if $\GG$ and $X$ are as in Theorem \ref{t:gengp}, then, as noted before \cite[Lemma 8.8]{genthesis}, the hyperplanes of $X$ are of the form $H_v^g$ for some $g \in \GG$ and for a unique $v \in V(\Gamma)$, where $H_v$ is the hyperplane dual to the clique spanned by $G_v \leq \GG = V(X)$ (meaning $H_v$ is dual to any edge in that clique). Thus, vertices of $\Delta X / \GG$ are of the form $H_v^{\GG}$ for $v \in V(\Gamma)$. It follows from \cite[Lemma 8.12]{genthesis} that $H_u^{\GG}$ and $H_v^{\GG}$ are adjacent in $\Delta X / \GG$ if and only if $u$ and $v$ are adjacent in $\Gamma$, and so we may deduce the following result.
\begin{lem}[Genevois \cite{genthesis}] \label{l:gengp-dxiso}
The graph $\Delta X / \GG$ is isomorphic to $\Gamma$.
\end{lem}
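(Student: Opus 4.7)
The plan is to exhibit the natural candidate isomorphism $\phi\colon \Gamma \to \Delta X/\GG$ defined on vertices by $\phi(v) = H_v^{\GG}$, where $H_v$ is the hyperplane of $X$ dual to the clique spanned by $G_v \leq \GG = V(X)$, and then to verify that $\phi$ is a bijection on vertex sets and that it preserves adjacency in both directions. The entire argument amounts to assembling the two facts about $X$ already recalled from \cite{genthesis} immediately above the statement.

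For the vertex-set bijection, I would invoke the observation that every hyperplane of $X$ equals $H_v^g$ for some $g \in \GG$ and a uniquely determined $v \in V(\Gamma)$. Surjectivity of $\phi$ is then automatic, since any $\GG$-orbit of hyperplanes contains some $H_v^g$ and therefore equals $H_v^{\GG}$. For injectivity, if $H_u^{\GG} = H_v^{\GG}$ then $H_v = H_u^g$ for some $g \in \GG$; but $H_v = H_v \cdot 1$ already exhibits $v$ as the unique vertex associated with that hyperplane, so $u = v$.

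For the edge condition, I would unwind the definition of the quotient graph: $H_u^{\GG}$ and $H_v^{\GG}$ are adjacent in $\Delta X/\GG$ exactly when there exists $g \in \GG$ such that $H_u$ and $H_v^g$ intersect in $X$. The cited \cite[Lemma 8.12]{genthesis} asserts precisely the equivalence of this condition with $(u,v) \in E(\Gamma)$, which closes the loop.

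There is no serious obstacle here, as everything is a direct translation of Genevois's results into the language of the quotient graph. The only point that deserves a brief check is that $\Delta X/\GG$ really is simplicial, so that the claim of isomorphism with the simplicial graph $\Gamma$ is sensible. Absence of loops follows from the specialness of the action in Theorem \ref{t:gengp}: no hyperplane intersects or osculates with a nontrivial $\GG$-translate of itself, hence the vertex $H_v^{\GG}$ is never adjacent to itself in $\Delta X/\GG$.
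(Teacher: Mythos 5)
Your proposal is correct and follows exactly the route the paper takes: the paper deduces the lemma, in the paragraph preceding its statement, from the same two facts of Genevois — that every hyperplane is $H_v^g$ for a unique $v \in V(\Gamma)$ (giving the vertex bijection $v \mapsto H_v^{\GG}$) and that \cite[Lemma 8.12]{genthesis} identifies adjacency of orbits with adjacency in $\Gamma$. Your extra remark that specialness of the action rules out loops in $\Delta X/\GG$ is a harmless added check, not a divergence in method.
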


An important subclass of graph products are right-angled Artin groups (RAAGs): indeed, if $G_v \cong \Z$ then $\GG$ is the RAAG associated to $\Gamma$. In this case, a vertex $v \in V(\Gamma)$ is usually identified with a generator of $G_v$. In \cite{kk13} Kim and Koberda constructed the \emph{extension graph} $\Gamma^e$ of a RAAG $G = \GG$ as a graph with vertex set $V(\Gamma^e) = \{ v^g \in G \mid g \in G, v \in V(\Gamma) \}$, where $g^v$ and $h^w$ are adjacent in $\Gamma^e$ if and only if they commute as elements of $G$. This graph turns out to be the same as the crossing graph $\Delta X$ of the Cayley graph $X$ defined in Theorem \ref{t:gengp}.

In fact, Kim and Koberda showed that, given that $|V(\Gamma)| \geq 2$ and both $\Gamma$ and its complement $\Gamma^C$ are connected, $\Gamma^e$ is quasi-isometric to a tree \cite{kk13} and the action of $G$ on $\Gamma^e$ by conjugation is non-elementary acylindrical \cite{kk}. In this paper we generalise these results to arbitrary graph products; this follows as a special case of Theorems \ref{t:qtree} and \ref{t:main}. As a special case, we recover hyperbolicity of the extension graph $\Gamma^e$ and acylindricity of the action $\GG \curvearrowright \Gamma^e$, providing an alternative (shorter and more geometric) argument to the ones presented in \cite{kk13,kk}. In the following corollary, a graph $\Gamma$ is said to have \emph{bounded degree} if there exists a constant $D \in \N$ such that each vertex of $\Gamma$ has degree $\leq D$.
\begin{corx} \label{c:gp}
Let $\Gamma$ be a simplicial graph, let $\mathcal{G} = \{ G_v \mid v \in V(\Gamma) \}$ be a collection of non-trivial groups, and let $X$ be the quasi-median graph defined in Theorem \ref{t:gengp}. Then $\mathcal{C}X$ is a quasi-tree, and if $\Gamma$ has bounded degree then the induced action $\GG \curvearrowright \mathcal{C}X$ is acylindrical. Moreover, if $|V(\Gamma)| \geq 2$ and the complement $\Gamma^C$ of $\Gamma$ is connected, then either $\Gamma\mathcal{G} \cong C_2 * C_2$ is the infinite dihedral group, or this action is non-elementary.
\end{corx}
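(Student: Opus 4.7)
The first claim is immediate: $X$ is quasi-median by Theorem~\ref{t:gengp}, so Theorem~\ref{t:qtree} applies. For the acylindricity claim, we verify the hypotheses of Theorem~\ref{t:main}~\ref{i:tmain-acyl}: by Theorem~\ref{t:gengp}, $\GG \curvearrowright X$ is special and free on vertices, so vertex stabilisers are trivial and hence uniformly finite; by Lemma~\ref{l:gengp-dxiso}, $\Delta X/\GG \cong \Gamma$ has bounded degree, so vertices in $\Delta X/\GG$ have uniformly finitely many neighbours. Theorem~\ref{t:main}~\ref{i:tmain-acyl} then gives acylindricity of $\GG \curvearrowright \mathcal{C}X$.

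Assume now $|V(\Gamma)| \geq 2$ and $\Gamma^C$ is connected. By Theorem~\ref{t:main}~\ref{i:tmain-acyl} together with Definition~\ref{d:ah}, it remains to show (a) orbits in $\mathcal{C}X$ are unbounded, and (b) $\GG \not\cong C_2 * C_2$ implies $\GG$ is not virtually cyclic. Both arguments start by picking non-adjacent $u, v \in V(\Gamma)$, which exist since $\Gamma^C$ has an edge. For (b), the retraction $\GG \to G_u * G_v$ killing all other vertex groups forces $G_u \cong G_v \cong C_2$ whenever $\GG$ is virtually cyclic. If additionally $|V(\Gamma)| \geq 3$, connectedness of $\Gamma^C$ produces a third vertex $w$ with $v \not\sim w$ in $\Gamma$; depending on whether $u \sim w$ in $\Gamma$, the subgroup of $\GG$ generated by $G_u, G_v, G_w$ is isomorphic to either $G_u * G_v * G_w$ or $(G_u \times G_w) * G_v$, each containing a non-abelian free subgroup (the second because $|G_u \times G_w| \geq 4$) and so contradicting virtual cyclicity.

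Claim (a) is the heart of the argument: we construct an element acting loxodromically on $\mathcal{C}X$. Fix non-trivial $g_u \in G_u$ and $g_v \in G_v$, and set $h := g_u g_v \in \GG$. The bi-infinite word $\ldots g_u g_v g_u g_v \ldots$ traces an $h$-invariant combinatorial geodesic in $X$ (it realises $d_X(1, h^n) = 2n$ via the graph-product normal form), crossing the hyperplanes $h^n H_u$ and $h^n g_u H_v$ alternately, where $H_u$ and $H_v$ are dual to the $G_u$- and $G_v$-cliques at $1$. The plan is to show that $(h^n H_u)_{n \in \Z}$ is a quasi-geodesic in $\mathcal{C}X$; this is the main obstacle. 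The argument must establish that a short contact-chain $H_u = K_0, K_1, \ldots, K_k = h^n H_u$ in $\mathcal{C}X$ would, via the correspondence between contacts in $\mathcal{C}X$ and shared squares or vertices in $X$ (using the speciality of the action), produce a path in $X$ of length much smaller than $n$ meeting each of $h H_u, h^2 H_u, \ldots, h^{n-1} H_u$, contradicting the linear-in-$n$ lower bound on $X$-distance coming from the normal form.
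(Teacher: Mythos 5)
Your first paragraph (quasi-tree via Theorem \ref{t:qtree}, acylindricity via Theorem \ref{t:main}\ref{i:tmain-acyl} using freeness of the action and $\Delta X/\GG\cong\Gamma$) matches the paper. Your claim (b) is also essentially sound, modulo a small fix: with $u,v$ fixed, connectivity of $\Gamma^C$ only guarantees a third vertex $w$ that is a $\Gamma^C$-neighbour of $u$ \emph{or} of $v$, so you must allow relabelling the pair; the paper instead directly picks a path $v_1\nsim w\nsim v_2$ in $\Gamma$ and uses that $\GG[\{v_1,v_2,w\}]\cong G_w * H$ with $|H|\geq 4$ has infinitely many ends, which is the same idea without the retraction step.

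The genuine gap is in your claim (a). For an arbitrary non-adjacent pair $u,v$, the element $h=g_ug_v$ need not have unbounded orbits on $\mathcal{C}X$: if $u$ and $v$ have a common neighbour $w$ in $\Gamma$, then $h\in G_u\times G_v\leq\GG[\st(w)]$, and since $V(\mathcal{N}(H_w))=\GG[\st(w)]$ (Theorem \ref{t:gmNH}), $h$ fixes the vertex $H_w$ of $\mathcal{C}X$; as $d_{\mathcal{C}X}(H_u,H_w)=1$, the whole sequence $(h^nH_u)_{n\in\Z}$ has diameter at most $2$ and every $\langle h\rangle$-orbit in $\mathcal{C}X$ is bounded. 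This happens under the hypotheses of the corollary: take $\Gamma$ the path $a{-}b{-}c{-}d$ (connected complement) and $u=b$, $v=d$; this is exactly the elliptic element $g_bg_d\in\GG[\st(c)]$ discussed in the remark following the proof of Corollary \ref{c:AHacc}. So your key assertion that $(h^nH_u)$ is a quasi-geodesic in $\mathcal{C}X$ is false in general, and the obstruction is precisely that a single star coset can absorb arbitrarily long powers of $h$; even for a pair with no common neighbour you only sketch this step, and proving it would require the same star-coset analysis. The paper's proof (Lemma \ref{l:starunbounded}) uses connectivity of $\Gamma^C$ globally, not just one non-edge: it takes $g=g_1\cdots g_\ell$ along a closed walk in $\Gamma^C$ visiting \emph{every} vertex, lifts a $\mathcal{C}X$-geodesic between hyperplanes at $1$ and at $g^n$ to points on the normal-form geodesic in $X$ via Proposition \ref{p:liftinggeodesics}, and observes that each carrier is a coset of some $\GG[\st(w)]$, so if one carrier contained a full period of the walk then $\st(w)=V(\Gamma)$, i.e.\ $w$ would be isolated in $\Gamma^C$ --- a contradiction; hence $d_{\mathcal{C}X}\geq n$, and unboundedness of orbits then follows from vertex-transitivity of $\GG\curvearrowright X$ rather than from exhibiting a loxodromic element. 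To repair your argument you would need to replace $g_ug_v$ by such a product over a closed walk in $\Gamma^C$ through all of $V(\Gamma)$ (or otherwise rule out absorption into star subgroups), at which point you are reproducing the paper's proof.
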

The hyperbolicity of $\mathcal{C}X$ and the acylindricity of the action follow immediately from Theorems \ref{t:qtree}, \ref{t:main}, \ref{t:gengp} and Lemma \ref{l:gengp-dxiso}, while non-elementarity is shown in Section \ref{ss:nonelem}.

It is worth noting that Minasyan and Osin have already shown in \cite{mo} that if $|V(\Gamma)| \geq 2$ and the complement of $\Gamma$ is connected, then $\Gamma\mathcal{G}$ is either infinite dihedral or acylindrically hyperbolic. However, their proof is not direct and does not provide an explicit acylindrical action on a hyperbolic space. The aim of Corollary \ref{c:gp} is to describe such an action.

We also show that in many cases the action of $\GG$ on $\mathcal{C}X$ is, in the sense of Abbott, Balasubramanya and Osin \cite{abo}, the `largest' acylindrical action of $\GG$ on a hyperbolic metric space: see Section \ref{ss:AHacc}. In particular, we show that many graph products are strongly $\mathcal{AH}$-accessible. This generalises the analogous result for right-angled Artin groups \cite[Theorem 2.18 (c)]{abo}.

\begin{corx} \label{c:AHacc}
Let $\Gamma$ be a finite simplicial graph and let $\mathcal{G} = \{ G_v \mid v \in V(\Gamma) \}$ be a collection of infinite groups. Suppose that for each isolated vertex $v \in V(\Gamma)$, the group $G_v$ is strongly $\mathcal{AH}$-accessible. Then $\GG$ is strongly $\mathcal{AH}$-accessible. Furthermore, if $\Gamma$ has no isolated vertices, then the action $\GG \curvearrowright \mathcal{C}X$, where $X$ is as in Theorem \ref{t:gengp}, is the largest acylindrical action of $\GG$ on a hyperbolic metric space.
\end{corx}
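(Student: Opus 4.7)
The plan is to establish the second assertion first---that $\GG \curvearrowright \mathcal{C}X$ is the largest acylindrical action on a hyperbolic space when $\Gamma$ has no isolated vertices---and then bootstrap to strong $\mathcal{AH}$-accessibility in the general case by peeling off the isolated vertices of $\Gamma$ as free factors.

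For the second assertion I would invoke the characterisation from \cite{abo}: an acylindrical action on a hyperbolic space is the largest precisely when every generalised loxodromic element acts loxodromically in it. The direction ``loxodromic on $\mathcal{C}X$ implies generalised loxodromic'' is immediate since $\GG \curvearrowright \mathcal{C}X$ is already known to be acylindrical by Corollary \ref{c:gp}. For the converse, the key step is to show that, when $\Gamma$ has no isolated vertices, every element $g$ of every vertex group $G_v$ is elliptic in every acylindrical action of $\GG$ on a hyperbolic space. Elements of finite order are automatically elliptic, so consider $g$ of infinite order and pick a neighbour $w$ of $v$ in $\Gamma$. Since $v$ and $w$ are joined by an edge, $G_w$ commutes with $G_v$ elementwise and meets it trivially, so the centraliser $C_{\GG}(g)$ contains $\langle g \rangle \times G_w$, which fails to be virtually cyclic because $G_w$ is infinite. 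However, in any acylindrical action on a hyperbolic space the centraliser of a loxodromic element is virtually cyclic, so $g$ cannot act loxodromically and is therefore elliptic.

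Next I would combine this with the observation that if $g \in \GG$ is elliptic in $\GG \curvearrowright \mathcal{C}X$, then some power $g^n$ is conjugate into a vertex group. Indeed, since $\mathcal{C}X$ is a graph, bounded orbit forces some power of $g$ to fix a vertex of $\mathcal{C}X$, i.e.\ to stabilise a hyperplane of $X$; and by Theorem \ref{t:gengp} together with the description of hyperplanes in $X$ recalled just before Lemma \ref{l:gengp-dxiso}, hyperplane stabilisers are exactly the conjugates of the vertex groups $G_v$. So any $g$ elliptic in $\GG \curvearrowright \mathcal{C}X$ has a power elliptic in every acylindrical action, and hence $g$ itself is elliptic in every acylindrical action and in particular not generalised loxodromic; this yields the required converse and proves that $\GG \curvearrowright \mathcal{C}X$ is the largest.

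For the first assertion, I would reduce to the case just handled via the free product decomposition
\[
\GG \cong G_{v_1} * \cdots * G_{v_k} * \GG[\Lambda],
\]
where $v_1, \ldots, v_k$ are the isolated vertices of $\Gamma$ and $\Lambda$ is the induced subgraph on the non-isolated vertices; this follows by iterating the elementary fact that an isolated vertex of $\Gamma$ splits off as a free factor of $\GG$. The second assertion provides strong $\mathcal{AH}$-accessibility of $\GG[\Lambda]$, the hypothesis supplies it for each $G_{v_i}$, and the preservation of strong $\mathcal{AH}$-accessibility under free products from \cite{abo} then yields the result. I expect the main obstacle to be the centraliser argument in the second assertion, where one has to uniformly handle torsion and virtually cyclic vertex groups and ensure that the needed statement---centralisers of loxodromic elements are virtually cyclic---is applied with the correct acylindricity hypotheses; the free product reduction, by contrast, should follow essentially formally from \cite{abo}.
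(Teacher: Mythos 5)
There is a genuine gap at the heart of your argument for the second assertion: you take as your criterion that an acylindrical action is the \emph{largest} element of $\mathcal{AH}(\GG)$ precisely when it is \emph{universal} (every generalised loxodromic acts loxodromically). No such characterisation is proved in \cite{abo}, and it is not valid: largestness is defined by domination, i.e.\ a uniform linear bound $d_Y(y,y^g)\leq Cd_{\mathcal{C}X}(x,x^g)+C$ over \emph{all} $g\in\GG$ and all acylindrical actions $\GG\curvearrowright Y$ on hyperbolic spaces, and universality is only a necessary consequence of this, not a sufficient condition. Your argument never produces the domination estimate, so even if your universality claim were fully justified you would not have proved the statement. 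The paper's route supplies exactly this missing ingredient via \cite[Proposition 4.13]{abo}: one checks that $\GG\curvearrowright\mathcal{C}X$ is \emph{cocompact} (the quotient is a complete graph on $V(\Gamma)$) and acylindrical, and that every vertex stabiliser of $\mathcal{C}X$, which by Theorem \ref{t:gmNH} is a conjugate of $\GG[\st(v)]\cong G_v\times\GG[\lk(v)]$, is a direct product of two infinite groups (here is where ``no isolated vertices'' and ``all $G_v$ infinite'' enter), hence neither virtually cyclic nor acylindrically hyperbolic, hence has bounded orbits in every acylindrical action on a hyperbolic space; Proposition 4.13 then gives largestness directly. Note also that your centraliser observation is essentially this ellipticity statement, but applied only to elements of the $G_v$ rather than to the full stabilisers $\GG[\st(v)]^g$, which is what is actually needed.

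Two secondary points. First, your claim that hyperplane stabilisers are exactly the conjugates of the vertex groups $G_v$ is incorrect: the stabiliser of $H_v^g$ is $(\GG[\st(v)])^g$, which is strictly larger whenever $v$ is not isolated; and the assertion that an element with bounded orbits on a graph has a power fixing a vertex is unjustified (and false for general graph actions), so your reduction of elliptic elements of $\mathcal{C}X$ to vertex groups does not stand as written. Second, your reduction for the first assertion (splitting off isolated vertices as free factors and citing preservation under free products) is essentially the paper's induction, which uses relative hyperbolicity of $\GG[A]*G_v$ and \cite[Theorem 7.9]{abo}; but the induction in the paper also has to treat the case of disconnected complement separately (via \cite[Example 7.8]{abo} and \cite[Corollary 7.2]{osinAH}), since Corollary \ref{c:gp} only gives non-elementarity of $\GG\curvearrowright\mathcal{C}X$ when $\Gamma^C$ is connected; your outline does not address how largestness is obtained in that degenerate case.
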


We prove Corollary \ref{c:AHacc} in Section \ref{ss:AHacc}.

\begin{rmk}
After the first version of this preprint was made available, it has been brought to the author's attention that most of the results stated in Corollary \ref{c:gp} follow from the results in \cite{gen1,gen2,gm}. Moreover, a special case of Corollary \ref{c:AHacc} (when the vertex groups $G_v$ are hierarchically hyperbolic) follows from the results in \cite{abd,br}. See Remarks \ref{r:ah} and \ref{r:AHacc} for details.
\end{rmk}

As an application, we use Corollary \ref{c:gp} to study the class of equationally noetherian groups, defined as follows.

\begin{defn} \label{d:en}
Given $n \in \N$, let $F_n$ denote the free group of rank $n$ with a free basis $X_1,\ldots,X_n$. Given a group $G$, an element $s \in F_n$ and a tuple $(g_1,\ldots,g_n) \in G^n$, we write $s(g_1,\ldots,g_n) \in G$ for the element obtained by replacing every occurence of $X_i$ in $s$ with $g_i$, and evaluating the resulting word in $G$. Given a subset $S \subseteq F_n$, the \emph{solution set} of $S$ in $G$ is
\[
V_G(S) = \{ (g_1,\ldots,g_n) \in G^n \mid s(g_1,\ldots,g_n) = 1 \text{ for all } s \in S \}.
\]
A group $G$ is said to be \emph{equationally noetherian} if for any $n \in \N$ and any subset $S \subseteq F_n$, there exists a \emph{finite} subset $S_0 \subseteq S$ such that $V_G(S_0) = V_G(S)$.
\end{defn}

Many classes of groups are known to be equationally noetherian. For example, groups that are linear over a field -- in particular, right-angled Artin groups -- are equationally noetherian \cite[Theorem B1]{bmr}. It is easy to see that the class of equationally noetherian groups is preserved under taking subgroups and direct products; a deep and non-trivial argument shows that the same is true for free products:

\begin{thm}[Sela {\cite[Theorem 9.1]{sela}}] \label{t:sela}
Let $G$ and $H$ be equationally noetherian groups. Then $G * H$ is equationally noetherian.
\end{thm}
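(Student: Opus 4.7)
The plan is to follow Sela's original strategy: proceed by contradiction, using a Bestvina--Paulin style limit of actions on the Bass--Serre tree of $G*H$, and then invoke a Rips-machine shortening argument. Suppose, for contradiction, that $G*H$ is not equationally noetherian. Then there exist $n \in \N$ and a system $S = \{s_1, s_2, \ldots\} \subseteq F_n$ together with homomorphisms $\phi_k \colon F_n \to G*H$ such that $\phi_k(s_i) = 1$ for $i \leq k$ but $\phi_k$ does not kill all of $S$. Let $T$ be the Bass--Serre tree of $G*H$; this is a simplicial tree on which $G*H$ acts with trivial edge stabilisers and vertex stabilisers conjugate to $G$ or $H$, and the action is in particular $1$-acylindrical. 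After replacing each $\phi_k$ by a representative in its $\mathrm{Aut}(F_n)$-orbit minimising $\ell_k := \max_i |\phi_k(X_i)|_T$, rescale the metric on $T$ by $1/\ell_k$ to obtain a sequence of actions $F_n \curvearrowright T_k$.

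Passing to an equivariant Gromov--Hausdorff limit, one obtains a non-trivial minimal isometric action of $L := F_n / \bigcap_k \ker(\phi_k)$ on a real tree $T_\infty$. Because the edge stabilisers in $T$ are trivial, arc stabilisers in $T_\infty$ must be contained in limits of conjugates of $G$ and $H$, and equational noetherianity of $G$ and $H$ is precisely what one needs to control these: it guarantees that each such arc stabiliser factors through a finitely presented quotient of $G$ or $H$, so the limit action is sufficiently tame for the Rips machine (tripod stabilisers trivial, arc stabilisers abelian, stability of the action). The Rips machine then supplies a graph-of-actions decomposition of $T_\infty$ into simplicial, surface and axial components.

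The final and central step is the shortening argument: from this decomposition one constructs automorphisms of $L$ (Dehn twists along simplicial edges, twists coming from mapping class groups of the surface pieces, and translations on axial pieces), and lifts them to automorphisms $\alpha_k \in \mathrm{Aut}(F_n)$. For sufficiently large $k$, the precomposed homomorphism $\phi_k \circ \alpha_k$ still satisfies $\phi_k(\alpha_k(s_i)) = 1$ for $i \leq k$ but has strictly smaller value of $\ell_k$, contradicting minimality and completing the proof. The main obstacle is running the Rips machine in this setting, since the vertex stabilisers of $T_\infty$ are quotients of $G$ and $H$, not $G$ or $H$ themselves; it is exactly at this point that equational noetherianity of the factors is indispensable, as it forces the sequence of restrictions of $\phi_k$ to preimages of vertex stabilisers to stabilise into honest, finitely describable homomorphisms, making the shortening valid.
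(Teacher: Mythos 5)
The paper itself does not prove this statement: Theorem \ref{t:sela} is quoted from Sela \cite{sela} and is used later as a black box (for instance in the proofs of Theorems \ref{t:engood} and \ref{t:en}, where even the disconnected case of Theorem \ref{t:en} appeals to it directly). So there is no in-paper argument to compare yours with, and your sketch has to be measured against Sela's proof, whose broad strategy (limits of actions on the Bass--Serre tree, Rips-type analysis, shortening) you correctly identify.

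As a proof, however, the sketch has genuine gaps. First, the contradiction does not close as stated. If you minimise $\ell_k$ over the whole $\mathrm{Aut}(F_n)$-orbit of $\phi_k$, you destroy the defining property $\phi_k(s_i)=1$ for $i\leq k$, and the limit object then retains no connection with the system $S$ whose finite subsystems you are trying to promote; if instead you keep that property, the shortening automorphisms produced by the Rips analysis are modular automorphisms of the limit group $L$ (Dehn twists along the decomposition, defined only on $L$), and neither they nor lifts of them to $\mathrm{Aut}(F_n)$ need preserve the condition that $s_1,\ldots,s_k$ lie in the kernel -- your sentence ``$\phi_k\circ\alpha_k$ still satisfies $\phi_k(\alpha_k(s_i))=1$ for $i\leq k$'' is exactly the point that fails. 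In Sela's scheme, and in the Groves--Hull reformulation \cite{gh} used in this paper (Theorem \ref{t:ghen}), the shortening argument never yields the final contradiction on its own: its output is that the homomorphisms factor through proper quotients (the stable or $\omega$-kernel is nontrivial), and equational noetherianity is then deduced from a descending chain condition on limit groups over the free product, equivalently the $\omega$-almost-sure factorisation through $F_\omega$; that inductive factorisation step is absent from your outline. Second, the role of equational noetherianity of the factors is misstated. Arc stabilisers of the limit tree are controlled because edge stabilisers of the Bass--Serre tree are already trivial -- no hypothesis on $G$ and $H$ is needed there -- and equational noetherianity certainly does not provide finitely presented quotients. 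Where it genuinely enters is in the elliptic part of the limit action: the restrictions of the $\phi_k$ to the subgroups that become elliptic (hence map into conjugates of $G$ or $H$) must $\omega$-almost surely factor through the corresponding limit quotients, which is precisely the equational noetherianity of the factors in the sense of Theorem \ref{t:ghen}. Until these two steps are made precise, what you have is a plan in the spirit of Sela's argument rather than a proof.
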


Using methods of Groves and Hull developed for acylindrically hyperbolic groups \cite{gh}, we generalise Theorem \ref{t:sela} to a wider class of graph products.

\begin{thmx} \label{t:en}
Let $\Gamma$ be a finite simplicial graph of girth $\geq 6$, and let $\mathcal{G} = \{ G_v \mid v \in V(\Gamma) \}$ be a collection of equationally noetherian groups. Then the graph product $\GG$ is equationally noetherian.
\end{thmx}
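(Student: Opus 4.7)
The proof combines the non-elementary acylindrical action of $\GG$ on the hyperbolic quasi-tree $\mathcal{C}X$ provided by Corollary \ref{c:gp} with the machinery of Groves and Hull \cite{gh}. I would proceed by induction on $|V(\Gamma)|$. The base case $|V(\Gamma)| = 1$ is the hypothesis. For the inductive step, I first dispose of the degenerate cases: if $\Gamma$ is disconnected, then $\GG$ splits as a free product of graph products over girth-$\geq 6$ proper subgraphs, and Sela's Theorem \ref{t:sela} combined with the inductive hypothesis finishes this case. If $\Gamma^C$ is disconnected, the girth $\geq 6$ hypothesis forces any non-trivial join decomposition $\Gamma = \Gamma_1 + \Gamma_2$ to have one factor equal to a single vertex $v$ and the other discrete (otherwise $\Gamma$ would contain a triangle or a $4$-cycle), so $\GG \cong G_v \times \mathop{*}_{w \neq v} G_w$ is equationally noetherian by Sela's theorem and closure of the class under direct products. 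Hence I may assume $\Gamma$ and $\Gamma^C$ are both connected with $|V(\Gamma)| \geq 2$.

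In this remaining case, Corollary \ref{c:gp} yields a non-elementary acylindrical action $\GG \curvearrowright \mathcal{C}X$, so $\GG$ is acylindrically hyperbolic. The stabiliser in $\GG$ of the hyperplane $H_v$ of $X$ (equivalently, of the corresponding vertex of $\mathcal{C}X$) is the star subgroup $\Gamma_{\st(v)}\mathcal{G}_{\st(v)}$, and every vertex stabiliser of $\GG \curvearrowright \mathcal{C}X$ is a conjugate of one of these. The girth $\geq 6$ condition simplifies these subgroups dramatically: since girth $\geq 4$ already excludes triangles in $\Gamma$, every link $\lk(v)$ is a discrete subgraph, whence
\[
\Gamma_{\st(v)}\mathcal{G}_{\st(v)} \;\cong\; G_v \times \Gamma_{\lk(v)}\mathcal{G}_{\lk(v)} \;\cong\; G_v \times \mathop{*}_{w \in \lk(v)} G_w.
\]
Iterating Theorem \ref{t:sela} shows the free product factor is equationally noetherian, and taking the direct product with $G_v$ preserves the property; hence every star subgroup $\Gamma_{\st(v)}\mathcal{G}_{\st(v)}$ is equationally noetherian.

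The final step is to invoke \cite{gh} to transfer equational noetherianity from this family of vertex stabilisers to $\GG$ itself. Concretely, I would aim to show that the collection $\{\Gamma_{\st(v)}\mathcal{G}_{\st(v)}\}_{v \in V(\Gamma)}$ is hyperbolically embedded in $\GG$ and then apply the Groves--Hull criterion that an acylindrically hyperbolic group with a hyperbolically embedded collection of equationally noetherian subgroups is equationally noetherian. I expect the main obstacle to be the verification of hyperbolic embeddedness, which is precisely where the full girth $\geq 6$ hypothesis (rather than only girth $\geq 4$) enters: the extra girth controls intersections of conjugates of star subgroups sufficiently to yield the almost malnormality needed, and combined with the acylindricity of Theorem \ref{t:main} \ref{i:tmain-acyl} this should promote the family of vertex stabilisers of $\GG \curvearrowright \mathcal{C}X$ to a hyperbolically embedded collection in $\GG$ via standard criteria.
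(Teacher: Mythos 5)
Your reduction steps are fine (the disconnected case via Theorem \ref{t:sela}, the star/join degenerate case, the identification of vertex stabilisers of $\GG \curvearrowright \mathcal{C}X$ with conjugates of $\GG[\st(v)] \cong G_v \times \mathop{*}_{w \in \lk(v)} G_w$, and their equational noetherianity), but the final step, which is where all the real work lies, does not go through. First, the collection $\{\GG[\st(v)]\}_{v \in V(\Gamma)}$ is not hyperbolically embedded in $\GG$ once some vertex groups are infinite and $\Gamma$ has an edge: for adjacent $v,w$ the intersection $\GG[\st(v)] \cap \GG[\st(w)]$ contains $G_v \times G_w$, and for $u$ at distance two from $v$ with common neighbour $w$ and $g \in G_u \setminus \{1\}$ one has $G_w \leq \GG[\st(v)]^g \cap \GG[\st(v)]$ with $g \notin \GG[\st(v)]$; so the required almost-malnormality fails, and the girth hypothesis does not repair this. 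Second, and more fundamentally, the criterion you want to invoke --- ``acylindrically hyperbolic plus a hyperbolically embedded collection of equationally noetherian subgroups implies equationally noetherian'' --- is not a theorem of Groves--Hull and cannot be true: every group with a non-elementary acylindrical action on a hyperbolic space contains hyperbolically embedded subgroups of the form $F_2 \times K$ with $K$ finite, which are equationally noetherian, yet $H * \Z$ for a finitely generated non-equationally-noetherian $H$ is acylindrically hyperbolic and not equationally noetherian (the property passes to subgroups). The actual Groves--Hull result in this direction (their Theorem D) requires $\GG$ to be hyperbolic \emph{relative} to equationally noetherian peripherals, which fails here for the same intersection reasons.

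What the paper actually does with the acylindrical action is much weaker and is only the first step: Theorem \ref{t:ghmain} reduces the problem to \emph{non-divergent} sequences of homomorphisms $F \to \GG$, and Theorem \ref{t:engood} converts such sequences into sequences of \emph{linking} homomorphisms (each generator mapping into some $\GG[\lk(v)]$, using girth $\geq 6$ only through $\GG[\lk(v)]$ being a free product, handled by Theorem \ref{t:sela}). The heart of the proof is then Proposition \ref{p:girth6}, showing that connected graphs of girth $\geq 6$ are ``admissible'': this is established by a genuinely combinatorial analysis of dual and polygonal van Kampen diagrams and minimal polygonal representations (Lemmas \ref{l:red}, \ref{l:mtsupplarge}, \ref{l:noam} and Corollary \ref{c:nothing}), where triangle-, square- and pentagon-freeness of $\Gamma$ is used to rule out configurations of hyperplane components. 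Your proposal is missing an argument that could replace this combinatorial core; as it stands, the step ``verify hyperbolic embeddedness and apply a Groves--Hull criterion'' is both unverifiable (the embeddedness is false) and based on a nonexistent criterion.
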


We prove Theorem \ref{t:en} in Section \ref{s:eqn}.

The paper is structured as follows. In Section \ref{s:prelim}, we define quasi-median graphs and give several results that are used in later sections. In Section \ref{s:geom}, we analyse the geometry of the contact graph and its relation to crossing graph, and prove Theoren \ref{t:qtree} and Theorem \ref{t:main} \ref{i:tmain-qi}. In Section \ref{s:acyl}, we consider the action of a group $G$ on a quasi-median graph $X$, and prove Theorem \ref{t:main} \ref{i:tmain-acyl}. In Section \ref{s:gp}, we consider the particular case when $G = \GG$ is a graph product and $X$ is the quasi-median graph associated to it, and deduce Corollaries \ref{c:gp} and \ref{c:AHacc}. In Section \ref{s:eqn}, we apply these results to prove Theorem \ref{t:en}.

\begin{ack}
I am deeply grateful to Anthony Genevois for his PhD thesis filled with many great ideas, for discussions which inspired the current argument and for his comments on this manuscript. I would also like to thank Jason Behrstock, Daniel Groves, Mark Hagen, Michael Hull, Thomas Koberda, Armando Martino and Ashot Minasyan for valuable discussions. A special thanks goes to the anonymous referee for their insightful comments and for pointing out a gap in the proof of the previous version of Theorem \ref{t:en}.
\end{ack}

\section{Preliminaries} \label{s:prelim}

Throughout the paper, we use the following conventions and notation. By a graph $X$, we mean an undirected simple (simplicial) graph, and we write $V(X)$ and $E(X)$ for the vertex and edge sets of $X$, respectively. Moreover, we write $d_X(-,-)$ for the combinatorial metric on $X$ -- thus, we view $X$ as a geodesic metric space. We consider the set $\N$ of natural numbers to include $0$.

Given a group $G$, all actions of $G$ on a set $X$ are considered to be right actions, $\theta: X \times G \to X$, and are written as $\theta(x,g) = x^g$ or $\theta(x,g) = xg$. Note that this results in perhaps unusual terminology when we consider a Cayley graph $\mathrm{Cay}(G,S)$: in our case it has edges of the form $(g,sg)$ for $g \in G$ and $s \in S$.

\subsection{Quasi-median graphs}

In this section we introduce quasi-median graphs and basic results that we use throughout the paper. Most of the definitions and results in this section were introduced by Genevois in his thesis \cite{genthesis}. We therefore refer the interested reader to \cite{genthesis} for further discussion and results on applications of quasi-median graphs to geometric group theory.

\begin{defn} \label{d:qm}
Let $X$ be a graph, let $x_1,x_2,x_3 \in V(X)$ be three vertices, and let $k \in \N$. We say a triple $(y_1,y_2,y_3) \in V(X)^3$ is a \emph{$k$-quasi-median} of $(x_1,x_2,x_3)$ if (see Figure \ref{f:dqm-qm}):
\begin{enumerate}[label={({\roman*})}]
\item \label{i:dqm-geod} $y_i$ and $y_j$ lie on a geodesic between $x_i$ and $x_j$ for any $i \neq j$;
\item \label{i:dqm-eql} $k = d_X(y_1,y_2)=d_X(y_1,y_3)=d_X(y_2,y_3)$; and
\item $k$ is as small as possible subject to \ref{i:dqm-geod} and \ref{i:dqm-eql}.
\end{enumerate}
We say $(y_1,y_2,y_3) \in V(X)^3$ is a \emph{quasi-median} of $(x_1,x_2,x_3) \in V(X)^3$ if it is a $k$-quasi-median for some $k$. A $0$-quasi-median is called a \emph{median}.

We say a graph $X$ is a \emph{quasi-median graph} if (see Figure \ref{f:dqm-graphs}):
\begin{enumerate}[label={({\roman*})}]
\item every triple of vertices has a unique quasi-median;
\item $K_{1,1,2}$ is not isomorphic to an induced subgraph of $X$; and
\item if $Y \cong C_6$ is a subgraph of $X$ such that the embedding $Y \hookrightarrow X$ is isometric, then the convex hull of $Y$ in $X$ is isomorphic to the $3$-cube.
\end{enumerate}
\end{defn}

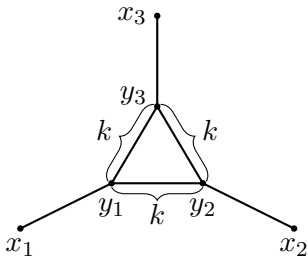
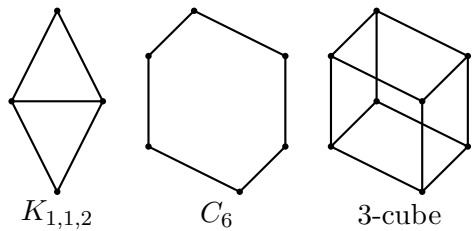
\begin{figure}[ht]
\begin{subfigure}[b]{0.37\textwidth}
\centering
\begin{tikzpicture}[scale=0.6]
\fill (0,0) circle (2pt) node [below] {$x_1$};
\fill (2,1) circle (2pt) node [below,yshift=-2pt] {$y_1$};
\fill (6,0) circle (2pt) node [below] {$x_2$};
\fill (4,1) circle (2pt) node [below,yshift=-2pt] {$y_2$};
\fill (3,4.7) circle (2pt) node [left] {$x_3$};
\fill (3,2.7) circle (2pt) node [left,yshift=4pt] {$y_3$};

\draw [thick] (0,0) -- (2,1);
\draw [thick] (6,0) -- (4,1);
\draw [thick] (3,4.7) -- (3,2.7);
\draw [thick] (4,1) -- (2,1);
\draw [thick] (3,2.7) -- (4,1);
\draw [thick] (2,1) -- (3,2.7);
\draw [decorate,decoration={brace,amplitude=5pt},xshift=0pt,yshift=-2.4pt] (4,1) -- (2,1) node [midway,yshift=-10pt] {$k$};
\draw [decorate,decoration={brace,amplitude=5pt},xshift=2.1pt,yshift=1.2pt] (3,2.7) -- (4,1) node [midway,xshift=10pt,yshift=5pt] {$k$};
\draw [decorate,decoration={brace,amplitude=5pt},xshift=-2.1pt,yshift=1.2pt] (2,1) -- (3,2.7) node [midway,xshift=-10pt,yshift=5pt] {$k$};
\end{tikzpicture}
\caption{A $k$-quasi-median $(y_1,y_2,y_3)$ of $(x_1,x_2,x_3)$.} \label{f:dqm-qm}
\end{subfigure}
\hfill
\begin{subfigure}[b]{0.55\textwidth}
\centering
\begin{tikzpicture}[scale=0.6]
\begin{scope}[xshift=-3cm]
\fill (0,0) circle (2pt);
\fill (2,0) circle (2pt);
\fill (1,2) circle (2pt);
\fill (1,-2) circle (2pt);
\draw [thick] (0,0) -- (2,0) -- (1,2) -- (0,0) -- (1,-2) -- (2,0);
\node at (1,-2.5) {$K_{1,1,2}$};
\end{scope}

\begin{scope}[xshift=0]
\fill (0,1) circle (2pt);
\fill (1,2) circle (2pt);
\fill (3,1) circle (2pt);
\fill (3,-1) circle (2pt);
\fill (2,-2) circle (2pt);
\fill (0,-1) circle (2pt);
\draw [thick] (0,1) -- (1,2) -- (3,1) -- (3,-1) -- (2,-2) -- (0,-1) -- cycle;
\node at (1.5,-2.5) {$C_6$};
\end{scope}

\begin{scope}[xshift=4cm]
\fill (0,1) circle (2pt);
\fill (1,2) circle (2pt);
\fill (3,1) circle (2pt);
\fill (3,-1) circle (2pt);
\fill (2,-2) circle (2pt);
\fill (0,-1) circle (2pt);
\fill (1,0) circle (2pt);
\fill (2,0) circle (2pt);
\draw [thick] (0,1) -- (1,2) -- (3,1) -- (3,-1) -- (2,-2) -- (0,-1) -- cycle;
\draw [thick] (0,1) -- (2,0) -- (3,1);
\draw [thick] (2,0) -- (2,-2);
\draw [thick] (0,-1) -- (1,0) -- (3,-1);
\draw [thick] (1,0) -- (1,2);
\node at (1.5,-2.5) {$3$-cube};
\end{scope}
\end{tikzpicture}
\caption{The graphs $K_{1,1,2}$, $C_6$ and the $3$-cube.\\~} \label{f:dqm-graphs}
\end{subfigure}
\caption{Graphs appearing in Definition \ref{d:qm}.} \label{f:dqm}
\end{figure}

There are many equivalent characterisations of quasi-median graphs: see \cite[Theorem 1]{bmw}. In this paper we think of quasi-median graphs as generalisations of median graphs. Recall that a graph $X$ is called a \emph{median graph} if every triple of vertices of $X$ has a unique median. In particular, every median graph is quasi-median; more precisely, it is known that a graph is median if and only if it is quasi-median and triangle-free: see \cite[Corollary 2.92]{genthesis}, for instance.

\begin{rmk}
The class of \emph{quasi-median graphs} is reminiscent of the class of a \emph{bucolic graphs}, introduced in \cite{bccgo}, in that both of these generalise median graphs to a class that also includes finite complete graphs. Neither of these two classes includes the other: in essence, bucolic graphs differ from quasi-median ones in that they do not contain infinite complete subgraphs, but may contain an induced subgraph $K_{1,1,2}$ (see Figure \ref{f:dqm-graphs}); compare \cite[Definition 2.1]{genthesis} and \cite[Definition 2.11]{bccgo}. We chose to work with quasi-median graphs as they are more suitable for our applications: for instance, the Cayley graph $X$ of a graph product $\GG$ defined in Theorem \ref{t:gengp} is quasi-median, but not bucolic unless $\mathcal{G}$ is a collection of \emph{finite} groups. 
\end{rmk}

In what follows, a \emph{clique} is a maximal complete subgraph, a \emph{triangle} is a complete graph on $3$ vertices, and a \emph{square} is a complete bipartite graph on two sets of $2$ vertices each.

\begin{defn} \label{d:hplanes}
Let $X$ be a quasi-median graph. Let $\sim$ be the equivalence relation on $E(X)$ generated by the equivalences $e \sim f$ when $e$ and $f$ either are two sides of a triangle or opposite sides of a square. A \emph{hyperplane} $H$ is an equivalence class $[e]$ for some $e \in E(X)$; in this case, we say $H$ is the hyperplane \emph{dual} to $e$ (or, alternatively, $H$ is the hyperplane dual to any clique containing $e$). Given a hyperplane $H$ dual to $e \in E(X)$, the \emph{carrier} of $H$, denoted by $\mathcal{N}(H)$, is the full subgraph of $X$ induced by $[e] \subseteq E(X)$; a \emph{fibre} of $H$ is a connected component of $\mathcal{N}(H) \setminus J$, where $J$ is the union of the interiors of all the edges in $[e]$.

Given two edges $e,e' \in E(X)$ with a common endpoint ($p$, say) that do not belong to the same clique, let $H$ and $H'$ be the hyperplanes dual to $e$ and $e'$, respectively. We then say $H$ and $H'$ \emph{intersect} (or \emph{intersect at $p$}) if $e$ and $e'$ are adjacent edges in a square, and we say $H$ and $H'$ \emph{osculate} (or \emph{osculate at $p$}) otherwise.

Finally, given two vertices $p,q \in V(X)$ and a hyperplane $H$, we say $H$ \emph{separates} $p$ from $q$ if every path between $p$ and $q$ contains an edge dual to $H$. More generally, we say $H$ \emph{separates} two subgraphs $P,Q \subseteq X$ if $H$ does not separate any two vertices of $P$ or any two vertices of $Q$, but it separates a vertex of $P$ from a vertex of $Q$.
Given a path $\gamma$ in $X$, we also say $H$ \emph{crosses} $\gamma$ if $\gamma$ contains an edge dual to $H$.
\end{defn}

Another important concept in the study of quasi-median graphs are gated subgraphs. Such subgraphs coincide with \emph{convex subgraphs} for median graphs, but in general form a larger class in quasi-median graphs.

\begin{defn} \label{d:gated}
Let $X$ be a quasi-median graph, let $Y \subseteq X$ be a full subgraph, and let $v \in V(X)$. We say $p \in V(Y)$ is a \emph{gate} for $v$ in $Y$ if, for any $q \in V(Y)$, there exists a geodesic in $X$ between $v$ and $q$ passing through $p$. We say a full subgraph $Y \subseteq X$ is a \emph{gated subgraph} if every vertex of $X$ has a gate in $Y$.
\end{defn}

The following result says that the subgraphs of interest to us are gated. Here, by convention, given two graphs $Y$ and $Z$ we denote by $Y \times Z$ the $1$-skeleton of the square complex obtained as a cartesian product of $Y$ and $Z$. The facts that $C$, $\mathcal{N}(H)$ and $F$ are gated subgraphs follow from Lemmas 2.16, 2.24 and 2.29 (respectively) in \cite{genthesis}, while the facts about the isomorphism $\Psi: \mathcal{N}(H) \to F \times C$ follow from its construction as well as Lemmas 2.28 and 2.29 in \cite{genthesis}. Note that, in our terminilogy, a `fibre' is a priori different from a `main fibre' defined in \cite[Definition 2.27]{genthesis}; however, these two graphs are isomorphic by \cite[Lemma 2.29]{genthesis}.

\begin{prop}[Genevois {\cite[Section 2.2]{genthesis}}] \label{p:cfgated}
Let $X$ be a quasi-median graph, $H$ a hyperplane dual to a clique $C$, and $F$ a fibre of $H$. Then $\mathcal{N}(H)$, $C$ and $F$ are gated subgraphs of $X$. Moreover, there exists a graph isomorphism $\Psi: \mathcal{N}(H) \to F \times C$, and the cliques dual to $H$ (respectively the fibres of $H$) are precisely the subgraphs $\Psi^{-1}(\{p\} \times C)$ for vertices $p \in V(F)$ (respectively $\Psi^{-1}(F \times \{p\})$ for vertices $p \in V(C)$).
\end{prop}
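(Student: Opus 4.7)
My plan is to establish the three gatedness claims in sequence ($C$, then $\mathcal{N}(H)$, then $F$) and simultaneously build up the structure needed for $\Psi$.

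For gatedness of $C$, fix $v \in V(X)$ and choose $p \in V(C)$ minimizing $d_X(v, \cdot)$. For any $q \in V(C) \setminus \{p\}$ I would examine the unique quasi-median $(y_1, y_2, y_3)$ of $(v, p, q)$. Since $d_X(p, q) = 1$, both $y_2$ and $y_3$ lie in $\{p, q\}$; if $k = 0$ then minimality of $d_X(v, p)$ forces $y_2 = y_3 = p$, exhibiting $p$ on a geodesic from $v$ to $q$. Otherwise $k = 1$, $\{y_2, y_3\} = \{p, q\}$, and $y_1$ is a common neighbour of $p, q$ with $d_X(v, y_1) < d_X(v, p)$; maximality of $C$ places $y_1$ outside $V(C)$, and combined with any $z \in V(C)$ not adjacent to $y_1$, the vertices $\{p, q, z, y_1\}$ induce a copy of $K_{1,1,2}$, contradicting Definition \ref{d:qm}(ii).

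For the isomorphism, I would use that any two cliques dual to $H$ are linked by a sequence of squares and triangles, each such square or triangle yielding a canonical vertex bijection between the two adjacent cliques; dually, any two fibres are linked by edges dual to $H$, yielding canonical vertex bijections between them. I would show that composing these bijections along any closed loop is the identity, by a case analysis on elementary relations using the quasi-median axiom together with the third axiom of Definition \ref{d:qm}. This yields canonical identifications of every clique $C'$ dual to $H$ with $C$ and of every fibre $F'$ with $F$. Each vertex $w \in V(\mathcal{N}(H))$ lies in a unique clique $C_w$ dual to $H$ and a unique fibre $F_w$, meeting at $w$; I would set $\Psi(w) = (\mathrm{proj}_F(w), \mathrm{proj}_C(w))$, with coordinates the images of $w$ under $F_w \to F$ and $C_w \to C$. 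Every edge of $\mathcal{N}(H)$ is either dual to $H$ (moving only the $C$-coordinate) or parallel to an edge of $F$ (moving only the $F$-coordinate), so $\Psi$ is a graph homomorphism, and bijectivity follows from the coherence above. Gatedness of $\mathcal{N}(H)$ is then deduced by building the gate of any $v \in V(X)$ as $\Psi^{-1}(f, c)$, where $c$ is the gate of $v$ in $C$ and $f$ is the gate of $v$ in $F$; gatedness of $F$ in $X$ then follows from the product decomposition together with composition of gates through the nested inclusion $F \subseteq \mathcal{N}(H) \subseteq X$.

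The main obstacle will be proving the coherence of parallel transport across $\mathcal{N}(H)$: that the vertex bijections induced by squares and triangles compose trivially around any closed loop. This reduces to checking every elementary loop of length $3$ or $4$ in the clique-adjacency graph, and it is exactly here that the full force of the quasi-median axioms, in particular the convex-hull condition on isometrically embedded $C_6$-subgraphs, must be exploited to rule out exotic configurations.
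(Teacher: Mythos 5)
You should first note that the paper itself contains no proof of Proposition \ref{p:cfgated}: it is imported from Genevois's thesis (gatedness of $C$, $\mathcal{N}(H)$ and $F$ from Lemmas 2.16, 2.24 and 2.29 of \cite{genthesis}, and the splitting $\Psi$ from Lemmas 2.28--2.29), so there is no argument in the paper to compare yours against; you are in effect reproving Genevois's result. Your first step, gatedness of the clique $C$, is essentially correct: for $p\in V(C)$ closest to $v$ and $q\in V(C)$, the quasi-median of $(v,p,q)$ either has $k=0$ with median $p$, giving the geodesic through $p$, or $k=1$ and produces a common neighbour $y_1$ of $p,q$ with $d_X(v,y_1)<d_X(v,p)$. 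One small repair: it is the minimality of $d_X(v,p)$, not the maximality of $C$, that forces $y_1\notin V(C)$; maximality of $C$ is what then either supplies $z\in V(C)$ non-adjacent to $y_1$ (yielding the forbidden induced $K_{1,1,2}$ on $\{p,q,z,y_1\}$) or, if $y_1$ is adjacent to every vertex of $C$, contradicts maximality directly. With that adjustment this part stands.

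The rest of the proposal, however, has genuine gaps, and they sit exactly where the substance of the proposition lies. The ``coherence of parallel transport'' -- that the bijections between cliques (resp.\ fibres) induced by squares and triangles have trivial holonomy -- is not an obstacle to be flagged at the end; it \emph{is} the product decomposition, and your claimed reduction to elementary loops of length $3$ or $4$ in the clique-adjacency graph presupposes a simple-connectivity-type statement about the carrier that you never establish (this is precisely where the bulk of Genevois's Section 2.2 is spent). You also use without proof that each vertex of $\mathcal{N}(H)$ lies in a \emph{unique} clique dual to $H$, i.e.\ that $H$ does not self-intersect or self-osculate at a vertex of its carrier -- a separate nontrivial fact (Lemma 2.25 of \cite{genthesis}, which the present paper also cites independently). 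Finally, the gatedness argument is circular and incomplete: you build the putative gate of $v$ in $\mathcal{N}(H)$ as $\Psi^{-1}(f,c)$ using the gate $f$ of $v$ in $F$, yet you propose to deduce gatedness of $F$ afterwards from that of $\mathcal{N}(H)$; and even granting both $c$ and $f$, no argument is given that $\Psi^{-1}(f,c)$ has the gate property -- that requires knowing $\mathcal{N}(H)$ is isometrically embedded (indeed convex) in $X$ and that distances inside it split over the two factors, which is essentially the content being proven. As written, the proposal proves only the gatedness of cliques; the carrier decomposition and the gatedness of $\mathcal{N}(H)$ and $F$ remain unproved.
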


\subsection{Special actions}

In this section we describe the hypotheses that we impose on group actions on quasi-median graphs. We first define what it means for an action on a quasi-median graph to be special.

\begin{defn}
Let $X$ be a quasi-median graph, and let $G$ be a group acting on it by graph isomorphisms. We say the action $G \curvearrowright X$ is \emph{special} if
\begin{enumerate}[label={({\roman*})}]
\item no two hyperplanes in the same orbit under $G \curvearrowright X$ intersect or osculate; and
\item given two hyperplanes $H$ and $H'$ that intersect, $H^g$ and $H'$ do not osculate for any $g \in G$.
\end{enumerate}
\end{defn}

Special actions on CAT(0) cube complexes were introduced by Haglund and Wise in \cite{hw}. Notably, there it is shown that, in our terminology, if a group $G$ acts specially, cocompactly and without `orientation-inversions' of hyperplanes on a CAT(0) cube complex $X$, then the fundamental group of the quotient $X/G$ embeds in a right-angled Artin group. 

It is clear from \cite[Lemma 2.25]{genthesis} that no hyperplane in a quasi-median graph can self-intersect or self-osculate. The next lemma says that, moreover, the action of the trivial group on a quasi-median graph is special. 
Recall that two hyperplanes are said to \emph{interosculate} if they both intersect and osculate.

\begin{lem} \label{l:nointosc}
In a quasi-median graph $X$, no two hyperplanes can interosculate.
\end{lem}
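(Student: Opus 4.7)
The plan is to argue by contradiction using the product decomposition of hyperplane carriers from Proposition \ref{p:cfgated}. Suppose two hyperplanes $H$ and $H'$ interosculate: fix edges $f = vu$ dual to $H$ and $f' = vw$ dual to $H'$ sharing the vertex $v$ but not lying in any common square (osculation), and fix a square $S \subseteq X$ whose two pairs of opposite sides are dual to $H$ and $H'$ respectively (intersection). Write $\mathcal{N}(H) \cong F \times C$ via the isomorphism $\Psi$, so that cliques dual to $H$ correspond to the $\{p\} \times C$ and fibres of $H$ to the $F \times \{c\}$; in these coordinates set $v = (p,c)$ and $u = (p,c')$. The argument splits on whether $w$ lies in $V(\mathcal{N}(H))$.

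If $w \in V(\mathcal{N}(H))$, then $f' = vw$ is a non-$H$-dual edge of $F \times C$, hence must be horizontal, so $w = (p', c)$ with $p'$ adjacent to $p$ in $F$. The four vertices $\{p,p'\} \times \{c,c'\}$ then form a 4-cycle in $F \times C$; since two vertices of a Cartesian product are adjacent only when they agree in exactly one coordinate, there is no diagonal chord inside $F \times C$, and because $\mathcal{N}(H)$ is a full subgraph of $X$ this 4-cycle is a square in $X$ containing both $f$ and $f'$, contradicting osculation.

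If $w \notin V(\mathcal{N}(H))$, I invoke a \emph{gate dichotomy}: let $C'_v$ be the unique clique of $X$ through $v$ whose edges are dual to $H'$. Since $\mathcal{N}(H)$ and $C'_v$ are both gated (Proposition \ref{p:cfgated} and the fact that cliques are gated), so is their intersection. Now $w \in V(C'_v) \setminus V(\mathcal{N}(H))$ is, as a member of the clique, adjacent to every other vertex of $V(C'_v)$; the gate property combined with this uniform adjacency then forces $V(\mathcal{N}(H)) \cap V(C'_v) = \{v\}$. Translated via $F \times C$, this says that $p$ is not incident in $F$ to any edge dual to the hyperplane $H'_F$ of $F$ induced by $H'$ via the horizontal $H'$-dual edges of $\mathcal{N}(H)$, an induced hyperplane that is non-empty precisely because $H$ and $H'$ intersect. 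A symmetric analysis inside $\mathcal{N}(H') \cong F' \times C'$ either yields $u \in V(\mathcal{N}(H'))$ (returning to the first case and producing a square) or the analogous failure of incidence for $v$'s $F'$-coordinate.

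To close the argument, the intersecting square $S$ supplies a vertex $a \in V(\mathcal{N}(H)) \cap V(\mathcal{N}(H'))$ whose $F$-coordinate $p_a$ \emph{is} in $V(\mathcal{N}_F(H'_F))$. Because the fibre $F$ is connected, I travel from $p$ to $p_a$ along an $F$-geodesic and locate the first vertex $q_j$ that enters $V(\mathcal{N}_F(H'_F))$; applying the dichotomy to the lifted vertex $(q_j, c)$ and exploiting the interaction of the two product decompositions at the transition edge $q_{j-1}q_j$ produces two incompatible incidence constraints. The main obstacle is precisely this closing step: juggling the two product decompositions $F \times C$ and $F' \times C'$ simultaneously, and using the dichotomy at the intermediate vertex to show that the singleton conclusion $V(\mathcal{N}(H)) \cap V(C'_v) = \{v\}$ cannot coexist with the intersecting square elsewhere in $X$. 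The bookkeeping is delicate, but the underlying principle is clean: intersection of two hyperplanes forces a horizontal $H'$-dual edge of $\mathcal{N}(H)$ near $v$, and any such edge closes an osculating pair at $v$ into a square.
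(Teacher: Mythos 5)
Your Case 1 is fine: if the $H'$-dual edge $vw$ has both endpoints in $\mathcal{N}(H)$, the product structure of Proposition \ref{p:cfgated} does give a square containing both chosen edges, contradicting osculation at $v$; and your ``gate dichotomy'' $V(\mathcal{N}(H)) \cap V(C'_v) = \{v\}$ is a correct (easy) consequence of gatedness. But the proof has a genuine gap exactly where you acknowledge it: the closing step in Case 2 is never carried out, and it is not mere bookkeeping --- it is the entire content of the lemma. Your final slogan, ``intersection of two hyperplanes forces a horizontal $H'$-dual edge of $\mathcal{N}(H)$ \emph{near} $v$,'' is precisely the assertion that needs proof: the square witnessing the intersection may sit far away in the carrier, and nothing in your outline shows how the $H'$-crossing propagates along the fibre back to the osculation vertex. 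At the ``transition edge'' you would need to manufacture a new square (or cube vertex) out of three pairwise-interacting hyperplanes, and gatedness of carriers plus the two product decompositions $F \times C$ and $F' \times C'$ do not by themselves produce it; some quasi-median axiom must enter, and your argument never invokes one.

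Compare with the paper's proof: it first runs a minimality argument (using convexity of the gated carriers) to reduce to the case where the intersection point $p$ and the osculation point $q$ are \emph{adjacent}; it then introduces the third hyperplane $K$ dual to the edge $pq$, checks via uniqueness of quasi-medians that a certain hexagon $C_6$ is isometrically embedded, and finally applies the defining axiom that the convex hull of such a $C_6$ is a $3$-cube to produce the extra vertex that turns the osculation at $q$ into an intersection --- contradiction. Those last two steps (quasi-median uniqueness and the $C_6$-to-cube condition) are exactly the input your sketch is missing; without identifying and using them, the ``two incompatible incidence constraints'' at $q_{j-1}q_j$ cannot be extracted, and the argument does not close. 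If you want to salvage your approach, the natural fix is to replace the fibre-travelling step by the paper's reduction to adjacent intersection/osculation points and then do the local cube-condition argument there.
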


\begin{proof}
Suppose for contradiction that hyperplanes $H$ and $H'$ intersect at $p$ and osculate at $q$ for some $p,q \in V(X)$, and assume without loss of generality that $p$ and $q$ are chosen in such a way that $d_X(p,q)$ is as small as possible. It is clear that $p \neq q$: see, for instance, \cite[Lemma 2.13]{genthesis}. On the other hand, since $\mathcal{N}(H)$ and $\mathcal{N}(H')$ are gated (and therefore convex) by Proposition \ref{p:cfgated}, and as $p,q \in \mathcal{N}(H) \cap \mathcal{N}(H')$, it follows that a geodesic between $p$ and $q$ lies in $\mathcal{N}(H) \cap \mathcal{N}(H')$. In particular, if $r$ is a vertex on this geodesic, then $H$ and $H'$ either intersect at $r$ or osculate at $r$; by minimality of $d_X(p,q)$, it then follows that $d_X(p,q)=1$.

Let $e$ be the edge joining $p$ and $q$, and let $K$ be the hyperplane dual to $e$. It follows from Proposition \ref{p:cfgated} that $K \neq H$ and $K \neq H'$: indeed, if we had $K=H$ (say), then $K=H$ and $H'$ would intersect at $q$, contradicting the choice of $q$. Thus $K$ is distinct from $H$ and $H'$, and so $e$ belongs to a fibre of $H$ and a fibre of $H'$. It then follows from Proposition \ref{p:cfgated} that $K$ intersects both $H$ and $H'$ at $q$, and that the graph $Y$ shown in Figure \ref{f:lnio} is a subgraph of $X$.

We now claim that the embedding $Y \hookrightarrow X$ is isometric. Indeed, as $H$, $H'$ and $K$ are distinct hyperplanes, no two vertices $p',q' \in V(Y)$ with $d_Y(p',q')=2$ can be joined by an edge in $X$, as that would create a triangle in $X$ with edges dual to different hyperplanes. It is thus enough to show that if $p',q' \in V(Y)$ and $d_Y(p',q')=3$, then $d_X(p',q')=3$. Up to relabelling $H$, $H'$ and $K$, we may assume without loss of generality that $p'=s$ and $q'=q$. Now it is clear that $d_X(s,q) \neq 1$: otherwise, $p_1s$ and $q_1q$ are opposite sides in a square in $X$, contradicting the fact that $H \neq H'$. Thus, suppose for contradiction that $d_X(s,q) = 2$. But then the triple $(p_1,s,t)$ is a quasi-median of $(p_1,s,q)$ for some vertex $t \in V(X)$, and the edges $p_1s$, $p_1t$, $q_1q$ are dual to the same hyperplane, again contradicting the fact that $H \neq H'$. Thus the embedding $Y \hookrightarrow X$ is isometric, as claimed.

But now the embedding of the $C_6 \subseteq Y$ formed by vertices $s$, $p_1$, $q_1$, $q$, $q_2$ and $p_2$ into $X$ is also isometric, and so the convex hull of this $C_6$ in $X$ is a $3$-cube. Thus there exists a vertex $u \in V(X)$ joined by edges to $s$, $p_2$ and $q_2$. This implies that $H$ and $H'$ intersect at $q$, contradicting the choice of $q$. Thus $H$ and $H'$ cannot interosculate.
\end{proof}

\begin{figure}[ht]
\centering
\begin{tikzpicture}
\draw [red,thick] (0,1) -- (2,2) node [midway,above] {$H\ $};
\draw [blue,thick] (2,2) -- (4,1) node [midway,above] {$\ H'$};
\draw [green!50!black,thick] (4,1) -- (4,-1) node [midway,right] {$K$};
\draw [red,thick] (4,-1) -- (2,-2);
\draw [blue,thick] (2,-2) -- (0,-1);
\draw [green!50!black,thick] (0,-1) -- (0,1);
\draw [blue,thick] (0,1) -- (2,0);
\draw [red,thick] (2,0) -- (4,1);
\draw [green!50!black,thick] (2,0) -- (2,-2) node [midway,right,black] {$e$};
\draw [dashed] (0,1) -- (1,0);
\draw [dashed] (2,2) -- (1,0);
\draw [dashed] (2,-2) -- (1,0);
\fill (0,1) circle (1.5pt) node [left] {$p_1$};
\fill (2,2) circle (1.5pt) node [above] {$s$};
\fill (4,1) circle (1.5pt) node [right] {$p_2$};
\fill (4,-1) circle (1.5pt) node [right] {$q_2$};
\fill (2,-2) circle (1.5pt) node [below] {$q$};
\fill (0,-1) circle (1.5pt) node [left] {$q_1$};
\fill (1,0) circle (1pt) node [below] {$t$};
\fill (2,0) circle (1.5pt) node [above] {$p$};
\end{tikzpicture}
\caption{Proof of Lemma \ref{l:nointosc}: the graph $Y$ (solid edges) and the vertex $t \in V(X)$.} \label{f:lnio}
\end{figure}
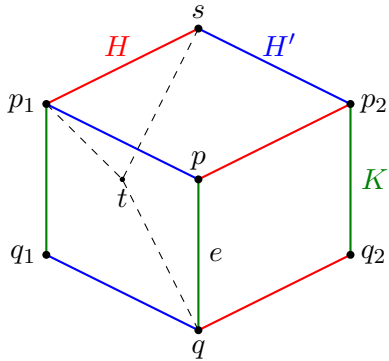

\begin{rmk} \label{r:swap}
We use Lemma \ref{l:nointosc} in the following setting. Let $\gamma$ be a geodesic in a quasi-median graph $X$, let $e$ and $e'$ be two consecutive edges of $\gamma$, and let $H$ and $H'$ be the hyperplanes dual to $e$ and $e'$, respectively. Suppose that $H$ and $H'$ intersect. It then follows from Lemma \ref{l:nointosc} that $H$ and $H'$ cannot osculate at the common endpoint $p$ of $e$ and $e'$, and therefore $H$ and $H'$ must intersect at $p$. In particular, $X$ contains a square with edges $e$, $e'$, $f$ and $f'$, in which $f$ and $f'$ are the edges opposite to $e$ and $e'$, respectively. We may then obtain another geodesic $\gamma'$ in $X$ (with the same endpoints as $\gamma$) by replacing the subpath $ee'$ of $\gamma$ with $f'f$. We refer to the operation of replacing $\gamma$ by $\gamma'$ as \emph{swapping $e$ and $e'$ on $\gamma$}.
\end{rmk}




\subsection{Geodesics in quasi-median graphs}

Here we record two results on geodesics in a quasi-median graph. The first one of these is due to Genevois.

\begin{prop}[Genevois {\cite[Proposition 2.30]{genthesis}}] \label{p:geod}
A path in a quasi-median graph $X$ is a geodesic if and only if it intersects any hyperplane at most once. In particular, the distance between two vertices of $X$ is equal to the number of hyperplanes separating them. \qed
\end{prop}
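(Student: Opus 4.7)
My plan is to exploit the gated product decomposition $\mathcal{N}(H) \cong F \times C$ from Proposition \ref{p:cfgated}, arguing by contradiction for the forward direction. Suppose $\gamma = v_0 e_1 v_1 \cdots e_n v_n$ is a geodesic containing two edges $e_i$ and $e_j$ (with $i < j$) both dual to some hyperplane $H$, chosen so that $j - i$ is minimal. Since $\mathcal{N}(H)$ is gated (hence convex), the sub-geodesic from $v_i$ to $v_{j-1}$ lies in $\mathcal{N}(H)$, and by minimality of $j - i$ none of its edges is dual to $H$; under the isomorphism $\Psi: \mathcal{N}(H) \to F \times C$, this sub-path therefore stays in a single fibre of $H$. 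Writing $\Psi(v_{i-1}) = (f_1, c_0)$, $\Psi(v_i) = (f_1, c_1)$, $\Psi(v_{j-1}) = (f_2, c_1)$ and $\Psi(v_j) = (f_2, c_2)$, the sub-path of $\gamma$ from $v_{i-1}$ to $v_j$ has length $d_F(f_1, f_2) + 2$. I would then construct an alternative by concatenating a geodesic within the fibre $\Psi^{-1}(F \times \{c_0\})$ from $(f_1, c_0)$ to $(f_2, c_0)$---of length $d_F(f_1, f_2)$ by gatedness of fibres---with a path of length at most one in the clique at $f_2$ from $(f_2, c_0)$ to $(f_2, c_2)$, giving a path of length at most $d_F(f_1, f_2) + 1$ and contradicting the geodesicity of $\gamma$.

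For the converse and the distance formula, the forward direction already implies that any geodesic from $p$ to $q$ has length equal to the number of distinct hyperplanes it crosses. I would next combine gate projections with the product structure to establish the following \emph{sector claim}: removing from $X$ all edges dual to a hyperplane $H$ yields connected components in bijection with the fibres of $H$. This implies that any hyperplane crossed exactly once by a path from $p$ to $q$ separates them, while every separating hyperplane must be crossed by every such path; hence $d_X(p, q)$ equals the number of hyperplanes separating $p$ from $q$. Now if $\gamma$ is any path from $p$ to $q$ crossing each hyperplane at most once, then the hyperplanes it crosses coincide with those separating $p$ from $q$, so $|\gamma|$ equals this number, which equals $d_X(p, q)$; thus $\gamma$ is a geodesic.

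The main obstacle is the sector claim itself. To prove it, given a path avoiding edges dual to $H$, I would argue by induction on its length that its endpoints have gates in the same fibre of $H$, using that the gates of adjacent vertices in $X$ differ by at most one edge in $\mathcal{N}(H)$ and exploiting the product decomposition to rule out the possibility that this edge is dual to $H$ whenever the original edge is not. A parity argument of the kind that works for CAT(0) cube complexes is unavailable here because clique triangles in $X$ traverse a single hyperplane an odd number of times, so this direct geometric argument via gates and fibres seems to be the correct substitute.
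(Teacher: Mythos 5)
The paper itself gives no proof of this proposition --- it is quoted from Genevois's thesis with a \qed --- so your argument has to stand on its own. Your forward direction does: convexity of the gated carrier, the dichotomy between clique-direction edges (dual to $H$) and fibre-direction edges of $\mathcal{N}(H)\cong F\times C$ from Proposition \ref{p:cfgated}, and the shortcut consisting of a path in one fibre followed by at most one clique edge give a path of length at most $d_F(f_1,f_2)+1$ against a subpath of length $d_F(f_1,f_2)+2$, a genuine contradiction (the appeal to ``gatedness of fibres'' is not needed; the product structure alone supplies that path). The reduction of the converse and of the distance formula to your sector claim is also sound, as is the $1$-Lipschitz property of the gate map onto a gated subgraph.

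The gap is in the sector claim itself, which you correctly flag as the main obstacle but do not prove. Your induction needs precisely this: if $u\sim w$ by an edge not dual to $H$ and $g_u\neq g_w$ are their gates in $\mathcal{N}(H)$, then the edge $g_ug_w$ is not dual to $H$. This does not follow from ``exploiting the product decomposition'': Proposition \ref{p:cfgated} describes the carrier internally and says nothing about how vertices outside it project onto it. Already in the lowest nontrivial case $d(u,\mathcal{N}(H))=d(w,\mathcal{N}(H))=1$, a short computation with the gate property shows the bad configuration is an induced $4$-cycle $u,w,g_w,g_u$, and the contradiction comes from the square relation in the definition of hyperplanes (the opposite edge $g_ug_w$ is dual to $H$, hence so is $uw$) --- i.e.\ from the hyperplane equivalence itself, not from $F\times C$. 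For larger distances the obvious induction breaks because the neighbours of $u$ and $w$ on geodesics towards their gates need not be adjacent, and pushing the configuration towards the carrier requires the weak modularity (triangle and quadrangle) conditions enjoyed by quasi-median graphs, or a hands-on argument in the style of the paper's Lemma \ref{l:nointosc} using unique quasi-medians and the $C_6$-to-$3$-cube axiom. As written, therefore, the ``if'' direction and the distance formula rest on an unproved lemma; this is exactly where Genevois's own treatment invests its effort, deducing the proposition from his theory of sectors, i.e.\ from the fact that deleting the edges dual to $H$ disconnects $X$ into pieces indexed by the vertices of a clique dual to $H$.
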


\begin{lem} \label{l:cancellingplane}
Let $p,q,r \in V(X)$ be vertices of a quasi-median graph $X$ such that some hyperplane separates $q$ from $p$ and $r$. Then there exists a hyperplane $C$ separating $q$ from $p$ and $r$ and geodesics $\gamma_p$ (respectively $\gamma_r$) between $q$ and $p$ (respectively $q$ and $r$) such that $q$ is an endpoint of the edges of $\gamma_p$ and $\gamma_r$ dual to $C$.
\end{lem}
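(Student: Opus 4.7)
The plan is to choose $C$ to be a hyperplane in
\[
\mathcal{H} := \{H \mid H \text{ separates } q \text{ from both } p \text{ and } r\}
\]
that is as ``close'' to $q$ as possible, in the sense of minimising the non-negative integer $d_X(q, \mathcal{N}(C))$; by hypothesis $\mathcal{H}$ is non-empty, so such a $C$ exists. I will show $q \in V(\mathcal{N}(C))$, and then explicitly construct $\gamma_p$ and $\gamma_r$ using the product structure $\Psi\colon \mathcal{N}(C) \to F \times C'$ given by Proposition \ref{p:cfgated} (where $C'$ is a clique dual to $C$ and $F$ is a fibre).

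To prove $q \in V(\mathcal{N}(C))$, suppose for contradiction that $d_X(q, \mathcal{N}(C)) > 0$. Let $q_C$ be the gate of $q$ in $\mathcal{N}(C)$, let $\gamma$ be a geodesic from $q$ to $q_C$, and let $D$ be the hyperplane dual to the first edge of $\gamma$. Since by the gate property any vertex $v \in V(\mathcal{N}(C))$ admits a geodesic from $q$ passing through $q_C$, and since by Proposition \ref{p:geod} different geodesics between the same pair of vertices cross the same hyperplanes, $D$ separates $q$ from every $v \in V(\mathcal{N}(C))$. Now, as $C$ separates $q$ from $p$, any geodesic from $q$ to $p$ contains an edge dual to $C$; picking the endpoint $v$ of such an edge closer to $q$, we have $v \in V(\mathcal{N}(C))$ and $v$ lies on a geodesic from $q$ to $p$, so by Proposition \ref{p:geod} $D$ separates $q$ from $p$. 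The identical argument with $r$ in place of $p$ shows $D \in \mathcal{H}$; but since $q$ is an endpoint of an edge dual to $D$, we have $d_X(q, \mathcal{N}(D)) = 0 < d_X(q, \mathcal{N}(C))$, contradicting the minimality of $C$.

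Given $q \in V(\mathcal{N}(C))$, identify $\mathcal{N}(C)$ with $F \times C'$ via $\Psi$ and write $q = (q_F, q_{C'})$. Let $p_C$ and $r_C$ be the gates of $p$ and $r$ in $\mathcal{N}(C)$, and write $p_C = (p_F, p_{C'})$, $r_C = (r_F, r_{C'})$. A short argument (noting that $p_C$ lies on a geodesic from $p$ to $q$ by the gate property, together with the fact that such a geodesic must cross $C$ exactly once by Proposition \ref{p:geod}) shows that $p_{C'} \neq q_{C'}$ and $r_{C'} \neq q_{C'}$; more precisely, $p_C$ and $p$ lie on the same side of $C$, since otherwise the endpoint of a $C$-dual edge closer to $p$ on a geodesic from $p$ to $p_C$ would contradict $p_C$ minimising $d_X(p,\cdot)$ on $\mathcal{N}(C)$. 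Then the edge $e_p$ from $(q_F, q_{C'})$ to $(q_F, p_{C'})$ lies in the clique $\{q_F\} \times C'$ and hence is dual to $C$; concatenating $e_p$ with a geodesic inside the fibre $F \times \{p_{C'}\}$ from $(q_F, p_{C'})$ to $p_C$ and then with a geodesic from $p_C$ to $p$ (which does not cross $C$) produces a geodesic $\gamma_p$ from $q$ to $p$ whose unique $C$-dual edge has $q$ as an endpoint. An analogous construction with $r$ in place of $p$ yields $\gamma_r$.

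The main obstacle will be the verification that $D \in \mathcal{H}$ in the contradiction step, which requires a careful interplay between Proposition \ref{p:geod} and the gated structure of $\mathcal{N}(C)$ from Proposition \ref{p:cfgated}; the construction of the geodesics from the product decomposition is then essentially formal.
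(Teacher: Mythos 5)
Your proof is correct, but it takes a genuinely different route from the paper's. The paper fixes a separating hyperplane $C$ together with geodesics $\gamma_p,\gamma_r$ and minimises the distance from $q$ to the endpoint of the $C$-dual edge on $\gamma_p$ (then on $\gamma_r$); the contradiction comes from swapping two consecutive edges of the geodesic across a square (Remark \ref{r:swap}), which rests on the non-interosculation Lemma \ref{l:nointosc}. You instead minimise $d_X(q,\mathcal{N}(C))$ over all hyperplanes $C$ separating $q$ from $p$ and $r$, force $q\in\mathcal{N}(C)$ by a gate argument, and then build $\gamma_p$ and $\gamma_r$ explicitly from the decomposition $\mathcal{N}(C)\cong F\times C'$ of Proposition \ref{p:cfgated}, using the gates of $p$ and $r$ in $\mathcal{N}(C)$. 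This bypasses Lemma \ref{l:nointosc} and the swapping technique entirely, at the cost of leaning harder on Proposition \ref{p:cfgated} (gatedness, hence convexity, of the carrier and its fibres, so that the carrier metric is the product metric) and on the standard consequence of Proposition \ref{p:geod} that the hyperplanes crossed by a geodesic are exactly those separating its endpoints -- facts the paper uses freely elsewhere, so nothing new is required. The paper's argument is more local, only modifying the given geodesics and reusing machinery needed later anyway; yours yields a cleaner ``nearest carrier'' statement and constructs the geodesics explicitly, making the endpoint condition transparent (the length count $d_X(q,p)=d_X(q,p_C)+d_X(p_C,p)$ with $d_X(q,p_C)=d_F(q_F,p_F)+1$ shows the concatenation is geodesic without further argument). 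If you write it up, make explicit the two small points you gesture at: the gate $p_C$ is the unique nearest point of $\mathcal{N}(C)$ to $p$, so $C$ cannot separate $p$ from $p_C$; and two vertices of $\mathcal{N}(C)$ lying in the same fibre are never separated by $C$ (a fibre contains no $C$-dual edges), which is what forces $p_{C'}\neq q_{C'}$ and $r_{C'}\neq q_{C'}$.
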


\begin{proof}
Let $C$ be a hyperplane separating $q$ from $p$ and $r$, let $\gamma_p$ (respectively $\gamma_r$) be a geodesic between $q$ and $p$ (respectively $q$ and $r$), and let $c_p$ and $c_r$ be the edges of $\gamma_p$ and $\gamma_r$ (respectively) dual to $C$. Let $q_p$ and $q_p'$, $q_r$ and $q_r'$ be the endpoints of $c_p$, $c_r$ (respectively), labelled so that $C$ does not separate $q$, $q_p$ and $q_r$. Suppose, without loss of generality, that $\gamma_p$ and $C$ are chosen in such a way that $d_X(q,q_p)$ is as small as possible, and that $\gamma_r$ is chosen so that $d_X(q,q_r)$ is as small as possible (subject to the choice of $\gamma_p$ and $C$). See Figure \ref{f:lcancellingplane}.

We first claim that $q = q_p$. Indeed, suppose not, and let $c_p' \neq c_p$ be the other edge of $\gamma_p$ with endpoint $q_p$. Let $C_p'$ be the hyperplane dual to $c_p'$. Then $C_p'$ does not separate $q_p$ and $p$ (as $\gamma_p$ is a geodesic), nor $q$ and $r$ (by minimality of $d_X(q,q_p)$), but it separates $q_p$ (and so $p$) from $q$ (and so $r$). On the other hand, $C$ separates $q_p$ from $p$ (as $\gamma_p$ is a geodesic) and $q$ from $r$ (as $\gamma_r$ is a geodesic). Therefore, $C$ and $C_p'$ must intersect. But then we may swap $c_p$ and $c_p'$ on $\gamma_p$ (see Remark \ref{r:swap}), contradicting minimality of $d_X(q,q_p)$. Thus we must have $q = q_p$.

We now claim that $q = q_r$. Indeed, suppose not, and let $c_r' \neq c_r$ be the other edge of $\gamma_r$ with endpoint $q_r$. Let $C_r'$ be the hyperplane dual to $c_r'$. Then $C_r'$ does not separate $q$ and $q_p'$ (as $C$ is the only hyperplane separating $q=q_p$ and $q_p'$), nor $q_r$ and $r$ (as $\gamma_r$ is a geodesic), but it separates $q$ (and so $q_p'$) from $q_r$ (and so $r$). On the other hand, $C$ separates $q_r$ from $r$ (as $\gamma_r$ is a geodesic) and $q$ from $q_p'$. Therefore, $C$ and $C_r'$ must intersect. But then we may swap $c_r$ and $c_r'$ on $\gamma_r$, contradicting minimality of $d_X(q,q_r)$. Thus we must have $q = q_r$.
\end{proof}

\begin{figure}[ht]
\begin{tikzpicture}
\draw [thick] (0,0) -- (1.75,-0.35);
\draw [thick] (3.25,-0.65) -- (5,-1) -- (6.75,-0.65);
\draw [thick] (8.25,-0.35) -- (10,0);
\draw [blue,thick] (1.75,-0.35) -- (2.5,-0.5);
\draw [blue,thick] (8.25,-0.35) -- (7.5,-0.5);
\draw [blue,very thick] plot [smooth] coordinates { (1.9,-0.9) (2,-0.4) (2.5,-0.1) (5,0.5) (7.5,-0.1) (8,-0.4) (8.1,-0.9) } node [below] {$C$};
\draw [red,thick] (2.5,-0.5) -- (3.25,-0.65) node [above] {$c_p'\,\,\,\,$};
\draw [red,thick] (7.5,-0.5) -- (6.75,-0.65) node [above] {$\,\,\,\,c_r'$};
\draw [red,very thick] plot [smooth] coordinates { (2.3,0.5) (2.7,0) (2.875,-0.575) (2.775,-1.075) } node [below] {$C_p'$};
\draw [red,very thick] plot [smooth] coordinates { (7.7,0.5) (7.3,0) (7.125,-0.575) (7.225,-1.075) } node [below] {$C_r'$};

\fill (0,0) circle (2pt) node [left] {$p$};
\fill (5,-1) circle (2pt) node [below] {$q$};
\fill (10,0) circle (2pt) node [right] {$r$};
\fill [blue] (1.75,-0.35) circle (1.2pt) node [above] {$q_p'\,\,$};
\fill [blue] (2.5,-0.5) circle (1.2pt) node [below] {$q_p$};
\fill [blue] (7.5,-0.5) circle (1.2pt) node [below] {$q_r$};
\fill [blue] (8.25,-0.35) circle (1.2pt) node [above] {$\,\,q_r'$};
\end{tikzpicture}
\caption{Proof of Lemma \ref{l:cancellingplane}.} \label{f:lcancellingplane}
\end{figure}

\section{Geometry of the contact graph} \label{s:geom}

Here we analyse the geometry of the contact graph $\mathcal{C}X$ of a quasi-median graph $X$. In Section \ref{ss:qi} we show that, under certain conditions, $\mathcal{C}X$ is quasi-isometric to $\Delta X$, and prove Theorem \ref{t:main} \ref{i:tmain-qi}. In Section \ref{ss:qtree} we prove that $\mathcal{C}X$ is a quasi-tree (Theorem \ref{t:qtree}).

\subsection{Contact and crossing graphs} \label{ss:qi}


The following proposition allows us to lift geodesics in $\mathcal{C}(X)$ back to $X$. This generalises the existence of `hierarchy paths' in CAT(0) cube complexes \cite[Theorem A(2)]{bhs} to arbitrary quasi-median graphs. Moreover, the same result applies when $\mathcal{C}X$ is replaced by $\Delta X$, as long as $\Delta X$ is connected.

\begin{prop} \label{p:liftinggeodesics}
Let $\Gamma = \mathcal{C}X$ or $\Gamma = \Delta X$, and let $A,B \in V(\Gamma)$ be hyperplanes in the same connected component of $\Gamma$. Let $p \in V(X)$ (respectively $q \in V(X)$) be a vertex in $\mathcal{N}(A)$ (respectively $\mathcal{N}(B)$). Then there exists a geodesic $A=A_0,\ldots,A_m=B$ in $\Gamma$ and vertices $p=p_0,\ldots,p_{m+1}=q \in V(X)$ such that $p_i \in \mathcal{N}(A_{i-1}) \cap \mathcal{N}(A_i)$ for $1 \leq i \leq m$ and $d_X(p,q) = \sum_{i=0}^m d_X(p_i,p_{i+1})$.
\end{prop}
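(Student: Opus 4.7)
I would prove this by induction on $d_X(p,q)$, treating the two cases $\Gamma = \mathcal{C}X$ and $\Gamma = \Delta X$ in parallel.

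For the base case $d_X(p,q) = 0$, we have $p = q \in \mathcal{N}(A) \cap \mathcal{N}(B)$, so $A$ and $B$ both pass through $p$; taking all $p_i = p$ reduces the statement to finding a $\Gamma$-geodesic from $A$ to $B$ all of whose vertices are hyperplanes through $p$. If $A = B$ this is trivial with $m = 0$. If $A \neq B$ and $\Gamma = \mathcal{C}X$, then $A$ and $B$ are automatically adjacent in $\mathcal{C}X$: their dual edges at $p$ lie in different cliques, and by Lemma \ref{l:nointosc} they either lie in a common square (so $A, B$ intersect at $p$) or not (so they osculate at $p$). For $\Gamma = \Delta X$, I would prove separately that the subgraph of $\Delta X$ induced by hyperplanes through $p$ is isometrically embedded in $\Delta X$; this is the most delicate part of the base case.

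For the inductive step with $d_X(p,q) \geq 1$, using that the carrier $\mathcal{N}(A)$ is a gated subgraph (Proposition \ref{p:cfgated}), let $r$ denote the gate of $q$ in $\mathcal{N}(A)$, so that $d_X(p,q) = d_X(p,r) + d_X(r,q)$ by the gate property. If $r \neq p$, then $d_X(r,q) < d_X(p,q)$, and applying the inductive hypothesis to $(A,B,r,q)$ produces a $\Gamma$-geodesic $A = A_0, \ldots, A_m = B$ and vertices $r = v_0, v_1, \ldots, v_{m+1} = q$. I would arrange the first transition $v_1$ to be the gate of $q$ in $\mathcal{N}(A) \cap \mathcal{N}(A_1)$, exploiting that projections to nested gated subgraphs compose (so this gate equals the gate of $r$ in $\mathcal{N}(A) \cap \mathcal{N}(A_1)$): this forces $r$ to lie on a geodesic from $p$ to $v_1$, so that setting $p_0 = p$ and $p_i = v_i$ gives the required configuration with the sum telescoping via the gate property. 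If instead $r = p$, pick an edge $e$ starting at $p$ on some geodesic to $q$, let $A_1$ be its dual hyperplane and $p'$ its other endpoint; then $p \in \mathcal{N}(A) \cap \mathcal{N}(A_1)$, $A_1 \neq A$ since $e$ leaves $\mathcal{N}(A)$, and one applies the hypothesis to $(A_1, B, p', q)$ with strictly smaller $d_X(p',q)$, prepending $A$ with transition $p_1 = p$.

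The main obstacle is twofold. First, showing the resulting sequence is a $\Gamma$-\emph{geodesic} rather than merely a $\Gamma$-path requires a careful complexity argument: at each induction step one must verify that the newly prepended hyperplane genuinely decreases $d_\Gamma(\cdot,B)$ by one, which may force a re-choice of $e$ or $A_1$ so as to lie on a shortest $\Gamma$-path from $A$ to $B$. Second, for $\Gamma = \Delta X$ the neighbour $A_1$ must actually intersect (not merely osculate with) its predecessor, else the recursive call leaves the relevant component of $\Delta X$; this is handled using Lemma \ref{l:nointosc} together with the swap operation of Remark \ref{r:swap} to replace $e$ by an adjacent edge whenever the naive choice would only osculate.
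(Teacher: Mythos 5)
Your step in the case $r \neq p$ is actually fine, and simpler than you make it: since the inductive hypothesis makes the concatenation from $r$ to $q$ geodesic, $v_1$ lies on a geodesic from $r$ to $q$, and because $r$ is the gate of $q$ in $\mathcal{N}(A)$ with $p \in \mathcal{N}(A)$, the triangle inequality alone forces $d_X(p,v_1) = d_X(p,r) + d_X(r,v_1)$; no normalisation of $v_1$ to a gate of $\mathcal{N}(A) \cap \mathcal{N}(A_1)$ is needed. The genuine gaps are elsewhere. First, your base case for $\Gamma = \Delta X$ -- that the hyperplanes through a fixed vertex span an isometrically embedded subgraph of $\Delta X$ -- is deferred without argument, yet it is essentially the $p = q$ instance of the proposition itself; in the paper this is Corollary \ref{c:localosc} and is \emph{deduced from} Proposition \ref{p:liftinggeodesics}, so the hardest content of that case is left unproved. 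Second, and more seriously, the inductive step in the case $r = p$ fails as described. Take $X$ the square grid on $\mathbb{Z}^2$, $A$ the vertical hyperplane with carrier $\{x \in \{0,1\}\}$, $B$ the horizontal hyperplane with carrier $\{y \in \{0,1\}\}$ (so $A$ and $B$ intersect and $d_\Gamma(A,B) = 1$), $p = (0,0)$ and $q = (-3,0)$. The gate of $q$ in $\mathcal{N}(A)$ is $p$, and the unique $X$-geodesic from $p$ to $q$ runs along $y = 0$; its first edge is dual to the vertical hyperplane $A_1$ between $x = -1$ and $x = 0$, which only osculates with $A$ and satisfies $d_\Gamma(A_1,B) = 1$. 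Prepending $A$ therefore gives a path of length at least $2$ in $\mathcal{C}X$ (and $A,A_1$ is not even an edge of $\Delta X$), whereas $d_\Gamma(A,B) = 1$. No re-choice of $e$ can repair this: by Proposition \ref{p:geod} no geodesic from $p$ to $q$ crosses $B$ at all, so the correct second hyperplane (here necessarily $B$, with transition point $p_1 = p \in \mathcal{N}(B)$) is not dual to any edge your recipe can select, and Remark \ref{r:swap} does not apply because consecutive edges of this geodesic are dual to non-intersecting hyperplanes. If instead you allow $A_1$ to be re-chosen as an arbitrary $\Gamma$-neighbour of $A$ on a shortest $\Gamma$-path to $B$, then $d_X(p',q)$ need not decrease and your induction measure breaks down.

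This is exactly the difficulty the paper's proof is organised to avoid: rather than building the sequence locally, it fixes an arbitrary $\Gamma$-geodesic $A = A_0,\ldots,A_m = B$ together with transition points $p_i \in \mathcal{N}(A_{i-1}) \cap \mathcal{N}(A_i)$ minimising $D = \sum_i d_X(p_i,p_{i+1})$, and shows $D = d_X(p,q)$: if not, some hyperplane $C$ is crossed twice, gatedness (hence convexity) of carriers forces the two crossings to occur in consecutive or next-to-consecutive segments, and in either case one modifies the data (replacing $A_{i+1}$ by $C$, or moving $p_{i+1}$ across a clique using Lemma \ref{l:cancellingplane}) to strictly decrease $D$, a contradiction. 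In particular the $\Gamma$-geodesic is never constructed edge by edge, so the problem of certifying that a prepended hyperplane decreases $d_\Gamma(\cdot,B)$ never arises. To salvage your induction you would need both a proof of the $\Delta X$ base case and a genuinely different mechanism (for example a lexicographic measure in $(d_\Gamma(A,B), d_X(p,q))$ together with a lemma guaranteeing a suitable next hyperplane through $p$), neither of which is supplied.
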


\begin{proof}
By assumption, there exists a geodesic $A=A_0,A_1,\ldots,A_m=B$ in $\Gamma$. For $1 \leq i \leq m$, let $p_i \in V(X)$ be a vertex in the carriers of both $A_{i-1}$ and $A_i$, and let $p_0 = p$, $p_{m+1} = q$. Suppose that the $A_i$ and the $p_i$ are chosen in such a way that $D = \sum_{i=0}^m d_X(p_i,p_{i+1})$ is as small as possible. We claim that $D = d_X(p,q)$.

Let $\gamma_i$ be a geodesic between $p_i$ and $p_{i+1}$ for $0 \leq i \leq m$. Suppose for contradiction that $D > d_X(p,q)$: this means that $\gamma_0 \gamma_1 \cdots \gamma_m$ is not a geodesic. Therefore, there exists a hyperplane $C$ separating $p_i$ and $p_{i+1}$ as well as $p_j$ and $p_{j+1}$ for some $i < j$.
Let $c_i$ (respectively $c_j$) be the edge of $\gamma_i$ (respectively $\gamma_j$) dual to $C$.

As hyperplane carriers are gated (and therefore convex), any hyperplane separating $p_i$ and $p_{i+1}$ either is or intersects $A_i$ for $0 \leq i \leq m$. Now note that $j-i \leq 2$: indeed, we have $d_\Gamma(A_i,C) \leq 1$ and $d_\Gamma(A_j,C) \leq 1$, so $j-i = d_\Gamma(A_i,A_j) \leq 1+1 = 2$. In particular, $j-i \in \{1,2\}$.

We now claim that $j = i+1$. Indeed, suppose for contradiction that $j = i+2$. Let $p_{i+1}'$ (respectively $p_{i+2}'$) be the endpoint of $c_i$ (respectively $c_{i+2}$) closer to $p_i$ (respectively $p_{i+3}$). Then we have
\begin{equation} \label{e:manydX}
\begin{aligned}
&d_X(p_i,p_{i+1}) + d_X(p_{i+1},p_{i+2}) + d_X(p_{i+2},p_{i+3}) \\ &\qquad = d_X(p_i,p_{i+1}') + d_X(p_{i+1}',p_{i+1}) + d_X(p_{i+1},p_{i+2}) \\ &\qquad\qquad+ d_X(p_{i+2},p_{i+2}') + d_X(p_{i+2}',p_{i+3}) \\ &\qquad \geq d_X(p_i,p_{i+1}') + d_X(p_{i+1}',p_{i+2}') + d_X(p_{i+2}',p_{i+3}),
\end{aligned}
\end{equation}
with equality if and only if $\gamma_i'\gamma_{i+1}\gamma_{i+2}'$ is a geodesic, where $\gamma_i'$ (respectively $\gamma_{i+2}'$) is the portion of $\gamma_i$ (respectively $\gamma_{i+2}$) between $p_{i+1}'$ and $p_{i+1}$ (respectively $p_{i+2}$ and $p_{i+2}'$). But $\gamma_i'\gamma_{i+1}\gamma_{i+2}'$ cannot be a geodesic as it passes through two edges dual to $C$, and so strict inequality in \eqref{e:manydX} holds. We may then replace $A_{i+1}$, $p_{i+1}$ and $p_{i+2}$ with $C$, $p_{i+1}'$ and $p_{i+2}'$, respectively, contradicting minimality of $D$. Thus $j = i+1$, as claimed.

Therefore, $C$ separates $p_{i+1}$ from $p_i$ and $p_{i+2}$. By Lemma \ref{l:cancellingplane}, we may assume (after modifying $C$, $\gamma_i$ and $\gamma_{i+1}$ if necessary) that $p_{i+1}$ is an endpoint of both $c_i$ and $c_{i+1}$. As $c_i$ and $c_{i+1}$ are dual to the same hyperplane, it follows that they belong to the same clique. In particular (as carriers of hyperplanes are gated and so contain their triangles) this whole clique belongs to $\mathcal{N}(A_i) \cap \mathcal{N}(A_{i+1})$. If $r_{i+1} \neq p_{i+1}$ is the other endpoint of $c_i$, then $d_X(p_i,r_{i+1}) < d_X(p_i,p_{i+1})$ and $d_X(r_{i+1},p_{i+2}) \leq d_X(p_{i+1},p_{i+2})$. We may therefore replace $p_{i+1}$ by $r_{i+1}$, contradicting minimality of $D$. Thus $D = d_X(p,q)$, as claimed.
\end{proof}

Taking $\Gamma=\Delta X$ and $p=q$ in Proposition \ref{p:liftinggeodesics} immediately gives the following.

\begin{cor} \label{c:localosc}
Let $A,B \in V(\Delta X)$ be hyperplanes in the same connected component of $\Delta X$ osculating at a point $p \in V(X)$. Then there exists a geodesic $A=A_0,\ldots,A_m=B$ in $\Delta X$ such that $A_{i-1}$ and $A_i$ intersect at $p$ for $1 \leq i \leq m$. \qed
\end{cor}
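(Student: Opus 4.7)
The plan is to deduce the corollary directly from Proposition \ref{p:liftinggeodesics} by specialising to $\Gamma = \Delta X$ and $q = p$. Since $A$ and $B$ osculate at $p$, the vertex $p$ lies in both carriers $\mathcal{N}(A)$ and $\mathcal{N}(B)$, so the hypotheses of the proposition are satisfied with the two chosen basepoints equal.

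The proposition then yields a geodesic $A = A_0, A_1, \ldots, A_m = B$ in $\Delta X$ together with vertices $p = p_0, p_1, \ldots, p_{m+1} = p$ satisfying $p_i \in \mathcal{N}(A_{i-1}) \cap \mathcal{N}(A_i)$ for $1 \leq i \leq m$ and
\[
0 = d_X(p,p) = \sum_{i=0}^m d_X(p_i, p_{i+1}).
\]
Since each summand is a non-negative integer, every term must vanish, forcing $p_i = p$ for all $i$. Thus $p \in \mathcal{N}(A_{i-1}) \cap \mathcal{N}(A_i)$ for every $i \in \{1,\ldots,m\}$.

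It remains to upgrade this common-carrier membership to the statement that $A_{i-1}$ and $A_i$ actually intersect \emph{at $p$}. Since $p$ is a vertex of $\mathcal{N}(A_{i-1})$ it is the endpoint of some edge $e_{i-1}$ dual to $A_{i-1}$, and similarly $p$ is the endpoint of an edge $e_i$ dual to $A_i$; as $A_{i-1} \neq A_i$ (consecutive vertices of a geodesic in $\Delta X$ are distinct) the edges $e_{i-1}$ and $e_i$ lie in different cliques, so by Definition \ref{d:hplanes} the hyperplanes $A_{i-1}$ and $A_i$ either intersect at $p$ or osculate at $p$. By the very definition of $\Delta X$, adjacency of $A_{i-1}$ and $A_i$ means they intersect at \emph{some} vertex of $X$. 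Invoking Lemma \ref{l:nointosc} (no two hyperplanes can interosculate), they cannot also osculate at $p$; therefore they intersect at $p$, as required.

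The only point that demands a tiny bit of care is the last step (passing from ``$p$ is in both carriers and the two hyperplanes are $\Delta X$-adjacent'' to ``they intersect at $p$''), which is precisely where Lemma \ref{l:nointosc} is used. Everything else is a mechanical specialisation of Proposition \ref{p:liftinggeodesics}, which is why the author calls the corollary immediate.
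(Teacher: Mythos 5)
Your proof is correct and is essentially the paper's argument: the paper derives the corollary by exactly this specialisation of Proposition \ref{p:liftinggeodesics} to $\Gamma=\Delta X$ and $p=q$, with the vanishing of the distance sum forcing $p_i=p$ for all $i$. The only difference is that you spell out the final step (upgrading ``$p$ lies in both carriers and the hyperplanes intersect somewhere'' to ``they intersect at $p$'' via Lemma \ref{l:nointosc}), which the paper leaves implicit, and your justification of that step is correct.
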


\begin{lem} \label{l:nolongosc}
Suppose a group $G$ acts on $X$ specially with $N$ orbits of hyperplanes. Let $A$ and $B$ be hyperplanes that osculate and belong to the same connected component of $\Delta X$. Then $d_{\Delta X}(A,B) \leq \max\{2,N-1\}$.
\end{lem}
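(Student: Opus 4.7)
The plan is to apply Corollary \ref{c:localosc} to reduce to the situation where a whole geodesic in $\Delta X$ concentrates at a single vertex, and then use the specialness hypothesis to force the hyperplanes along that geodesic to live in distinct orbits.

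First, since $A$ and $B$ osculate, they are distinct, and by Lemma \ref{l:nointosc} they cannot also intersect; hence $A$ and $B$ are non-adjacent in $\Delta X$, giving $m := d_{\Delta X}(A,B) \geq 2$. This already takes care of the ``$2$'' in the bound $\max\{2,N-1\}$.

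Next, apply Corollary \ref{c:localosc} to obtain a geodesic $A=A_0,A_1,\ldots,A_m=B$ in $\Delta X$ and a vertex $p \in V(X)$ such that $A_{i-1}$ and $A_i$ intersect at $p$ for each $1 \leq i \leq m$. In particular, for every $i$ the point $p$ is an endpoint of some edge dual to $A_i$, so $p \in V(\mathcal{N}(A_i))$. Since the $A_i$ are pairwise distinct (they are vertices on a geodesic in a simple graph), and since edges in a common clique are all dual to the same hyperplane, for any $i \neq j$ the edges at $p$ dual to $A_i$ and $A_j$ lie in different cliques of $X$. By the definition of intersection/osculation it follows that $A_i$ and $A_j$ either intersect at $p$ or osculate at $p$.

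Now invoke the specialness of $G \curvearrowright X$: condition (i) says that no two hyperplanes in the same $G$-orbit may intersect or osculate. By the previous paragraph, this forces $A_0,A_1,\ldots,A_m$ to lie in pairwise distinct $G$-orbits of hyperplanes. Since there are only $N$ such orbits, we get $m+1 \leq N$, i.e.\ $m \leq N-1$. Combining with $m \geq 2$ yields $d_{\Delta X}(A,B) = m \leq \max\{2,N-1\}$, as required. The only mild subtlety to watch is step three, namely verifying that two distinct hyperplanes sharing an incident vertex must intersect or osculate at that vertex; everything else is a direct bookkeeping argument once Corollary \ref{c:localosc} is in hand.
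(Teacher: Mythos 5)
Your proof is correct, and it takes a genuinely different (and shorter) route through the specialness hypothesis than the paper does. The shared starting point is Corollary \ref{c:localosc}, which puts the vertex $p$ in the carrier of every hyperplane $A_0,\ldots,A_m$ on the geodesic; your "mild subtlety" --- that two \emph{distinct} hyperplanes whose carriers share the vertex $p$ must intersect or osculate at $p$ (their edges at $p$ cannot lie in a common clique, else they would be dual to the same hyperplane) --- is sound and is in fact used verbatim in the paper's own argument. From there you invoke condition (i) of specialness to conclude that $A_0,\ldots,A_m$ lie in pairwise distinct orbits, giving the clean bound $m\leq N-1$ (so in particular the hypotheses force $N\geq m+1\geq 3$, and the "$2$" in $\max\{2,N-1\}$ is mere slack). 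The paper instead never uses condition (i): it only uses condition (ii) (an intersection in one orbit representative forbids osculation in another), via a minimal decomposition of the geodesic into blocks whose endpoints lie in a common orbit, and it must then treat separately the case $k=2$, i.e.\ $A^G=B^G$ --- a case your argument excludes outright, since an osculating pair in a single orbit violates condition (i). So your proof is simpler and yields a slightly sharper statement under the stated hypotheses, while the paper's longer argument implicitly establishes the bound $\max\{2,N-1\}$ under the weaker assumption that only the second specialness condition holds.
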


\begin{proof}
Let $p \in V(X)$ be such that $A$ and $B$ osculate at $p$. By Corollary \ref{c:localosc}, there exists a geodesic $A = A_0,A_1,\ldots,A_m = B$ in $\Delta X$ such that $A_{i-1}$ and $A_i$ intersect at $p$ for each $i$. Let $i_1,\ldots,i_k \in \N$, satisfying $0 = i_1 < i_2 < \cdots < i_k = m+1$, be such that $A_{i_j}^G = A_{i_{j+1}-1}^G$ for $1 \leq j \leq k-1$ (for instance, we may take $i_j = j-1$). Suppose this is done so that $k$ is as small as possible. Clearly, this implies $A_{i_j}^G \neq A_{i_{j'}}^G$ whenever $1 \leq j < j' \leq k-1$: otherwise, we may replace $i_1,\ldots,i_k$ by $i_1,\ldots,i_j,i_{j'+1},\ldots,i_k$, contradicting minimality of $k$. In particular, $k \leq N+1$; as $m \geq 2$, note also that $k \geq 2$. We will consider the cases $k = 2$ and $k \geq 3$ separately.

Suppose first that $k \geq 3$. We claim that $i_{j+1}-i_j \leq 1$ whenever $1 \leq j \leq k-1$. Indeed, note that whenever $1 \leq j \leq k-2$, $p \in \mathcal{N}(A_{i_j}) \cap \mathcal{N}(A_{i_{j+1}})$, and so $A_{i_j}$ and $A_{i_{j+1}}$ must either intersect or osculate. But $A_{i_{j+1}}$ intersects $A_{i_{j+1}-1}$, and $A_{i_j}^G = A_{i_{j+1}-1}^G$: therefore, as the action $G \curvearrowright X$ is special, it follows that $A_{i_j}$ and $A_{i_{j+1}}$ must intersect. In particular, $i_{j+1}-i_j = d_{\Delta X}(A_{i_j},A_{i_{j+1}}) \leq 1$ for $1 \leq j \leq k-2$. For $j = k-1$, we may similarly note that $\mathcal{N}(A_{i_j-1}) \cap \mathcal{N}(A_{i_{j+1}-1}) \neq \varnothing$ and so $A_{i_j-1}$ and $A_m=A_{i_{j+1}-1}$ must intersect: thus $i_{j+1}-i_j = d_{\Delta X}(A_{i_j-1},A_{i_{j+1}-1}) \leq 1$ in this case as well. In particular, we get
\[
d_{\Delta X}(A,B) = m = i_k-i_1-1 = \left( \sum_{j=1}^{k-1} (i_{j+1}-i_j) \right) - 1 \leq k-2 \leq N-1,
\]
as required.

Suppose now that $k = 2$. Similarly to the case $k \geq 3$, we may note that $p \in \mathcal{N}(A_1) \cap \mathcal{N}(A_m)$, and so, as $A_0$ and $A_1$ intersect and as $A_0^G = A_m^G$, it follows that $A_1$ and $A_m$ intersect. Thus $m-1 = d_{\Delta X}(A_1,A_m) \leq 1$ and so $d_{\Delta X}(A,B) = m \leq 2$, as required.
\end{proof}

\begin{proof}[Proof of Theorem \ref{t:main} \ref{i:tmain-qi}]
It is clear that $d_{\mathcal{C}X}(A,B) \leq d_{\Delta X}(A,B)$ for any hyperplanes $A$ and $B$, as $\Delta X$ is a subgraph of $\mathcal{C}X$. Conversely, Lemma \ref{l:nolongosc} implies that $d_{\Delta X}(A,B) \leq \overline{N} d_{\mathcal{C}X}(A,B)$ for any hyperplanes $A$ and $B$, where $\overline{N}=\max\{2,N-1\}$.
\end{proof}

\begin{rmk}
We note that all the assumptions for Theorem \ref{t:main} \ref{i:tmain-qi} are necessary. Indeed, it is clear that $\Delta X$ needs to be connected. To show necessity of the other two conditions, consider the following. Let $G_0 = \langle S \mid R \rangle$ be the group with generators $S = \{ a_{i,j} \mid (i,j) \in \Z^2 \}$ and relators $R = \bigcup_{(i,j) \in \Z^2} \{ a_{i,j}^2, [a_{i,j},a_{i,j+1}], [a_{i,j},a_{i+1,j}] \}$; this is the (infinitely generated) right-angled Coxeter group associated to a `grid' in $\mathbb{R}^2$: a graph $\Gamma$ with $V(\Gamma) = \Z^2$, where $(i,j)$ and $(i',j')$ are adjacent if and only if $|i-i'| + |j-j'| = 1$. Let $X$ be the Cayley graph of $G_0$ with respect to $S$.

Then $X$ is a quasi-median (and, indeed, median) graph by \cite[Proposition 8.2]{genthesis}. Furthermore, by the results in \cite[Chapter 8]{genthesis}, $\Delta X$ is connected, and if $H_{i,j}$ is the hyperplane dual to the edge $(1,a_{i,j})$ of $X$ (for $(i,j) \in \Z^2$) then $d_{\mathcal{C}X}(H_{0,0},H_{i,j}) \leq 1$ but $d_{\Delta X}(H_{0,0},H_{i,j}) = |i|+|j|$ for all $(i,j)$. Thus the inclusion $\Delta X \hookrightarrow \mathcal{C}X$ cannot be a quasi-isometry. Moreover, by Theorem \ref{t:qtree}, we know that $\mathcal{C}X$ is a quasi-tree, whereas the inclusion into $\Delta X$ of the subgraph spanned by $\{ H_{i,j} \mid (i,j) \in \Z^2 \}$ (which is isomorphic to the `grid' $\Gamma$) is isometric, and so $\Delta X$ cannot be hyperbolic -- therefore, $\Delta X$ and $\mathcal{C}X$ are not quasi-isometric in this case.

It follows from \cite[Proposition 8.11]{genthesis} that the usual action of $G_0$ on $X$ is special -- however, there are infinitely many orbits of hyperplanes under this action. On the other hand, let $G = G_0 \rtimes \Z^2$, where the action of $\Z^2 = \langle x,y \mid xy=yx \rangle$ on $G_0$ is given by $a_{i,j}^{x^ny^m} = a_{i+n,j+m}$; this can be thought of as an example of a \emph{graph-wreath product}, see \cite{km} for details. Then it is easy to see that the action of $G$ on $G_0$ extends to an action of $G$ on $X$. This action is transitive on hyperplanes, and therefore not special.
\end{rmk}

\subsection{Hyperbolicity} \label{ss:qtree}

We show here that $\mathcal{C}X$ is a quasi-tree, proving Theorem \ref{t:qtree}.

\begin{prop} \label{p:midpointsep}
Let $A,B \in V(\mathcal{C}X)$ be two hyperplanes and suppose that $d_{\mathcal{C}X}(A,B) \geq 2$. Then there exists a midpoint $M$ of a geodesic between $A$ and $B$ in $\mathcal{C}X$ and a hyperplane $C$ separating $\mathcal{N}(A)$ and $\mathcal{N}(B)$ such that $d_{\mathcal{C}X}(M,C) \leq 3/2$.
\end{prop}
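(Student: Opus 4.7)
The plan is to pick $p^* \in \mathcal{N}(A)$ and $q^* \in \mathcal{N}(B)$ minimizing $d_X(p^*, q^*)$ — this distance is positive because $d_{\mathcal{C}X}(A, B) \geq 2$ forces $\mathcal{N}(A) \cap \mathcal{N}(B) = \varnothing$ — and to apply Proposition \ref{p:liftinggeodesics} with $\Gamma = \mathcal{C}X$ to obtain a $\mathcal{C}X$-geodesic $A = A_0, A_1, \ldots, A_m = B$ (with $m \geq 2$) together with vertices $p^* = p_0, p_1, \ldots, p_{m+1} = q^*$ satisfying $p_i \in \mathcal{N}(A_{i-1}) \cap \mathcal{N}(A_i)$. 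Concatenating geodesics $\gamma_i$ from $p_i$ to $p_{i+1}$ produces a single geodesic $\gamma$ in $X$ from $p^*$ to $q^*$. A standard gate argument, using the gatedness of $\mathcal{N}(A)$ from Proposition \ref{p:cfgated}, shows that $p^*$ is the gate of $q^*$ in $\mathcal{N}(A)$; in particular $d_X(r, q^*) = d_X(r, p^*) + d_X(p^*, q^*)$ for every $r \in \mathcal{N}(A)$, and symmetrically for $\mathcal{N}(B)$.

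The heart of the argument is to prove that every hyperplane $C$ crossing $\gamma$ separates $\mathcal{N}(A)$ from $\mathcal{N}(B)$; since $C$ already separates $p^* \in \mathcal{N}(A)$ from $q^* \in \mathcal{N}(B)$, one needs only to verify that $C$ crosses neither $\mathcal{N}(A)$ nor $\mathcal{N}(B)$ (which in particular rules out $C \in \{A, B\}$). Suppose for contradiction that $C$ is dual to an edge $uu'$ inside $\mathcal{N}(A)$. If $d_X(p^*, u) \neq d_X(p^*, u')$ — say $d_X(p^*, u) = d_X(p^*, u') + 1$ — then the concatenation $u \to u' \to p^* \to q^*$ has length $d_X(u, q^*)$ by the gate identity, hence is a geodesic; yet it crosses $C$ twice (at $uu'$ and somewhere along $\gamma$), contradicting Proposition \ref{p:geod}. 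If instead $d_X(p^*, u) = d_X(p^*, u')$, the quasi-median axiom applied to $(p^*, u, u')$ produces a vertex $y_1$ adjacent to both $u$ and $u'$ with $d_X(p^*, y_1) = d_X(p^*, u) - 1$, yielding a triangle $y_1uu'$ whose edges are all dual to $C$. Uniqueness of gates then forces $y_1 \in \mathcal{N}(A)$ — otherwise both $u$ and $u'$ would be gates of $y_1$ in $\mathcal{N}(A)$, which is absurd — after which the edge $y_1u'$ reduces to the previous case. A symmetric argument rules out $C$ crossing $\mathcal{N}(B)$.

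To conclude, note that $d_X(p_i, p_{i+1}) \geq 1$ for each $1 \leq i \leq m-1$: otherwise $p_i = p_{i+1}$ would belong to $\mathcal{N}(A_{i-1}) \cap \mathcal{N}(A_{i+1})$, forcing $A_{i-1}$ and $A_{i+1}$ to be adjacent in $\mathcal{C}X$ and contradicting that $A_0, \ldots, A_m$ is a geodesic. Setting $k = \lceil m/2 \rceil \in \{1, \ldots, m-1\}$ and picking any hyperplane $C$ dual to an edge of $\gamma_k$, the gatedness of $\mathcal{N}(A_k)$ places $\gamma_k$ inside $\mathcal{N}(A_k)$, so $C = A_k$ or $C$ intersects $A_k$; in either case $d_{\mathcal{C}X}(A_k, C) \leq 1$, and by the preceding lemma $C$ separates $\mathcal{N}(A)$ from $\mathcal{N}(B)$. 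Letting $M$ be the midpoint of the $\mathcal{C}X$-geodesic gives $M = A_k$ when $m$ is even (hence $d_{\mathcal{C}X}(M, C) \leq 1$), and $M$ halfway along the edge $A_{(m-1)/2}A_{(m+1)/2}$ when $m$ is odd (hence $d_{\mathcal{C}X}(M, A_k) = 1/2$ and $d_{\mathcal{C}X}(M, C) \leq 3/2$), yielding the proposition.

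The main obstacle is the triangle subcase in the second paragraph, where the possibility of adjacent vertices being equidistant from a third vertex — peculiar to the non-bipartite quasi-median setting — forces one to combine the quasi-median axiom with gate uniqueness rather than relying on the cleaner median arithmetic.
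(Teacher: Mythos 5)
Your proof is correct, and its overall skeleton is the same as the paper's: lift a $\mathcal{C}X$-geodesic via Proposition \ref{p:liftinggeodesics}, take a near-middle index $k$, pick a hyperplane $C$ dual to an edge of the middle segment (nonempty since $\mathcal{N}(A_{i-1}) \cap \mathcal{N}(A_{i+1}) = \varnothing$ forces $p_i \neq p_{i+1}$), note $d_{\mathcal{C}X}(A_k,C) \leq 1$, and conclude $d_{\mathcal{C}X}(M,C) \leq 3/2$. The genuine difference is the key input guaranteeing that $C$ separates $\mathcal{N}(A)$ from $\mathcal{N}(B)$: the paper black-boxes this by citing \cite[Lemma 2.36]{genthesis}, which supplies a pair $p \in \mathcal{N}(A)$, $q \in \mathcal{N}(B)$ such that every hyperplane separating $p$ from $q$ separates the two carriers, whereas you take the distance-minimizing pair $(p^*,q^*)$ and prove this property from scratch. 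Your argument for it checks out: $p^*$ is the gate of $q^*$ in $\mathcal{N}(A)$ by gatedness (Proposition \ref{p:cfgated}), so if $C$ were dual to an edge $uu'$ of $\mathcal{N}(A)$, either the endpoints are at different distances from $p^*$ and the gate identity produces a geodesic to $q^*$ crossing $C$ twice (contradicting Proposition \ref{p:geod}), or they are equidistant and the quasi-median of $(p^*,u,u')$ — which must be a $1$-quasi-median, since the $u$--$u'$ side forces $k \leq 1$ and $k=0$ is impossible — yields the triangle condition, whence a common neighbour $y_1$ strictly closer to $p^*$, lying in $\mathcal{N}(A)$ by uniqueness of gates, reducing to the first case; separation of $p^*$ from $q^*$ itself follows from Proposition \ref{p:geod}, and not separating two vertices of $\mathcal{N}(A)$ follows since a path inside the (connected, full) carrier would otherwise contain an edge dual to $C$. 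So your version buys self-containedness (an internal proof of the cited Genevois lemma, restricted to carriers), at the cost of a longer argument; the paper's version is shorter but relies on the external reference. Minor presentational point: when reducing ``$C$ does not separate two vertices of $\mathcal{N}(A)$'' to ``$C$ is not dual to an edge of $\mathcal{N}(A)$,'' it is worth stating explicitly that this uses connectedness of the carrier and that it is a full subgraph.
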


\begin{proof}
By Proposition \ref{p:cfgated}, we know that $\mathcal{N}(A)$ and $\mathcal{N}(B)$ are gated. It then follows from \cite[Lemma 2.36]{genthesis} that there exist vertices $p \in V(\mathcal{N}(A))$ and $q \in V(\mathcal{N}(B))$ such that any hyperplane separating $p$ from $q$ also separates $\mathcal{N}(A)$ from $\mathcal{N}(B)$. 
Let $A=A_0,\ldots,A_m=B \in V(\mathcal{C}X)$ and $p=p_0,\ldots,p_{m+1}=q \in V(X)$ be as given by Proposition \ref{p:liftinggeodesics} in the case $\Gamma = \mathcal{C}X$, and let $M$ be the midpoint of the former geodesic. It is clear that $\mathcal{N}(A_i) \cap \mathcal{N}(A_j) = \varnothing$ whenever $|i-j| \geq 2$; in particular, $p_i \neq p_{i+1}$ whenever $1 \leq i \leq m-1$.

Now let $i = \lfloor m/2 \rfloor \in \{1,\ldots,m-1\}$, and let $C$ be any hyperplane separating $p_i$ and $p_{i+1}$. By the choice of the $p_j$, there exists a geodesic between $p$ and $q$ passing through $p_i$ and $p_{i+1}$: therefore, $C$ separates $p$ and $q$. Therefore, by the choice of $p$ and $q$, $C$ also separates $\mathcal{N}(A)$ from $\mathcal{N}(B)$.
Finally, note that as $C$ separates $p_i,p_{i+1} \in \mathcal{N}(A_i)$, we have $d_{\mathcal{C}X}(A_i,C) \leq 1$. Therefore,
\[
d_{\mathcal{C}X}(M,C) \leq d_{\mathcal{C}X}(M,A_i) + d_{\mathcal{C}X}(A_i,C) = \left|\frac{m}{2}-i\right| + d_{\mathcal{C}X}(A_i,C) \leq \frac{1}{2}+1 = \frac{3}{2},
\]
as required.
\end{proof}

\begin{defn} \label{d:bottleneck}
For a geodesic metric space $Y$ and two points $x,y \in Y$ we say a point $m \in Y$ is a \emph{midpoint} between $x$ and $y$ if $d_Y(m,x) = d_Y(m,y) = \frac{1}{2} d_Y(x,y)$.

Let $D \in \N$. A geodesic metric space $Y$ is said to \emph{satisfy the $D$-bottleneck criterion} if for any points $x,y \in Y$, there exists a midpoint $m$ between $x$ and $y$ such that any path between $x$ and $y$ passes through a point $p$ such that $d_Y(p,m) \leq D$.
\end{defn}

\begin{thm}[Manning {\cite[Theorem 4.6]{manning}}] \label{t:manning}
A geodesic metric space $Y$ is a quasi-tree if and only if there exists a constant $D$ such that $Y$ satisfies the $D$-bottleneck criterion. If this is the case, then there exists a $(26D,52D)$-quasi-isometry from a simplicial tree to $Y$. \qed
\end{thm}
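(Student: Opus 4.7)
The plan is to prove the equivalence in both directions separately, with the heart of the argument lying in the converse where an explicit $(26D, 52D)$-quasi-isometry from a simplicial tree to $Y$ must be constructed.

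Easy direction: suppose $Y$ is a quasi-tree, so there is a $(K,C)$-quasi-isometry $f : T \to Y$ from a simplicial tree $T$, with quasi-inverse $g : Y \to T$. Given $x,y \in Y$, let $m_T$ be the midpoint of the $T$-geodesic $[g(x), g(y)]$ and set $m := f(m_T)$. Any path $\alpha$ in $Y$ from $x$ to $y$ may be discretised into a sequence of points whose images under $g$ are consecutively close in $T$; since $T$ is a tree and the sequence starts near $g(x)$ and ends near $g(y)$, one term must lie within distance $K + C$ of $m_T$. Applying $f$ returns a point on $\alpha$ within a uniform distance of $m$, so $Y$ satisfies the $D$-bottleneck criterion for some $D = D(K,C)$.

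Harder direction: assume the $D$-bottleneck criterion. I would construct $T$ explicitly. Fix a basepoint $x_0 \in Y$ and let $S$ be a maximal $D$-separated subset of $Y$ containing $x_0$, so that $S$ is $D$-dense in $Y$. Order $S$ by $Y$-distance from $x_0$. For each $x \in S \setminus \{x_0\}$, pick a $Y$-geodesic $\gamma_x$ from $x_0$ to $x$, apply the bottleneck criterion to obtain a point on $\gamma_x$ within distance $D$ of the midpoint of $\gamma_x$, and declare the \emph{parent} $p(x) \in S$ to be a net point within $D$ of that bottleneck and strictly closer to $x_0$ than $x$ (possible because the bottleneck lies on $\gamma_x$). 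Joining each non-root vertex to its parent by a single simplicial edge yields a graph $T$ with vertex set $S$; because parents are monotonically closer to $x_0$, no cycle arises, so $T$ is a tree.

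Setting $\phi : T \to Y$ to be inclusion on vertices (extended affinely on edges), the claim reduces to showing $\phi$ is a $(26D, 52D)$-quasi-isometry with $D$-dense image. The multiplicative upper bound $d_Y(\phi(x), \phi(y)) \leq 26 D \cdot d_T(x,y)$ reduces to an edge-length estimate: each parent edge has $Y$-length $O(D)$ by the triangle inequality at the midpoint bottleneck. The lower bound $d_T(x,y) \leq 2\, d_Y(\phi(x), \phi(y)) + 52D$ is where I expect the main obstacle to lie. The strategy is to iterate the bottleneck criterion along a $Y$-geodesic $[x,y]$: its midpoint bottleneck produces a net point lying $O(D)$ from the $T$-common-ancestor of $x$ and $y$, and recursing on the two halves identifies the $T$-geodesic from $x$ to $y$ segment by segment. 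Controlling how the $O(D)$ errors accumulate across this dyadic recursion is precisely what yields the explicit constants $26D$ and $52D$; the delicate point is to arrange the bookkeeping so that the errors add rather than compound multiplicatively. A secondary but necessary subtlety, already built into the construction, is that the parent relation really yields a tree, which is automatic from the strict monotonicity of distance from $x_0$ along parent edges.
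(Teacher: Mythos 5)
The main gap is in your construction for the harder direction. You attach each net point $x$ to a parent chosen within $2D$ of the bottleneck point of a geodesic $[x_0,x]$, i.e.\ within about $2D$ of its \emph{midpoint}; such a parent lies at $Y$-distance roughly $\tfrac12 d_Y(x_0,x)$ from $x$, which is unbounded. So single edges of your tree $T$ have arbitrarily long images in $Y$, the inclusion $\phi$ is not coarsely Lipschitz with constants depending only on $D$, and the claim that ``each parent edge has $Y$-length $O(D)$ by the triangle inequality at the midpoint bottleneck'' is simply false for this construction; consequently no bound of the form $d_Y(\phi(x),\phi(y))\le 26D\,d_T(x,y)+52D$ can hold. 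A workable construction (this is what Manning does in \cite{manning}) joins net points in consecutive annuli of width comparable to $D$ around the basepoint, so that parents are at controlled distance, and uses the bottleneck property only to show that the resulting graph has tree-like metric behaviour; this is also the only place the explicit constants can come from. As written, your argument also does not justify that the parent chains terminate (strict decrease of $d_Y(x_0,\cdot)$ is not enough when a bounded ball may contain infinitely many net points), nor does the dyadic ``bookkeeping'' sketch produce the specific constants $26D$ and $52D$.

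Two further points. In the easy direction you take $m:=f(m_T)$, which need not be a midpoint of $x$ and $y$ in $Y$, whereas Definition \ref{d:bottleneck} requires $d_Y(m,x)=d_Y(m,y)=\tfrac12 d_Y(x,y)$; one should instead take a genuine midpoint on a $Y$-geodesic and show every path passes $g$-close to the corresponding point of the $T$-geodesic. Finally, be aware that the paper does not reprove Manning's equivalence at all: Theorem \ref{t:manning} is quoted from \cite{manning}, and the only added content is the remark that Manning's map $\beta\colon T\to Y$ satisfies $8D\,d_T(v,w)-16D\le d_Y(\beta(v),\beta(w))\le 26D\,d_T(v,w)$ on vertices, has $20D$-dense image, and hence — since every point of $T$ is within $\tfrac12$ of a vertex — is a $(26D,52D)$-quasi-isometry. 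That extension from vertex estimates to all points is the step actually needed here, not a new proof of the bottleneck characterisation.
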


\begin{rmk}
In \cite{manning}, the last part of Theorem \ref{t:manning} is not explicitly stated, that is, the pair of constants $(26D,52D)$ is not explicitly computed. However, in \cite{manning} Manning constructs a map $\beta: T \to Y$, where $T$ is a simplicial tree, such that edges in $T$ are mapped to geodesic segments in $Y$, such that $d_Y(y,\beta(T)) < 20D$ for any $y \in Y$, and such that
\begin{equation} \label{e:manning}
8D d_T(v,w) - 16D \leq d_Y(\beta(v),\beta(w)) \leq 26D d_T(v,w)
\end{equation}
for any \emph{vertices} $v,w \in V(T)$. It is easy to check, using \eqref{e:manning} and the fact that any point in $T$ is within distance $\frac{1}{2}$ from a vertex, that we also have
\[
8D d_T(v,w) - 50D \leq d_Y(\beta(v),\beta(w)) \leq 26D d_T(v,w) + 52D
\]
for any \emph{points} $v,w \in T$, and so $\beta$ is a $(26D,52D)$-quasi-isometry, as required.

\end{rmk}

\begin{proof}[Proof of Theorem \ref{t:qtree}]
We claim that $\mathcal{C}X$ satisfies the $7/2$-bottleneck criterion. Thus, let $a,b \in \mathcal{C}X$ be any two points, and let $A,B \in V(\mathcal{C}X)$ be two hyperplanes lying on a geodesic between $a$ and $b$ such that $d_{\mathcal{C}X}(A,a)< 1$ and $d_{\mathcal{C}X}(B,b) < 1$. Note that if $M$ is a midpoint between $A$ and $B$, then there exists a midpoint $m$ between $a$ and $b$ such that $d_{\mathcal{C}X}(m,M) \leq \frac{1}{2}$.

If $d_{\mathcal{C}X}(a,b) < 7$, then any path between $a$ and $b$ passes through $a$, and $d_{\mathcal{C}X}(a,m) = d_{\mathcal{C}X}(a,b)/2 < 7/2$ for any midpoint $m$ between $a$ and $b$, so the $7/2$-bottleneck criterion is satisfied.

On the other hand, if $d_{\mathcal{C}X}(a,b) \geq 7$, then $d_{\mathcal{C}X}(A,B) > 5$ and so we may define points $M$ and $C$ as given by Proposition \ref{p:midpointsep}; let also $m$ be a midpoint $m$ between $a$ and $b$ such that $d_{\mathcal{C}X}(m,M) \leq \frac{1}{2}$. Let $B_1,\ldots,B_n$ be hyperplanes lying on a path in $\mathcal{C}X$ between $a$ and $b$ (in order), so that $A = B_r,B_{r+1},\ldots,B_{n+s} = B$ is a path in $\mathcal{C}X$ between $A$ and $B$, where $r,s \in \{0,1\}$. Choose vertices $q_1,\ldots,q_n \in V(X)$ such that $q_i \in \mathcal{N}(B_{i-1}) \cap \mathcal{N}(B_i)$ for all $i$. As $q_r \in \mathcal{N}(A)$, $q_{n+s} \in \mathcal{N}(B)$, and as $C$ separates $A$ and $B$, it follows that $C$ separates $q_r$ and $q_{n+s}$, and so it separates $q_i$ and $q_{i+1}$ for some $i$. But as $q_i,q_{i+1} \in \mathcal{N}(B_i)$, it follows that $d_{\mathcal{C}X}(C,B_i) \leq 1$. Then
\[
d_{\mathcal{C}X}(M,B_i) \leq d_{\mathcal{C}X}(M,C) + d_{\mathcal{C}X}(C,B_i) \leq \frac{3}{2}+1 = \frac{5}{2},
\]
implying, in particular, that $B_i \notin \{A,B\}$ (as $d_{\mathcal{C}X}(M,A) = d_{\mathcal{C}X}(M,B) = d_{\mathcal{C}X}(A,B)/2 > 5/2$), and so $B_i$ lies on the chosen path between $a$ and $b$. But then $d_{\mathcal{C}X}(m,B_i) \leq d_{\mathcal{C}X}(m,M) + d_{\mathcal{C}X}(M,B_i) \leq 3 < \frac{7}{2}$, and so $7/2$-bottleneck criterion is again satisfied, as required.

In particular, Theorem \ref{t:manning} implies that $\mathcal{C}X$ is a quasi-tree, and there exists a $(91,182)$-quasi-isometry from a simplicial tree $T$ to $\mathcal{C}X$. This implies that there exists a $(K,C)$-quasi-isometry $\gamma: \mathcal{C}X \to T$, where $K = 91$ and $C = (3 \cdot 91 + 4) \cdot 182$. The $\gamma$-images of edges in a geodesic triangle of $\mathcal{C}X$ are then $(K,C)$-quasi-geodesics in the tree $T$, and such quasi-geodesics must be within Hausdorff distance $R$ from geodesics with the same endpoints, where $R = R(K,C)$ is a universal constant: see, for instance, \cite[Chapter III.H, Theorem 1.7]{bridson}. This implies that geodesic triangles in $\mathcal{C}X$ must be $\delta$-slim, where $\delta = K(2R+C)$, proving the last part of the Theorem.
\end{proof}

\section{Acylindricity} \label{s:acyl}

In this section we prove Theorem \ref{t:main} \ref{i:tmain-acyl}. To do this, in Section \ref{ss:ss} we introduce the notion of contact sequences (see Definition \ref{d:cs}) and show the main technical result we need to prove Theorem \ref{t:main} \ref{i:tmain-acyl}: namely, Proposition \ref{p:technical}. In Section \ref{ss:constech} we use this to deduce Theorem \ref{t:main} \ref{i:tmain-acyl}.

Throughout this section, let $X$ be a quasi-median graph.

\subsection{Contact sequences} \label{ss:ss}

In this subsection, for a gated subgraph $Y \leq X$ and a collection $\mathcal{H}$ of hyperplanes in $X$, we denote by $Y'_{\mathcal{H}} \subseteq V(X)$ the set of vertices $v \in V(X)$ for which there exists a vertex $p_v \in V(Y)$ such that all hyperplanes separating $v$ from $p_v$ are in $\mathcal{H}$. Moreover, we write $Y_{\mathcal{H}}$ for the full subgraph of $X$ spanned by $Y'_{\mathcal{H}}$.

\begin{lem} \label{l:YHgated}
Let $Y \leq X$ be a gated subgraph and let $\mathcal{H}$ be a collection of hyperplanes in $X$. Then the subgraph $Y_{\mathcal{H}}$ of $X$ is gated.
\end{lem}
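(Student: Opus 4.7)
My plan is as follows. For each $v \in V(X)$, I will construct a gate $p_v \in V(Y_{\mathcal{H}})$. Since $Y$ is gated, let $g = g_Y(v) \in V(Y)$ denote the gate of $v$ in $Y$. I rely on the standard fact that the set $\mathcal{A}$ of hyperplanes separating $v$ from $g$ coincides with the set of hyperplanes separating $v$ from every vertex of $Y$; in particular, no hyperplane of $\mathcal{A}$ crosses $Y$, and the opposite side of $H$ from $v$ contains all of $V(Y)$ whenever $H \in \mathcal{A}$.

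I define $p_v$ by a minimal construction. Among all pairs $(\gamma, i_0)$, with $\gamma = (v = v_0, v_1, \ldots, v_n = g)$ a geodesic in $X$ and $i_0 \in \{0, 1, \ldots, n\}$ an index such that every hyperplane crossed by the sub-geodesic $\gamma[v_{i_0}, v_n]$ lies in $\mathcal{H}$, I choose one minimising $i_0$ and set $p_v := v_{i_0}$. By construction, $p_v \in V(Y_{\mathcal{H}})$; moreover, if $i_0 > 0$, the hyperplane dual to the edge $v_{i_0 - 1}v_{i_0}$ is not in $\mathcal{H}$, since otherwise $(\gamma, i_0 - 1)$ would violate minimality.

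To verify $p_v$ is a gate, I take an arbitrary $q \in V(Y_{\mathcal{H}})$. By Proposition \ref{p:geod}, it suffices to show that no hyperplane $H$ simultaneously separates $v$ from $p_v$ and $p_v$ from $q$; granted this, the concatenation of $\gamma[v_0, v_{i_0}]$ with any geodesic from $p_v$ to $q$ crosses every hyperplane at most once and is therefore a geodesic through $p_v$. Any such $H$ is crossed by $\gamma[v_0, v_{i_0}]$, so $H \in \mathcal{A}$; in particular $p_v$ lies on the $Y$-side of $H$. In the easy case $H \notin \mathcal{H}$, any $\mathcal{H}$-path from $q$ to $V(Y)$ avoids $H$, placing $q$ on the same $Y$-side as $p_v$, so $H$ does not separate $p_v$ from $q$.

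The hard part will be the case $H \in \mathcal{H}$. My plan is to reduce it to the easy case by exhibiting a non-$\mathcal{H}$ hyperplane $H^* \in \mathcal{A}$ that also separates $v$ from $p_v$ and satisfies (the $v$-side of $H$) $\subseteq$ (the $v$-side of $H^*$). Suppose $H$ is crossed at position $j$ on $\gamma$; if $H$ intersected every hyperplane at positions $j+1, \ldots, i_0 - 1$, then Remark \ref{r:swap} would let me swap $H$ successively past them to position $i_0 - 1$, producing a new geodesic whose tail from that position uses only $\mathcal{H}$-hyperplanes (since $H \in \mathcal{H}$ and the original tail was $\mathcal{H}$-only), contradicting minimality. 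Hence some hyperplane $H_k$ at a position $k \in \{j+1, \ldots, i_0 - 1\}$ fails to intersect $H$; after pushing $H$ adjacent to $H_k$ via swaps, the non-crossing pair is linearly ordered on the resulting geodesic, forcing the claimed inclusion of $v$-sides. If $H_k \notin \mathcal{H}$, I take $H^* = H_k$; otherwise I iterate the argument with $H_k$ in place of $H$, which must terminate since positions strictly increase and $H_{i_0 - 1} \notin \mathcal{H}$ by the minimality observation above. Applying the easy case to $H^*$ then forces $q$ onto the $Y$-side of $H^*$, hence onto the $Y$-side of $H$, completing the verification that $p_v$ is a gate and showing that $Y_{\mathcal{H}}$ is gated.
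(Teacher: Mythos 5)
Your route is genuinely different from the paper's: the paper argues by contradiction, using Lemma \ref{l:cancellingplane} to produce a hyperplane dual to edges at $\hat p$ on both geodesics and then pushing $\hat p$ closer to $v$, whereas you construct the candidate gate directly (your $p_v$ is essentially the paper's $\hat p$) and verify gatedness hyperplane by hyperplane. Your reduction via Proposition \ref{p:geod}, the easy case $H \notin \mathcal{H}$, and the production of a non-intersecting hyperplane $H_k$ at a later position (via Remark \ref{r:swap} and minimality of $i_0$) are all sound.

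The gap is in the last step of the hard case, and it comes from treating hyperplanes as two-sided. In a quasi-median graph a hyperplane dual to a clique with $k \geq 3$ vertices cuts $X$ into $k$ sectors (e.g.\ the hyperplane dual to a coset of a vertex group of order at least $3$ in the graph-product Cayley graph), and ``separates'' means ``lies in different sectors''. Your claimed inclusion is of $v$-sides, i.e.\ ($v$-sector of $H$) $\subseteq$ ($v$-sector of $H^*$); combined with the easy case, which places $q$ in the sector of $H^*$ containing $Y$, this only yields that $q$ is \emph{not} in the $v$-sector of $H$. When $H$ has three or more sectors, $q$ could a priori lie in a third sector of $H$, in which case $H$ would still separate $p_v$ from $q$, so the deduction ``hence onto the $Y$-side of $H$'' does not follow from what you proved. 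What you need is the dual inclusion ($Y$-sector of $H^*$) $\subseteq$ ($Y$-sector of $H$), and that requires its own (short) argument which your write-up does not contain: since $H$ and $H^*$ do not intersect, the product structure of Proposition \ref{p:cfgated} shows no edge of $\mathcal{N}(H)$ is dual to $H^*$, so the connected subgraph $\mathcal{N}(H)$ lies in a single sector of $H^*$, namely the $v$-sector (it contains the endpoints of the $H$-dual edge of $\gamma$, which precede the $H^*$-crossing); hence a geodesic from a vertex of the $Y$-sector of $H^*$ to $g$ that crossed $H$ would enter that $v$-sector and cross $H^*$ twice, contradicting Proposition \ref{p:geod}. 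Your phrase ``the non-crossing pair is linearly ordered on the resulting geodesic, forcing the claimed inclusion'' asserts the nesting rather than proving it, and the adjacency obtained by swapping plays no role in it. With the corrected nesting statement, chained along your iterates $H, H_{k_1}, \ldots, H^*$, your strategy does go through; the paper's proof avoids the issue entirely by its Lemma \ref{l:cancellingplane} argument.
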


\begin{proof}
Suppose for contradiction that $Y_{\mathcal{H}}$ is not gated, and let $v \in V(X)$ be a vertex that does not have a gate in $Y_{\mathcal{H}}$. Let $p$ be the gate for $v$ in $Y$. Let $\hat{p}$ be a vertex of $Y_{\mathcal{H}}$ on a geodesic between $v$ and $p$ with $d_X(v,\hat{p})$ minimal. By our assumption, $\hat{p}$ is not a gate for $v$ in $Y_{\mathcal{H}}$, and so there exists a vertex $\hat{q} \in V(Y_{\mathcal{H}})$ such that no geodesic between $v$ and $\hat{q}$ passes through $\hat{p}$. Let $q$ be the gate of $\hat{q}$ in $Y$. Let $\gamma_p$, $\gamma_q$, $\delta$, $\hat\delta$, $\eta$ be geodesics between $\hat{p}$ and $p$, $\hat{q}$ and $q$, $p$ and $q$, $\hat{p}$ and $\hat{q}$, $v$ and $\hat{p}$ (respectively), as shown in Figure \ref{f:lYHgated}.

Since both $\eta$ and $\hat\delta$ are geodesics, and since $\eta\hat\delta$ is not (by the choice of $\hat{q}$), it follows from Lemma \ref{l:cancellingplane} that we may assume, without loss of generality, that there exists a hyperplane $C$ and edges $c_1$, $c_2$ of $\eta$, $\hat\delta$ (respectively), both of which are dual to $C$ and have $\hat{p}$ as an endpoint. But as $p$ is the gate for $v$ in $Y$, as $\eta\gamma_p$ is a geodesic by the choice of $\hat{p}$, and as $q \in Y$, it follows that $\eta\gamma_p\delta$ is a geodesic. Therefore, by Proposition \ref{p:geod} $H$ cannot cross $\gamma_p\delta$, and so $H$ does not separate $\hat{p}$ and $q$. As $H$ separates $\hat{p}$ and $\hat{q}$, it follows that $H$ separates $\hat{q}$ and $q$ and so crosses $\gamma_q$. In particular, since $\hat{q} \in V(Y_{\mathcal{H}})$ and since $q \in V(Y)$ is a gate for $\hat{q}$ in $Y$, it follows that all hyperplanes crossing $\gamma_q$ are in $\mathcal{H}$, and therefore $H \in \mathcal{H}$. But then the endpoint $p' \neq \hat{p}$ of $c_1$ is separated from $p \in V(Y)$ only by hyperplanes in $\mathcal{H}$; this contradicts the choice of $\hat{p}$. Thus $Y_{\mathcal{H}}$ is gated, as claimed.
\end{proof}

\begin{figure}[ht]
\centering
\begin{tikzpicture}
\filldraw [dashed,green!50!black,fill=green!8,rounded corners=10pt] (5,-1) rectangle (10.5,3);
\node [green!50!black] at (10.15,-0.65) {$Y_{\mathcal{H}}$};
\filldraw [dashed,green!50!black,fill=green!20,rounded corners=10pt] (8,-0.5) rectangle (10,2.5);
\node [green!50!black] at (9.65,-0.15) {$Y$};

\fill (2.5,0) circle (2pt) node [below] {$v$};
\fill (5,0) circle (2pt) node [below] {$\,\,\,\,\hat{p}$};
\fill (5.75,2) circle (2pt) node [above] {$\hat{q}$};
\fill (8,0) circle (2pt) node [below] {$p\,\,\,\,$};
\fill (8,2) circle (2pt) node [above] {$q\,\,\,\,$};

\draw [thick] (2.5,0) -- (3.5,0) node [below] {$\eta$} -- (5,0);
\draw [thick] (5,0) -- (6.5,0) node [below] {$\gamma_p$} -- (8,0);
\draw [thick] (5.75,2) -- (7,2) node [above] {$\gamma_q$} -- (8,2);
\draw [thick] (5,0) -- (5.45,1.2) node [left] {$\hat\delta$} -- (5.75,2);
\draw [thick] (8,0) arc (-90:0:1) node [right] {$\delta$} arc (0:90:1);

\fill (4.5,0) circle (1.2pt) node [below] {$p'\,\,$};
\draw [red,very thick] plot [smooth] coordinates { (4.7,-0.5) (4.75,0) (5,0.4) (6,1) (6.5,2) (6.55,2.5) } node [above] {$H$};
\end{tikzpicture}
\caption{Proof of Lemma \ref{l:YHgated}.} \label{f:lYHgated}
\end{figure}

Now let a group $G$ act on a quasi-median graph $X$. This induces an action of $G$ on the crossing graph $\Delta X$. Let $\mathcal{H}$ be the set of orbits of vertices under $G \curvearrowright \Delta X$ -- alternatively, the set of orbits of hyperplanes under $G \curvearrowright X$. We may regard each element of $\mathcal{H}$ as a collection of hyperplanes -- thus, for instance, given $\mathcal{H}_0 \subseteq \mathcal{H}$ we may write $\bigcup \mathcal{H}_0$ for the set of all hyperplanes whose orbits are elements of $\mathcal{H}_0$.

Let $n \in \N$, and let $\mathcal{H}_1,\ldots,\mathcal{H}_n$ be subsets of $\mathcal{H}$. Pick a vertex (a `basepoint') $o \in V(X)$, and define the subgraphs $Y_0,\ldots,Y_n \subseteq X$ inductively: set $Y_0 = \{o\}$ and $Y_i = (Y_{i-1})_{\bigcup \mathcal{H}_i}$ for $1 \leq i \leq n$. By Lemma \ref{l:YHgated}, $Y_n$ is a gated subgraph. We denote $Y_n$ as above by $Y(o,\mathcal{H}_1,\ldots,\mathcal{H}_n)$, and we denote the gate for $v \in V(X)$ in $Y_n$ by $\mathfrak{g}(v;o,\mathcal{H}_1,\ldots,\mathcal{H}_n)$.

\begin{defn} \label{d:cs}
Let $H,H' \in V(\mathcal{C}X)$, and let $p,p' \in V(X)$ be such that $p \in \mathcal{N}(H)$ and $p' \in \mathcal{N}(H')$. Let $n = d_{\mathcal{C}X}(H,H')$. Given any geodesic $H=H_0,\ldots,H_n=H'$ in $\mathcal{C}X$ and vertices $p=p_0,p_1,\ldots,p_{n+1}=p' \in V(X)$ such that $p_i,p_{i+1} \in \mathcal{N}(H_i)$ for $0 \leq i \leq n$, we call $\mathfrak{S} = (H_0,\ldots,H_n,p_0,\ldots,p_{n+1})$ a \emph{contact sequence} for $(H,H',p,p')$. 

Given a contact sequence $\mathfrak{S} = (H_0,\ldots,H_n,p_0,\ldots,p_{n+1})$ for $(H,H',p,p')$ and a vertex $v \in V(X)$, we say $(g_0,\ldots,g_n) \in V(X)^{n+1}$ is the \emph{$v$-gate} for $\mathfrak{S}$ if $g_i$ is the gate for $v$ in $\mathcal{N}(H_i)$ for $0 \leq i \leq n$. We furthermore denote the tuples $(d_X(p_n,g_n),\ldots,d_X(p_0,g_0))$ and $(d_X(p_1,g_0),\ldots,d_X(p_{n+1},g_n))$ by $\mathfrak{C}_\diagup(\mathfrak{S},v)$ and $\mathfrak{C}_\diagdown(\mathfrak{S},v)$, respectively. We say a contact sequence $\mathfrak{S}$ for $(H,H',p,p')$ is \emph{$v$-minimal} if for any other contact sequence $\mathfrak{S}'$ for $(H,H',p,p')$ we have either $\mathfrak{C}_\diagup(\mathfrak{S},v) \leq \mathfrak{C}_\diagup(\mathfrak{S}',v)$ or $\mathfrak{C}_\diagdown(\mathfrak{S},v) \leq \mathfrak{C}_\diagdown(\mathfrak{S}',v)$ in the lexicographical order.

Finally, suppose a group $G$ acts on $X$. Given a vertex $v \in V(X)$ and a contact sequence $\mathfrak{S} = (H_0,\ldots,H_n,p_0,\ldots,p_{n+1})$ for $(H,H',p,p')$ with a $v$-gate $(g_0,\ldots,g_n)$, we say $(\mathcal{H}_0,\ldots,\mathcal{H}_n,\mathcal{H}_0',\ldots,\mathcal{H}_n')$, where $\mathcal{H}_i,\mathcal{H}_i' \subseteq V(\mathcal{C}X/G)$, is the \emph{$(v,G)$-orbit sequence} for $\mathfrak{S}$ if \[ \mathcal{H}_i = \{ H^G \mid H \in V(\mathcal{C}X) \text{ separates } p_i \text{ from } g_i \} \] and \[ \mathcal{H}_i' = \{ H^G \mid H \in V(\mathcal{C}X) \text{ separates } p_{i+1} \text{ from } g_i \} \] for $0 \leq i \leq n$.
\end{defn}

It is clear that given any $H$, $H'$, $p$ and $p'$ as in Definition \ref{d:cs}, there exists a contact sequence for $(H,H',p,p')$. As the lexicographical order is a well-ordering of $\N^n$, it follows that a $v$-minimal contact sequence exists as well.


\begin{prop} \label{p:technical}
Suppose a group $G$ acts specially on a quasi-median graph $X$. Let $H,H' \in V(\mathcal{C}X)$, let $p,p' \in V(X)$ be such that $p \in \mathcal{N}(H)$ and $p' \in \mathcal{N}(H')$, and let $v \in V(X)$. Let $\mathfrak{S} = (H_0,\ldots,H_n,p_0,\ldots,p_{n+1})$ be a $v$-minimal contact sequence for $(H,H',p,p')$ with $v$-gate $(g_0,\ldots,g_n)$ and $(v,G)$-orbit sequence $(\mathcal{H}_0,\ldots,\mathcal{H}_n,\mathcal{H}_0',\ldots,\mathcal{H}_n')$. Write $\mathfrak{g}_i := \mathfrak{g}(v;p,\mathcal{H}_0,\ldots,\mathcal{H}_i)$ and $\mathfrak{g}_i' := \mathfrak{g}(v;p',\mathcal{H}_n',\ldots,\mathcal{H}_i')$ for $0 \leq i \leq n$.
Then,
\begin{enumerate}[label={\normalfont({\roman*})}]
\item \label{i:ptechnical-main} $\mathfrak{g}_n = \mathfrak{g}_0'$;
\item \label{i:ptechnical-noosc} no hyperplane from $\bigcup \mathcal{H}_i$ osculates with a hyperplane from $\bigcup \mathcal{H}_j'$ whenever $i > j$; and
\item \label{i:ptechnical-gateclose} for $1 \leq i \leq n$, the hyperplanes separating $\mathfrak{g}_{i-1}$ from $\mathfrak{g}_i$ (respectively $\mathfrak{g}_i'$ from $\mathfrak{g}_{i-1}'$) are precisely the hyperplanes separating $p_i$ from $g_i$ (respectively $p_i$ from $g_{i-1}$).
\end{enumerate}
\end{prop}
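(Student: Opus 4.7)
The plan is to prove parts (iii), (ii), (i) in that order, using the gated subgraph structure of Lemma~\ref{l:YHgated}, the specialness of the $G$-action, the $v$-minimality of $\mathfrak{S}$, and the swap operation of Remark~\ref{r:swap} together with Lemma~\ref{l:nointosc}. Throughout I will write $Z_{k+1}$ for the gated subgraph of which $\mathfrak{g}_k$ is the gate for $v$, so that $Z_0=\{p\}\subseteq Z_1\subseteq\cdots\subseteq Z_{n+1}$ with $Z_{i+1}=(Z_i)_{\bigcup\mathcal{H}_i}$, and write $Z_{k+1}'$ for the analogous chain starting from $p'$ and using the orbits $\mathcal{H}_n',\mathcal{H}_{n-1}',\ldots,\mathcal{H}_0'$ in that order.

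For (iii) I would argue by induction on $i$. Since $\mathfrak{g}_{i-1}\in Z_i\subseteq Z_{i+1}$, the geodesic from $v$ to $\mathfrak{g}_{i-1}$ passes through $\mathfrak{g}_i$; moreover the gate of $\mathfrak{g}_i$ in the smaller gated subgraph $Z_i$ lies on a geodesic through $\mathfrak{g}_{i-1}$, so the defining property of $Z_{i+1}$ forces every hyperplane separating $\mathfrak{g}_{i-1}$ from $\mathfrak{g}_i$ to lie in $\bigcup\mathcal{H}_i$. To upgrade this orbit statement to the equality with the hyperplanes separating $p_i$ from $g_i$, I would combine specialness (two hyperplanes in the same $G$-orbit can neither intersect nor osculate) with the product decomposition $\mathcal{N}(H_i)\cong F_i\times C_i$ from Proposition~\ref{p:cfgated}: each such hyperplane crosses $H_i$ and lies inside $\mathcal{N}(H_i)$, and specialness then pins its position in the fibre factor to agree with that of the corresponding hyperplane from $p_i$ to $g_i$.

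For (ii) I would argue by contradiction against $v$-minimality. If $K\in\bigcup\mathcal{H}_i$ osculates with $K'\in\bigcup\mathcal{H}_j'$ for some $i>j$, specialness rules out any intersection between a hyperplane in the $G$-orbit of $K$ and one in the $G$-orbit of $K'$; in particular the original representatives $\tilde K$ (separating $p_i$ from $g_i$) and $\tilde K'$ (separating $p_{j+1}$ from $g_j$) cannot intersect. I would then construct an alternative contact sequence $\mathfrak{S}'$ by rerouting the geodesics between the $p_k$ to move the $\tilde K$-edge past the $\tilde K'$-edge via iterated swaps in the sense of Remark~\ref{r:swap}, each one legitimized by Lemma~\ref{l:nointosc} and specialness forcing consecutive intermediate hyperplanes to intersect. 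The resulting $\mathfrak{S}'$ would strictly decrease both $\mathfrak{C}_\diagup(\mathfrak{S}',v)$ and $\mathfrak{C}_\diagdown(\mathfrak{S}',v)$ compared to $\mathfrak{S}$, contradicting $v$-minimality.

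Finally, for (i), applying (iii) symmetrically from both ends describes the hyperplanes separating $v$ from $\mathfrak{g}_n$ and from $\mathfrak{g}_0'$ as explicit unions along the orbit chains $(\mathcal{H}_0,\ldots,\mathcal{H}_n)$ and $(\mathcal{H}_n',\ldots,\mathcal{H}_0')$ respectively, while (ii) guarantees that these two descriptions produce the same set of hyperplanes; since vertices of a quasi-median graph are determined by their separating hyperplanes from a basepoint, this forces $\mathfrak{g}_n=\mathfrak{g}_0'$. The main obstacle I anticipate is the construction in Step~(ii): producing $\mathfrak{S}'$ so that both the $\diagup$ and $\diagdown$ tuples strictly decrease simultaneously requires careful tracking of how each elementary swap alters the distances $d_X(p_k,g_k)$ and $d_X(p_{k+1},g_k)$ across the sequence, because a local modification in one segment can affect the gates $g_k$ of neighbouring segments.
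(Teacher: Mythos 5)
There is a genuine gap, and it sits at the heart of the argument: your proof of part \ref{i:ptechnical-gateclose} only establishes the \emph{orbit} statement, namely that every hyperplane separating $\mathfrak{g}_{i-1}$ from $\mathfrak{g}_i$ lies in $\bigcup\mathcal{H}_i$ (this much does follow from composing gates along the nested gated subgraphs $Z_i$). The proposition asserts something much stronger: these hyperplanes are \emph{literally} the hyperplanes separating $p_i$ from $g_i$, not merely orbit-mates of them. The step ``specialness pins its position in the fibre factor'' is not a proof: specialness only forbids two hyperplanes in the same orbit from intersecting or osculating, and a hyperplane separating $\mathfrak{g}_{i-1}$ from $\mathfrak{g}_i$ need not lie in $\mathcal{N}(H_i)$ at all, so Proposition~\ref{p:cfgated} gives you no handle on it. Note also that your argument for \ref{i:ptechnical-gateclose} uses neither the $v$-minimality of $\mathfrak{S}$ nor the primed chain, whereas in the paper this identification is exactly what the simultaneous induction on $n$ is for: one compares the forward chain from $p$ with the backward chain from $p'$ (and from $p_n$, $p_1$), introduces the four separating families $\mathcal{A},\mathcal{B},\mathcal{A}',\mathcal{B}'$, and proves $\mathcal{A}=\mathcal{A}'$, $\mathcal{B}=\mathcal{B}'$; minimality is needed there (for $\mathcal{A}\cap\mathcal{B}=\varnothing$, via Lemma~\ref{l:cancellingplane} and replacing part of the contact sequence) and part \ref{i:ptechnical-noosc} \emph{at the same level} is needed (for $\mathcal{A}'\cap\mathcal{B}'=\varnothing$). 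So the parts are interlocked, and your linear order (iii) $\to$ (ii) $\to$ (i) cannot be carried out as sketched: a standalone proof of (iii) is precisely the missing ingredient.

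Two further points would fail as written. For \ref{i:ptechnical-noosc}, swapping edges on the geodesics between the $p_k$ (Remark~\ref{r:swap}) cannot contradict $v$-minimality, because $\mathfrak{C}_\diagup(\mathfrak{S},v)$ and $\mathfrak{C}_\diagdown(\mathfrak{S},v)$ depend only on the points $p_i$ and the gates $g_i$, which re-routing geodesics does not change; to violate minimality you must exhibit a genuinely different contact sequence (new $H_i$'s or $p_i$'s), which is how the paper uses Lemma~\ref{l:cancellingplane}. Moreover the paper's logic runs the other way: it shows directly, from the separation pattern encoded in $\mathcal{A}\cap\mathcal{B}=\mathcal{A}\cap\mathcal{B}'=\varnothing$ and $\mathcal{B}\subseteq\mathcal{B}'$, that representatives $A\in\mathcal{A}$ and $B\in\mathcal{B}$ must intersect, and then specialness kills osculation throughout the orbits. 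For \ref{i:ptechnical-main}, the two ``explicit unions'' you want to compare are anchored at different basepoints ($p$ versus $p'$) and are built from different segments ($p_i$ to $g_i$ versus $p_{i+1}$ to $g_i$); part \ref{i:ptechnical-noosc} does not make them equal, and even if it did, equality of those unions would not by itself identify $\mathfrak{g}_n$ with $\mathfrak{g}_0'$. The paper instead shows $\mathfrak{g}_n\in Y(p',\mathcal{H}_n',\ldots,\mathcal{H}_0')$ (using $\mathcal{A}=\mathcal{A}'\subseteq\bigcup\mathcal{H}_0'$) and symmetrically, so each point lies on a geodesic from $v$ through the other, forcing equality.
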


\begin{proof}
Induction on $n$.

For $n = 0$, we claim that $\mathfrak{g}_0 = g_0$. Indeed, by definition of $\mathcal{H}_0$ we have $g_0 \in Y(p_0,\mathcal{H}_0)$, and so there exists a geodesic $\eta$ between $p$ and $v$ passing through $g_0$ and $\mathfrak{g}_0$. Suppose for contradiction $\mathfrak{g}_0 \neq g_0$, let $a \subseteq \eta$ be the edge with endpoint $g_0$ such that the other endpoint $q_a \neq g_0$ of $a$ satisfies $d_X(v,g_0) > d_X(v,q_a)$, and let $A$ be the hyperplane dual to $a$; see Figure \ref{f:ptech-n0}. Then $g_0 \in \mathcal{N}(H_0) \cap \mathcal{N}(A)$, and so $H_0$ and $A$ either coincide, or intersect, or osculate. As $A$ separates $p$ and $\mathfrak{g}_0$, we know that $A^g$ separates $p$ and $g_0$ and so $A^g$ and $H_0$ either coincide or intersect for some $g \in G$. Thus, as the action $G \curvearrowright X$ is special, it follows that $A$ and $H_0$ cannot osculate, and therefore they either coincide or intersect. But then we also have $q_a \in \mathcal{N}(H_0)$, contradicting the choice of $g_0$. Therefore, $\mathfrak{g}_0 = g_0$, as claimed. A symmetric argument shows that $\mathfrak{g}_0' = g_0$, and so the conclusion of the proposition is clear.

Suppose now that $n \geq 1$, and let $\hat{\mathfrak{g}}_i'=\mathfrak{g}(v;p_n,\mathcal{H}_{n-1}',\ldots\mathcal{H}_i')$ for $0 \leq i \leq n$ (so that $\hat{\mathfrak{g}}_n' = p_n$). Notice that $(H_0,\ldots,H_{n-1},p_0,\ldots,p_n)$ is a $v$-minimal contact sequence for $(H,H_{n-1},p,p_n)$. Thus, by the inductive hypothesis
we have
\begin{enumerate}[label=({\roman*}')]
\item \label{i:ptech-pf-main1} $\mathfrak{g}_{n-1} = \hat{\mathfrak{g}}_0'$;
\item \label{i:ptech-pf-noosc1} no hyperplane from $\bigcup \mathcal{H}_i$ osculates with a hyperplane from $\bigcup \mathcal{H}_j'$ whenever $n-1 \geq i > j$;
\item \label{i:ptech-pf-gateclose1} for $1 \leq i \leq n-1$, the hyperplanes separating $\mathfrak{g}_{i-1}$ from $\mathfrak{g}_i$ (respectively $\hat{\mathfrak{g}}_i'$ from $\hat{\mathfrak{g}}_{i-1}'$) are precisely the hyperplanes separating $p_i$ from $g_i$ (respectively $p_i$ from $g_{i-1}$).
\end{enumerate}
Moreover, let $\hat{\mathfrak{g}}_i=\mathfrak{g}(v;p_1,\mathcal{H}_1,\ldots\mathcal{H}_i)$ for $0 \leq i \leq n$ (so that $\hat{\mathfrak{g}}_0 = p_1$), and notice that $(H_1,\ldots,H_n,p_1,\ldots,p_{n+1})$ is a $v$-minimal contact sequence for $(H_1,H',p_1,p')$. Thus, by the inductive hypothesis
we have
\begin{enumerate}[label=({\roman*}'')]
\item \label{i:ptech-pf-main2} $\hat{\mathfrak{g}}_n = \mathfrak{g}_1'$;
\item \label{i:ptech-pf-noosc2} no hyperplane from $\bigcup \mathcal{H}_i$ osculates with a hyperplane from $\bigcup \mathcal{H}_j'$ whenever $i > j \geq 1$;
\item \label{i:ptech-pf-gateclose2} for $2 \leq i \leq n$, the hyperplanes separating $\hat{\mathfrak{g}}_{i-1}$ from $\hat{\mathfrak{g}}_i$ (respectively $\mathfrak{g}_i'$ from $\mathfrak{g}_{i-1}'$) are precisely the hyperplanes separating $p_i$ from $g_i$ (respectively $p_i$ from $g_{i-1}$).
\end{enumerate}
Finally, the proof of the $n = 0$ case above shows that $\mathfrak{g}(v;p_i,\mathcal{H}_i) = g_i = \mathfrak{g}(v;p_{i+1},\mathcal{H}_i')$ for $0 \leq i \leq n$.

Now let $q = \hat{\mathfrak{g}}_{n-1}$ and note that we also have $q = \hat{\mathfrak{g}}_1'$: this is clear if $n = 1$ and follows from the inductive hypothesis if $n \geq 2$. Let $\mathcal{A}, \mathcal{B}, \mathcal{A}', \mathcal{B}' \subseteq V(\mathcal{C}X)$ be the sets of hyperplanes separating $q$ from $\mathfrak{g}_{n-1}$, $q$ from $\mathfrak{g}_1'$, $\mathfrak{g}_1'$ from $\mathfrak{g}_n$, $\mathfrak{g}_{n-1}$ from $\mathfrak{g}_n$, respectively; see Figure \ref{f:ptech-general}. We claim that $\mathcal{A} = \mathcal{A}'$ and $\mathcal{B} = \mathcal{B}'$. We will show this in steps, proving part \ref{i:ptechnical-noosc} of the Proposition along the way.

\begin{figure}[ht]
\begin{subfigure}[b]{0.3\textwidth}
\centering
\begin{tikzpicture}[scale=0.66]
\filldraw [dashed,green!50!black,fill=green!20,rounded corners=10pt] (-0.5,-1.4) rectangle (1.5,2);
\node [green!50!black] at (0.6,-1) {$\mathcal{N}(H)$};

\draw [thick] plot [smooth] coordinates { (0,0) (1.5,1.5) (1.9,1.7) (3,1.5) (4,2) (4.8,3) };
\fill (0,0) circle (1.8pt) node [below] {$p$};
\fill (1.5,1.5) circle (1.8pt) node [left,yshift=3pt] {$g_0$};
\fill (1.9,1.7) circle (1.8pt) node [above] {$\ q_a$};
\fill (3,1.5) circle (1.8pt) node [below] {$\mathfrak{g}_0$};
\fill (4.8,3) circle (1.8pt) node [above] {$v$};
\node [below] at (4,2) {$\eta$};

\draw [red,very thick] plot [smooth] coordinates { (1.5,2.5) (1.75,1.5) (1.75,0.5) } node [below] {$A$};
\draw [red,very thick] plot [smooth] coordinates { (0,1.5) (0.8,1) (1,0.5) } node [below] {$A^g$};
\end{tikzpicture}
\caption{Case $n = 0$.} \label{f:ptech-n0}
\end{subfigure}
\hfill
\begin{subfigure}[b]{0.69\textwidth}
\centering
\begin{tikzpicture}[scale=0.8]
\filldraw [dashed,green!50!black,fill=green!20,rounded corners=10pt] (-0.9,1) rectangle (2,-1.2);
\filldraw [dashed,green!50!black,fill=green!20,rounded corners=10pt] (9.3,1) rectangle (6,-1.2);
\node [green!50!black] at (0.8,-0.8) {$\mathcal{N}(H)$};
\node [green!50!black] at (7.2,-0.8) {$\mathcal{N}(H')$};

\fill (0,0) circle (1.5pt) node [below,xshift=-3pt] {$p_0=p$};
\fill (1,1) circle (1.5pt) node [above left,xshift=6pt] {$\mathfrak{g}_0=g_0$};
\fill (2,0) circle (1.5pt) node [right,yshift=-3pt] {$p_1=\hat{\mathfrak{g}}_0$};
\fill (8,0) circle (1.5pt) node [below,yshift=3pt,xshift=9pt] {$p'=p_{n+1}\ $};
\fill (7,1) circle (1.5pt) node [above right,xshift=-6pt] {$g_n=\mathfrak{g}_n'$};
\fill (6,0) circle (1.5pt) node [left,yshift=-3pt] {$\hat{\mathfrak{g}}_n'=p_n\!$};
\fill (4,2) circle (1.5pt) node [below] {$q$};
\fill (3,3) circle (1.5pt) node [left,yshift=5pt] {$\hat{\mathfrak{g}}_0'=\mathfrak{g}_{n-1}$};
\fill (5,3) circle (1.5pt) node [right,yshift=5pt] {$\mathfrak{g}_1'=\hat{\mathfrak{g}}_n$};
\fill (4,4) circle (1.5pt) node [above] {$\mathfrak{g}_n\ \ $};
\fill (6,5) circle (1.5pt) node [right] {$v$};
\node at (4,-0.5) {$\cdots$};

\draw [thick,rounded corners=10pt] (0,0) -- (5,5) -- (6,5);
\draw [thick] (2,0) -- (1,1);
\draw [thick] (2,0) -- (5,3);
\draw [thick] (8,0) -- (4,4);
\draw [thick] (6,0) -- (7,1);
\draw [thick] (6,0) -- (3,3);

\draw [red,very thick] (1,0.5) -- (3.4,2.9);
\draw [red,very thick] (1.25,0.25) -- (3.6,2.6);
\draw [red,very thick] (1.5,0) -- (3.9,2.4);
\node [red] at (3.6,2.8) {$\mathcal{A}$};
\draw [blue,very thick] (7,0.5) -- (4.6,2.9);
\draw [blue,very thick] (6.75,0.25) -- (4.4,2.6);
\draw [blue,very thick] (6.5,0) -- (4.1,2.4);
\node [blue] at (4.4,2.8) {$\mathcal{B}$};
\draw [red!80!black,very thick] (4.1,3.6) -- (4.4,3.9);
\draw [red!80!black,very thick] (4.35,3.35) -- (4.6,3.6);
\draw [red!80!black,very thick] (4.6,3.1) -- (4.9,3.4);
\node [red!80!black] at (4.8,3.8) {$\mathcal{A}'$};
\draw [blue!70!black,very thick] (3.9,3.6) -- (3.6,3.9);
\draw [blue!70!black,very thick] (3.65,3.35) -- (3.4,3.6);
\draw [blue!70!black,very thick] (3.4,3.1) -- (3.1,3.4);
\node [blue!70!black] at (3.2,3.8) {$\mathcal{B}'$};
\end{tikzpicture}
\caption{Case $n \geq 1$.} \label{f:ptech-general}
\end{subfigure}
\caption{Proof of Proposition \ref{p:technical}: general setup.} \label{f:ptech-genset}
\end{figure}
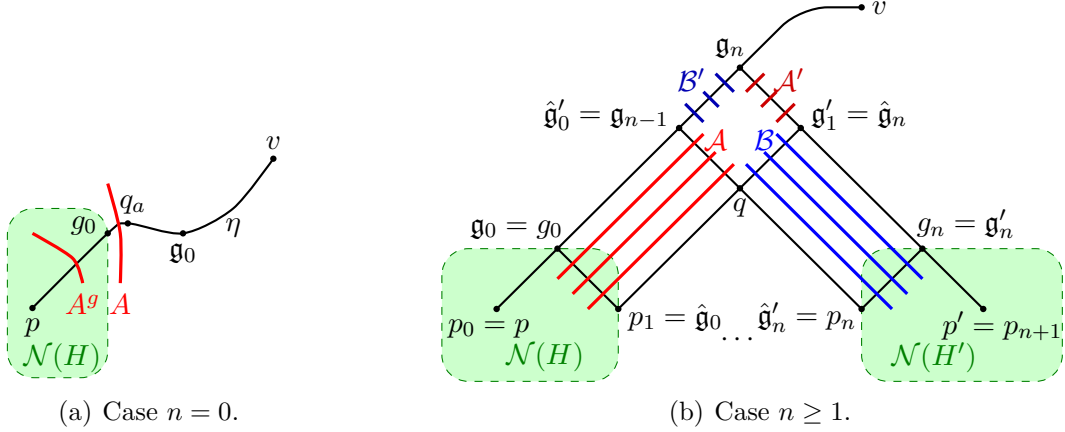

\begin{description}
\item[$\mathcal{A} \cap \mathcal{B} = \varnothing$:] Suppose for contradiction that there exists some hyperplane $A \in \mathcal{A} \cap \mathcal{B}$. As $A \in \mathcal{A}$, we know that $A$ separates $\hat{\mathfrak{g}}_1'$ from $\hat{\mathfrak{g}}_0'$, and so by \ref{i:ptech-pf-gateclose1} above it also separates $p_1$ from $g_0$: thus $d_{\mathcal{C}X}(H_0,A) \leq 1$. Similarly, as $A \in \mathcal{B}$, by \ref{i:ptech-pf-gateclose2} above we know that $A$ separates $p_n$ from $g_n$ and therefore $d_{\mathcal{C}X}(H_n,A) \leq 1$. Hence, $n = d_{\mathcal{C}X}(H_0,H_n) \leq 2$, and so either $n = 1$ or $n = 2$.

Let $\alpha$, $\beta$ be geodesics between $p_1$ and $g_0$, $p_n$ and $g_n$, respectively, and let $a \subseteq \alpha$ and $b \subseteq \beta$ be the edges dual to $A$. As $a$ and $b$ lie on geodesics with endpoint $v$, we may pick endpoints $q_a$ and $q_b$ of $a$ and $b$, respectively, such that $A$ does not separate $q_a$, $q_b$ and $v$.

Suppose first that $n = 2$: see Figure \ref{f:ptech-ABn2}. Note that in this case $H_0,A,H_2$ is a geodesic in $\mathcal{C}X$ and that $d_X(p_2,g_2) > d_X(q_b,g_2)$ and $d_X(p_1,g_0) > d_X(q_a,g_0)$. Moreover, since $q_a$ lies on a geodesic between $p_1$ and $g_0$, we have $q_a \in \mathcal{N}(H_0)$; similarly, $q_b \in \mathcal{N}(H_2)$. Furthermore, by the construction we know that $q_a,q_b \in \mathcal{N}(A)$. We may therefore replace $p_1$, $p_2$ and $H_1$ by $q_a$, $q_b$ and $A$, respectively, contradicting $v$-minimality of $\mathfrak{S}$. Thus $n \neq 2$.

Suppose now that $n = 1$. Then $A$ separates $p_1$ from both $g_0$ and $g_1$. By Lemma \ref{l:cancellingplane}, we may then without loss of generality assume that $p_1$ is an endpoint (distinct from $q_a$ and $q_b$) of both $a$ and $b$. Now note that both $a$ and $b$ are edges on a geodesic between $p_1$ and $v$, so we must have $a = b$, and in particular $q_a = q_b$; see Figure \ref{f:ptech-ABn1}. Since $A$ separates $p_1$ from both $g_0$ and $g_1$, it intersects or coincides with both $H_0$ and $H_1$, and so $q_a \in \mathcal{N}(H_0) \cap \mathcal{N}(H_1)$. We may therefore replace $p_n$ by $q_a$; but we have $d_X(p_1,g_1) > d_X(q_a,g_1)$ and $d_X(p_1,g_0) > d_X(q_a,g_0)$, contradicting $v$-minimality of $\mathfrak{S}$. Thus no such hyperplane $A \in \mathcal{A} \cap \mathcal{B}$ can exist and so $\mathcal{A} \cap \mathcal{B} = \varnothing$, as claimed.

\item[$\mathcal{A} \cap \mathcal{B}' = \varnothing$:] This is clear, as $\mathfrak{g}_{n-1}=\hat{\mathfrak{g}}_0'$ lies on a geodesic between $q = \hat{\mathfrak{g}}_1'$ and $\mathfrak{g}_n$, and so no hyperplane can separate $\mathfrak{g}_{n-1}$ from both $q$ and $\mathfrak{g}_n$.

\item[$\mathcal{A}' \cap \mathcal{B} = \varnothing$:] Let $\mathcal{C}$ be the set of hyperplanes separating $\mathfrak{g}_n$ and $v$. We first claim that $\mathcal{A}' \cap \mathcal{B} = \mathcal{A}' \cap \mathcal{C}$. Indeed, let $B \in \mathcal{A}' \cap \mathcal{B}$. Since $B \in \mathcal{B}$, it separates $q$ and $\mathfrak{g}_1'$; as $\mathfrak{g}_1'=\hat{\mathfrak{g}}_n$ lies on a geodesic between $q$ and $v$, $B$ cannot separate $\mathfrak{g}_1'$ and $v$. But as $B \in \mathcal{A}'$, it separates $\mathfrak{g}_1'$ and $\mathfrak{g}_n$, and so $B$ must separate $\mathfrak{g}_n$ and $v$. Therefore, $B \in \mathcal{A}' \cap \mathcal{C}$. Conversely, let $C \in \mathcal{A}' \cap \mathcal{C}$. Since $C \in \mathcal{C}$, it separates $\mathfrak{g}_n$ and $v$; as $\mathfrak{g}_n$ lies on a geodesic between $q$ and $v$, $C$ cannot separate $q$ and $\mathfrak{g}_n$. But as $C \in \mathcal{A}'$, it separates $\mathfrak{g}_1'$ and $\mathfrak{g}_n$, and so $C$ must separate $q$ and $\mathfrak{g}_1'$. Therefore, $C \in \mathcal{A}' \cap \mathcal{B}$, and so $\mathcal{A}' \cap \mathcal{B} = \mathcal{A}' \cap \mathcal{C}$, as claimed.

Now suppose for contradiction that there exists a hyperplane $A \in \mathcal{A}' \cap \mathcal{B} = \mathcal{A}' \cap \mathcal{B} \cap \mathcal{C}$. Let $\gamma$ be a geodesic between $\mathfrak{g}_n$ and $v$, and let $c \subseteq \gamma$ be the edge dual to $A$. By Lemma \ref{l:cancellingplane}, we may without loss of generality assume that $\mathfrak{g}_n$ is an endpoint of $c$: see Figure \ref{f:ptech-ApB}.

Now let $q_c \neq \mathfrak{g}_n$ be the other endpoint of $c$. Note that since $A \in \mathcal{B}$, we have $A^G \in \mathcal{H}_n$. Therefore, it follows that $q_c$ is separated from $\mathfrak{g}_{n-1}$ only by hyperplanes in $\bigcup \mathcal{H}_n$; as $d_X(v,\mathfrak{g}_n) > d_X(v,q_c)$, this contradicts the definition of $\mathfrak{g}_n$. Thus $\mathcal{A} \cap \mathcal{B}' = \varnothing$, as claimed.

\item[$\mathcal{A} \subseteq \mathcal{A}'$ and $\mathcal{B} \subseteq \mathcal{B}'$:]  As $\mathcal{A} \cap \mathcal{B} = \varnothing = \mathcal{A} \cap \mathcal{B}'$, every hyperplane separating $q$ and $\mathfrak{g}_{n-1}$ does not separate $q$ and $\mathfrak{g}_1'$, nor $\mathfrak{g}_{n-1}$ and $\mathfrak{g}_n$, thus it separates $\mathfrak{g}_1'$ and $\mathfrak{g}_n$. It follows that $\mathcal{A} \subseteq \mathcal{A}'$. Similarly, as $\mathcal{A} \cap \mathcal{B} = \varnothing = \mathcal{A}' \cap \mathcal{B}$, we get $\mathcal{B} \subseteq \mathcal{B}'$.

\item[Part \ref{i:ptechnical-noosc}:] By \ref{i:ptech-pf-noosc1} and \ref{i:ptech-pf-noosc2} above, it is enough to show that no hyperplane from $\bigcup \mathcal{H}_n$ osculates with a hyperplane from $\bigcup \mathcal{H}_0'$. Thus, let $A$ (respectively $B$) be a hyperplane separating $p_1$ and $g_0$ (respectively $p_n$ and $g_n$), so that $A^G \in \mathcal{H}_0'$ and $B^G \in \mathcal{H}_n$. It is now enough to show that $A^g$ and $B^h$ do not osculate for any $g,h \in G$.

But as $A$ separates $p_1$ from $g_0$, we know from \ref{i:ptech-pf-gateclose1} that it also separates $\hat{\mathfrak{g}}_1'=q$ from $\hat{\mathfrak{g}}_0'=\mathfrak{g}_{n-1}$, that is, $A \in \mathcal{A}$. Similarly, as $B$ separates $p_n$ and $g_n$, we know from \ref{i:ptech-pf-gateclose2} that $B \in \mathcal{B}$. But as $\mathcal{A} \cap \mathcal{B} = \varnothing = \mathcal{A} \cap \mathcal{B}'$ and as $\mathcal{B} \subseteq \mathcal{B}'$, it follows that $A$ separates $q$ and $\mathfrak{g}_1'$ from $\mathfrak{g}_{n-1}$ and $\mathfrak{g}_n$, while $B$ separates $q$ from $\mathfrak{g}_1'$ and $\mathfrak{g}_{n-1}$ from $\mathfrak{g}_n$. Therefore, $A$ and $B$ must intersect. But as the action $G \curvearrowright X$ is special, it follows that $A^g$ and $B^h$ do not osculate for any $g,h \in G$. Thus no hyperplane from $\bigcup \mathcal{H}_n$ osculates with a hyperplane from $\bigcup \mathcal{H}_0'$, and so part \ref{i:ptechnical-noosc} holds, as required.

\item[$\mathcal{A}' \cap \mathcal{B}' = \varnothing$:] Suppose for contradiction that $A \in \mathcal{A}' \cap \mathcal{B}'$ is a hyperplane. Let $\alpha'$ be a geodesic between $\mathfrak{g}_1'$ and $\mathfrak{g}_n$, let $a \subseteq \alpha'$ be the edge dual to $A$, and let $q_a,q_a'$ be the endpoints of $a$ so that $A$ does not separate $\mathfrak{g}_1'$ and $q_a$. Suppose, without loss of generality, that $\alpha'$ and $A$ are chosen in such a way that $d_X(\mathfrak{g}_1',q_a)$ is as small as possible.

We now claim that $\mathfrak{g}_1' = q_a$. Indeed, suppose not, and let $a' \neq a$ be the other edge on $\alpha'$ with endpoint $q_a$. Let $A' \in \mathcal{A}'$ be the hyperplane dual to $a'$; see Figure \ref{f:ptech-ApBp}. Then $A'$ does not separate $q$ and $\mathfrak{g}_1'$ (as $\mathcal{A}' \cap \mathcal{B} = \varnothing$), nor $\mathfrak{g}_{n-1}$ and $\mathfrak{g}_n$ (by minimality of $d_X(\mathfrak{g}_1',q_a)$), but it separates $\mathfrak{g}_1'$ (and so $q$) from $\mathfrak{g}_n$ (and so $\mathfrak{g}_{n-1}$). In particular, $A' \in \mathcal{A}$, and so $A' \in \bigcup \mathcal{H}_0'$. On the other hand, $A \in \mathcal{B}' \subseteq \bigcup \mathcal{H}_n$, and so $A$ and $A'$ cannot oscullate by part \ref{i:ptechnical-noosc}. It follows that $A$ and $A'$ must intersect, and therefore we may swap $a$ and $a'$ on $\alpha'$, contradicting minimality of $d_X(\mathfrak{g}_1',q_a)$. Thus $\mathfrak{g}_1' = q_a$, as claimed.

But now $q_a'$ is separated from $q$ just by hyperplanes in $\bigcup \mathcal{H}_n$. Furthermore, $A$ cannot separate $\mathfrak{g}_n$ and $v$ (as $\mathfrak{g}_n$ lies on a geodesic between $\mathfrak{g}_{n-1}$ and $v$, and as $A$ separates $\mathfrak{g}_{n-1}$ and $\mathfrak{g}_n$), nor $\mathfrak{g}_n$ and $q_a'$ (as $\alpha'$ is a geodesic), but $A$ separates $q_a'$ (and so $\mathfrak{g}_n$ and $v$) from $\mathfrak{g}_1'$. In particular, $d_X(v,\mathfrak{g}_1') > d_X(v,q_a')$, contradicting the fact that $\mathfrak{g}_1' = \hat{\mathfrak{g}}_n$. Thus $\mathcal{A}' \cap \mathcal{B}' = \varnothing$, as required.

\item[$\mathcal{A} = \mathcal{A}'$ and $\mathcal{B} = \mathcal{B}'$:] We have already shown $\mathcal{A} \subseteq \mathcal{A}'$ and $\mathcal{B} \subseteq \mathcal{B}'$. Conversely, as $\mathcal{A}' \cap \mathcal{B} = \varnothing = \mathcal{A}' \cap \mathcal{B}'$, every hyperplane separating $\mathfrak{g}_1'$ and $\mathfrak{g}_n$ does not separate $q$ and $\mathfrak{g}_1'$, nor $\mathfrak{g}_{n-1}$ and $\mathfrak{g}_n$, thus it separates $q$ and $\mathfrak{g}_{n-1}$. It follows that $\mathcal{A}' \subseteq \mathcal{A}$ and so $\mathcal{A} = \mathcal{A}'$. Similarly, as $\mathcal{A} \cap \mathcal{B}' = \varnothing = \mathcal{A}' \cap \mathcal{B}'$, we get $\mathcal{B} = \mathcal{B}'$.
\end{description}

Now part \ref{i:ptechnical-gateclose} of the Proposition follows immediately. Indeed, given \ref{i:ptech-pf-gateclose1} and \ref{i:ptech-pf-gateclose2}, it is enough to show that the hyperplanes separating $\mathfrak{g}_{n-1}$ from $\mathfrak{g}_n$ (respectively $\mathfrak{g}_1'$ from $\mathfrak{g}_0'$) are precisely the hyperplanes separating $\hat{\mathfrak{g}}_{n-1}$ from $\hat{\mathfrak{g}}_n$ (respectively $\hat{\mathfrak{g}}_1'$ from $\hat{\mathfrak{g}}_0'$). But this, and so \ref{i:ptechnical-gateclose}, follows from the fact that $\mathcal{A} = \mathcal{A}'$ and $\mathcal{B} = \mathcal{B}'$.

Finally, we are left to show part \ref{i:ptechnical-main}. We know that $\mathcal{A}' = \mathcal{A} \subseteq \bigcup \mathcal{H}_0'$, and so $\mathfrak{g}_n \in Y(\mathfrak{g}_1',\mathcal{H}_0') \subseteq Y(p',\mathcal{H}_n',\ldots,\mathcal{H}_0')$. In particular, there exists a geodesic between $v$ and $\mathfrak{g}_n$ passing through $\mathfrak{g}(v;p',\mathcal{H}_n',\ldots,\mathcal{H}_0') = \mathfrak{g}_0'$. But a symmetric argument can show that there exists a geodesic between $v$ and $\mathfrak{g}_0'$ passing through $\mathfrak{g}_n$. Thus $\mathfrak{g}_n = \mathfrak{g}_0'$, proving \ref{i:ptechnical-main}.
\end{proof}

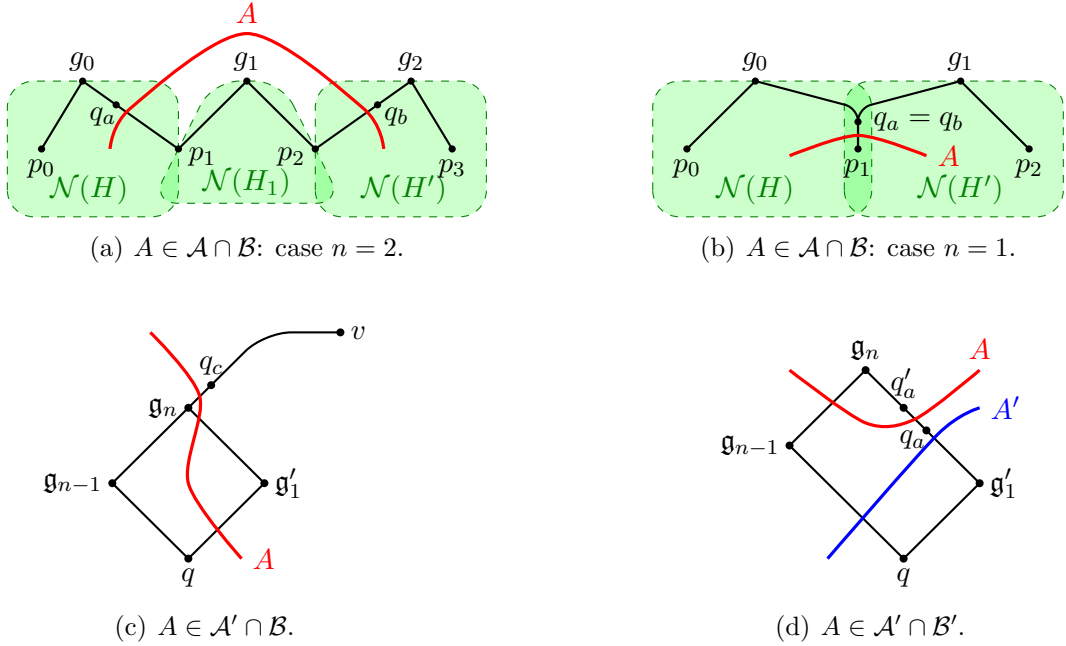
\begin{figure}[ht]
\begin{subfigure}[b]{0.52\textwidth}
\centering
\begin{tikzpicture}[scale=0.9]
\fill [green,rounded corners=10pt,opacity=0.2] (-0.5,-1) rectangle (2,1);
\fill [green,rounded corners=10pt,opacity=0.2] (4,-1) rectangle (6.5,1);
\fill [green,rounded corners=10pt,opacity=0.2] (3.5,1) -- (4.4,-0.8) -- (1.6,-0.8) -- (2.5,1) -- cycle;
\draw [dashed,green!50!black,rounded corners=10pt] (-0.5,-1) rectangle (2,1);
\draw [dashed,green!50!black,rounded corners=10pt] (4,-1) rectangle (6.5,1);
\draw [dashed,green!50!black,rounded corners=10pt] (3.5,1) -- (4.4,-0.8) -- (1.6,-0.8) -- (2.5,1) -- cycle;
\node [green!50!black] at (0.7,-0.6) {$\mathcal{N}(H)$};
\node [green!50!black] at (5.3,-0.6) {$\mathcal{N}(H')$};
\node [green!50!black] at (3,-0.5) {$\mathcal{N}(H_1)$};

\draw [thick] (0,0) -- (0.6,1) -- (2,0) -- (3,1) -- (4,0) -- (5.4,1) -- (6,0);
\fill (0,0) circle (1.5pt) node [below] {$p_0$};
\fill (0.6,1) circle (1.5pt) node [above] {$g_0$};
\fill (2,0) circle (1.5pt) node [right,yshift=-3pt] {$p_1$};
\fill (3,1) circle (1.5pt) node [above] {$g_1$};
\fill (4,0) circle (1.5pt) node [left,yshift=-3pt] {$p_2$};
\fill (5.4,1) circle (1.5pt) node [above] {$g_2$};
\fill (6,0) circle (1.5pt) node [below] {$p_3$};
\fill (1.09,0.65) circle (1.5pt) node [below,xshift=-5pt,yshift=3pt] {$q_a$};
\fill (4.91,0.65) circle (1.5pt) node [below,xshift=7pt,yshift=3pt] {$q_b$};

\draw [very thick,red] plot [smooth] coordinates { (1,0) (1.3,0.6) (3,1.7) (4.7,0.6) (5,0) };
\node [red,above] at (3,1.7) {$A$};
\end{tikzpicture}
\caption{$A \in \mathcal{A} \cap \mathcal{B}$: case $n = 2$.} \label{f:ptech-ABn2}
\end{subfigure}
\hfill
\begin{subfigure}[b]{0.47\textwidth}
\centering
\begin{tikzpicture}[scale=0.9]
\fill [green,rounded corners=10pt,opacity=0.2] (-0.5,-1) rectangle (2.7,1);
\fill [green,rounded corners=10pt,opacity=0.2] (2.3,-1) rectangle (5.5,1);
\draw [dashed,green!50!black,rounded corners=10pt] (-0.5,-1) rectangle (2.7,1);
\draw [dashed,green!50!black,rounded corners=10pt] (2.3,-1) rectangle (5.5,1);
\node [green!50!black] at (1,-0.6) {$\mathcal{N}(H)$};
\node [green!50!black] at (4,-0.6) {$\mathcal{N}(H')$};

\draw [thick] (0,0) -- (1,1);
\draw [thick,rounded corners=5pt] (1,1) -- (2.5,0.6) -- (2.5,0);
\draw [thick] (5,0) -- (4,1);
\draw [thick,rounded corners=5pt] (4,1) -- (2.5,0.6) -- (2.5,0);
\fill (0,0) circle (1.5pt) node [below] {$p_0$};
\fill (1,1) circle (1.5pt) node [above] {$g_0$};
\fill (2.5,0) circle (1.5pt) node [below] {$p_1$};
\fill (2.5,0.4) circle (1.5pt) node [right,xshift=2pt] {$q_a=q_b$};
\fill (4,1) circle (1.5pt) node [above] {$g_1$};
\fill (5,0) circle (1.5pt) node [below] {$p_2$};

\draw [very thick,red] plot [smooth] coordinates { (1.5,-0.1) (2.5,0.2) (3.5,-0.1) } node [right] {$A$};
\end{tikzpicture}
\caption{$A \in \mathcal{A} \cap \mathcal{B}$: case $n = 1$.} \label{f:ptech-ABn1}
\end{subfigure}
\vspace{0.5em}

\begin{subfigure}[b]{0.45\textwidth}
\centering
\begin{tikzpicture}
\draw [thick] (-1,0) -- (0,-1) -- (1,0) -- (0,1);
\draw [thick,rounded corners=10pt] (-1,0) -- (1,2) -- (2,2);
\fill (-1,0) circle (1.5pt) node [left] {$\mathfrak{g}_{n-1}$};
\fill (0,-1) circle (1.5pt) node [below] {$q$};
\fill (1,0) circle (1.5pt) node [right] {$\mathfrak{g}_1'$};
\fill (0,1) circle (1.5pt) node [left] {$\mathfrak{g}_n$};
\fill (0.3,1.3) circle (1.5pt) node [above] {$q_c$};
\fill (2,2) circle (1.5pt) node [right] {$v$};
\draw [very thick,red] plot [smooth] coordinates { (-0.5,2) (0.15,1.15) (0,0) (0.7,-1) } node [right] {$A$};
\end{tikzpicture}
\caption{$A \in \mathcal{A}' \cap \mathcal{B}$.} \label{f:ptech-ApB}
\end{subfigure}
\hfill
\begin{subfigure}[b]{0.45\textwidth}
\centering
\begin{tikzpicture}
\draw [thick] (-1,0) -- (0.5,-1.5) -- (1.5,-0.5) -- (0,1) -- cycle;
\fill (-1,0) circle (1.5pt) node [left] {$\mathfrak{g}_{n-1}$};
\fill (0.5,-1.5) circle (1.5pt) node [below] {$q$};
\fill (1.5,-0.5) circle (1.5pt) node [right] {$\mathfrak{g}_1'$};
\fill (0,1) circle (1.5pt) node [above] {$\mathfrak{g}_n$};
\fill (0.5,0.5) circle (1.5pt) node [above] {$q_a'$};
\fill (0.8,0.2) circle (1.5pt) node [left,yshift=-3.5pt] {$q_a\!\!$};
\draw [very thick,red] plot [smooth] coordinates { (-1,1) (0,0.3) (0.65,0.35) (1.5,1) } node [above] {$A$};
\draw [very thick,blue] plot [smooth] coordinates { (-0.5,-1.5) (0.95,0.15) (1.5,0.5) } node [right] {$A'$};
\end{tikzpicture}
\caption{$A \in \mathcal{A}' \cap \mathcal{B}'$.} \label{f:ptech-ApBp}
\end{subfigure}
\caption{Proof of Proposition \ref{p:technical}: showing that $\mathcal{A} = \mathcal{A}'$ and $\mathcal{B} = \mathcal{B}'$.} \label{f:ptech}
\end{figure}

\subsection{Consequences of Proposition \ref{p:technical}} \label{ss:constech}

\begin{cor} \label{c:technical}
Let a group $G$ act specially on a quasi-median graph $X$. Let $H,H',K,K' \in V(\mathcal{C}X)$, and let $p,p',v,v' \in V(X)$ be such that $p \in \mathcal{N}(H)$, $p' \in \mathcal{N}(H')$, $v \in \mathcal{N}(K)$ and $v' \in \mathcal{N}(K')$. Suppose that $d_{\mathcal{C}X}(H,K) \geq d_{\mathcal{C}X}(H,H')+d_{\mathcal{C}X}(K,K')+3$.

If $\mathfrak{S}$ is a $v$-minimal contact sequence for $(H,H',p,p')$, then $\mathfrak{S}$ is also $v'$-minimal. Furthermore, if $(\mathcal{H}_0,\ldots,\mathcal{H}_n,\mathcal{H}_0',\ldots,\mathcal{H}_n')$ is the $(v,G)$-orbit sequence for $\mathfrak{S}$, then $\mathfrak{g}(v;p,\mathcal{H}_0,\ldots,\mathcal{H}_n) = \mathfrak{g}(v';p,\mathcal{H}_0,\ldots,\mathcal{H}_n)$.
\end{cor}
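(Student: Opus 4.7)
The plan is to show that, for each hyperplane $H_i$ in $\mathfrak{S}$, the gate $g_i''$ of $v'$ in $\mathcal{N}(H_i)$ coincides with the gate $g_i$ of $v$; both conclusions of the corollary will then follow. Throughout, set $n = d_{\mathcal{C}X}(H,H')$ and $m = d_{\mathcal{C}X}(K,K')$.

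\textbf{Step 1 (gates in $\mathcal{N}(H_i)$ coincide).} Suppose a hyperplane $A$ separates $g_i$ from $g_i''$. Since $g_i, g_i'' \in \mathcal{N}(H_i)$ and $\mathcal{N}(H_i)$ is gated by Proposition \ref{p:cfgated}, the hyperplane $A$ must cross $\mathcal{N}(H_i)$; thus $A$ either equals or intersects $H_i$, giving $d_{\mathcal{C}X}(A,H) \leq 1 + n$. The gate property places $v$ in the same sector of $A$ as $g_i$ and $v'$ in the same sector as $g_i''$, so $A$ separates $v$ from $v'$. On the other hand, Proposition \ref{p:liftinggeodesics} applied to $K,K'$ and the vertices $v \in \mathcal{N}(K)$, $v' \in \mathcal{N}(K')$ yields a geodesic from $v$ to $v'$ whose segments lie in the carriers $\mathcal{N}(K_j)$ of hyperplanes on a $\mathcal{C}X$-geodesic from $K$ to $K'$; any hyperplane separating $v$ and $v'$ therefore has $d_{\mathcal{C}X}(A,K) \leq m+1$. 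This contradicts $d_{\mathcal{C}X}(A,K) \geq d_{\mathcal{C}X}(H,K) - d_{\mathcal{C}X}(A,H) \geq (n+m+3) - (n+1) = m+2$.

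\textbf{Step 2 ($v'$-minimality of $\mathfrak{S}$).} Because $g_i=g_i''$ for all $i$, the tuples $\mathfrak{C}_\diagup(\mathfrak{S},v)$ and $\mathfrak{C}_\diagdown(\mathfrak{S},v)$ equal their $v'$-counterparts. The estimate in Step 1 used only that the hyperplanes of $\mathfrak{S}$ lie at $\mathcal{C}X$-distance at most $n$ from $H$, which holds for any contact sequence for $(H,H',p,p')$; hence the same argument yields the same identities for every other contact sequence $\mathfrak{S}'$. The lexicographic inequalities defining $v$-minimality therefore hold if and only if their $v'$-analogues do.

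\textbf{Step 3 (gate equality).} By Step 1, the $(v,G)$- and $(v',G)$-orbit sequences of $\mathfrak{S}$ are identical; call them $(\mathcal{H}_0,\ldots,\mathcal{H}_n,\mathcal{H}_0',\ldots,\mathcal{H}_n')$. Write $\mathfrak{g}_i^w := \mathfrak{g}(w;p,\mathcal{H}_0,\ldots,\mathcal{H}_i)$ for $w \in \{v,v'\}$. I prove $\mathfrak{g}_i^v = \mathfrak{g}_i^{v'}$ by induction on $i$; the case $i=n$ gives the second claim. The base case is the $n=0$ instance of Proposition \ref{p:technical}, which yields $\mathfrak{g}_0^v = g_0 = g_0'' = \mathfrak{g}_0^{v'}$. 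For the inductive step, let $u := \mathfrak{g}_{i-1}^v = \mathfrak{g}_{i-1}^{v'}$. Proposition \ref{p:technical}\ref{i:ptechnical-gateclose}, applied to $v$ and (via Step 2) to $v'$, shows that both the set of hyperplanes separating $u$ from $\mathfrak{g}_i^v$ and the set separating $u$ from $\mathfrak{g}_i^{v'}$ equal the common set $\mathcal{S}$ of hyperplanes separating $p_i$ from $g_i=g_i''$; as each element of $\mathcal{S}$ lies in $\mathcal{N}(H_i)$, it is at $\mathcal{C}X$-distance at most $n+1$ from $H$. Suppose a hyperplane $A$ separates $\mathfrak{g}_i^v$ from $\mathfrak{g}_i^{v'}$. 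If $A\notin\mathcal{S}$, the three vertices $u,\mathfrak{g}_i^v,\mathfrak{g}_i^{v'}$ all lie in one sector of $A$, contradicting that $A$ separates $\mathfrak{g}_i^v$ and $\mathfrak{g}_i^{v'}$. If $A\in\mathcal{S}$, then $A$ crosses the gated subgraph $Y := Y(p,\mathcal{H}_0,\ldots,\mathcal{H}_i)$ since $\mathfrak{g}_i^v,\mathfrak{g}_i^{v'}\in Y$; the gate property then forces $v$ to share a sector of $A$ with $\mathfrak{g}_i^v$ (otherwise a geodesic from $v$ through $\mathfrak{g}_i^v$ to a vertex of $Y$ on the $v$-side of $A$ would cross $A$ twice), and symmetrically for $v'$, so $A$ separates $v$ from $v'$ and the distance estimate from Step 1 again yields a contradiction.

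The main obstacle is the final case of Step 3: because quasi-median graphs contain triangles, a single hyperplane $A\in\mathcal{S}$ can place $u, \mathfrak{g}_i^v, \mathfrak{g}_i^{v'}$ in three distinct sectors even though the sets of hyperplanes separating $u$ from $\mathfrak{g}_i^v$ and from $\mathfrak{g}_i^{v'}$ are equal; the equality of these sets alone does not force $\mathfrak{g}_i^v=\mathfrak{g}_i^{v'}$, and it is the $\mathcal{C}X$-distance estimate inherited from Step 1 that rescues the argument.
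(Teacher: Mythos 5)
Your argument is correct and is essentially the paper's own proof: Steps 1--2 reproduce the paper's first half (a hyperplane separating the $v$- and $v'$-gates in any carrier within $\mathcal{C}X$-distance $n$ of $H$ would lie within $n+1$ of $H$ and, since it must separate $v$ from $v'$, within $m+1$ of $K$, contradicting $d_{\mathcal{C}X}(H,K)\geq n+m+3$), and your Step 3 runs on exactly the same pair of distance bounds, with Proposition \ref{p:technical} \ref{i:ptechnical-gateclose} supplying $d_{\mathcal{C}X}(A,H)\leq n+1$. The only differences are organizational: the paper obtains $\mathfrak{g}(v;p,\mathcal{H}_0,\ldots,\mathcal{H}_n)=\mathfrak{g}(v';p,\mathcal{H}_0,\ldots,\mathcal{H}_n)$ in a single step anchored at $p$ (a separating hyperplane must separate $p$ from one of the two gates, hence some consecutive pair of iterated gates), whereas you induct on $i$ anchored at the previous common gate, and in Step 3 your sector claim is more cleanly justified by running a geodesic from $v$ through $\mathfrak{g}_i^v$ to $\mathfrak{g}_i^{v'}$, which crosses $A$ at most once (a vertex of $Y$ ``on the $v$-side'' need not exist, since $A$ may have more than two sectors) --- both versions are sound.
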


\begin{proof}
Let $m = d_{\mathcal{C}X}(K,K')$, and let $K = K_0,\ldots,K_m = K'$ be a geodesic in $\mathcal{C}X$. For $1 \leq i \leq m$, choose a vertex $v_i \in \mathcal{N}(K_{i-1}) \cap \mathcal{N}(K_i)$; let also $v_0 = v$ and $v_{m+1} = v'$. Let $n = d_{\mathcal{C}X}(H,H')$.

Given a contact sequence $\mathfrak{S} = (H_0,\ldots,H_n,p_0,\ldots,p_{n+1})$ for $(H,H',p,p')$ and any $v \in V(X)$, the tuples $\mathfrak{C}_\diagup(\mathfrak{S},v)$ and $\mathfrak{C}_\diagdown(\mathfrak{S},v)$ only depend on the gates for $v$ in the $\mathcal{N}(H_i)$, $0 \leq i \leq n$. In particular, if for all hyperplanes $A \in V(\mathcal{C}X)$ with $d_{\mathcal{C}X}(H,A) \leq n$ the gates for $v$ and $v'$ in $\mathcal{N}(A)$ coincide, then the set of $v$-minimal contact sequences for $(H,H',p,p')$ coincides with the set of $v'$-minimal ones.

Thus, let $A \in V(\mathcal{C}X)$ be a hyperplane with $d_{\mathcal{C}X}(H,A) \leq n$, and suppose for contradiction that $g \neq g'$, where $g$ and $g'$ are the gates for $v$ and $v'$ (respectively) in $\mathcal{N}(A)$. Let $B$ be a hyperplane separating $g$ and $g'$. Since $B$ separates two points in $\mathcal{N}(A)$, we must have $d_{\mathcal{C}X}(A,B) \leq 1$, and so $d_{\mathcal{C}X}(H,B) \leq n+1$. On the other hand, as $B$ separates the gates of $v$ and $v'$ in a gated subgraph, $B$ must also separate $v=v_0$ and $v'=v_{m+1}$. Thus $B$ must separate $v_i$ and $v_{i+1}$ for some $i \in \{ 0,\ldots,m \}$. As $v_i,v_{i+1} \in \mathcal{N}(K_i)$, it follows that $d_{\mathcal{C}X}(B,K_i) \leq 1$. In particular, $d_{\mathcal{C}X}(B,K) \leq d_{\mathcal{C}X}(B,K_i) + d_{\mathcal{C}X}(K_i,K) \leq i+1 \leq m+1$. But then $d_{\mathcal{C}X}(H,K) \leq d_{\mathcal{C}X}(H,B)+d_{\mathcal{C}X}(B,K) \leq n+m+2$, contradicting our assumption. Thus we must have $g = g'$, and so the set of $v$-minimal contact sequences for $(H,H',p,p')$ coincides with the set of $v'$-minimal ones. In particular, $\mathfrak{S}$ is a $v'$-minimal structural sequence for $(H,H',p,p')$, and so the conclusion of Proposition \ref{p:technical} holds if $v$ is replaced by $v'$ as well.

Now suppose for contradiction that $\mathfrak{g}_n(v) = \mathfrak{g}(v;p,\mathcal{H}_0,\ldots,\mathcal{H}_n)$ is not equal to $\mathfrak{g}_n(v') = \mathfrak{g}(v';p,\mathcal{H}_0,\ldots,\mathcal{H}_n)$. Let $B$ be a hyperplane separating $\mathfrak{g}_n(v)$ from $\mathfrak{g}_n(v')$. Then $B$ separates gates for $v$ and $v'$ in a gated subgraph, and so as above we get $d_{\mathcal{C}X}(B,K) \leq m+1$. On the other hand, since $B$ separates $\mathfrak{g}_n(v)$ from $\mathfrak{g}_n(v')$, it follows that $B$ separates $p$ from either $\mathfrak{g}_n(v)$ or $\mathfrak{g}_n(v')$: without loss of generality, suppose the former. Then $B$ must separate $\mathfrak{g}(v;p,\mathcal{H}_0,\ldots,\mathcal{H}_{j-1})$ and $\mathfrak{g}(v;p,\mathcal{H}_0,\ldots,\mathcal{H}_j)$ for some $j \in \{ 0,\ldots,n \}$. By Proposition \ref{p:technical} \ref{i:ptechnical-gateclose}, it then follows that $B$ separates $p_j$ from $g_j$, and so $d_{\mathcal{C}X}(B,H_j) \leq 1$; in particular, $d_{\mathcal{C}X}(H,B) \leq d_{\mathcal{C}X}(H,H_j)+d_{\mathcal{C}X}(H_j,B) \leq j+1 \leq n+1$. Therefore, $d_{\mathcal{C}X}(H,K) \leq d_{\mathcal{C}X}(H,B)+d_{\mathcal{C}X}(B,K) \leq n+m+2$, again contradicting our assumption. Thus we must have $\mathfrak{g}_n(v) = \mathfrak{g}_n(v')$, as required.
\end{proof}

\begin{lem} \label{l:fewplanes}
Suppose $G$ acts specially on $X$. Let $D \in \N$, and suppose every vertex of $\Delta X/G$ has at most $D$ neighbours. If $v,w \in V(X)$, then there exist at most $(D+1)^2$ hyperplanes $H \in V(\mathcal{C}X)$ such that $w \in \mathcal{N}(H)$ and $w$ is not the gate for $v$ in $\mathcal{N}(H)$.
\end{lem}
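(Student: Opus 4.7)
The plan is to control the set $\mathcal{H}^{\mathrm{bad}} := \{H \in V(\mathcal{C}X) : w \in \mathcal{N}(H),\ w \text{ is not the gate for } v \text{ in } \mathcal{N}(H)\}$ via a map $H \mapsto K_H$ into an auxiliary set $\mathcal{K}$ of ``first-edge'' hyperplanes through $w$, bounding both $|\mathcal{K}|$ and each fibre of the map by $D+1$.

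Given $H \in \mathcal{H}^{\mathrm{bad}}$, let $g_H \neq w$ be the gate of $v$ in $\mathcal{N}(H)$; any geodesic in $\mathcal{N}(H)$ from $w$ to $g_H$ starts with an edge $e$ at $w$ lying inside $\mathcal{N}(H) \cong F \times C$ by Proposition \ref{p:cfgated}. By the product decomposition, the hyperplane $K = K_H$ dual to $e$ is either $H$ itself (when $e$ is in the clique direction) or a hyperplane intersecting $H$ at $w$ (when $e$ is in the fibre direction); in either case $w \in \mathcal{N}(K)$ and $K$ separates $w$ from $v$. A preliminary observation I would use throughout is that, under a special action, every $G$-orbit of hyperplanes contains at most one hyperplane through $w$: two distinct such would have distinct dual cliques at $w$ and would therefore either intersect or osculate at $w$, contradicting specialness.

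For fixed $K \in \mathcal{K} := \{K_H : H \in \mathcal{H}^{\mathrm{bad}}\}$, any $H$ with $K_H = K$ satisfies $H = K$ or is adjacent to $K$ in $\Delta X$; its orbit thus lies in $\{[K]\} \cup N_{\Delta X / G}([K])$, a set of size at most $D+1$, and by the preliminary observation each orbit contributes at most one such $H$, so each fibre of $H \mapsto K_H$ has size at most $D+1$. The same orbit argument gives $|\mathcal{K}| \leq D+1$, provided we establish the geometric claim that any two distinct elements of $\mathcal{K}$ intersect at $w$, which places $\mathcal{K} \subseteq \{K_0\} \cup N_{\Delta X}(K_0)$ for any fixed $K_0 \in \mathcal{K}$. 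Combining the two bounds then yields $|\mathcal{H}^{\mathrm{bad}}| \leq |\mathcal{K}|(D+1) \leq (D+1)^2$.

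The main obstacle is this pairwise-intersection claim, which I would prove by contradiction: suppose distinct $K_1, K_2 \in \mathcal{K}$ osculate at $w$, with dual edges $e_i = w u_i$ and $d(u_i, v) = d(w, v) - 1$. The product structure of $\mathcal{N}(K_2)$ immediately forces $u_1 \notin \mathcal{N}(K_2)$ (otherwise $e_1$ would be a fibre edge of $\mathcal{N}(K_2)$, making $K_1$ and $K_2$ intersect), and symmetrically $u_2 \notin \mathcal{N}(K_1)$. Since $K_1$ and $K_2$ are adjacent in $\mathcal{C}X$ via osculation, Proposition \ref{p:liftinggeodesics} applied with $A = K_1$, $B = K_2$, $p = u_1$, $q = u_2$ produces a vertex $p_1 \in \mathcal{N}(K_1) \cap \mathcal{N}(K_2)$ with $d(u_1, p_1) + d(p_1, u_2) = d(u_1, u_2) = 2$. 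The cases $p_1 \in \{u_1, u_2\}$ are excluded by the preceding sentence, so $p_1$ is a common neighbour of $u_1$ and $u_2$. When $p_1 \neq w$, the $4$-cycle $w u_1 p_1 u_2$ has no diagonals ($u_1 u_2$ would force $K_1 = K_2$, and a lone diagonal $w p_1$ would induce a $K_{1,1,2}$ forbidden by Definition \ref{d:qm}), so it is an induced square forcing $K_1$ and $K_2$ to intersect at $w$, a contradiction. In the residual case $p_1 = w$, I would exploit the vertex $v$: when $d(w, v) = 2$, the $4$-cycle $v u_1 w u_2$ is by the same argument an induced square giving the intersection at $w$; when $d(w, v) \geq 3$, the quasi-median of $(v, u_1, u_2)$ is forced to have rank $2$ with a distinguished vertex $y_1$ at distance $2$ from each $u_i$ and $3$ from $w$, and choosing intermediate vertices on the three length-$2$ geodesics among $y_1, u_1, u_2$ traces a $6$-cycle through $w$ whose isometric embedding in $X$ (verified by iterating the square-argument at each distance-$2$ pair) forces its convex hull to be a $3$-cube by Definition \ref{d:qm}; this cube contains a square witnessing the intersection of $K_1$ and $K_2$ at $w$, the desired contradiction.
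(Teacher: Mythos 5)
Your counting framework is sound and is essentially the paper's: the bound $(D+1)^2$ arises as (number of ``first-edge'' hyperplanes at $w$ pointing towards $v$) times (number of hyperplanes through $w$ equal or adjacent to a fixed one), with specialness guaranteeing that hyperplanes whose carriers contain $w$ lie in pairwise distinct orbits; your fibre bound via the product structure of carriers (Proposition \ref{p:cfgated}) is correct. Where you diverge from the paper, and where the problem lies, is the pairwise-intersection claim for $\mathcal{K}$. The paper obtains it in two lines from separation: the hyperplane dual to $wu_1$ separates $\{w,u_2\}$ from $\{u_1,v\}$ while the hyperplane dual to $wu_2$ separates $\{w,u_1\}$ from $\{u_2,v\}$ (using Proposition \ref{p:geod}), and two hyperplanes separating a four-point configuration transversally in this way must intersect. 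Your local argument handles several cases correctly (the application of Proposition \ref{p:liftinggeodesics} with $m=1$, the induced-square/$K_{1,1,2}$ analysis when $p_1\neq w$, and the case $d(w,v)=2$), but the residual case $p_1=w$, $d(w,v)\geq 3$ is not established.

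Two concrete issues there. First, the quasi-median of $(v,u_1,u_2)$ is not ``forced to have rank $2$'': it can be a $0$- or $1$-quasi-median (in a median graph it always is a median). These cases are in fact harmless -- they produce a common neighbour of $u_1,u_2$ at distance less than $d(w,v)$ from $v$, hence distinct from $w$, and your own square argument finishes -- but this needs to be said rather than the rank asserted. Second, and more seriously, in the genuine rank-$2$ case the isometric embedding of the hexagon $w,u_1,a,y_1,b,u_2$ is not verified. Checking distance-$2$ pairs only rules out chords, and an induced $C_6$ need not be isometrically embedded: you must also show the three antipodal pairs are at distance exactly $3$, i.e.\ $d(u_1,b)=d(a,u_2)=3$ (only $d(w,y_1)=3$ is known), and even among the distance-$2$ pairs you have not excluded $a=b$ or $a\sim b$. (The possibility $d(u_1,b)=1$ you could dismiss, since it yields a second common neighbour of $u_1,u_2$, but $d(u_1,b)=2$, or $a\sim b$, is not addressed and does not follow from the $K_{1,1,2}$ argument.) Compare the paper's proof of Lemma \ref{l:nointosc}, where exactly this antipodal-distance verification is carried out by hand before Definition \ref{d:qm}(iii) can be invoked; without it, the ``convex hull is a $3$-cube'' step, and hence your contradiction, is unsupported. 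Replacing this case by the separation argument above (or supplying the missing distance checks, possibly after re-choosing $a$ and $b$) is needed to close the proof.
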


\begin{proof}
Let $\mathcal{U} \subseteq V(X)$ be the set of vertices $u \in V(X)$ such that $d_X(u,w)=1$ and $d_X(v,w) = d_X(v,u)+1$. We claim that $|\mathcal{U}| \leq D+1$. Indeed, suppose there exist $k$ distinct vertices $u_1,\ldots,u_k \in \mathcal{U}$, and let $H_i$ be the hyperplane separating $w$ and $u_i$ for $1 \leq i \leq k$. It is clear that $H_i \neq H_j$ whenever $i \neq j$: indeed, if $H_i = H_j = H$ then by Proposition \ref{p:geod} $H$ cannot separate $v$ from either $u_i$ or $u_j$, and therefore $u_i=u_j$, hence $i=j$. Since $w \in \mathcal{N}(H_i) \cap \mathcal{N}(H_j)$ for every $i,j$ and since the action $G \curvearrowright X$ is special, it also follows that $H_i^G \neq H_j^G$ whenever $i \neq j$.

We now claim that $H_i$ and $H_j$ intersect for every $i \neq j$. Indeed, $H_i$ cannot separate $u_i$ from $v$ (by Proposition \ref{p:geod}), nor $w$ from $u_j$ (as $H_i \neq H_j$), but it does separate $w$ (and so $u_j$) from $u_i$ (and so $v$). On the other hand, a symmetric argument shows that $H_j$ separates $w$ and $u_i$ from $u_j$ and $v$. Thus $H_i$ and $H_j$ must intersect, as claimed. Therefore, $d_{\Delta X}(H_i,H_j) = 1$ and so, as $H_i^G \neq H_j^G$, we have $d_{\Delta X/G}(H_i^G,H_j^G) = 1$. In particular, $\{ H_1^G,\ldots,H_k^G \}$ are $k$ vertices of a clique in $\Delta X/G$, and so by our assumption it follows that $k \leq D+1$. Thus $|\mathcal{U}| \leq D+1$, as claimed.

Now let $u \in \mathcal{U}$, and let $\mathcal{H} \subseteq V(\mathcal{C}X)$ be the set of hyperplanes $H \in V(\mathcal{C}X)$ such that $u,w \in \mathcal{N}(H)$. It is then enough to show that $|\mathcal{H}| \leq D+1$. Thus, let $H_1,H_2,\ldots,H_k \in \mathcal{H}$ be $k$ distinct hyperplanes, where $H_1$ is the hyperplane separating $u$ and $w$. As $w \in \mathcal{N}(H_i) \cap \mathcal{N}(H_j)$ for every $i,j$ and as $G \curvearrowright X$ is special, it is clear that $H_i^G \neq H_j^G$ for any $i \neq j$. Furthermore, it is clear (see, for instance, Proposition \ref{p:cfgated}) that $H_1$ and $H_j$ intersect for every $j \neq 1$. In particular, $d_{\Delta X}(H_1,H_j) = 1$, and so $d_{\Delta X/G}(H_1^G,H_j^G) = 1$. As by assumption $H_1^G$ has at most $D$ neighbours in $\Delta X/G$, it follows that $k \leq D+1$, and so $|\mathcal{H}| \leq D+1$, as required.
\end{proof}

\begin{thm} \label{t:acyl}
Suppose a group $G$ acts specially on a quasi-median graph $X$, and suppose there exists some $D \in \N$ such that $|\Stab_G(w)| \leq D$ for any $w \in V(X)$ and any vertex of $\Delta X/G$ has at most $D$ neighbours. Then the induced action $G \curvearrowright \mathcal{C}X$ is acylindrical, and the acylindricity constants $D_\varepsilon$ and $N_\varepsilon$ can be expressed as functions of $\varepsilon$ and $D$ only.
\end{thm}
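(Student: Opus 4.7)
The plan is to associate to each pair of hyperplanes $H, H' \in V(\mathcal{C}X)$ at large $\mathcal{C}X$-distance a canonical vertex $w \in V(X)$, to show that for any $g \in G$ with $d_{\mathcal{C}X}(H, H^g) \le \varepsilon$ and $d_{\mathcal{C}X}(H', (H')^g) \le \varepsilon$ the image $w^g$ ranges over only a bounded number $M_\varepsilon$ of vertices of $V(X)$, and then to invoke the uniform bound $|\Stab_G(w)| \le D$ to obtain acylindricity with $N_\varepsilon := M_\varepsilon \cdot D$.

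First I would choose $D_\varepsilon$ sufficiently large (to be quantified below). Given $H, H'$ with $n := d_{\mathcal{C}X}(H, H') \ge D_\varepsilon$, fix $p \in \mathcal{N}(H)$ and $p' \in \mathcal{N}(H')$. If $\mathcal{C}X$ has diameter at most $D_\varepsilon$ the statement is vacuous, so we may assume there exists a hyperplane $K$ with $d_{\mathcal{C}X}(H, K)$ much larger than $n + \varepsilon$ and a vertex $v \in \mathcal{N}(K)$. Let $\mathfrak{S}$ be a $v$-minimal contact sequence for $(H, H', p, p')$ with $(v, G)$-orbit sequence $(\mathcal{H}_0, \ldots, \mathcal{H}_n, \mathcal{H}_0', \ldots, \mathcal{H}_n')$, and set
\[
w := \mathfrak{g}(v; p, \mathcal{H}_0, \ldots, \mathcal{H}_n) \in V(X).
\]
By Proposition~\ref{p:technical}~\ref{i:ptechnical-main}, $w$ also equals $\mathfrak{g}(v; p', \mathcal{H}_n', \ldots, \mathcal{H}_0')$, and unpacking this symmetric expression shows $w \in \mathcal{N}(H')$.

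Second, for any such $g$, the $G$-equivariance of the construction (using that each $\mathcal{H}_i$ and $\mathcal{H}_j'$ is a $G$-invariant orbit) gives
\[
w^g = \mathfrak{g}(v^g; p^g, \mathcal{H}_0, \ldots, \mathcal{H}_n) = \mathfrak{g}(v^g; (p')^g, \mathcal{H}_n', \ldots, \mathcal{H}_0'),
\]
so in particular $w^g \in \mathcal{N}((H')^g)$. Since $d_{\mathcal{C}X}(H', (H')^g) \le \varepsilon$, the hyperplane $(H')^g$ lies on a short $\mathcal{C}X$-path $H' = B_0, B_1, \ldots, B_m = (H')^g$ with $m \le \varepsilon$, whose consecutive terms share a vertex of $X$. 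I would bound the number of possibilities for $(H')^g$ (and hence for $w^g$) by iterating Lemma~\ref{l:fewplanes} along this short path: at each shared vertex, the lemma caps by $(D+1)^2$ the hyperplanes through it that fail to have this vertex as their $v$-gate, while Proposition~\ref{p:technical}~\ref{i:ptechnical-noosc} forbids the remaining osculations so that only gate-preserving hyperplane orbits contribute near $w$. Proposition~\ref{p:technical}~\ref{i:ptechnical-gateclose} additionally controls the $X$-position of $w^g$ relative to $w$, and Corollary~\ref{c:technical} ensures the whole count is independent of the particular choice of $v$ (provided $D_\varepsilon$ is large enough).

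The main obstacle is this counting step, where one must carefully propagate Lemma~\ref{l:fewplanes} along the short path $B_0, \ldots, B_m$ and combine it with parts~\ref{i:ptechnical-noosc} and~\ref{i:ptechnical-gateclose} of Proposition~\ref{p:technical} to force the $(H')^g$ (and in turn $w^g$) into a set of size bounded by some $M_\varepsilon$ depending only on $\varepsilon$ and $D$. Setting $N_\varepsilon := M_\varepsilon \cdot D$ and $D_\varepsilon$ large enough to accommodate all applications of Corollary~\ref{c:technical} then yields acylindricity with both constants depending only on $\varepsilon$ and $D$, as required.
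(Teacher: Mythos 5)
There is a genuine gap, and it comes from inverting the roles of the two hyperplanes in the acylindricity pair. In the paper's argument the far-apart pair $H,K$ \emph{is} the acylindricity pair, the contact sequence is taken for $(H,H^g,p,p^g)$ -- a sequence of length at most $\varepsilon+1$ -- and the observation vertex $v$ lies on $\mathcal{N}(K)$, where $K$ is also moved at most $\varepsilon+1$ by $g$; this is exactly what makes the hypothesis $d_{\mathcal{C}X}(H,K) \geq d_{\mathcal{C}X}(H,H^g)+d_{\mathcal{C}X}(K,K^g)+3$ of Corollary \ref{c:technical} verifiable with $D_\varepsilon = 2\varepsilon+6$. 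In your setup the contact sequence joins the two acylindricity hyperplanes $H,H'$, so it has length $n \geq D_\varepsilon$, which is \emph{not} bounded in terms of $\varepsilon$ and $D$, and the basepoint $v$ sits on an auxiliary hyperplane $K$ far beyond $H'$. Two things break. First, such a $K$ need not exist (the diameter of $\mathcal{C}X$ can exceed $D_\varepsilon$ without being ``much larger than $n+\varepsilon$'' for every relevant pair, so the case you dismiss as vacuous is not), and even when it exists there is no control whatsoever on $d_{\mathcal{C}X}(K,K^g)$: the element $g$ is only assumed to almost-fix $H$ and $H'$, so the hypothesis of Corollary \ref{c:technical} (which needs the hyperplane carrying $v^g$ to stay close to $K$) cannot be checked, and the crucial identity $\mathfrak{g}(v;\ldots)=\mathfrak{g}(v^g;\ldots)$ is unavailable.

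Second, your counting step does not close. Knowing $w^g \in \mathcal{N}((H')^g)$ with $(H')^g$ within $\varepsilon$ of $H'$ does not confine $w^g$ to a set of size $M_\varepsilon(\varepsilon,D)$: carriers of hyperplanes are in general infinite (for a graph product, $\mathcal{N}(H)$ is a coset of $\GG[\st(v)]$), and Lemma \ref{l:fewplanes} only bounds the number of hyperplanes through a \emph{fixed} vertex whose gate condition fails; it says nothing about the number of vertices in a union of carriers along a short path. The paper avoids this entirely: it uses Proposition \ref{p:technical}~\ref{i:ptechnical-main} together with equivariance to show that $g$ maps one \emph{determined} vertex, $\mathfrak{g}(v;p,\mathcal{H}_n',\ldots,\mathcal{H}_0')$, to another determined vertex, $\mathfrak{g}(v;p,\mathcal{H}_0,\ldots,\mathcal{H}_n)$, where the determining data are the orbit sets $\mathcal{H}_i,\mathcal{H}_j' \subseteq V(\mathcal{C}X/G)$ along a sequence of length at most $\varepsilon+1$; Lemma \ref{l:fewplanes} and the degree bound on $\Delta X/G$ then cap the number of possible data by $N^{O(\varepsilon)}$ with $N=(D+1)^2 2^{D+1}$, and only at the very end does the stabiliser bound $|\Stab_G(w)| \leq D$ enter. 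With your long sequence between $H$ and $H'$ the analogous data would have $n$ entries, giving a bound exponential in $n$ rather than in $\varepsilon$, which is useless for acylindricity. So the overall strategy (canonical vertex plus stabiliser bound) is the right one, but you must run it with the short sequence $(H,H^g)$ and observe from the second acylindricity hyperplane $K$, as the paper does.
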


\begin{proof}
Let $\varepsilon \in \N$. We claim that the acylindricity condition in Definition \ref{d:ah} is satisfied for $D_\varepsilon = 2\varepsilon+6$ and $N_\varepsilon = N^{2(\varepsilon+3)}D/(N-1)^2$, where $N = (D+1)^2 2^{D+1}$.

Indeed, let $h,k \in \Delta X$ be such that $d_{\mathcal{C}X}(h,k) \geq D_\varepsilon$. Let $H,K \in V(\mathcal{C}X)$ be hyperplanes such that $d_{\mathcal{C}X}(H,h) \leq 1/2$ and $d_{\mathcal{C}X}(K,k) \leq 1/2$, and note that we have $d_{\mathcal{C}X}(H,K) \geq D_\varepsilon-1 = 2\varepsilon+5$. Let $\mathcal{G}_\varepsilon(h,k) = \{ g \in G \mid d_{\mathcal{C}X}(h,h^g) \leq \varepsilon, d_{\mathcal{C}X}(k,k^g) \leq \varepsilon \}$, and note that we have $\mathcal{G}_\varepsilon(h,k) \subseteq \mathcal{G}_{\varepsilon+1}(H,K)$. We thus aim to show that $|\mathcal{G}_{\varepsilon+1}(H,K)| \leq N_\varepsilon$.

Pick vertices $v \in \mathcal{N}(K)$ and $p \in \mathcal{N}(H)$, and an element $g \in \mathcal{G}_{\varepsilon+1}(H,K)$. Let $\mathfrak{S}=(H_0,\ldots,H_n,p_0,\ldots,p_{n+1})$ be a $v$-minimal contact sequence for the tuple $(H,H^g,p,p^g)$ with $v$-gate $(g_0,\ldots,g_n)$ and $(v,G)$-orbit sequence $(\mathcal{H}_0,\ldots,\mathcal{H}_n,\mathcal{H}_0',\ldots,\mathcal{H}_n')$; as $g \in \mathcal{G}_{\varepsilon+1}(H,K)$, we have $n \leq \varepsilon+1$. For $0 \leq i \leq n$, set $\mathfrak{g}_i = \mathfrak{g}(v;p,\mathcal{H}_0,\ldots,\mathcal{H}_i)$ and $\mathfrak{g}_i' = \mathfrak{g}(v;p^g,\mathcal{H}_n',\ldots,\mathcal{H}_i')$; let also $\mathfrak{g}_{-1}=p$ and $\mathfrak{g}_{n+1}' = p^g$.

We first claim that there exist hyperplanes $A_0,\ldots,A_n \in V(\mathcal{C}X)$ such that $\mathfrak{g}_{i-1},\mathfrak{g}_i \in \mathcal{N}(A_i)$ for each $i$. Indeed, this is clear if $g_i = p_{i+1}$ for each $i$, as in that case we may simply take $A_i = H_i$ for each $i$. Otherwise, let $k \in \{ 0,\ldots,n \}$ be minimal such that $g_k \neq p_{k+1}$, and let $A$ be a hyperplane separating $g_k$ and $p_{k+1}$ such that $g_k \in \mathcal{N}(A)$. For $0 \leq i \leq k-1$ we may take $A_i = H_i$, while for $k \leq i \leq n$ we can show (by induction on $i$, say) that $\mathfrak{g}_i \in \mathcal{N}(A)$. Indeed, the base case ($i = k$) is clear by construction; and if $\mathfrak{g}_{i-1} \in \mathcal{N}(A)$ for some $i > k$ and $\mathfrak{g}_{i-1}=q_0,\ldots,q_m=\mathfrak{g}_i$ is a geodesic in $X$, then $A$ cannot osculate with the hyperplane separating $q_{j-1}$ and $q_j$ by Proposition \ref{p:technical} \ref{i:ptechnical-noosc} and \ref{i:ptechnical-gateclose}, and so $q_j \in \mathcal{N}(A)$ by induction on $j$. Thus we may take $A_i = A$ for $k \leq i \leq n$, so that $\mathfrak{g}_{i-1},\mathfrak{g}_i \in \mathcal{N}(A_i)$ for each $i$, as claimed. A symmetric argument shows that there exist hyperplanes $B_n,\ldots,B_0 \in V(\mathcal{C}X)$ such that $\mathfrak{g}_{i+1}',\mathfrak{g}_i' \in \mathcal{N}(B_i)$ for each $i$.

Now, we pass the sequence $(\mathfrak{g}_{-1},\ldots,\mathfrak{g}_n)$ to a subsequence $(\mathfrak{g}_{k_0},\ldots,\mathfrak{g}_{k_a})$ by removing those $\mathfrak{g}_i$ for which $\mathfrak{g}_{i-1}=\mathfrak{g}_i$. It then follows that $\mathfrak{g}_{k_{i-1}} \neq \mathfrak{g}_{k_i}$ and that $\mathfrak{g}_{k_{i-1}}, \mathfrak{g}_{k_i} \in \mathcal{N}(A_{k_i})$ for $1 \leq i \leq a$, where $a \leq n+1 \leq \varepsilon+2$. Similarly, we may pass the sequence $(\mathfrak{g}_{n+1}',\ldots,\mathfrak{g}_0')$ to a subsequence $(\mathfrak{g}_{k_0'},\ldots,\mathfrak{g}_{k_b'})$ such that $\mathfrak{g}_{k_{i-1}'} \neq \mathfrak{g}_{k_i'}$ and that $\mathfrak{g}_{k_{i-1}'}, \mathfrak{g}_{k_i}' \in \mathcal{N}(B_{k_i})$ for $1 \leq i \leq b$, where $b \leq n+1 \leq \varepsilon+2$.

Now as $d_{\mathcal{C}X}(H,H^g)+d_{\mathcal{C}X}(K,K^g)+3 \leq 2(\varepsilon+1)+3 = 2\varepsilon+5 \leq d_{\mathcal{C}X}(H,K)$, it follows from Corollary \ref{c:technical} that $\mathfrak{S}$ is also a $v^g$-minimal contact sequence and that $\mathfrak{g}(v;p,\mathcal{H}_0,\ldots,\mathcal{H}_n) = \mathfrak{g}(v^g;p,\mathcal{H}_0,\ldots,\mathcal{H}_n)$. Therefore, by Proposition \ref{p:technical} \ref{i:ptechnical-main} and the discussion above,
\begin{equation} \label{e:gates}
\begin{aligned}
\mathfrak{g}(v;p,\mathcal{H}_{k_1},\ldots,\mathcal{H}_{k_a}) &= \mathfrak{g}(v;p,\mathcal{H}_0,\ldots,\mathcal{H}_n) = \mathfrak{g}(v^g;p,\mathcal{H}_0,\ldots,\mathcal{H}_n) \\ &= \mathfrak{g}(v^g;p^g,\mathcal{H}_n',\ldots,\mathcal{H}_0') = \mathfrak{g}(v;p,\mathcal{H}_n',\ldots,\mathcal{H}_0')^g \\ &= \mathfrak{g}(v;p,\mathcal{H}_{k_1'}',\ldots,\mathcal{H}_{k_b'}')^g.
\end{aligned}
\end{equation}
As the stabiliser of $\mathfrak{g}(v;p,\mathcal{H}_{k_1},\ldots,\mathcal{H}_{k_a})$ has cardinality $\leq D$, it follows that, given any subsets $\mathcal{H}_{k_1},\ldots,\mathcal{H}_{k_a},\mathcal{H}_{k_1'}',\ldots,\mathcal{H}_{k_b'}' \subseteq V(\mathcal{C}X/G)$, there are at most $D$ elements $g \in G$ satisfying \eqref{e:gates}. But as $\mathfrak{g}_{k_{i-1}} \neq \mathfrak{g}_{k_i}$, as $\mathfrak{g}_{k_i}$ lies on a geodesic between $\mathfrak{g}_{k_{i-1}}$ and $v$, and as $\mathfrak{g}_{k_{i-1}},\mathfrak{g}_{k_i} \in \mathcal{N}(A_i)$, it follows from Lemma \ref{l:fewplanes} that there are at most $(D+1)^2$ possible choices for $A_{k_i}$ (for $1 \leq i \leq a$). Moreover, given a choice of $A_{k_i}$, as $\mathcal{H}_{k_i} \subseteq \st_{\Delta X/G}(A_{k_i}^G)$ and by assumption $|\st_{\Delta X/G}(A_{k_i}^G)| \leq D+1$, there exist at most $2^{D+1}$ choices for $\mathcal{H}_{k_i}$. It follows that there exist at most $N^a$ choices for the subsets $\mathcal{H}_{k_1},\ldots,\mathcal{H}_{k_a} \subseteq V(\mathcal{C}X/G)$, where $N = (D+1)^2 2^{D+1}$; similarly, there exist at most $N^b$ choices for the subsets $\mathcal{H}_{k_1'}',\ldots,\mathcal{H}_{k_b'}' \subseteq V(\mathcal{C}X/G)$. In particular,
\[
|\mathcal{G}_{\varepsilon+1}(H,K)| \leq D \left( \sum_{a=0}^{\varepsilon+2} N^a \right) \left( \sum_{b=0}^{\varepsilon+2} N^b \right) < D \left( \frac{N^{\varepsilon+3}}{N-1} \right)^2 = N_\varepsilon,
\]
as required.
\end{proof}

\section{Application to graph products} \label{s:gp}

We use this section to deduce results about graph products from Theorems \ref{t:qtree} and \ref{t:main}: namely, we show Corollary \ref{c:gp} in Section \ref{ss:nonelem} and Corollary \ref{c:AHacc} in Section \ref{ss:AHacc}. Throughout this section, let $\Gamma$ be a simplicial graph and let $\mathcal{G} = \{ G_v \mid v \in V(\Gamma) \}$ be a collection of non-trivial groups. Let $X$ be the quasi-median graph associated to $\GG$, as given by Theorem \ref{t:gengp}: namely, $X$ is the Cayley graph of $\GG$ with respect to the (not necessarily finite) generating set $S = \bigsqcup_{v \in V(\Gamma)} (G_v \setminus \{1\})$.

For the rest of the paper, given a subset $A \subseteq V(\Gamma)$, we write $\Gamma_A$ for the full subgraph of $\Gamma$ spanned by $A$ and denote $\{ G_v \mid v \in A \}$ by $\mathcal{G}_A$, and we view the graph product $\GG[A]$ as a subgroup of $\GG$. We will use the following result.

\begin{thm}[Genevois {\cite[Section 8.1]{genthesis}}; Genevois--Martin {\cite[Theorem 2.10]{gm}}] \label{t:gmNH}
For $v \in V(\Gamma)$, let $H_v$ be the hyperplane dual to the clique $G_v \subseteq X$. Then any hyperplane $H$ in $X$ is of the form $H_v^g$ for some $v \in V(\Gamma)$ and $g \in \GG$. Moreover, the vertices in $\mathcal{N}(H)$ are precisely $\GG[\st(v)]g \subseteq V(X)$.
\end{thm}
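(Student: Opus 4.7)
The plan is to label each edge of $X$ by a vertex of $\Gamma$ and show that this labelling is preserved under the hyperplane equivalence; this simultaneously identifies every hyperplane as some $H_v^g$ and gives enough control to describe its carrier.

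First I would define $\lambda(e) = v \in V(\Gamma)$ for each edge $e = (g, sg)$, where $v$ is the unique vertex with $s \in G_v \setminus \{1\}$. The key claim is that $\lambda$ descends to a well-defined map on hyperplanes, i.e.\ is preserved by the equivalences generating $\sim$. For a triangle in $X$, the three vertices must all lie in a single coset $G_v g$: if the sides correspond to non-trivial $s_1, s_2, s_3$ with $s_2 s_1 = s_3$, the normal form theorem for $\GG$ forces $s_1, s_2, s_3$ to all lie in the same vertex group, so every edge of the triangle carries label $v$. For an (induced) square $\{g, sg, tg, stg\}$ with $s \in G_u \setminus \{1\}$ and $t \in G_w \setminus \{1\}$ satisfying $st = ts$: the absence of diagonals forces $u \neq w$, the commutation then forces $u \sim w$ in $\Gamma$, and opposite edges share their label. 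Thus every hyperplane $H$ has a well-defined label; if $e = (g, sg) \in H$ with $s \in G_v$, then $e$ lies in the clique $G_v g$, which is the image of $G_v$ under $g$ and is therefore dual to $H_v^g$, giving $H = H_v^g$.

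For the carrier description it suffices to handle $H = H_v$ and translate by $g$; fix a base edge $e_0 = (1, s_0)$ with $s_0 \in G_v \setminus \{1\}$. For the inclusion $V(\mathcal{N}(H_v)) \subseteq \GG[\st(v)]$, I would induct on the length of a chain of triangle/square equivalences from $e_0$ to an edge $f \in [e_0]$. The base case is clear since $\{1, s_0\} \subseteq \GG[\st(v)]$; a triangle equivalence keeps both endpoints within a coset $G_v h \subseteq \GG[\st(v)]$ (where $h \in \GG[\st(v)]$ by induction), while a square equivalence translates the endpoints by an element of $G_u$ with $u \in \lk(v) \subseteq \st(v)$, both operations preserving membership in $\GG[\st(v)]$.

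For the reverse inclusion $\GG[\st(v)] \subseteq V(\mathcal{N}(H_v))$, I would invoke Proposition \ref{p:cfgated}: since $\mathcal{N}(H_v)$ is gated and decomposes as $F \times C$, every vertex of $V(\mathcal{N}(H_v))$ lies on a unique clique dual to $H_v$, which by the label argument above must be a coset of $G_v$. Hence $V(\mathcal{N}(H_v))$ is closed under left multiplication by $G_v$. Moreover, for any $u \in \lk(v)$ and $t \in G_u$, given $g' \in V(\mathcal{N}(H_v))$ the square with vertices $\{g', s_0 g', tg', s_0 t g'\}$ (which exists because $[s_0, t] = 1$) has the edge $(g', s_0 g')$ in $[e_0]$ by the previous closure, and its opposite edge $(tg', s_0 t g')$ is therefore also in $[e_0]$, giving $tg' \in V(\mathcal{N}(H_v))$. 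Thus $V(\mathcal{N}(H_v))$ is closed under left multiplication by every $G_w$ with $w \in \st(v)$, and starting from $1 \in V(\mathcal{N}(H_v))$, an induction on word length over the generators $\bigsqcup_{w \in \st(v)} G_w \setminus \{1\}$ of $\GG[\st(v)]$ yields the desired inclusion. The main obstacle is the well-definedness of $\lambda$, particularly the square case, which hinges on the fact that commutation in $\GG$ between non-trivial elements of distinct vertex groups is equivalent to adjacency in $\Gamma$; modulo this input, the remaining steps are bookkeeping with the clique/fibre decomposition.
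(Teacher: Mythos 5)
The paper does not prove this statement at all: it is imported as Theorem \ref{t:gmNH} with citations to Genevois's thesis and to Genevois--Martin, so there is no internal proof to compare against. Your self-contained argument is essentially the standard one from those sources and is correct in outline: label each edge $(g,sg)$ by the vertex group containing $s$, check the label is invariant under the triangle/square moves generating the hyperplane relation (so $H=[e]=H_v^g$, since the unique maximal clique containing an edge labelled $v$ at $g$ is the coset $G_vg$), prove $V(\mathcal{N}(H_v))\subseteq \GG[\st(v)]$ by induction along a chain of elementary equivalences from the base edge, and prove the reverse inclusion by showing $V(\mathcal{N}(H_v))$ is closed under left multiplication by $G_w$ for all $w\in\st(v)$, using Proposition \ref{p:cfgated} (or directly, cliques dual to $H_v$ being $G_v$-cosets) and explicit commuting squares.

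The one step you should make explicit is the square case of the label invariance, and it needs slightly more than the fact that commutation of nontrivial elements of distinct vertex groups is equivalent to adjacency in $\Gamma$. The relation on edges is generated by \emph{arbitrary} squares, so you must show that every $4$-cycle $p_1,p_2=ap_1,p_3=bap_1,p_4=cbap_1$ with $a,b,c,d\in S$ and $dcba=1$ has opposite edges with equal labels. The normal form theorem does give this: either all four syllables lie in one vertex group (the $4$-cycle lies in a single clique coset, where labels trivially agree), or the supports pair up as $(v,u,v,u)$ with $c=a^{-1}$, $d=b^{-1}$ and $u$ adjacent to $v$, which is exactly the commuting square $\{g,sg,tg,stg\}$ you describe. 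Your write-up assumes this standard form (and implicitly assumes the square is induced), so the short normal-form computation ruling out the other syllable patterns -- and the degenerate clique case, depending on whether ``square'' is read as an induced $K_{2,2}$ -- is the only missing bookkeeping; once it is supplied, the rest of your argument, including the closure argument for $\GG[\st(v)]\subseteq V(\mathcal{N}(H_v))$ and the translation to $H_v^g$ by equivariance of the right action, goes through.
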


\begin{rmk}
Due to our convention to consider only right actions, the Cayley graph $X = \mathrm{Cay}(\GG,S)$ defined in Theorem \ref{t:gengp} is the \emph{left} Cayley graph: for $s \in S$ and $g \in \GG$, an edge labelled $s$ joins $g \in V(X)$ to $sg \in V(X)$. Therefore, contrary to the convention in \cite{genthesis,gm}, the vertices in the carrier of a hyperplane will form a \emph{right} coset of $\GG[\st(v)]$ for some $v \in V(\Gamma)$.
\end{rmk}

\subsection{Acylindrical hyperbolicity} \label{ss:nonelem}

Here we prove Corollary \ref{c:gp}. It is clear from Theorem \ref{t:gengp} and Lemma \ref{l:gengp-dxiso} that we may apply Theorems \ref{t:qtree} and \ref{t:main} to the quasi-median graph $X$ associated to a graph product $\GG$. In particular, it follows that the contact graph $\mathcal{C}X$ is a quasi-tree and $\GG$ acts on it acylindrically. We thus only need to show that, given that $|V(\Gamma)| \geq 2$ and the complement $\Gamma^C$ of $\Gamma$ is connected, either the action $\GG \curvearrowright \mathcal{C}X$ is non-elementary or $\GG \cong C_2 * C_2$.

\begin{lem} \label{l:starunbounded}
The following are equivalent:
\begin{enumerate}[label={\normalfont({\roman*})}]
\item \label{i:lsu1} $\mathcal{C}X$ is unbounded;
\item \label{i:lsu2} $\Gamma^C$ is connected and $|V(\Gamma)| \geq 2$.
\end{enumerate}
\end{lem}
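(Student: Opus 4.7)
The proof splits into the two implications.

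For \ref{i:lsu1}$\Rightarrow$\ref{i:lsu2} I would argue by contrapositive. If $|V(\Gamma)| = 1$ then $X$ is a single clique with a unique hyperplane, so $\mathcal{C}X$ is a single vertex. If $|V(\Gamma)| \geq 2$ but $\Gamma^C$ is disconnected, then $\Gamma$ is a non-trivial join $\Gamma = \Gamma_1 * \Gamma_2$, so $\GG$ splits as a direct product $\GG[V(\Gamma_1)] \times \GG[V(\Gamma_2)]$ with $X$ realised as a Cartesian product of the two factor Cayley graphs. By Theorem~\ref{t:gmNH}, the carrier of any hyperplane $H_v^g$ with $v \in V(\Gamma_i)$ is $\GG[\st(v)]g \supseteq \GG[V(\Gamma_{3-i})]g$; consequently any two hyperplanes of opposite ``types'' share a vertex of $X$ and are adjacent in $\mathcal{C}X$, so $\mathrm{diam}(\mathcal{C}X) \leq 2$.

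For \ref{i:lsu2}$\Rightarrow$\ref{i:lsu1} the first step is to extract from Theorem~\ref{t:gmNH} the following distance characterisation: for $u \in V(\Gamma)$ and $g \in \GG$, $d_{\mathcal{C}X}(H_u, H_u^g) \leq k$ if and only if $g = s_k s_{k-1} \cdots s_0$ for some $s_i \in \GG[\st(v_i)]$ and indices $v_0, \ldots, v_k \in V(\Gamma)$ with $v_0 = v_k = u$. Indeed, a $\mathcal{C}X$-path $H_u = K_0, \ldots, K_k = H_u^g$ can be realised by writing $K_i = H_{v_i}^{h_i}$ with consecutive carriers $\GG[\st(v_i)]h_i$ meeting, which rewrites as $h_i h_{i-1}^{-1} \in \GG[\st(v_i)]\GG[\st(v_{i-1})]$; these telescope to the stated decomposition of $g = h_k$. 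Showing unboundedness of $\mathcal{C}X$ thus reduces to exhibiting elements $g_n \in \GG$ requiring arbitrarily many star-subgroup factors. Since $\Gamma^C$ is connected on at least two vertices, non-adjacent $u, v \in V(\Gamma)$ exist.

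In the subcase where $u, v$ have \emph{no} common neighbour in $\Gamma$, I would use the canonical retraction $\pi\colon \GG \twoheadrightarrow \GG[\{u, v\}] = G_u * G_v$ sending each $G_w$ with $w \notin \{u, v\}$ to $1$; this is well-defined since every commutator defining relator of $\GG$ maps to the identity under $\pi$. For every $w \in V(\Gamma)$ the set $\st(w) \cap \{u, v\}$ has at most one element (a common neighbour is excluded by assumption, while $u \notin \st(v)$ and $v \notin \st(u)$ because $u, v$ are non-adjacent), so $\pi(\GG[\st(w)])$ lies in $G_u$, $G_v$, or the trivial subgroup, and each factor $\pi(s_i)$ has syllable length at most $1$ in $G_u * G_v$. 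Taking $g_n = (g_u g_v)^n$ with $g_u \in G_u \setminus \{1\}, g_v \in G_v \setminus \{1\}$, whose $\pi$-image has syllable length $2n$, forces $k + 1 \geq 2n$ and hence $d_{\mathcal{C}X}(H_u, H_u^{g_n}) \to \infty$.

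The main obstacle is the remaining subcase, in which every non-adjacent pair in $\Gamma$ shares a common neighbour (as for $\Gamma = C_5$); here some $\GG[\st(w)]$ already contains the entire subgroup $G_u * G_v$ and the naive retraction yields no bound. To overcome this I would locate three vertices $u_1, u_2, u_3 \in V(\Gamma)$ that are not simultaneously contained in any $\st(v)$. Existence of such a triple is a small combinatorial fact resting on $\Gamma^C$-connectedness: this rules out dominating vertices in $\Gamma$, and one then builds the triple starting from a path of length two in $\Gamma^C$, modifying it whenever a common ``super-neighbour'' obstructs. For this triple and $g_n = (g_{u_1} g_{u_2} g_{u_3})^n$, a syllable-trap analysis in the graph-product normal form shows that any single $\GG[\st(v)]$-factor can absorb only a bounded window of $g_n$ (certain syllables get wedged between non-commuting neighbours and thus cannot migrate to the ends of the word), giving a linear lower bound on the number of factors.
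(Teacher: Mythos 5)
Your (i)$\Rightarrow$(ii) direction and your reduction of (ii)$\Rightarrow$(i) to a ``star-factor length'' lower bound are fine and close in spirit to the paper (which works with the quantity $\|g\|_*$, the minimal number of factors from subgroups $\GG[\st(v)]$, and with lifted $\mathcal{C}X$-geodesics via Proposition \ref{p:liftinggeodesics}); your retraction argument also correctly handles the subcase where some non-adjacent pair has no common neighbour. The genuine gap is in your remaining subcase. The ``small combinatorial fact'' you rely on --- that whenever $\Gamma^C$ is connected one can find $u_1,u_2,u_3$ not simultaneously contained in any $\st(v)$ --- is false. Unwinding the definition, $\{u_1,u_2,u_3\}\not\subseteq\st(v)$ for every $v$ says exactly that every vertex $v$ has a $\Gamma^C$-neighbour among $u_1,u_2,u_3$, i.e.\ that $\{u_1,u_2,u_3\}$ is a \emph{total dominating set} of $\Gamma^C$ of size $3$. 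Take $\Gamma$ to be the complement of a long cycle, say $\Gamma^C=C_9$: then $\Gamma^C$ is connected, every $\Gamma$-non-adjacent pair (a $C_9$-edge) has a common $\Gamma$-neighbour (any vertex far from that edge on the $9$-cycle), so your first subcase does not apply; yet each vertex of $C_9$ has only two neighbours, so three vertices dominate at most six of the nine vertices and no such triple exists. Your suggested repair procedure also fails in miniature: starting from a $\Gamma^C$-path $u_1,u_2,u_3$ obstructed by a common super-neighbour $v$, replacing $u_3$ by $v$ produces a triple contained in $\st(v)$. (Separately, the ``syllable-trap'' estimate is only sketched, and would need care when some of the $u_i$ commute, but that is secondary.)

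The paper circumvents this by not restricting to three vertices at all: since $\Gamma^C$ is connected with $|V(\Gamma)|\geq 2$, there is a closed walk $v_0,v_1,\ldots,v_\ell=v_0$ in $\Gamma^C$ visiting \emph{every} vertex of $\Gamma$, and one sets $g=g_1\cdots g_\ell$ with $1\neq g_i\in G_{v_i}$. Consecutive syllables never commute, so $g^n$ has a unique normal form; lifting a $\mathcal{C}X$-geodesic from a hyperplane at $1$ to one at $g^n$ via Proposition \ref{p:liftinggeodesics} and Theorem \ref{t:gmNH}, each intermediate jump lies in a single coset of some $\GG[\st(v)]$, and a jump swallowing $\ell$ or more consecutive syllables would force $\st(v)=V(\Gamma)$, i.e.\ $v$ isolated in $\Gamma^C$ --- impossible. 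This is exactly the role your triple was meant to play, but using all of $V(\Gamma)$ makes the needed combinatorial input (no star equals $V(\Gamma)$) trivially available, whereas a bounded-size substitute need not exist.
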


\begin{proof}
We first show \ref{i:lsu1} $\Rightarrow$ \ref{i:lsu2}. Indeed, if $\Gamma$ is a single vertex $v$, then $X$ is a single clique and so $\mathcal{C}X$ is a single vertex. On the other hand, if $\Gamma^C$ is disconnected, then we have a partition $V(\Gamma) = A \sqcup B$ where $A$ and $B$ are adjacent and non-empty. In particular, $\GG = \GG[A] \times \GG[B]$, and so any vertex $g \in \GG$ of $X$ can be expressed as $g = g_A g_B$ for some $g_A \in \GG[A]$ and $g_B \in \GG[B]$. Thus, if $H \in V(\mathcal{C}X)$ then by Theorem \ref{t:gmNH} $\mathcal{N}(H) = \GG[\st(v)] g_A g_B$ for some $g_A \in \GG[A]$, $g_B \in \GG[B]$ and $v \in V(\Gamma)$: without loss of generality, suppose $v \in A$. Then $g_B \in \GG[B] \leq \GG[\st(v)]$ and $g_A \in \GG[A] \leq \GG[\st(u)]$ for any $u \in B$, and so $g_A \in \mathcal{N}(H) \cap \mathcal{N}(H_u)$; therefore, $d_{\mathcal{C}X}(H,H_u) \leq 1$. Since $1 \in \mathcal{N}(H_u) \cap \mathcal{N}(H_v)$ and so $d_{\mathcal{C}X}(H_u,H_v) \leq 1$ for any $u,v \in V(\Gamma)$, it follows that $d_{\mathcal{C}X}(H,H') \leq 3$ for any $H,H' \in \mathcal{C}X$ and so $\mathcal{C}X$ is bounded, as required.

To show \ref{i:lsu2} $\Rightarrow$ \ref{i:lsu1}, suppose that $\Gamma$ is a graph with at least $2$ vertices and connected complement. Thus, there exists a closed walk $(v_0,v_1,\ldots,v_\ell)$ on the complement of $\Gamma$ that visits every vertex -- in particular, we have $v_i \in V(\Gamma)$ with $v_\ell = v_0$ and $v_{i-1} \neq v_i$, $(v_{i-1},v_i) \notin E(\Gamma)$ for $1 \leq i \leq \ell$. Pick arbitrary non-identity elements $g_i \in G_{v_i}$ for $i = 1,\ldots,\ell$, and consider the element $g = g_1 \cdots g_\ell \in \GG$.

Now let $n \in \N$, and let $A,B \in V(\mathcal{C}X)$ be such that $1 \in \mathcal{N}(A)$ and $g^n \in \mathcal{N}(B)$. Let $A=A_0,\ldots,A_m=B$ be the geodesic in $\mathcal{C}X$ and let $1=p_0,\ldots,p_{m+1}=g^n$ be the vertices in $X$ given by Proposition \ref{p:liftinggeodesics}. It follows from the normal form theorem for graph products \cite[Theorem 3.9]{green} that $\overbrace{(g_1 \cdots g_\ell) \cdots (g_1 \cdots g_\ell)}^n$ is the \emph{unique} normal form for the element $g^n$. In particular, as geodesics in $X$ are precisely the words spelling out normal forms of elements of $\GG$, we have $p_i = g_{\ell n-c_i+1} g_{\ell n-c_i+2} \cdots g_{\ell n}$, where $0 = c_0 \leq c_1 \leq \cdots \leq c_{m+1} = \ell n$ and indices are taken modulo $\ell$.

We now claim that $c_{i+1}-c_i < \ell$ for each $i$. Indeed, suppose $c_{i+1}-c_i \geq \ell$ for some $i$. Note that, as $p_i,p_{i+1} \in \mathcal{N}(A_i)$, it follows from Theorem \ref{t:gmNH} that $\GG[\st(v)] p_i = V(\mathcal{N}(A_i)) = \GG[\st(v)] p_{i+1}$ for some $v \in V(\Gamma)$, and therefore we have $p_{i+1} p_i^{-1} \in \GG[\st(v)]$. But as $g_{\ell n-c_{i+1}} g_{\ell n-c_{i+1}+1} \cdots g_{\ell n-c_i-1}$ is a normal form for $p_{i+1}p_i^{-1}$ (where indices are taken modulo $\ell$), it follows that $v_j \in \st(v)$ for $\ell n-c_{i+1} \leq j < \ell n-c_i$ (with indices again modulo $\ell$). But as by assumption $c_{i+1}-c_i \geq \ell$ and as $\{v_1,\ldots,v_\ell\} = V(\Gamma)$, this implies that $\st(v) = V(\Gamma)$, and so $v$ is an isolated vertex of $\Gamma^C$. This contradicts the fact that $\Gamma^C$ is connected; thus $c_{i+1}-c_i < \ell$ for each $i$, as claimed.

In particular, we get $\ell n = \sum_{i=0}^m (c_{i+1}-c_i) < (m+1) \ell$, and so $m+1 > n$. Thus $d_{\mathcal{C}X}(A,B) = m \geq n$ and so $\mathcal{C}X$ is unbounded, as required.
\end{proof}

It is now easy to deduce when the action of $\GG$ on $\mathcal{C}X$ is non-elementary acylindrical.

\begin{proof}[Proof of Corollary \ref{c:gp}]
By the argument above, we only need to show the last part. Thus, suppose that $\Gamma$ is a graph with at least $2$ vertices and connected complement. Then, by Lemma \ref{l:starunbounded}, the graph $\mathcal{C}X$ is unbounded. In particular, given any $H \in V(\mathcal{C}X)$ and $n \in \N$, we may pick $H' \in V(\mathcal{C}X)$ such that $d_{\mathcal{C}X}(H,H') \geq n+1$. Since the action $\GG \curvearrowright X$ is transitive on vertices, it follows that given any vertex $p \in \mathcal{N}(H)$ there exists $g \in \GG$ such that $p^g \in \mathcal{N}(H')$, and in particular $d_{\mathcal{C}X}(H^g,H') \leq 1$. Thus $d_{\mathcal{C}X}(H,H^g) \geq n$, and so the action $\GG \curvearrowright \mathcal{C}X$ has unbounded orbits.

It is left to show that $\GG$ is either isomorphic to $C_2 * C_2$ or not virtually cyclic. Indeed, if $\Gamma$ has only two vertices ($v$ and $w$, say) and $G_v \cong G_w \cong C_2$, then $\Gamma$ is a discrete graph with two vertices and $\GG \cong G_v * G_w \cong C_2 * C_2$. Otherwise, we claim that there exists a subset $A \subseteq V(\Gamma)$ such that $\GG[A]$ is a free product of two non-trivial groups, not both $C_2$. If $|V(\Gamma)| = 2$ (and so $V(\Gamma) = \{ v,w \}$) then $\GG \cong G_v * G_w$ and so we may take $A = V(\Gamma)$. Otherwise, since $|V(\Gamma)| \geq 3$ and $\Gamma^C$ is connected, $\Gamma^C$ contains a path of length $2$, and so there exist vertices $v_1,v_2,w \in \Gamma$ such that $v_1 \nsim w \nsim v_2$. Let $A = \{v_1,v_2,w\}$ and $H = \GG[\{v_1,v_2\}]$ (so either $H \cong G_{v_1} \times G_{v_2}$ or $H \cong G_{v_1} * G_{v_2}$). Since the groups $G_v$ are non-trivial for each $v \in V(\Gamma)$, we have $|H| \geq 4 > 2$ and so $\GG[A] \cong G_w * H$ satisfies our conditions, which proves the claim.

Thus, in either case $\GG[A]$ has infinitely many ends, and so is not virtually cyclic. In particular, since the subgroup $\GG[A] \leq \GG$ is not virtually cyclic, neither is $\GG$, as required.
\end{proof}

\begin{rmk} \label{r:ah}
After appearance of the first version of this paper in a form of a preprint, it has been brought to the author's attention that most of the results stated in Corollary \ref{c:gp} have already been proved by Genevois. In \cite[Theorem 4.11]{gen2}, Genevois shows that $\Delta X$ is quasi-isometric to a tree whenever it is connected and $\Gamma$ is finite, so in particular, by Theorem \ref{t:main} \ref{i:tmain-qi}, $\mathcal{C}X$ is a quasi-tree as well. Moreover, methods used by Genevois to prove \cite[Theorem 4.1]{gen1} can be adapted to show that the action of $\GG$ on $\mathcal{C}X$ is non-uniformly acylindrical; here, the \emph{non-uniform acylindricity} of an action $G \curvearrowright X$ is a weaker version of acylindricity, defined by replacing the phrase `is bounded above by $N_\varepsilon$' by `is finite' in Definition \ref{d:ah}. Corollary \ref{c:gp} strengthens this statement.
\end{rmk}

\subsection{\texorpdfstring{$\mathcal{AH}$}{AH}-accessibility} \label{ss:AHacc}

Here we study $\mathcal{AH}$-accessibility, introduced in \cite{abo} by Abbott, Balasubramanya and Osin, of graph products. In particular, we show that if $\Gamma$ is connected, non-trivial, and the groups in $\mathcal{G}$ are infinite, then the action of $\GG$ on $\mathcal{C}X$ is the `largest' acylindrical action of $\GG$ on a hyperbolic metric space. Hence we prove Corollary \ref{c:AHacc}.

We briefly recall the terminology of \cite{abo}. Given two isometric actions $G \curvearrowright X$ and $G \curvearrowright Y$ of a group $G$, we say $G \curvearrowright X$ \emph{dominates} $G \curvearrowright Y$, denoted $G \curvearrowright Y \preceq G \curvearrowright X$, if there exist $x \in X$, $y \in Y$ and a constant $C$ such that
\[
d_Y(y,y^g) \leq Cd_X(x,x^g)+C
\]
for all $g \in G$. The actions $G \curvearrowright X$ and $G \curvearrowright Y$ are said to be \emph{weakly equivalent} if $G \curvearrowright X \preceq G \curvearrowright Y$ and $G \curvearrowright Y \preceq G \curvearrowright X$. This partitions all such actions into equivalence classes.

It is easy to see that $\preceq$ defines a preorder on the set of all isometric actions of $G$ on metric spaces. Therefore, $\preceq$ defines a partial order on the set of equivalence classes of all such actions. We may restrict this to a partial order on the set $\mathcal{AH}(G)$ of equivalence classes of \emph{acylindrical} actions of $G$ on a \emph{hyperbolic} space. We then say the group $G$ is \emph{$\mathcal{AH}$-accessible} if the partial order $\mathcal{AH}(G)$ has a largest element (which, if exists, must necessarily be unique), and we say $G$ is \emph{strongly $\mathcal{AH}$-accessible} if a representative of this largest element is a Cayley graph of $G$.

Recall that for an action $G \curvearrowright X$ by isometries with $X$ hyperbolic, an element $g \in G$ is said to be \emph{loxodromic} if, for some (or any) $x \in X$, the map $\Z \to X$ given by $n \mapsto x^{g^n}$ is a quasi-isometric embedding. It is clear from the definitions that the `largest' action $G \curvearrowright X$ will also be \emph{universal}, in the sense that every element of $G$, that is loxodromic with respect to \emph{some} acylindrical action of $G$ on a hyperbolic space, will be loxodromic with respect to $G \curvearrowright X$.

In \cite[Theorem 2.18 (c)]{abo}, it is shown that the all right-angled Artin groups are $\mathcal{AH}$-accessible (and more generally, so are all hierarchically hyperbolic groups -- in particular, groups acting properly and cocompactly on a CAT(0) cube complex possessing a factor system \cite[Theorem A]{abd}). Here we generalise this result to `most' graph products of infinite groups. The proof is very similar to that of \cite[Lemma 7.16]{abo}.

\begin{proof}[Proof of Corollary \ref{c:AHacc}]
It is easy to show -- for instance, by Theorem \ref{t:gmNH} -- that $\mathcal{C}X$ is ($G$-equivariantly) quasi-isometric to the Cayley graph of $\GG$ with respect to $\bigcup_{v \in V(\Gamma)} \GG[\st(v)]$.

We prove the statement by induction on $|V(\Gamma)|$. If $|V(\Gamma)| = 1$ ($V(\Gamma)| = \{v\}$, say), then $v$ is an isolated vertex of $\Gamma$ and so, by the assumption, $\GG \cong G_v$ is strongly $\mathcal{AH}$-accessible.

Suppose now that $|V(\Gamma)| \geq 2$. If $\Gamma$ has an isolated vertex ($\Gamma = \Gamma_A \sqcup \{v\}$ for some partition $V(\Gamma) = A \sqcup \{v\}$, say), then $\GG \cong \GG[A] * G_v$ is hyperbolic relative to $\{ \GG[A], G_v \}$. By the induction hypothesis, both $\GG[A]$ and $G_v$ are strongly $\mathcal{AH}$-accessible, and hence, by \cite[Theorem 7.9]{abo}, so is $\GG$. If, on the other hand, the complement $\Gamma^C$ of $\Gamma$ is disconnected ($\Gamma^C = \Gamma_A^C \sqcup \Gamma_B^C$ for some partition $V(\Gamma) = A \sqcup B$, say), then $\GG \cong \GG[A] \times \GG[B]$ is not acylindrically hyperbolic by \cite[Corollary 7.2]{osinAH}, as both $\GG[A]$ and $\GG[B]$ are infinite. It then follows from \cite[Example 7.8]{abo} that $\GG$ is strongly $\mathcal{AH}$-accessible; it also follows that \emph{any} acylindrical action of $\GG$ on a hyperbolic metric space ($\GG \curvearrowright \mathcal{C}X$, say) represents the largest element of $\mathcal{AH}(\GG)$.

Hence, we may without loss of generality assume that $\Gamma$ is a graph with no isolated vertices and connected complement. It then follows that $|V(\Gamma)| \geq 4$, and so by Corollary \ref{c:gp}, $\mathcal{C}X$ is a hyperbolic metric space and $\GG$ acts on it non-elementarily acylindrically. It is easy to see from Theorem \ref{t:gmNH} that, given two hyperplanes $H,H' \in V(\mathcal{C}X)$, they are adjacent in $\mathcal{C}X$ if and only if there exist distinct $u,v \in V(\Gamma)$ and $g \in \GG$ such that $H = H_u^g$ and $H' = H_v^g$. It follows that the quotient space $\mathcal{C}X/\GG$ is the complete graph on $|V(\Gamma)|$ vertices, and in particular, the action $\GG \curvearrowright \mathcal{C}X$ is cocompact.

Moreover, it follows from Theorem \ref{t:gmNH} that the stabiliser of an arbitrary vertex $H_v^g$ of $\mathcal{C}X$ is precisely $G \cong (\GG[\st(v)])^g \cong G_v^g \times (\GG[\lk(v)])^g$. Since $\Gamma$ has no isolated vertices, $\lk(v) \neq \varnothing$, and so, as all groups in $\mathcal{G}$ are infinite, both $G_v^g$ and $(\GG[\lk(v)])^g$ are infinite groups. Thus, $G$ is a direct product of two infinite groups, and so -- by \cite[Corollary 7.2]{osinAH}, say -- $G$ does not possess a non-elementary acylindrical action on a hyperbolic space. Since $G$ is not virtually cyclic, for every acylindrical action of $\GG$ on a hyperbolic space $Y$, the induced action of $G$ on $Y$ has bounded orbits. It then follows from \cite[Proposition 4.13]{abo} that $\GG$ is strongly $\mathcal{AH}$-accessible -- and in particular, $\GG \curvearrowright \mathcal{C}X$ represents the largest element of $\mathcal{AH}(\GG)$.
\end{proof}

\begin{rmk} \label{r:AHacc}
Corollary \ref{c:AHacc} gives some explicit descriptions for the class of hierarchically hyperbolic groups, introduced by Behrstock, Hagen and Sisto in \cite{bhs}. In particular, a result by Berlai and Robbio \cite[Theorem C]{br} says that if all vertex groups $G_v$ are hierarchically hyperbolic with the intersection property and clean containers, then the same can be said about $\GG$. Moreover, Abbott, Behrstock and Durham show in \cite[Theorem A]{abd} that all hierarchically hyperbolic groups are $\mathcal{AH}$-accessible, which implies Corollary \ref{c:AHacc} in the case when the vertex groups $G_v$ are hierarchically hyperbolic with the intersection property and clean containers.

More precisely, every hierarchically hyperbolic group $G$ comes with an action on a space $\mathcal{X}$, such that there exist projections $\pi_Y: \mathcal{X} \to 2^{\hat{\mathcal{C}}Y}$ to some collection of $\delta$-hyperbolic spaces $\{ \hat{\mathcal{C}}Y \mid Y \in \mathfrak{S} \}$, where $\mathfrak{S}$ is a partial order that contains a (unique) largest element, $S \in \mathfrak{S}$, say. Moreover, the action of $G$ on $\mathcal{X}$ induces an action of $G$ on (a space quasi-isometric to) $\mathcal{U} = \bigcup_{x \in \mathcal{X}} \pi_S(x) \subseteq \hat{\mathcal{C}}S$, and in \cite[Theorem 14.3]{bhs} it is shown that this action is acylindrical. In \cite{abd}, this construction is modified so that the action $G \curvearrowright \mathcal{U}$ represents the largest element of $\mathcal{AH}(G)$. If $\Gamma$ is connected, non-trivial, and the groups $G_v$ are infinite and hierarchically hyperbolic (with the intersection property and clean containers), then the proof of Corollary \ref{c:AHacc} gives this action $\GG \curvearrowright \mathcal{U}$ explicitly. This is potentially useful for studying hierarchical hyperbolicity of graph products.
\end{rmk}

\begin{rmk}
Note that the condition on the $G_v$ being infinite is necessary for the proof to work. Indeed, suppose
$\Gamma = \begin{tikzpicture}
\draw (0,0) -- (1.5,0);
\fill (0,0) circle (2pt) node [above] {$a$};
\fill (0.5,0) circle (2pt) node [above] {$b$};
\fill (1,0) circle (2pt) node [above] {$c$};
\fill (1.5,0) circle (2pt) node [above] {$d$};
\end{tikzpicture}$
is a path of length $3$, and $G_v = \langle g_v \rangle \cong C_2$ for each $v \in V(\Gamma)$, so that $\GG$ is the right-angled Coxeter group over $\Gamma$. Notice that $\GG \cong A *_C B$, where $A = G_b \times (G_a*G_c)$, $B = G_c \times G_d$ and $C = G_c$. In particular, since $C$ is finite, $\GG$ is hyperbolic relative to $\{A,B\}$. Hence the Cayley graph $\operatorname{Cay}(\GG,A \cup B)$ is hyperbolic and the usual action of $\GG$ on it is acylindrical.

It is easy to verify from the normal form theorem for amalgamated free products that the element $g_b g_d$ will be loxodromic with respect to $\GG \curvearrowright \operatorname{Cay}(\GG,A \cup B)$. However, as $g_b g_d \in \GG[\st(c)]$, we know that $g_b g_d$ stabilises the hyperplane dual to $G_c \subseteq V(X)$ under the action of $\GG$ on $\mathcal{C}X$, and so $g_bg_d$ is not loxodromic with respect to $\GG \curvearrowright \mathcal{C}X$. In particular, the equivalence class of $\GG \curvearrowright \mathcal{C}X$ cannot be the largest element of $\mathcal{AH}(\GG)$. It is straightforward to generalise this argument to show that if $c \in V(\Gamma)$ is a separating vertex of a connected finite simplicial graph $\Gamma$, then for \emph{any} graph product $\GG$ with $G_c$ finite, the action $\GG \curvearrowright \mathcal{C}X$ will not be the `largest' one.

On the other hand, note that this particular group $\GG$ (and indeed any right-angled Coxeter group) will be $\mathcal{AH}$-accessible: see \cite[Theorem A (4)]{abd}.
\end{rmk}

\section{Equational noetherianity of graph products} \label{s:eqn}

In this section we prove Theorem \ref{t:en}. To do this, we use the methods that Groves and Hull exhibited in \cite{gh}. Here we briefly recall their terminology.

The approach to equationally noetherian groups used in \cite{gh} is through sequences of homomorphisms. In particular, let $G$ be any group, let $F$ be a finitely generated group and let $\varphi_i: F \to G$ be a sequence of homomorphisms ($i \in \N$). Let $\omega: \mathcal{P}(\N) \to \{0,1\}$ be a non-principal ultrafilter. We say a sequence of properties $(P_i)_{i \in \N}$ holds \emph{$\omega$-almost surely} if $\omega(\{ i \in \N \mid P_i \text{ holds} \}) = 1$. We define the \emph{$\omega$-kernel} of $F$ with respect to $(\varphi_i)$ to be
\[
F_{\omega,(\varphi_i)} = \{ f \in F \mid \varphi_i(f) = 1 \text{ $\omega$-almost surely} \};
\]
we write $F_\omega$ for $F_{\omega,(\varphi_i)}$ if the sequence $(\varphi_i)$ is clear. It is easy to check that $F_\omega$ is a normal subgroup of $F$. We say $\varphi_i$ \emph{factors through $F_\omega$ $\omega$-almost surely} if $F_\omega \subseteq \ker(\varphi_i)$ $\omega$-almost surely.

The idea behind all these definitions is the following result.

\begin{thm}[Groves and Hull {\cite[Theorem 3.5]{gh}}] \label{t:ghen}
Let $\omega$ be a non-principal ultrafilter. Then the following are equivalent for any group $G$:
\begin{enumerate}[label={\normalfont({\roman*})}]
\item $G$ is equationally noetherian;
\item for any finitely generated group $F$ and any sequence of homomorphisms $(\varphi_i: F \to G)$, $\varphi_i$ factors through $F_\omega$ $\omega$-almost surely.
\end{enumerate}
\end{thm}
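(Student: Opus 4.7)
The plan is to prove the two implications of Theorem \ref{t:ghen} by reducing each to a finite-data argument over the free group $F_n$. For $(i) \Rightarrow (ii)$, fix a generating set $X_1, \ldots, X_n$ of $F$ inducing $\pi: F_n \twoheadrightarrow F$, and set $\tilde\varphi_i = \varphi_i \circ \pi$. Consider $S = \pi^{-1}(F_\omega) = \{s \in F_n : \tilde\varphi_i(s) = 1 \text{ $\omega$-a.s.}\} \subseteq F_n$. By equational noetherianity of $G$, there is a finite $S_0 \subseteq S$ with $V_G(S_0) = V_G(S)$. Because $S_0$ is finite, the intersection $A = \bigcap_{s \in S_0} \{i \in \N : \tilde\varphi_i(s) = 1\}$ of finitely many $\omega$-measure-$1$ sets again satisfies $\omega(A) = 1$. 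For $i \in A$ the tuple $(\tilde\varphi_i(X_1), \ldots, \tilde\varphi_i(X_n))$ lies in $V_G(S_0) = V_G(S)$, so $\tilde\varphi_i(s) = 1$ for every $s \in S$; given $f \in F_\omega$, any lift $s \in S \cap \pi^{-1}(f)$ witnesses $\varphi_i(f) = \tilde\varphi_i(s) = 1$, so $F_\omega \subseteq \ker \varphi_i$ for $\omega$-a.s. $i$.

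For the reverse direction $(ii) \Rightarrow (i)$, I would argue by contradiction. Suppose some $n$ and some $S \subseteq F_n$ witness failure of equational noetherianity, so that no finite $S_0 \subseteq S$ satisfies $V_G(S_0) = V_G(S)$. Since $F_n$ is countable, enumerate $S = \{s_1, s_2, \ldots\}$ and set $S_k = \{s_1, \ldots, s_k\}$. For each $k$, strict containment $V_G(S) \subsetneq V_G(S_k)$ lets me choose a tuple $(g_1^{(k)}, \ldots, g_n^{(k)}) \in V_G(S_k) \setminus V_G(S)$ and define $\varphi_k: F_n \to G$ by $X_j \mapsto g_j^{(k)}$. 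For every fixed $s = s_m \in S$, we have $\varphi_k(s) = 1$ whenever $k \geq m$, hence $s \in (F_n)_{\omega, (\varphi_k)}$. Thus $S \subseteq (F_n)_\omega$, so hypothesis (ii) applied to $F = F_n$ and $(\varphi_k)$ yields some (in fact $\omega$-a.s.) $k$ with $\varphi_k(s) = 1$ for every $s \in S$, i.e., $(g_1^{(k)}, \ldots, g_n^{(k)}) \in V_G(S)$, contradicting the choice of the tuple.

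The argument is essentially formal and uses no group-theoretic content about $G$ beyond the two definitions. The only points requiring care are the bookkeeping between $F_\omega \subseteq F$ and its preimage $\pi^{-1}(F_\omega) \subseteq F_n$ in the forward direction (immediate since $\ker\pi \subseteq \ker\tilde\varphi_i$ automatically), and the monotone exhaustion of the countable set $S$ in the reverse direction. The main conceptual obstacle, if any, is recognising that equational noetherianity translates cleanly into the ultrafilter language: the ability to replace an arbitrary solution-defining system by a finite subsystem is exactly what lets the ultrafilter collapse "all $i$ large enough" into the single statement "$\omega$-almost surely", and conversely a diagonal sequence of would-be counterexamples to finite truncations furnishes a sequence of homomorphisms in which $F_\omega$ strictly exceeds $\ker \varphi_k$.
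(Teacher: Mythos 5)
Your proof is correct. Note that the paper itself gives no proof of this statement — it is quoted directly from Groves and Hull \cite[Theorem 3.5]{gh} — and your argument is essentially the standard one from that source: for (i)$\Rightarrow$(ii) you pull back along $\pi\colon F_n\twoheadrightarrow F$, apply equational noetherianity to $S=\pi^{-1}(F_\omega)=(F_n)_{\omega,(\tilde\varphi_i)}$, and use that a finite intersection of $\omega$-large sets is $\omega$-large; for (ii)$\Rightarrow$(i) you build the diagonal sequence of tuples in $V_G(S_k)\setminus V_G(S)$, with non-principality of $\omega$ guaranteeing $S\subseteq (F_n)_\omega$, and derive the contradiction.
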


\begin{rmk}
Note that Definition \ref{d:en} differs from the usual definition of equationally noetherian groups, as we do not allow `coefficients' in our equations: that is, we restrict to subsets $S \subseteq F_n$ instead of $S \subseteq G * F_n$. However, the two concepts coincide when $G$ is finitely generated -- see \cite[\S 2.2, Proposition 3]{bmr}. We use this (weaker) definition of equationally noetherian groups as it is more suitable for our methods. In particular, we use an equivalent characterisation of equationally noetherian groups given by Theorem \ref{t:ghen}.
\end{rmk}

The structure of this section is as follows. In Section \ref{ss:linking}, we introduce `admissible' graphs and show that being equationally noetherian is preserved under taking graph products over connected admissible graphs. In Section \ref{ss:dvkds}, we introduce (polygonal) dual van Kampen diagrams for graph products -- a tool that we use in our proof of Theorem \ref{t:en}. In Section \ref{ss:pr}, we introduce minimal polygonal representations for words in a free group, closely related to polygonal van Kampen diagrams defined in the preceding subsection. In Section \ref{ss:girth}, we give our main technical results in order to show that all connected graphs of girth $\geq 6$ are admissible. In Section \ref{ss:ten}, we combine these results to prove Theorem \ref{t:en}.

\subsection{Reduction to sequences of linking homomorphisms} \label{ss:linking}

Suppose now that the group $G$ acts by isometries on a metric space $(Y,d)$. As before, let $F$ be a finitely generated group, $\omega$ a non-principal ultrafilter, and $(\varphi_i: F \to G)_{i=1}^\infty$ a sequence of homomorphisms. Pick a finite generating set $S$ for $F$. We say that the sequence of homomorphisms $(\varphi_i)$ is \emph{non-divergent} if
\[
\lim_\omega \inf_{y \in Y} \max_{s \in S} d(y,y^{\varphi_i(s)}) < \infty.
\]
We say that $(\varphi_i)$ is \emph{divergent} otherwise. It is easy to see that this does not depend on the choice of a generating set $S$ for $F$.

The main technical result of \cite{gh} states that in case $Y$ is hyperbolic and the action of $G$ on $Y$ is non-elementary acylindrical, it is enough to consider non-divergent sequences of homomorphisms (cf Theorem \ref{t:ghen}).

\begin{thm}[Groves and Hull {\cite[Theorem B]{gh}}] \label{t:ghmain}
Let $Y$ be a hyperbolic metric space and $G$ a group acting non-elementarily acylindrically on $Y$. Suppose that for any finitely generated group $F$ and any non-divergent sequence of homomorphisms $(\varphi_i: F \to G)$, $\varphi_i$ factors through $F_\omega$ $\omega$-almost surely. Then $G$ is equationally noetherian.
\end{thm}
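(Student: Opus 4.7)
By the Groves--Hull equivalence (Theorem~\ref{t:ghen}), it suffices to show that for every finitely generated group $F$ and every sequence $(\varphi_i \colon F \to G)$ of homomorphisms, one has $F_\omega \subseteq \ker \varphi_i$ for $\omega$-almost every $i$. The hypothesis handles the non-divergent case, so the whole content of the theorem is a reduction of the divergent case to it. The plan is to run a Bestvina--Paulin style asymptotic argument inside the hyperbolic space $Y$, using acylindricity to transfer control to the limit.

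Fix a divergent sequence $(\varphi_i)$ and a finite generating set $S \subseteq F$, set $\lambda_i = \inf_{y \in Y}\max_{s \in S} d(y, y^{\varphi_i(s)})$, so that $\lim_\omega \lambda_i = \infty$, and pick basepoints $y_i \in Y$ approximately realising the infimum. The $\omega$-ultralimit of the rescaled pointed spaces $(Y, y_i, d/\lambda_i)$ is a pointed $\R$-tree $T$: rescaling a $\delta$-hyperbolic space by factors tending to infinity produces a $0$-hyperbolic limit. The limit carries a canonical isometric action $\varphi_\omega$ of $F$ with no global fixed point in which each generator translates by at most $1$ and some generator by exactly $1$; in particular $\varphi_\omega$ is non-trivial.

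Next, transfer acylindricity to $T$. If $f \in F$ fixes two points of $T$ at distance $> 2\varepsilon$, then $\omega$-almost surely $\varphi_i(f)$ displaces two points of $Y$ at distance $\geq D_\varepsilon$ in the original metric by at most $\varepsilon\lambda_i$, so after absorbing the scaling one reaches an acylindricity-type bound of the form $N_\varepsilon$ on the set of such elements. Passing modulo $F_\omega$, the induced action of $\overline{F} := F/F_\omega$ on $T$ is acylindrical, hence stable in the sense of Bestvina--Feighn, without a global fixed point; non-elementarity of $G \curvearrowright Y$ ensures the limit action does not collapse to a fixed point or a line with trivial dynamics. Apply Rips--Guirardel decomposition theory to split $\overline{F}$ as a graph of groups whose edge groups stabilise arcs of $T$ and whose vertex groups are either elliptic or of axial or surface (IET) type.

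Each elliptic vertex subgroup $\overline{H} \leq \overline{F}$ lifts to a subgroup $H \leq F$ whose orbits in $T$ are bounded, which translates to $\inf_y \max_{h \in S_H} d(y, y^{\varphi_i(h)}) = o(\lambda_i)$ for a finite generating set $S_H$; after a suitable basepoint rechoice the restricted sequence $(\varphi_i|_H)$ is non-divergent, and the hypothesis supplies $H_\omega \subseteq \ker \varphi_i|_H$ $\omega$-a.s. The axial and surface vertex groups are handled by a Sela-style shortening argument: using acylindricity of $\overline{F} \curvearrowright T$, one shows that precomposing $\varphi_i$ with modular automorphisms supported on these pieces strictly decreases $\lambda_i$, so after finitely many shortening moves the remaining sequence is non-divergent, to which the hypothesis applies directly. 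Reassembling these facts via Bass--Serre theory, together with finite generation of $F$, yields $F_\omega \subseteq \ker\varphi_i$ $\omega$-almost surely. The main obstacle is the faithful transfer of acylindricity to $T$ and the termination of shortening; these are the technical core of the argument, and it is precisely at shortening that non-elementarity of $G \curvearrowright Y$ (providing loxodromic elements to drive the modular automorphism group) is indispensable.
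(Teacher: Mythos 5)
A preliminary remark on comparison: the paper does not prove this statement at all --- Theorem \ref{t:ghmain} is quoted verbatim as Theorem B of Groves--Hull \cite{gh} and used as a black box --- so what you have written is an attempt to reprove their theorem. Your outline (rescaled ultralimit, limiting $\R$-tree, structure theory for the limit action, non-divergence of pieces, a shortening-type argument) is in the same general paradigm as the actual argument in \cite{gh}, but two of your key steps fail as stated. The first is the transfer of acylindricity to the limit tree. If $f \in F$ fixes two points of $T$, all you know about the approximating points $a_i,b_i \in Y$ is that $\varphi_i(f)$ displaces them by $o(\lambda_i)$; the acylindricity condition of Definition \ref{d:ah} requires displacement at most the \emph{absolute} constant $\varepsilon$, not $\varepsilon\lambda_i$, and since $\lambda_i \to \infty$ you cannot ``absorb the scaling'' --- there is no bound at all on the number of group elements moving two far-apart points by $\varepsilon\lambda_i$. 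Consequently the claim that $F/F_\omega \curvearrowright T$ is acylindrical is unjustified, and the subsequent appeal to stability rests on it. Establishing the weaker control that actually suffices (finiteness of tripod stabilizers, stability of the limit action, control of arc stabilizers) is precisely the technical core of \cite{gh} and requires a genuinely different argument than the one you indicate.

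The second problem is your reduction of the pieces to the non-divergent case, where you repeatedly conflate ``sublinear in $\lambda_i$'' with ``bounded''. An elliptic vertex group fixes a point of $T$, which only says its generators can be displaced by $o(\lambda_i)$ around suitable basepoints; that does not make the restricted sequence non-divergent (it may diverge at a slower rate), so the hypothesis cannot be applied to it without a further limiting/induction step together with a termination argument that you do not supply. Similarly, the shortening argument does not work by ``finitely many shortening moves'' after which the sequence becomes non-divergent: each precomposition $\varphi_i\circ\alpha_i$ yields a new, generally still divergent, sequence, and shortening is used to contradict minimality of $\lambda_i$ within an orbit, not to land in the non-divergent regime. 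There is also an unaddressed bookkeeping issue: the automorphisms produced by the Rips--Guirardel analysis are automorphisms of the limit quotient (or a finitely presented approximation), not of the arbitrary finitely generated group $F$, and even granting them, knowing that the modified sequence factors through $F_{\omega,(\varphi_i\circ\alpha_i)}$ does not immediately give $F_{\omega,(\varphi_i)} \subseteq \ker\varphi_i$, since the $\omega$-kernel changes with the (varying) automorphisms. These are exactly the points where the real work of \cite{gh} lies, so as written the proposal does not constitute a proof.
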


We now consider the particular case when $G$ is a graph product and $Y = \mathcal{C}X$ is the crossing graph of the quasi-median graph $X$ associated to $G$. Thus, as before, let $\Gamma$ be a finite simplicial graph and let $\mathcal{G} = \{ G_v \mid v \in V(\Gamma) \}$ be a collection of non-trivial groups. It turns out that in this case we may reduce any non-divergent sequence of homomorphisms to one of the following form: see the proof of Theorem \ref{t:engood}.

\begin{defn} \label{d:linking}
Let $F$ be a finitely generated free group, let $S \subset F$ be a free basis for $F$, and let $\varphi: F \to \GG$ be a homomorphism. We say $\varphi$ is \emph{linking} if for each $s \in S$, there exists $v = v(s) \in V(\Gamma)$ such that $\varphi(s) \in \GG[\lk(v)]$; although in general this depends on the choice of the free basis $S$, we will usually fix $S$ and omit references to it. We say the graph $\Gamma$ is \emph{admissible} if for every collection of non-trivial equationally noetherian groups $\mathcal{G} = \{ G_v \mid v \in V(\Gamma) \}$ and every sequence of linking homomorphisms $(\varphi_i: F \to \GG)_{i=1}^\infty$, $\varphi_i$ factors through $F_\omega$ $\omega$-almost surely.
\end{defn}

The proof of Theorem \ref{t:engood} uses the following result.

\begin{lem} \label{l:sgphsgood}
Full subgraphs of admissible graphs are admissible.
\end{lem}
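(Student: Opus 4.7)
The plan is to reduce admissibility of a full subgraph $\Gamma_A$ (with $A \subseteq V(\Gamma)$) to admissibility of $\Gamma$ via the canonical embedding of graph products. Suppose $\Gamma$ is admissible. To test admissibility of $\Gamma_A$, fix any collection $\mathcal{G}_A = \{ G_v \mid v \in A \}$ of non-trivial equationally noetherian groups, any finitely generated free group $F$ with a free basis $S$, and any sequence $(\varphi_i : F \to \GG[A])_{i=1}^\infty$ of linking homomorphisms (with respect to $\Gamma_A$). We must show that $\omega$-almost surely $\varphi_i$ factors through $F_{\omega,(\varphi_i)}$.

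First, extend $\mathcal{G}_A$ to a collection $\mathcal{G} = \{ G_v \mid v \in V(\Gamma) \}$ indexed by all vertices of $\Gamma$ by choosing $G_v$ (for $v \in V(\Gamma) \setminus A$) to be any non-trivial equationally noetherian group, for instance $G_v = \Z$. By the standard fact that for any $B \subseteq V(\Gamma)$ the natural map $\GG[B] \to \GG$ is injective (so that $\GG[B]$ is identified with a subgroup of $\GG$; see e.g.\ \cite{green}), we may compose with this inclusion to view each $\varphi_i$ as a homomorphism $\tilde\varphi_i : F \to \GG$.

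Next, I would verify that each $\tilde\varphi_i$ is linking with respect to $\Gamma$. Given $s \in S$, by the linking hypothesis for $\varphi_i$ there exists $v = v(s,i) \in A$ with $\varphi_i(s) \in \GG[A][\lk_{\Gamma_A}(v)]$. Since $\lk_{\Gamma_A}(v) = \lk_\Gamma(v) \cap A \subseteq \lk_\Gamma(v)$, the subgroup $\GG[A][\lk_{\Gamma_A}(v)] = \GG[\lk_\Gamma(v) \cap A]$ sits inside $\GG[\lk_\Gamma(v)]$ under the canonical embedding, so $\tilde\varphi_i(s) \in \GG[\lk_\Gamma(v)]$, as required. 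Admissibility of $\Gamma$ applied to the sequence $(\tilde\varphi_i)$ then yields that $\tilde\varphi_i$ factors through $F_{\omega,(\tilde\varphi_i)}$ $\omega$-almost surely.

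Finally, I would observe that the two $\omega$-kernels coincide: since $\GG[A] \hookrightarrow \GG$ is injective, for any $f \in F$ we have $\varphi_i(f) = 1$ in $\GG[A]$ if and only if $\tilde\varphi_i(f) = 1$ in $\GG$, so $F_{\omega,(\varphi_i)} = F_{\omega,(\tilde\varphi_i)}$; moreover $\ker(\varphi_i) = \ker(\tilde\varphi_i)$. Hence $\omega$-almost sure factorisation of $\tilde\varphi_i$ through $F_{\omega,(\tilde\varphi_i)}$ is exactly the required factorisation of $\varphi_i$ through $F_{\omega,(\varphi_i)}$, and $\Gamma_A$ is admissible. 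There is no substantive obstacle here; the only point that needs to be stated carefully is the compatibility of the linking condition with passage to the full supergraph, which follows immediately from $\lk_{\Gamma_A}(v) \subseteq \lk_\Gamma(v)$ for $v \in A$.
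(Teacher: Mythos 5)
Your proof is correct and follows essentially the same route as the paper: compose with the canonical inclusion $\GG[A]\hookrightarrow\GG$, observe that the linking condition is preserved because $\lk_{\Gamma_A}(v)\subseteq\lk_\Gamma(v)$ for the full subgraph, apply admissibility of $\Gamma$, and use injectivity to identify $\ker\varphi_i$ with $\ker\tilde\varphi_i$ and the two $\omega$-kernels. Your explicit extension of the vertex-group collection by $G_v=\Z$ on $V(\Gamma)\setminus A$ is a small detail the paper leaves implicit, and it is handled correctly.
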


\begin{proof}
Let $\Gamma$ be a admissible graph, let $\mathcal{G} = \{ G_v \mid v \in V(\Gamma) \}$ be a collection of non-trivial equationally noetherian groups, and let $F$ be a finitely generated free group. Let $A \subseteq V(\Gamma)$, so that $\Gamma_A$ is a full subgraph of $\Gamma$, and let $(\varphi_i^A: F \to \GG[A])_{i=1}^\infty$ be a sequence of linking homomorphisms. Let $\omega$ be a non-principal ultrafilter. We aim to show that $\varphi_i^A$ factors through $F_{\omega,(\varphi_i^A)}$ $\omega$-almost surely.

Note that we have a canonical inclusion of subgroup $\iota_A: \GG[A] \to \GG$. For each $i$, let $\varphi_i = \iota_A \circ \varphi_i^A: F \to \GG$. It is easy to see that the $\varphi_i$ are linking homomorphisms. In particular, since $\Gamma$ is admissible, we have $F_{\omega,(\varphi_i)} \subseteq \ker\varphi_i$ $\omega$-almost surely. Moreover, since $\iota_A$ is injective, we have $\ker\varphi_i^A = \ker\varphi_i$ for each $i$ and $F_{\omega,(\varphi_i^A)} = F_{\omega,(\varphi_i)}$. Thus $F_{\omega,(\varphi_i^A)} \subseteq \ker\varphi_i^A$ $\omega$-almost surely, and so $\Gamma_A$ is admissible, as required.
\end{proof}

\begin{thm} \label{t:engood}
For any connected admissible graph $\Gamma$ and any collection $\mathcal{G} = \{ G_v \mid v \in V(\Gamma) \}$ of equationally noetherian groups, the graph product $\Gamma\mathcal{G}$ is equationally noetherian.
\end{thm}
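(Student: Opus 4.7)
The plan is to induct on $|V(\Gamma)|$, proving the slightly stronger statement that $\GG$ is equationally noetherian for every (not necessarily connected) admissible graph $\Gamma$; Lemma \ref{l:sgphsgood} ensures that full subgraphs of admissible graphs remain admissible, so the inductive hypothesis passes to induced subgraphs. The base case $|V(\Gamma)| \leq 1$ is immediate from the hypothesis on $\mathcal{G}$.

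For the inductive step I would first dispose of the reducible cases. If $\Gamma = \Gamma_A \sqcup \Gamma_B$ is disconnected (with both pieces non-empty), then $\GG \cong \GG[A] * \GG[B]$; the factors are equationally noetherian by induction, and their free product is equationally noetherian by Sela's Theorem \ref{t:sela}. If instead $\Gamma$ is a non-trivial join, $V(\Gamma) = A \sqcup B$ with every $A$-vertex adjacent to every $B$-vertex, then $\GG \cong \GG[A] \times \GG[B]$, and inductively both factors are equationally noetherian, hence so is the direct product (as noted in the paragraph before Theorem \ref{t:sela}).

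The remaining case is when both $\Gamma$ and $\Gamma^C$ are connected and $|V(\Gamma)| \geq 2$; note that $\GG \cong C_2 * C_2$ cannot occur here, as that would force $\Gamma$ to be two isolated vertices. Corollary \ref{c:gp} then provides a non-elementary acylindrical action $\GG \curvearrowright \mathcal{C}X$, and $\mathcal{C}X$ is hyperbolic by Theorem \ref{t:qtree}, so I would apply the Groves--Hull criterion (Theorem \ref{t:ghmain}): after precomposing with a surjection from a finitely generated free group, it suffices to show that every non-divergent sequence $\varphi_i \colon F \to \GG$ from a free group $F$ with finite basis $S$ factors through the $\omega$-kernel $F_{\omega,(\varphi_i)}$ $\omega$-almost surely. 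By non-divergence and cocompactness of $\GG \curvearrowright \mathcal{C}X$ (recall that $\mathcal{C}X/\GG$ is the finite complete graph on $V(\Gamma)$, as observed in the proof of Corollary \ref{c:AHacc}), I can conjugate the $\varphi_i$ so that there is a fixed basepoint hyperplane $y = H_v \in V(\mathcal{C}X)$ with $d_{\mathcal{C}X}(y, y^{\varphi_i(s)}) \leq C$ for all $s \in S$ $\omega$-almost surely; since conjugation alters neither $\ker \varphi_i$ nor the $\omega$-kernel, this reduction is harmless. Using Theorem \ref{t:gmNH}'s description $\Stab_{\GG}(H_v) = \GG[\st(v)] = G_v \times \GG[\lk(v)]$ together with the graph-product normal form, I would then absorb the bounded displacement into a further modification of $S$, replacing each generator by a product of conjugates whose image lies in $\GG[\lk(v(s))]$ for some $v(s) \in V(\Gamma)$ depending only on $s$; this modification is designed to preserve factoring through the $\omega$-kernel, and by admissibility of $\Gamma$ the resulting linking sequence factors through it $\omega$-almost surely.

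The main obstacle is this final reduction from ``non-divergent'' to ``linking'': executing the basepoint change and the absorption of the short displacement into the generators, while simultaneously tracking the $\omega$-kernel so that the factoring property transfers. This will rely on a careful combinatorial argument blending the lifting of geodesics from Proposition \ref{p:liftinggeodesics} with the normal form theorem in $\GG$, and is the step whose delicate nature presumably motivates the additional girth $\geq 6$ hypothesis introduced in Section \ref{ss:girth} when establishing admissibility for Theorem \ref{t:en}.
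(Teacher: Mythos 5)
Your overall architecture matches the paper's: induct on $|V(\Gamma)|$, dispose of the product decompositions, and in the core case invoke Corollary \ref{c:gp} plus the Groves--Hull criterion (Theorem \ref{t:ghmain}), so that everything hinges on converting a non-divergent sequence $\varphi_i\colon F\to\GG$ into a linking sequence to which admissibility applies. (Your decision to prove the statement for arbitrary admissible graphs, using Sela's Theorem \ref{t:sela} for disconnected $\Gamma$, is a harmless strengthening; the opening reduction via non-divergence and the identification of hyperplane stabilisers through Theorem \ref{t:gmNH} are also in line with the paper.)

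However, the step you flag as "the main obstacle" is not a technicality to be deferred: it is essentially the entire content of the proof, and the sketch you give for it would not work as stated. You propose to modify $S$ by "replacing each generator by a product of conjugates whose image lies in $\GG[\lk(v(s))]$ for some $v(s)\in V(\Gamma)$ depending only on $s$". But a non-divergent sequence only gives, for each $s\in S$, a uniform bound on $\|\varphi_i(s)\|_*$, i.e. $\omega$-almost surely a factorisation $\varphi_i(s)=\hat g_{i,s,1}\cdots\hat g_{i,s,\hat n_s}$ with $\hat g_{i,s,j}\in\GG[\st(\hat v_{s,j})]$ for an $\omega$-a.s.\ constant pattern of vertices; the image of a single generator need not lie in any single link subgroup (nor a conjugate of one), so no choice of one vertex $v(s)$ per generator can succeed. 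What is actually needed is to subdivide: split each star factor as $\GG[\st(v)]=G_v\times\GG[\lk(v)]$ (using connectedness of $\Gamma$ to place the $G_v$-part inside some $\GG[\lk(w)]$), introduce a \emph{new} free group $\overline F$ with one fresh generator $h_{s,j}$ per syllable, map $F\to\overline F/N$ where $N$ is generated by the images of the relators of $F$, check that the induced maps $\widehat\varphi_i\colon\overline F/N\to\GG$ are $\omega$-a.s.\ well defined, apply admissibility to the linking homomorphisms $\overline F\to\GG$, and then transfer the conclusion back through the surjections to deduce $F_\omega\subseteq\ker\varphi_i$ $\omega$-almost surely. None of this bookkeeping (in particular the comparison of the three $\omega$-kernels of $F$, $\overline F/N$ and $\overline F$) appears in your outline, and since your stated modification of $S$ cannot produce a linking sequence in the first place, the argument has a genuine gap at its decisive point. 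Incidentally, your closing remark misplaces the girth hypothesis: girth $\geq 6$ is used only to \emph{establish} admissibility (Proposition \ref{p:girth6}), whereas the reduction from non-divergent to linking in this theorem works for every connected admissible $\Gamma$ and uses no girth assumption.
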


\begin{proof}
We proceed by induction on $|V(\Gamma)|$. If $|V(\Gamma)| = 1$ ($V(\Gamma) = \{v\}$, say) then $\GG \cong G_v$, and so the result is clear. Thus, assume that $|V(\Gamma)| \geq 2$.


If the complement of $\Gamma$ is disconnected, then we have a partition $V(\Gamma) = A \sqcup B$ such that $\GG \cong \GG[A] \times \GG[B]$. By Lemma \ref{l:sgphsgood}, both $\Gamma_A$ and $\Gamma_B$ are admissible, and so $\GG[A]$ and $\GG[B]$ are equationally noetherian by the induction hypothesis. It is clear from the definition that a direct product $G \times H$ of equationally noetherian groups $G$ and $H$ is equationally noetherian: indeed, this follows from the cartesian product decomposition $V_{G \times H}(S) = V_G(S) \times V_H(S)$, for any $S \subseteq F_n$. Thus $\GG$ is equationally noetherian. 

Therefore, we may without of loss of generality assume that $\Gamma$ is a connected graph with a connected complement and with $|V(\Gamma)| \geq 2$ (and, therefore, $|V(\Gamma)| \geq 4$). In this case, Corollary \ref{c:gp} shows that $\mathcal{C}X$ is a hyperbolic metric space and the action of $\GG$ on it is non-elementary acylindrical. We thus may use Theorem \ref{t:ghmain} to show that $\GG$ is equationally noetherian.

In particular, let $F$ be a finitely generated group and let $(\varphi_i: F \to \GG)_{i=1}^\infty$ be a non-divergent sequence of homomorphisms. By Theorem \ref{t:ghmain}, it is enough to show that $\varphi_i$ factors through $F_\omega$ $\omega$-almost surely.

We proceed as in the proof of \cite[Theorem D]{gh}. Let $S$ be a finite generating set for $F$. Note that, by Theorem \ref{t:gmNH}, we may conjugate each $\varphi_i$ (if necessary) to assume that the minimum (over all hyperplanes $H$ in $X$) of $\max_{s \in S} d_{\mathcal{C}X}(H,H^{\varphi_i(s)})$ is attained for $H = H_u$ for some $u \in V(\Gamma)$. Moreover, it is easy to see from Theorem \ref{t:gmNH} that $\left| \|g\|_* - d_{\mathcal{C}X}(H_u,H_u^g) \right| \leq 1$ for any $g \in \GG$ and $u \in V(\Gamma)$, where we write $\|g\|_*$ for the minimal integer $\ell \in \N$ such that $g = g_1 \cdots g_\ell$ and $g_i \in \GG[\st(v_i)]$ for some $v_i \in V(\Gamma)$. In particular, since the sequence $(\varphi_i)$ is non-divergent, it follows that
\[
\lim_\omega \max_{s \in S} \|\varphi_i(s)\|_* < \infty.
\]

It follows that for each $s \in S$, there exists $\hat{n}_s \in \mathbb{N}$ such that $\|\varphi_i(s)\|_* = \hat{n}_s$ $\omega$-almost surely. Moreover, for each $s \in S$, there exist $\hat{v}_{s,1},\ldots,\hat{v}_{s,\hat{n}_s} \in V(\Gamma)$ such that we have
\[
\varphi_i(s) = \hat{g}_{i,s,1} \cdots \hat{g}_{i,s,\hat{n}_s}
\]
with $\hat{g}_{i,s,j} \in \GG[\st(\hat{v}_{s,j})]$ $\omega$-almost surely. But since $\GG[\st(v)] = G_v \times \GG[\lk(v)]$ for each $v \in V(\Gamma)$, we have $\hat{g}_{i,s,j} = g_{i,s,2j-1}g_{i,s,2j}$, where $g_{i,s,2j-1} \in G_{\hat{v}_{s,j}} \leq \GG[\lk(v_{s,2j-1})]$ with any choice of vertex $v_{s,2j-1} \in \lk(\hat{v}_{s,j})$ (which exists since $\Gamma$ is connected and $|V(\Gamma)| \geq 2$), and $g_{i,s,2j} \in \GG[\lk(v_{s,2j})]$ with $v_{s,2j} = \hat{v}_{s,j}$. It follows that, after setting $n_s = 2\hat{n}_s$, we $\omega$-almost surely may write
\[
\varphi_i(s) = g_{i,s,1} \cdots g_{i,s,n_s}
\]
with $g_{i,s,j} \in \GG[\lk(v_{s,j})]$.

Now for each $s \in S$, define abstract letters $h_{s,1},\ldots,h_{s,n_s}$. For each $v \in V(\Gamma)$, let
\[
S_v = \{ h_{s,j} \mid v_{s,j} = v \},
\]
let $\overline{S} = \sqcup_{v \in V(\Gamma)} S_v$, and let $\overline{F} = F(\overline{S})$, the free group on $\overline{S}$. We can define a map from $S$ to $F$ by sending $s \in S$ to $h_{s,1} \cdots h_{s,n_s}$. Let $N$ be the normal subgroup of $\overline{F}$ generated by images of all the relators of $F$ under this map. This gives a group homomorphism $\rho: F \to \overline{F}/N$.

The map $\widehat\varphi_i: \overline{F}/N \to \GG$, obtained by sending $h_{s,j}N$ to $g_{i,s,j}$, is $\omega$-almost surely a well-defined homomorphism. Indeed, all the relators in $\overline{F}/N$ are of the form $\phi(\{ h_{s,1} \cdots h_{s,n_s} \mid s \in S \})$, where $\phi(S)$ is a relator in $F$, and so $\omega$-almost surely map to the identity under $\widehat\varphi_i$. It is also clear that $\varphi_i = \widehat\varphi_i \circ \rho$ $\omega$-almost surely.

Now let $\pi: \overline{F} \to \overline{F}/N$ be the quotient map. Then, by construction, the homomorphisms $\varphi_i' = \widehat\varphi_i \circ \pi: \overline{F} \to \GG$ are linking (when they are well-defined). Since $\Gamma$ is admissible and the groups $G_v$ are equationally noetherian, it follows that $\varphi_i'$ factors through $\overline{F}_\omega$ $\omega$-almost surely. Since $\pi$ is surjective, this implies that $(\overline{F}/N)_\omega \subseteq \ker\hat\varphi_i$ $\omega$-almost surely. Thus $\varphi_i = \widehat\varphi_i \circ \rho$ factors through $F_\omega = \rho^{-1}((\overline{F}/N)_\omega)$ $\omega$-almost surely, as required.
\end{proof}

We expect that the class of equationally noetherian groups is closed under taking arbitrary graph products. Although we are not able to show this in full generality, in the next subsections we show that any connected graph $\Gamma$ that is triangle-free, square-free and pentagon-free, is admissible, and therefore (by Theorem \ref{t:engood}) the class of equationally noetherian groups is closed under taking graph products over such graphs $\Gamma$.

\subsection{Dual van Kampen diagrams} \label{ss:dvkds}

Before embarking on a proof of Theorem \ref{t:en}, let us define the following notion. Following methods of \cite{cw} and \cite{kk}, we consider \emph{dual van Kampen diagrams} for words representing the identity in $\GG$; recently, dual van Kampen diagrams for graph products have been independently introduced by Genevois in \cite{gendual}. Here we explain their construction and properties.

We consider van Kampen diagrams in the quasi-median graph $X$ given by Theorem \ref{t:gengp}, viewed as a Cayley graph.
In particular, note that we have a presentation
\begin{equation} \label{e:pres}
\GG = \langle S \mid R_\triangle \sqcup R_\square \rangle
\end{equation}
with generators
\[
S = \bigsqcup_{v \in V\Gamma} (G_v \setminus \{1\})
\]
and relators of two types: the `triangular' relators
\[
R_\triangle = \bigsqcup_{v \in V(\Gamma)} \{ ghk^{-1} \mid g,h,k \in G_v \setminus \{1\}, gh=k \text{ in } G_v \}
\]
and the `rectangular' relators
\[
R_\square = \bigsqcup_{(v,w) \in E(\Gamma)} \{ [g_v,g_w] \mid g_v \in G_v \setminus \{1\}, g_w \in G_w \setminus \{1\} \}.
\]

We now dualise the notion of van Kampen diagrams with respect to the presentation \eqref{e:pres}. Let $D \subseteq \R^2$ be a van Kampen diagram with boundary label $w$, for some word $w \in S^*$ representing the identity in $\GG$, with respect to the presentation \eqref{e:pres}. It is convenient to pick a colouring $V(\Gamma) \to \N$ and to colour edges of $D$ according to their labels. Suppose that $w = g_1 \cdots g_n$ for some syllables $g_i$, and let $e_1,\ldots,e_n$ be the corresponding edges on the boundary of $D$. We add a `vertex at infinity' $\infty$ somewhere on $\R^2 \setminus D$, and for each $i = 1,\ldots,n$, we attach to $D$ a triangular `boundary' face whose vertices are the endpoints of $e_i$ and $\infty$. We get the dual van Kampen diagram $\Delta$ corresponding to $D$ by taking the dual of $D$ as a polyhedral complex and removing the face corresponding to $\infty$: thus, $\Delta$ is a tesselation of a disk. See Figure \ref{f:dvkd}.

\begin{figure}[ht]
\begin{tikzpicture}[scale=0.63]
\draw [very thick,red,->] (0,0) -- (1,1) node [left] {$a_1$};
\draw [very thick,red] (1,1) -- (2,2);
\draw [very thick,red,->] (-1.4,1.4) -- (-0.4,2.4) node [left] {$a_1$};
\draw [very thick,red] (-0.4,2.4) -- (0.6,3.4);
\draw [very thick,red,->] (2,2) -- (2,1) node [right] {$a_2$};
\draw [very thick,red] (2,1) -- (2,0);
\draw [very thick,red,->] (4,2) -- (4,1) node [right] {$a_2$};
\draw [very thick,red] (4,1) -- (4,0);
\draw [very thick,red,->] (3.6,-3.4) -- (4.3,-2.7) node [left] {$a_3$};
\draw [very thick,red] (4.3,-2.7) -- (5,-2);
\draw [very thick,red,->] (5,-4.8) -- (5.7,-4.1) node [right] {$a_3$};
\draw [very thick,red] (5.7,-4.1) -- (6.4,-3.4);
\draw [very thick,red,->] (0,0) -- (1,0) node [below] {$a_4$};
\draw [very thick,red] (1,0) -- (2,0);
\draw [very thick,red,->] (0,-2) -- (1,-2) node [below] {$a_4$};
\draw [very thick,red] (1,-2) -- (2,-2);
\draw [very thick,blue,->] (0.6,3.4) -- (1.3,2.7) node [right] {$b_1$};
\draw [very thick,blue] (1.3,2.7) -- (2,2);
\draw [very thick,blue,->] (-1.4,1.4) -- (-0.7,0.7) node [left] {$b_1$};
\draw [very thick,blue] (-0.7,0.7) -- (0,0);
\draw [very thick,blue,->] (4,0) -- (4.5,-1) node [right] {$b_2$};
\draw [very thick,blue] (4.5,-1) -- (5,-2);
\draw [very thick,green!70!black,->] (2,2) -- (3,2) node [above] {$c_1$};
\draw [very thick,green!70!black] (3,2) -- (4,2);
\draw [very thick,green!70!black,->] (2,0) -- (3,0) node [above] {$c_1$};
\draw [very thick,green!70!black] (3,0) -- (4,0);
\draw [very thick,green!70!black,->] (5,-2) -- (5.7,-2.7) node [right] {$c_2$};
\draw [very thick,green!70!black] (5.7,-2.7) -- (6.4,-3.4);
\draw [very thick,green!70!black,->] (3.6,-3.4) -- (4.3,-4.1) node [below] {$c_2$};
\draw [very thick,green!70!black] (4.3,-4.1) -- (5,-4.8);
\draw [very thick,green!70!black,->] (2,-2) -- (3,-1) node [right] {$c_3$};
\draw [very thick,green!70!black] (3,-1) -- (4,0);
\draw [very thick,green!70!black,->] (0,0) -- (0,-1) node [left] {$c_4$};
\draw [very thick,green!70!black] (0,-1) -- (0,-2);
\draw [very thick,green!70!black,->] (2,0) -- (2,-1) node [left] {$c_4$};
\draw [very thick,green!70!black] (2,-1) -- (2,-2);
\fill (-1.4,1.4) circle (3pt);

\draw [very thick] (-1.4,1.4) -- (-2.4,1.4);
\draw [very thick,dotted] (-2.4,1.4) -- (-3,1.4);
\draw [thick] (0.6,3.4) -- (0.6,4.4);
\draw [thick,dotted] (0.6,4.4) -- (0.6,5);
\draw [thick] (2,2) -- (2.7,2.7);
\draw [thick,dotted] (2.7,2.7) -- (3.1,3.1);
\draw [thick] (4,2) -- (4.8,2.4);
\draw [thick,dotted] (4.8,2.4) -- (5.3,2.65);
\draw [thick] (4,0) -- (4.9,0.225);
\draw [thick,dotted] (4.9,0.225) -- (5.5,0.375);
\draw [thick] (5,-2) -- (6,-2);
\draw [thick,dotted] (6,-2) -- (6.6,-2);
\draw [thick] (6.4,-3.4) -- (7.2,-3.8);
\draw [thick,dotted] (7.2,-3.8) -- (7.7,-4.05);
\draw [thick] (5,-4.8) -- (5.4,-5.6);
\draw [thick,dotted] (5.4,-5.6) -- (5.65,-6.1);
\draw [thick] (3.6,-3.4) -- (3.6,-4.4);
\draw [thick,dotted] (3.6,-4.4) -- (3.6,-5);
\draw [thick] plot [smooth] coordinates { (4,0) (3.8,-1.2) (2.8,-2) (2.302,-2.996) (2.3,-3) };
\draw [thick,dotted] (2.3,-3) -- (2.05,-3.5);
\draw [thick] plot [smooth] coordinates { (5,-2) (3.8,-2.2) (3.2,-2.8) (3.001,-3.596) (3,-3.6)  };
\draw [thick,dotted] (3,-3.6) -- (2.85,-4.2);
\draw [thick] (2,-2) -- (1.3,-2.7);
\draw [thick,dotted] (1.3,-2.7) -- (0.9,-3.1);
\draw [thick] (0,-2) -- (-0.8,-2.4);
\draw [thick,dotted] (-0.8,-2.4) -- (-1.3,-2.65);
\draw [thick] (0,0) -- (-0.9,-0.225);
\draw [thick,dotted] (-0.9,-0.225) -- (-1.5,-0.375);
\end{tikzpicture}%
\hfill%
\begin{tikzpicture}[scale=0.63]
\fill [white] (0,-6.5) circle (0);
\draw [very thick,blue] plot [smooth] coordinates { (-3.412,-0.779) (-2.7,0.5) (-2.736,2.182) };
\draw [very thick,red] plot [smooth] coordinates { (-2,0) (-2.7,0.5) (-3.412,0.779) };
\draw [very thick,red] plot [smooth] coordinates { (-2,0) (-1.7,-1) (-1.519,-3.153) };
\draw [very thick,red] plot [smooth] coordinates { (-2,0) (-0.8,1.5) (0,3.5) };
\draw [very thick,green!70!black] plot [smooth] coordinates { (-2.736,-2.182) (-1.7,-1) (-0.5,-0.5) };
\draw [very thick,green!70!black] plot [smooth] coordinates { (-1.519,3.153) (-0.8,1.5) (-0.5,-0.5) };
\draw [very thick,green!70!black] plot [smooth] coordinates { (0,-3.5) (0,-2) (-0.5,-0.5) };
\draw [very thick,blue] plot [smooth] coordinates { (1.519,-3.153) (1,0) (1.519,3.153) };
\draw [very thick,red] plot [smooth] coordinates { (2.736,-2.182) (2.5,-1) (2.8,0) (3.412,0.779) };
\draw [very thick,green!70!black] plot [smooth] coordinates { (2.736,2.182) (2.5,1) (2.8,0) (3.412,-0.779) };

\fill (-2.7,0.5) circle (2pt);
\fill (-1.7,-1) circle (2pt);
\fill (-0.8,1.5) circle (2pt);
\fill (2.8,0) circle (2pt);

\draw [very thick] (0,0) circle (3.5);
\draw [very thick] (-3.8,0) -- (-3.2,0);
\draw [very thick,->] (-3.500,0.000) arc (180.00:167.14:3.5) node [left] {$a_1$};                                                       \draw [very thick,|->] (-3.153,1.519) arc (154.29:141.43:3.5) node [left] {$b_1$};                                                       \draw [very thick,|->] (-2.182,2.736) arc (128.57:115.71:3.5) node [above] {$c_1$};                                                       \draw [very thick,|->] (-0.779,3.412) arc (102.86:90.00:3.5) node [above] {$a_2$};                                                        \draw [very thick,|->] (0.779,3.412) arc (77.14:64.29:3.5) node [above] {$b_2$};                                                          \draw [very thick,|->] (2.182,2.736) arc (51.43:38.57:3.5) node [right] {$c_2$};                                                          \draw [very thick,|-<] (3.153,1.519) arc (25.71:12.86:3.5) node [right] {$a_3$};                                                          \draw [very thick,|-<] (3.500,0.000) arc (0.00:-12.86:3.5) node [right] {$c_2$};                                                              \draw [very thick,|->] (3.153,-1.519) arc (-25.71:-38.57:3.5) node [right] {$a_3$};                                                             \draw [very thick,|-<] (2.182,-2.736) arc (-51.43:-64.29:3.5) node [below] {$b_2$};                                                             \draw [very thick,|-<] (0.779,-3.412) arc (-77.14:-90.00:3.5) node [below] {$c_3$};                                                             \draw [very thick,|-<] (-0.779,-3.412) arc (-102.86:-115.71:3.5) node [below] {$a_4$};                                                          \draw [very thick,|-<] (-2.182,-2.736) arc (-128.57:-141.43:3.5) node [left] {$c_4$};                                                          \draw [very thick,|-<] (-3.153,-1.519) arc (-154.29:-167.14:3.5) node [left] {$b_1$};
\end{tikzpicture}
\caption{Van Kampen diagram ($D$, left) and its dual ($\Delta$, right) with the word $a_1 b_1 c_1 a_2 b_2 c_2 a_3^{-1} c_2^{-1} a_3 b_2^{-1} c_3^{-1} a_4^{-1} c_4^{-1} b_1^{-1}$ as its boundary label, where $a_i \in G_a$ with $a_1a_2 = a_4$, $b_i \in G_b$, $c_i \in G_c$ with $c_4c_3 = c_1$, and $b \sim a \sim c$ in $\Gamma$. The black edges on $D$ represent the boundary faces attached: the non-visible endpoint of each black edge is the point $\infty$. The dual van Kampen diagram $\Delta$ contains $6$ components in total: $2$ components corresponding to each of the vertices $a$, $b$ and $c$.} \label{f:dvkd}
\end{figure}
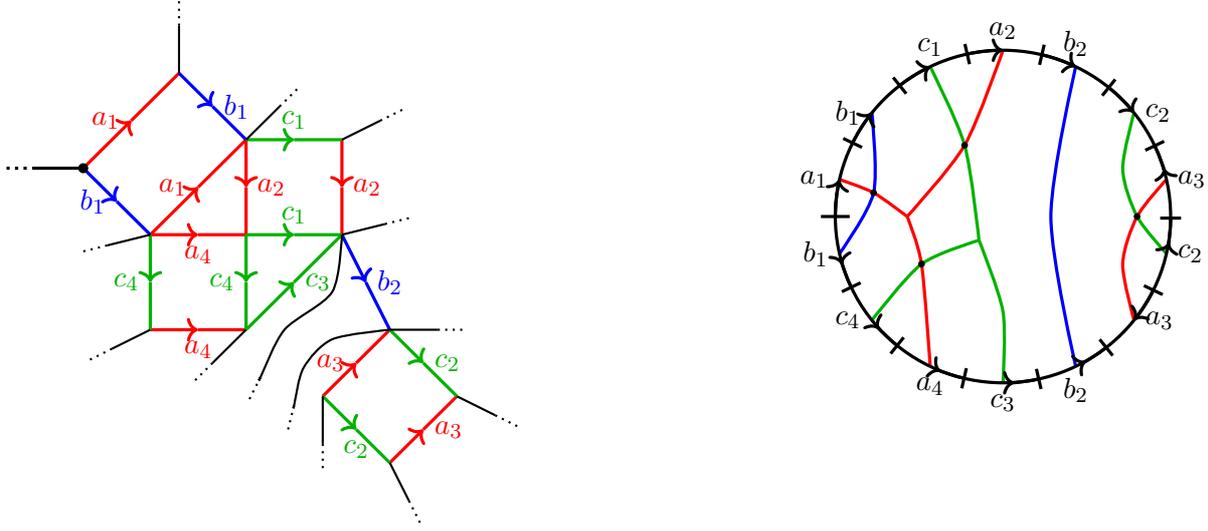

We lift the colouring of edges in $D$ to a colouring of edges of $\Delta$: this gives a corresponding vertex $v \in V(\Gamma)$ for each internal edge of $\Delta$. We say a $1$-subcomplex (a subgraph) of $\Delta$ is a \emph{$v$-component} (or just a \emph{component}) for some $v \in V(\Gamma)$ if it is a maximal connected subgraph each of whose edges correspond to the vertex $v$. We call a vertex of $\Delta$ an \emph{intersection point} (respectively \emph{branch point}, \emph{boundary point}) if it comes from a triangular (respectively rectangular, boundary) face in $D$. It is easy to see that boundary, intersection and branch points lying on a component $C$ will be precisely the vertices of $C$ of degree $1$, $2$ and $3$, respectively.

We now equip dual van Kampen diagrams with some additional structure. Some of the concepts introduced in the following Definition are displayed in Figure \ref{f:pvkd}. 
\begin{defn} \label{d:polyg-vkd}
Let $\mathbf{W}$ be a cyclic word over $\bigsqcup_{v \in V(\Gamma)} (G_v \setminus \{1\})$ such that $\mathbf{W} = 1$ in $\Gamma\mathcal{G}$. A \emph{polygonal van Kampen diagram} for $\mathbf{W}$ is a dual van Kampen diagram with boundary label $\mathbf{W}$ together with the following information:
\begin{itemize}
\item a subdivision $\mathbf{W} = g_1 \cdots g_k$ of $\mathbf{W}$ into geodesic subwords $g_\ell$, called \emph{van Kampen pieces}, such that for each $\ell \in \{ 1,\ldots,k \}$ we have $\supp g_\ell \subseteq \link(v)$ for some $v \in V(\Gamma)$; 
\item an assignment of a \emph{label} from $V(\Gamma) \times \{ \dot{\ }, \widehat{\ } \}$ for each van Kampen piece, such that $\supp g_\ell \subseteq \{ v \}$ if $g_\ell$ has label $(v,\dot{\ })$ and $\supp g_\ell \subseteq \link(v)$ if $g_\ell$ has label $(v,\widehat{\ })$.
\end{itemize}
For any $v \in V(\Gamma)$, we write $\dot{v}$ for $(v,\dot{\ })$, and call each van Kampen piece with label $\dot{v}$ a $\dot{v}$-piece; we write $\widehat{v}$ for $(v,\widehat{\ })$, and call each van Kampen piece with label $\widehat{v}$ a $\widehat{v}$-piece.

We say a van Kampen piece $g$ is \emph{trivial} if $|g| = 0$, \emph{small} if $|g| = 1$, and \emph{large} otherwise. With $g_1,\ldots,g_k$ the van Kampen pieces as above, we say $g_{j_1},\ldots,g_{j_m}$ are \emph{consecutive} van Kampen pieces if $2 \leq m \leq k$ and $j_{i+1} \equiv j_i+1 \pmod{k}$ for $1 \leq i \leq m-1$. We say van Kampen pieces $g$ and $h$ are \emph{adjacent} if either $g,h$ or $h,g$ are consecutive.

We adopt the convention that $\mathbf{W}$ is read out by following the boundary of $\Delta$ clockwise. For a van Kampen piece $g$ of $\Delta$, we write $\mathcal{I}_g(\Delta)$ for the subinterval of the boundary $\partial\Delta$ of $\Delta$ corresponding to the subword $g$. Given two distinct points $P,Q \in \partial\Delta$, we write $[P,Q]$ for the closed subinterval of $\partial\Delta$ going clockwise from $P$ to $Q$. We say a component $C$ of $\Delta$ is \emph{supported} at a van Kampen piece $g$ if it has a boundary point on $\mathcal{I}_g(\Delta)$. A \emph{supporting interval} of $C$ is an interval of the form $[P,Q]$, where $P,Q \in \partial\Delta$ are distinct boundary points of $C$ such that all boundary points of $C$ are contained in $[P,Q]$.

An \emph{oriented component} of $\Delta$ is a pair $(C,[P,Q])$, where $C$ is a component of $\Delta$ and $[P,Q]$ is a supporting interval of $C$. An oriented component $(C,[P,Q])$ is said to \emph{enclose} an oriented component $(C',[P',Q'])$ (or we may simply say that $(C,[P,Q])$ encloses $C'$) if $[P',Q'] \subsetneq [P,Q]$. 
We say that $(C,[P,Q])$ is \emph{trivial} if $P \in \mathcal{I}_g(\Delta)$, $Q \in \mathcal{I}_h(\Delta)$ and $g,h$ are consecutive van Kampen pieces; $(C,[P,Q])$ is said to be \emph{minimal} if it is not trivial and does not enclose any non-trivial components, and \emph{almost minimal} if it is neither trivial nor minimal and does not enclose any non-trivial non-minimal components. We say $(C,[P,Q])$ is an oriented component \emph{starting} (respectively \emph{ending}) at a van Kampen piece $g$ if $P \in \mathcal{I}_g(\Delta)$ (respectively $Q \in \mathcal{I}_g(\Delta)$).

\end{defn}

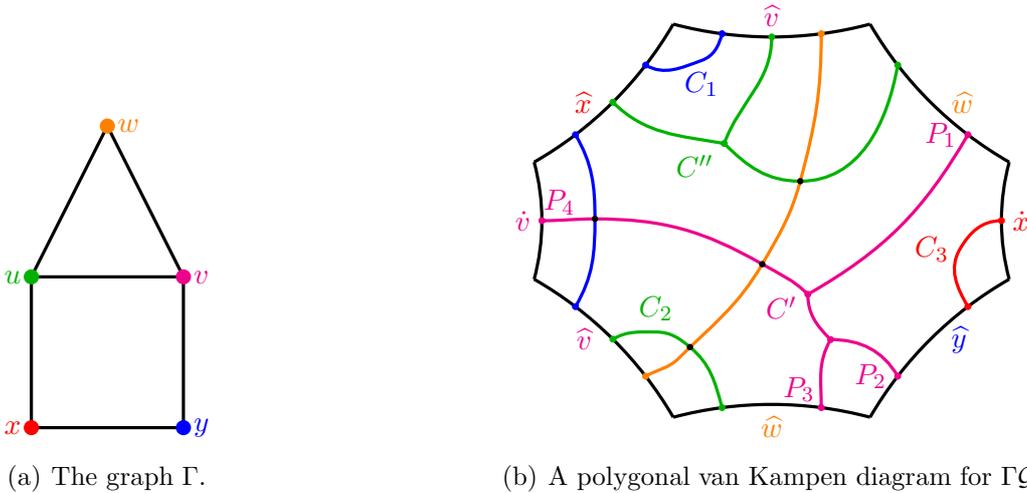
\begin{figure}[ht]
\begin{subfigure}[b]{0.3\textwidth}
\centering
\begin{tikzpicture}
\draw[very thick] (0,2) -- (0,0) -- (2,0) -- (2,2) -- (0,2) -- (1,4) -- (2,2);
\fill[red] (0,0) circle (0.1) node[left] {$x$};
\fill[blue] (2,0) circle (0.1) node[right] {$y$};
\fill[green!70!black] (0,2) circle (0.1) node[left] {$u$};
\fill[magenta] (2,2) circle (0.1) node[right] {$v$};
\fill[orange] (1,4) circle (0.1) node[right] {$w$};
\end{tikzpicture}
\caption{The graph $\Gamma$.}
\end{subfigure}
\hfill
\begin{subfigure}[b]{0.6\textwidth}
\centering
\begin{tikzpicture}
\draw[very thick] (0,0) arc (-15:15:3) node[midway,left,color=magenta] {$\dot{v}$} node[at end] (e1) {} node[midway] (p4) {};
\draw[very thick] (e1) arc (-60:-30:5) node[midway,left,color=red] {$\widehat{x}\ $} node[at end] (e2) {} node[near start] (d22) {} node[midway] (cpp1) {} node[near end] (c11) {};
\draw[very thick] (e2) arc (-105:-75:5) node[midway,above,color=magenta] {$\widehat{v}$} node[at end] (e3) {} node[near start] (c12) {} node[midway] (cpp2) {} node[near end] (d11) {};
\draw[very thick] (e3) arc (-150:-120:5) node[midway,right,color=orange] {$\ \widehat{w}$} node[at end] (e4) {} node[near start] (cpp3) {} node[near end] (p1) {};
\draw[very thick] (e4) arc (165:195:3) node[midway,right,color=red] {$\dot{x}$} node[at end] (e5) {} node[midway] (c31) {};
\draw[very thick] (e5) arc (120:150:5) node[midway,right,color=blue] {$\ \widehat{y}$} node[at end] (e6) {} node[near start] (c32) {} node[near end] (p2) {};
\draw[very thick] (e6) arc (75:105:5) node[midway,below,color=orange] {$\widehat{w}$} node[at end] (e7) {} node[near start] (p3) {} node[near end] (c21) {};
\draw[very thick] (e7) arc (30:60:5) node[midway,left,color=magenta] {$\widehat{v}\ $} node[near start] (d12) {} node[midway] (c22) {} node[near end] (d21) {};
\draw[very thick,blue] (c11.center) circle (0.7pt) to[out=-30,in=-160] (2.2,2.9) node[below] {$C_1$} to[out=20,in=-105] (c12.center) circle (0.7pt);
\draw[very thick,green!70!black] (c22.center) circle (0.7pt) to[out=30,in=180] (1.6,-0.7) node[above] {$C_2$} to[out=0,in=140] (2.05,-0.9) node (ic2d1) {} to[out=-40,in=105] (c21.center) circle (0.7pt);
\draw[very thick,red] (c31.center) circle (0.7pt) to[out=-180,in=60] (5.6,0.4) node[left] {$C_3$} to[out=-120,in=120] (c32.center) circle (0.7pt);
\draw[very thick,orange] (d12.center) circle (0.7pt) to[out=30,in=-135] (ic2d1) to[out=45,in=-120] (3,0.2) node (icpd1) {} to[out=60,in=-105] (3.5,1.3) node (icppd1) {} to[out=75,in=-90] (d11.center) circle (0.7pt);
\draw[very thick,blue] (d21.center) circle (0.7pt) to[out=60,in=-90] (0.8,0.8) node (icpd2) {} to[out=90,in=-60] (d22.center) circle (0.7pt);
\draw[very thick,green!70!black] (cpp1.center) circle (0.7pt) to[out=-45,in=170] (2.5,1.8) node (bcpp) {} circle (0.7pt) node[below left] {$C''$};
\draw[very thick,green!70!black] (cpp2.center) circle (0.7pt) to[out=-90,in=70] (bcpp.center) to[out=-45,in=-180] (icppd1.center) to[out=0,in=-100] (cpp3.center) circle (0.7pt);
\draw[very thick,magenta] (p4.center) circle (0.7pt) node[right,yshift=7pt] {$\!\!P_4$} to[out=0,in=180] (icpd2.center) to[out=0,in=150] (icpd1.center) to[out=-30,in=135] (3.6,-0.2) circle (0.7pt) node (bcp1) {} node[left,yshift=-5pt] {$C'$};
\draw[very thick,magenta] (p1.center) circle (0.7pt) node[left] {$P_1$} to[out=-120,in=30] (bcp1.center) to[out=-90,in=135] (3.9,-0.8) circle (0.7pt) node (bcp2) {};
\draw[very thick,magenta] (p3.center) circle (0.7pt) node[left,yshift=7pt] {$P_3\!$} to[out=90,in=-120] (bcp2.center) to[out=0,in=120] (p2.center) circle (0.7pt) node[left] {$P_2$};
\fill (icpd1) circle (1.2pt);
\fill (icppd1) circle (1.2pt);
\fill (ic2d1) circle (1.2pt);
\fill (icpd2) circle (1.2pt);
\end{tikzpicture}
\caption{A polygonal van Kampen diagram for $\Gamma\mathcal{G}$.}
\end{subfigure}

\caption{An example of a polygonal van Kampen diagram $\Delta$. The interval $\mathcal{I}_g(\Delta)$ corresponding to a van Kampen piece $g$ is drawn as a smooth curve, with the label of $g$ written next to it. The trivial components are $C_1$, $C_2$ and $C_3$ (with the obvious choices of supporting intervals). The oriented component $(C',[P_1,P_4])$ is minimal, but for $1 \leq i \leq 3$, the oriented component $(C',[P_{i+1},P_i])$ is not minimal as it encloses $C''$. The components $C_3$ and $C'$ are the first (penultimate) and second (last) components, respectively, supported at the unique $\widehat{y}$-piece of $\Delta$.
}
\label{f:pvkd}
\end{figure}

We order the components supported at a given van Kampen piece $g$ by looking at their boundary points on $\mathcal{I}_g(\Delta)$. In particular, for $i \geq 1$ and a van Kampen piece $g$, we say a component $C$ supported at $g$ is the \emph{$i$-th component supported at $g$} if the boundary point of $C$ in $\mathcal{I}_g(\Delta)$ is the $i$-th boundary point on $\mathcal{I}_g(\Delta)$ when following $\partial\Delta$ clockwise. We similarly define the \emph{last} (or \emph{penultimate}) \emph{component supported at $g$}. 


The following two Lemmas will be used in Section \ref{ss:girth}.

\begin{lem} \label{l:nointer}
Let $\Delta$ be a polygonal van Kampen diagram, and suppose that $\Gamma$ is triangle-free. Then for any van Kampen piece $g$ of $\Delta$, no two components supported at $g$ intersect.
\end{lem}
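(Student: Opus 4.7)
The plan is a case analysis on where two distinct components $C_1, C_2$ of $\Delta$ supported at a common van Kampen piece $g$ could share a vertex $p$. Since each component is by definition a maximal connected monochromatic subgraph of $\Delta$, two distinct components of the same colour cannot share a vertex. Hence $C_1$ is a $u_1$-component and $C_2$ is a $u_2$-component with $u_1 \neq u_2$, and since each $C_i$ has a boundary point on $\mathcal{I}_g(\Delta)$ we have $u_1, u_2 \in \supp g$.

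Next I would rule out each possible type for $p$. A boundary point is dual to a boundary face of $D$ and has degree $1$ in $\Delta$, so it lies on a unique component and cannot be $p$. An intersection point is dual to a triangular relator $ghk^{-1} \in R_\triangle$ with $g,h,k \in G_v \setminus \{1\}$, so all three edges incident to it have the single colour $v$; any component through such a vertex is therefore a $v$-component, and having two such distinct components forces $u_1 = u_2$, a contradiction. Thus $p$ must be a branch point, dual to a rectangular relator $[h_{u_1}, h_{u_2}] \in R_\square$, which in particular forces the adjacency $u_1 \sim u_2$ in $\Gamma$.

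To conclude I would exploit the label of $g$. If $g$ is a $\dot{v}$-piece then $\supp g \subseteq \{v\}$, forcing $u_1 = u_2 = v$, again contradicting $u_1 \neq u_2$. If $g$ is a $\widehat{v}$-piece then $\supp g \subseteq \link(v)$, so both $u_1$ and $u_2$ are adjacent to $v$ in $\Gamma$; combined with the adjacency $u_1 \sim u_2$ produced in the previous step, this yields a triangle on $\{v, u_1, u_2\}$ in $\Gamma$, contradicting the triangle-freeness hypothesis. I do not anticipate any subtle obstacle in this argument: the triangle-freeness assumption is used only in the final $\widehat{v}$-subcase, and the main technical input is simply the explicit description of the two relator types in the presentation \eqref{e:pres}.
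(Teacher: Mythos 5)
Your proof is correct and follows essentially the same route as the paper: an intersection of a $u_1$-component and a $u_2$-component must occur at a vertex dual to a rectangular relator, forcing $u_1 \sim u_2$ in $\Gamma$, while the support condition on the van Kampen piece places $u_1,u_2$ in a common link, producing a forbidden triangle. The only cosmetic differences are that you spell out the vertex-type case analysis explicitly and split on the label ($\dot{v}$ versus $\widehat{v}$), whereas the paper's proof simply invokes the condition $\supp g \subseteq \link(u)$ for some $u \in V(\Gamma)$ coming from the definition of the subdivision; both give the same contradiction.
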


\begin{proof}
Let $C$ be a $v$-component and $C'$ be a $w$-component (for some $v,w \in V(\Gamma)$), both supported at a van Kampen piece $g$. Suppose for contradition that $C$ and $C'$ intersect: then $w \in \link(v)$. But we have $\{ v,w \} \subseteq \supp g \subseteq \link(u)$ for some $u \in V(\Gamma)$. Hence $u,v,w$ span a triangle in $\Gamma$, contradicting the fact that $\Gamma$ is triangle-free. Thus $C$ and $C'$ cannot intersect, as required.
\end{proof}


\begin{lem} \label{l:interhat}
Let $\Delta$ be a polygonal van Kampen diagram, and suppose that $\Gamma$ has girth $\geq 5$. Suppose $C_1$ and $C_2$ are distinct components of $\Delta$ supported at a van Kampen piece $g$ of $\Delta$, and $C$ is a $v$-component (for some $v \in V(\Gamma)$) that either intersects or coincides with each of $C_1$ and $C_2$. Then $g$ is a $\widehat{v}$-piece, and $C$ intersects both $C_1$ and $C_2$.
\end{lem}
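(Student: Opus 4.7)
My plan is to first pin down the label of $g$, use Lemma \ref{l:nointer} to exclude $C = C_i$, and then derive $u = v$ by extracting a short cycle in $\Gamma$.

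First, since $C_1 \neq C_2$ are both supported at $g$, they contribute distinct boundary points on $\mathcal{I}_g(\Delta)$ coming from distinct syllables of $g$, so $g$ has at least two syllables. A $\dot{u}$-piece has all its syllables in $G_u$ and, by geodesicity, can contain at most one such syllable (adjacent $G_u$-syllables always combine). Hence $g$ must be a $\widehat{u}$-piece for some $u \in V(\Gamma)$. Writing $v_i \in V(\Gamma)$ for the vertex of $C_i$, this yields $v_1, v_2 \in \supp g \subseteq \link(u)$, and in particular $v_i \neq u$.

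Next I would rule out $C = C_1$ (the case $C = C_2$ is symmetric). If $C = C_1$, then since $C$ either coincides with or intersects $C_2$ and $C_1 \neq C_2$, the components $C_1$ and $C_2$ would have to intersect; but two distinct components supported at $g$ cannot intersect by Lemma \ref{l:nointer} (applicable since girth $\geq 5$ forces $\Gamma$ to be triangle-free). Hence $C \neq C_1, C_2$, so $C$ intersects both $C_1$ and $C_2$, establishing the second half of the conclusion and yielding $v_1, v_2 \in \link(v)$.

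It remains to prove $u = v$. Suppose for contradiction $u \neq v$. The edges $u \sim v_i$ and $v \sim v_i$ for $i \in \{1,2\}$ give the closed walk $u \to v_1 \to v \to v_2 \to u$ in $\Gamma$. If $v_1 \neq v_2$, then using $u \neq v$, $u \neq v_i$ and $v \neq v_i$ one sees that the four vertices are pairwise distinct, so this walk is a $4$-cycle, contradicting girth $\geq 5$. The subcase $v_1 = v_2 =: w$ is the main obstacle. To handle it, I would exploit geodesicity of $g$: as $g$ is geodesic over $\supp g \subseteq \link(u)$ containing two distinct $G_w$-syllables (the ones carrying the boundary points $P_1, P_2$ of $C_1, C_2$), between them must appear a syllable with vertex $x \in \link(u) \setminus (\link(w) \cup \{w\})$, since otherwise the two $G_w$-syllables could be shuffled together across $\link(w)$-letters and combined. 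Let $C'$ be the $x$-component dual to this intermediate syllable.

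The crux is to show $C'$ must cross $C$. I would consider the topological sub-disc $D_0 \subseteq \Delta$ bounded by the arc $[P_1, P_2] \subseteq \partial\Delta$ together with arcs in $C_1, C, C_2$ joining $P_1$ through the branch points $b_1 \in C \cap C_1$ and $b_2 \in C \cap C_2$ back to $P_2$. The component $C'$ enters $D_0$ at its boundary point in $(P_1, P_2)$, and it cannot cross $C_1$ or $C_2$ because that would force $x \in \link(w)$, contradicting the choice of $x$. So $C'$ either exits $D_0$ across $C$ or becomes trapped inside $D_0$. A minimality argument on the number of syllables of $g$ strictly between $P_1$ and $P_2$ — choosing a counterexample that minimises this count and observing that a trapped $C'$ allows one to extract a strictly smaller configuration — should rule out the trapped case. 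Once $C'$ crosses $C$ we obtain $x \in \link(v)$, and then $u, w, v, x$ are pairwise distinct vertices spanning the $4$-cycle $u - w - v - x - u$ in $\Gamma$, again contradicting girth $\geq 5$. Setting up the minimality step so that the trapped case genuinely reduces to a smaller instance of the lemma's hypotheses is the most delicate point I anticipate.
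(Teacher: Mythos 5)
The first half of your argument is correct and coincides with the paper's endgame: two distinct supported components force $g$ to have at least two syllables, hence to be a $\widehat{u}$-piece; $C \neq C_1,C_2$ by Lemma \ref{l:nointer}, so $C$ intersects both; and when the two supported components carry distinct vertices $v_1 \neq v_2$, the $4$-cycle $u - v_1 - v - v_2 - u$ rules out $u \neq v$. The genuine gap is exactly where you flag it: the subcase $v_1 = v_2 = w$ with the ``trapped'' $x$-component $C'$. The proposed fix --- minimise the number of syllables strictly between $P_1$ and $P_2$ and ``extract a strictly smaller configuration'' from a trapped $C'$ --- does not set itself up: a trapped $C'$ is not a smaller instance of the lemma's hypotheses (there is no third component playing the role of $C$), so there is nothing for an inductive hypothesis to apply to. What a trapped $C'$ actually produces is either (a) a component with at least two boundary points on the single geodesic piece $g$, or (b) a component with only one boundary point, hence containing a cycle (a closed band in the underlying van Kampen diagram); your write-up excludes neither. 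Possibility (a) is indeed contradictory, but for a reason you never invoke: under the combinatorial map from the van Kampen diagram to the Cayley graph $X$, all edges dual to one component map to edges dual to a single hyperplane of $X$, so a component meeting $\mathcal{I}_g(\Delta)$ twice would make the geodesic $g$ cross one hyperplane twice, contradicting Proposition \ref{p:geod}. Possibility (b) cannot be dismissed for an arbitrary (unreduced) diagram without a further reduction making components trees, which you do not carry out. So the trapped case, which is the entire difficulty of the $v_1 = v_2$ subcase, remains open in your proposal.

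For comparison, the paper never meets this subcase: it orders the components supported at $g$ and, when the boundary points of $C_1$ and $C_2$ are not adjacent on $\mathcal{I}_g(\Delta)$, replaces $C_2$ by the component whose boundary point immediately follows $P_1$; the separation argument shows this new component still meets $C$ (it cannot meet $C_1$ or $C_2$ by Lemma \ref{l:nointer} and geodesicity), and once the two boundary points are adjacent the labels are automatically distinct, since consecutive syllables of a geodesic lie in different vertex groups, so square-freeness concludes at once. If you prefer to keep your route through the repeated-label case, the clean way to close the gap is to prove the auxiliary statement that no component has two boundary points on a single geodesic van Kampen piece (via Proposition \ref{p:geod} as above, or a shuffle argument), together with a justification that the relevant component has at least two boundary points.
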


\begin{proof}
Let $C_1$ and $C_2$ be the $i$-th and the $j$-th pieces (respectively) supported at $g$, and suppose, without loss of generality, that $i < j$.

Suppose first that $j \geq i+2$. Let $P_1,P_2 \in \mathcal{I}_p(\Delta)$ be the boundary points of $C_1,C_2$, respectively. Note that, by the assumption, any continuous path in $\Delta$ between a point in $[P_1,P_2] \setminus \{P_1,P_2\}$ and a point in $[P_2,P_1] \setminus \{P_1,P_2\}$ must intersect either $C$ or $C_1$ or $C_2$. Thus, if $C'$ is the $(i+1)$-st component supported at $p$, then $C'$ must either intersect or coincide with one of $C$, $C_1$ and $C_2$. But $C'$ cannot intersect $C_1$ or $C_2$ by Lemma \ref{l:nointer}, and it cannot coincide with either $C_1$ or $C_2$ since $g$ is a geodesic. Thus $C'$ must either intersect or coincide with $C$; by replacing $C_2$ with $C'$ if necessary, we may therefore assume, without loss of generality, that $j = i+1$.

Now let $v_1,v_2 \in V(\Gamma)$ be such that $C_1$ and $C_2$ are a $v_1$-component and a $v_2$-component, respectively. As $g$ is a geodesic and $j = i+1$, it follows that $v_1 \neq v_2$. This implies that we cannot have $C_1 = C = C_2$; on the other hand, if one of the $C_i$ intersected $C$ and the other one was equal to $C$, then $C_1$ and $C_2$ would intersect, contradicting Lemma \ref{l:nointer}. Thus both $C_1$ and $C_2$ must intersect $C$, and so $v_1,v_2 \in \link(v)$. As $\Gamma$ is square-free and $v_1 \neq v_2$, this implies that $g$ must be a $\widehat{v}$-piece, as required.
\end{proof}

\subsection{Minimal polygonal representations} \label{ss:pr}

Throughout this and the next subsections, let $\Gamma$ be a connected finite simple graph with at least two vertices, and let $\mathcal{G} = \{ G_v \mid v \in V(\Gamma) \}$ be a collection of groups. Let $F = \langle S_0 \mid \rangle$ be a finitely generated free group (so that $|S_0| < \infty$), let $S := S_0 \sqcup S_0^{-1} \subset F$, and let $\varphi_i: F \to \Gamma\mathcal{G}$ be a sequence of homomorphisms. For each $v \in V(\Gamma)$, let $S_v \subseteq S$ be the set of all $s \in S$ such that $\supp \varphi_i(s) \subseteq \link(v)$ $\omega$-almost surely. Suppose that the homomorphisms $\varphi_i$ are linking, and in particular, $S = \bigcup_{v \in V(\Gamma)} S_v$.

Now for each $v \in V(\Gamma)$, define abstract letters $T_v^{(0)} = \{ t_v(s) \mid s \in S_0 \}$. Let $\overline{F}$ be a free group with free basis $\overline{S}_0 := S_0 \sqcup \left( \bigsqcup_{v \in V(\Gamma)} T_v^{(0)} \right)$, so that $F$ is a free factor of $\overline{F}$. We may extend the homomorphisms $\varphi_i: F \to \Gamma\mathcal{G}$ to homomorphisms $\overline\varphi_i: \overline{F} \to \Gamma\mathcal{G}$ by setting $\overline\varphi_i(t_v(s)) = \pi_v(\varphi_i(s))$, where $\pi_v: \Gamma\mathcal{G} \to G_v$ is the canonical projection. Let $T_v := T_v^{(0)} \sqcup (T_v^{(0)})^{-1} \subset \overline{F}$ for $v \in V(\Gamma)$, and let $\overline{S} := \overline{S}_0 \sqcup \overline{S}_0^{-1} \subset \overline{F}$.

For each $v \in V(\Gamma)$, let $\dot{F}_v$ and $\widehat{F}_v$ be the subgroups of $\overline{F}$ generated by $T_v$ and $\widehat{S}_v := S_v \sqcup (\bigsqcup_{w \in \link(v)} T_w)$, respectively. Note that $\overline\varphi_i(\dot{F}_v) \subseteq G_v$ and $\overline\varphi_i(\widehat{F}_v) \subseteq \Gamma_{\link(v)} \mathcal{G}_{\link(v)}$ $\omega$-almost surely. For convenience, for any $v,w \in V(\Gamma)$ and $s \in S_0$ we also set $t_v(t_w(s)) = \begin{cases} t_w(s) & \text{if } v = w, \\ 1 & \text{otherwise}, \end{cases}$ so that we have well-defined functions $t_v: \overline{S}_0 \to T_v^{(0)} \cup \{1\}$ extending to homomorpisms $t_v: \overline{F} \to \dot{F}_v$.

The aim of this and the next subsections is to introduce several technical lemmas we will use in a proof of the following result.

\begin{prop} \label{p:girth6}
If $\Gamma$ has girth $\geq 6$, then $\overline{F}_{\omega,(\overline\varphi_i)}$ is the normal closure $K$ of
\[
\mathcal{K} = \left( \bigcup_{v \in V(\Gamma)} \left( \overline{F}_{\omega,(\overline\varphi_i)} \cap \widehat{F}_v \right) \right) \cup \left( \bigcup_{v \in V(\Gamma)} \left[ \dot{F}_v, \widehat{F}_v \right] \right)
\]
in $\overline{F}$.
\end{prop}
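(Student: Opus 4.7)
I plan to prove the two inclusions separately. For $K \subseteq \overline{F}_\omega$: elements of $\overline{F}_\omega \cap \widehat{F}_v$ lie in $\overline{F}_\omega$ tautologically, and for $a \in \dot{F}_v$, $b \in \widehat{F}_v$ we have $\overline\varphi_i(a) \in G_v$ and $\overline\varphi_i(b) \in \GG[\link(v)]$ $\omega$-almost surely; these subgroups commute in $\GG$, so $\overline\varphi_i([a,b]) = 1$ $\omega$-almost surely. Hence $\mathcal{K} \subseteq \overline{F}_\omega$, and normality of $\overline{F}_\omega$ gives $K \subseteq \overline{F}_\omega$.

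For the reverse inclusion, I argue by strong induction on the length $n$ of a word $w = w_1 \cdots w_n$ over $\overline{S}$ representing $f \in \overline{F}_\omega$; the base case $n = 0$ gives $f = 1 \in K$. For $n \geq 1$, assign each letter a label in $V(\Gamma) \times \{\dot{\ },\widehat{\ }\}$: either $\dot{v}$ if $w_j \in T_v$, or $\widehat{v}$ for some $v$ with $w_j \in S_v$ (such a $v$ exists because the $\varphi_i$ are linking). Since $\overline\varphi_i(w) = 1$ in $\GG$ $\omega$-almost surely, this yields a polygonal van Kampen diagram $\Delta_i$ with van Kampen pieces $\overline\varphi_i(w_j)$ carrying the labels above. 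The inductive step is to locate a cyclic subword $w' = w_{j+1} \cdots w_{j+m}$ of $w$ with $m \geq 1$ lying in $\widehat{F}_v \cap \overline{F}_\omega \subseteq \mathcal{K}$ for some $v \in V(\Gamma)$: by normality of $K$, the element $f$ is then congruent modulo $K$ to the word $w_1 \cdots w_j w_{j+m+1} \cdots w_n$ of length at most $n - m < n$, which lies in $\overline{F}_\omega$ and hence in $K$ by the inductive hypothesis, giving $f \in K$.

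To find $w'$, I pick $\omega$-almost surely a minimal oriented component $(C, [P,Q])$ of $\Delta_i$ (passing to a subsequence where the combinatorial type of this component is constant), say $C$ a $v$-component, and take $w'$ to be the subword corresponding to $[P,Q]$. Applying Lemma \ref{l:interhat} to $C$ and any component $C'$ enclosed by $(C,[P,Q])$ meeting $C$, their shared van Kampen piece must be a $\widehat{v}$-piece; combined with the minimality of $(C,[P,Q])$, Lemma \ref{l:nointer}, and the girth $\geq 6$ hypothesis, every van Kampen piece along $[P,Q]$ should carry a label of the form $\widehat{v}$ or $\dot{u}$ with $u \in \link(v)$. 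Each letter of $w'$ then lies in $\widehat{F}_v$, so $w' \in \widehat{F}_v$, and the sub-diagram of $\Delta_i$ enclosed by $C$ and $[P,Q]$ certifies $\overline\varphi_i(w') = 1$ $\omega$-almost surely, placing $w' \in \overline{F}_\omega \cap \widehat{F}_v \subseteq \mathcal{K}$. The main obstacle I expect is the precise use of the girth $\geq 6$ hypothesis (as opposed to just girth $\geq 5$): it should enter when excluding a non-trivial enclosed component whose label would violate the $\widehat{v}$ / $\dot{u}$-with-$u \in \link(v)$ dichotomy, ruling out a latent $5$-cycle in $\Gamma$ implicit in such a configuration.
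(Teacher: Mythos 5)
Your easy inclusion $K \subseteq \overline{F}_\omega$ matches the paper, but the inductive step of the reverse inclusion has a genuine gap, in fact two. First, the claimed dichotomy along $[P,Q]$ is not available for the diagram you build: you take each letter as its own van Kampen piece, so your $\Delta_i$ need not satisfy any of the reduction properties of Lemma \ref{l:red}, and the conclusions of Lemma \ref{l:mtsupplarge} (which are what force the pieces met by a minimal component to be large $\widehat{v}$-pieces, and force $C$ to be supported only at the two end pieces) are proved only for diagrams satisfying those properties. In an arbitrary diagram the pieces at which the $v$-component $C$ is supported are typically $\widehat{x}$-pieces with $x \in \link(v)$ (see the minimal component $C'$ in Figure \ref{f:pvkd}, supported at $\widehat{w}$- and $\widehat{y}$-pieces), and letters of $S_x$ with $x \in \link(v)$ do not lie in $\widehat{F}_v$; likewise enclosed trivial components inside $[P,Q]$ can carry labels unrelated to $v$. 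Second, and more fundamentally, even when the letters of $w'$ do lie in $\widehat{F}_v$, the subdiagram bounded by $C$ and $[P,Q]$ does not certify $\overline\varphi_i(w')=1$: a supporting interval of a component is not a relation. A concrete counterexample to your inductive step is $W=[t_v(s),x]$ with $x\in S_v$, $\varphi_i(x)\neq 1$ and $\pi_v(\varphi_i(s))\neq 1$: here $W\in[\dot{F}_v,\widehat{F}_v]\subseteq\overline{F}_\omega$, your recipe applied to the (only candidate) minimal $v$-component produces $w'=x$, which lies in $\widehat{F}_v$ but has $\overline\varphi_i(w')\neq 1$, and indeed no nonempty proper cyclic subword of $W$ lies in any $\overline{F}_\omega\cap\widehat{F}_u$, so no deletion of the kind your induction requires is possible.

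The missing ingredient is that the commutators $\left[\dot{F}_v,\widehat{F}_v\right]$ must be used actively as rewriting moves, not merely quoted in the easy inclusion. The paper does this by working with the whole equivalence class of $W$ modulo $K$ (conjugation and multiplication by elements of $\mathcal{K}$), choosing a polygonal representation minimizing $(\widehat{P},\dot{P})$ lexicographically; Lemma \ref{l:red} converts that minimality into the combinatorial constraints (no length-$0$ or small $\widehat{v}$-pieces, no forbidden adjacencies), which are then inherited by the associated van Kampen diagrams, and Lemmas \ref{l:mtsupplarge}, \ref{l:noam} and Corollary \ref{c:nothing} show such a minimal diagram over a girth-$\geq 6$ graph is empty, i.e.\ $W$ is equivalent to the trivial word, hence $W\in K$. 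Your plan to induct by excising subwords lying in $\overline{F}_\omega\cap\widehat{F}_v$ cannot work without first performing these commutation/merging/splitting moves, which is precisely the global minimization your argument omits; your intended use of Lemmas \ref{l:nointer} and \ref{l:interhat} is legitimate, but they alone do not yield the label dichotomy or the triviality of $w'$.
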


By construction, $\left[ \dot{F}_v, \widehat{F}_v \right] \subseteq \overline{F}_\omega$ $\omega$-almost surely for all $v \in V(\Gamma)$, and so $\mathcal{K} \subseteq \overline{F}_\omega$. Thus, we only need to show that $K$ contains every $f \in \overline{F}$ such that $\overline\varphi_i(f) = 1$ $\omega$-almost surely. We will use the remainder of this subsection to prove this. In order to do that, we work with `polygonal representations', which are defined below and closely related to polygonal van Kampen diagrams introduced in Section \ref{ss:dvkds}.

\begin{defn} \label{d:polyg-repn}
Let $W = s_1 \cdots s_n$ be a cyclic word over the alphabet $\overline{S}$. 
A \emph{polygonal representation} for $W$ consists of the following data:
\begin{itemize}
\item A subdivision $W = | s_{j_0} \cdots s_{j_1-1} | s_{j_1} \cdots s_{j_2-1} | \cdots | s_{j_{k-1}} \cdots s_{j_k-1} |$ of $W$, where $j_k = j_0$, into subwords $p_\ell = s_{j_{\ell-1}} \cdots s_{j_\ell-1}$ called \emph{pieces}, such that for each $\ell \in \{ 1,\ldots,k \}$ we have $s_{j_{\ell-1}},\ldots,s_{j_\ell-1} \in \widehat{S}_v$ for some $v \in V(\Gamma)$.
\item An assignment of a \emph{label} from $V(\Gamma) \times \{ \dot{\ }, \widehat{\ } \}$ for each piece. For any $v \in V(\Gamma)$, we write $\dot{v}$ for $(v,\dot{\ })$, and call each piece with label $\dot{v}$ a $\dot{v}$-piece; we write $\widehat{v}$ for $(v,\widehat{\ })$, and call each piece with label $\widehat{v}$ a $\widehat{v}$-piece. We require the assignment to be such that for any piece $p_\ell = s_{j_{\ell-1}} \cdots s_{j_\ell-1}$, we have $s_{j_{\ell-1}},\ldots,s_{j_\ell-1} \in T_v$ if $p_\ell$ is a $\dot{v}$-piece, and $s_{j_{\ell-1}},\ldots,s_{j_\ell-1} \in \widehat{S}_v$ if $p_l$ is a $\widehat{v}$-piece.
\end{itemize}
The \emph{length} of a piece $p_\ell$ is the number $\lim^\omega |\overline\varphi_i(p_\ell)| \in \mathbb{N} \cup \{\infty\}$, where $|g|$ is the word length of $g \in \Gamma\mathcal{G}$ over the alphabet $\bigsqcup_{v \in V(\Gamma)} (G_v \setminus \{1\})$. Clearly, any $\dot{v}$-piece has length $\leq 1$ for any $v \in V(\Gamma)$. If a piece $p_\ell$ has length $L < \infty$, then (as $\Gamma$ is finite) there exist vertices $v_1,\ldots,v_L \in V(\Gamma)$ such that we $\omega$-almost surely have $\overline\varphi_i(p_\ell) = g_{i,1} \cdots g_{i,L}$ for some $g_{i,m} \in G_{v_m}$; if in addition we have $v_m \neq v_{m'}$ for $1 \leq m < m' \leq L$, then $p_\ell$ is said to be a \emph{small} piece. A piece that is not small is called \emph{large}.

Given a finite sequence $p_{j_1},\ldots,p_{j_m}$ of pieces, we say $p_{j_1},\ldots,p_{j_m}$ are \emph{consecutive} if $2 \leq m \leq k$ and $j_{i+1} \equiv j_i+1 \pmod{k}$ for $1 \leq i \leq m-1$. We say a piece $p$ is \emph{adjacent} to a piece $q$ if either $p,q$ or $q,p$ are consecutive.
\end{defn}

It is clear that every word has at least one polygonal representation: for instance, the one where pieces are single letters.

We say two words $U,U'$ over $\overline{S}$ are \emph{equivalent} if $UK$ and $U'K$ are conjugate in $\overline{F}/K$, and we say two cyclic words $W,W'$ over $\overline{S}$ are \emph{equivalent} if some (equivalently, any) representative words $U$ of $W$ and $U'$ of $W'$ are equivalent. It is clear that this defines an equivalence relation on the set of cyclic words over $\overline{S}$.

Given a cyclic word $W'$ with a polygonal representation $\mathcal{D}'$, we denote by $\dot{P}(W',\mathcal{D}')$ and $\widehat{P}(W',\mathcal{D}')$ the number of $\dot{v}$-pieces (for $v \in V(\Gamma)$) and the number of $\widehat{v}$-pieces (for $v \in V(\Gamma)$), respectively, in $\mathcal{D}'$. A polygonal representation $\mathcal{D}$ for a word $W$ is said to be \emph{minimal} if, for all cyclic words $W'$ equivalent to $W$ and all polygonal representations $\mathcal{D}'$ for $W'$, we have $(\widehat{P}(W,\mathcal{D}),\dot{P}(W,\mathcal{D})) \leq (\widehat{P}(W',\mathcal{D}'),\dot{P}(W',\mathcal{D}'))$ in the lexicographical order.

\begin{rmk}
There is an a priori noticeable discrepancy between the definition of a `small / large piece' in Definition \ref{d:polyg-repn} and the definition of a `trivial / small / large van Kampen piece' in Definition \ref{d:polyg-vkd}. However, in view of parts \ref{i:lred-0} and \ref{i:lred-1} of Lemma \ref{l:red} below, the two definitions are related in the following sense.

Let $W$ be a cyclic word over $\overline{S}$, let $\mathcal{D}$ be a polygonal representation for $W$ with pieces $p_1,\ldots,p_k$, and suppose that $\mathcal{D}$ is a minimal polygonal representation. Suppose moreover that $W \in \overline{F}_\omega$: in particular, $\omega$-almost surely there exists a polygonal van Kampen diagram $\Delta_i$ with van Kampen pieces $g_{i,1},\ldots,g_{i,k}$, where $g_{i,\ell}$ is a geodesic word representing $\overline\varphi_i(p_\ell)$ for each $\ell$, and where the label of the van Kampen piece $g_{i,\ell}$ coincides with the label of the piece $p_\ell$. Then $p_\ell$ is a small (respectively large) piece if and only if $g_{i,\ell}$ is $\omega$-almost surely a small (respectively large) van Kampen piece.

Since in the subsequent argument we do not consider polygonal representations that are not minimal, we chose our terminology the way we did in Definitions \ref{d:polyg-vkd} and \ref{d:polyg-repn}.
\end{rmk}

\begin{lem} \label{l:red}
A minimal polygonal representation satisfies the following:
\begin{enumerate}[label=\textup{(\roman*)}]
\item \label{i:lred-0} there are no pieces of length $0$;
\item \label{i:lred-1} every small piece is a $\dot{v}$-piece for some $v \in V(\Gamma)$ (in particular, any small piece has length $1$, and there are no pieces of length $2$);
\item \label{i:lred-adperm} if $p_1,\ldots,p_m$ are consecutive pieces, $p_1$ and $p_m$ are a $\widehat{w}_1$-piece and a $\dot{v}$-piece (respectively) for some $w_1,v \in V(\Gamma)$, and, for each $j \in \{ 2,\ldots,m \}$, $p_j$ is a $\dot{w}_j$-piece for some $w_j \in V(\Gamma)$, then we have $\{ w_1,w_2,\ldots,w_{m-1} \} \nsubseteq \link(v)$;
\item \label{i:lred-adperm2} if $p_1,\ldots,p_m$ are consecutive pieces, $p_1$ and $p_m$ are $\dot{v}$-pieces for some $v \in V(\Gamma)$, and, for each $j \in \{ 2,\ldots,m-1 \}$, $p_j$ is a $\dot{w}_j$-piece for some $w_j \in V(\Gamma)$, then $\{ w_2,\ldots,w_{m-1} \} \nsubseteq \link(v)$;
\item \label{i:lred-adsame} no two pieces with the same label are adjacent;
\item \label{i:lred-adad} for any $v \in V(\Gamma)$ and $w \in \link(v)$, no $\dot{w}$-piece is adjacent to a $\widehat{v}$-piece.
\item \label{i:lred-adperm3} for any $v \in V(\Gamma)$ and $w \in \link(v)$, no $\widehat{v}$-piece is adjacent to both a $\widehat{w}$-piece and a $\dot{v}$-piece.
\item \label{i:lred-adperm4} for any $v \in V(\Gamma)$, no $\widehat{v}$-piece is adjacent to two $\dot{v}$-pieces.
\item \label{i:lred-adperm5} for any $v \in V(\Gamma)$, no $\dot{v}$-piece is adjacent to two $\widehat{v}$-pieces.
\end{enumerate}
\end{lem}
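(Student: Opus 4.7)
The plan is to prove each item by contradiction: if the stated condition fails, I exhibit an equivalent cyclic word and a polygonal representation for it with strictly smaller lexicographic pair $(\widehat{P},\dot{P})$, contradicting minimality of $\mathcal{D}$. The modifications producing such a smaller representation come in three flavours: (a) \emph{erase} a piece that happens to lie in $K$; (b) \emph{merge} two adjacent pieces whose labels force the concatenation to lie in a single $\dot{F}_u$ or $\widehat{F}_u$; and (c) \emph{swap} two adjacent pieces $p,q$ with $[p,q] \in K$.

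Two algebraic ingredients justify all such moves. First, $\widehat{F}_v \cap \overline{F}_\omega \subseteq \mathcal{K}$ for every $v \in V(\Gamma)$; and since $\Gamma$ is connected with at least two vertices, every $v$ admits some $w \in \link(v)$, so $T_v \subseteq \widehat{S}_w$ and hence $\dot{F}_v \subseteq \widehat{F}_w$, which lets us erase any piece of length $0$ regardless of its label. Second, $[\dot{F}_v, \widehat{F}_v] \subseteq \mathcal{K}$, and $w \in \link(v)$ implies $\dot{F}_w \subseteq \widehat{F}_v$, so a $\dot{v}$-piece can be swapped modulo $K$ past any adjacent $\dot{w}$- or $\widehat{v}$-piece whenever $w \in \link(v)$.

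With these tools, the items are dispatched as follows. Part \ref{i:lred-0} is immediate from move (a). For \ref{i:lred-1}, a small $\widehat{v}$-piece $p$ of length $1$ satisfies $\overline\varphi_i(p) \in G_w$ $\omega$-almost surely for some $w \in \link(v)$; using the identity $\overline\varphi_i \circ t_w = \pi_w \circ \overline\varphi_i$ one checks that $p \cdot t_w(p)^{-1}$ lies in $\widehat{F}_v \cap \overline{F}_\omega \subseteq K$, so $p$ may be replaced by the $\dot{w}$-piece $t_w(p)$, reducing $\widehat{P}$. Parts \ref{i:lred-adperm} and \ref{i:lred-adperm2} are handled by commuting the terminal $\dot{v}$-piece $p_m$ leftward through each intermediate $\dot{w}_j$-piece (each swap is legal because $w_j \in \link(v)$) and then merging the result into $p_1$ via $\dot{F}_v \subseteq \widehat{F}_{w_1}$ (valid because $w_1 \in \link(v)$) in case \ref{i:lred-adperm}, or directly via $\dot{F}_v \cdot \dot{F}_v \subseteq \dot{F}_v$ in case \ref{i:lred-adperm2}. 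The remaining cases \ref{i:lred-adsame}--\ref{i:lred-adperm5} are one- or two-step specialisations of the same commute-then-merge recipe: merging two same-label neighbours in \ref{i:lred-adsame}, absorbing a $\dot{w}$-piece into an adjacent $\widehat{v}$-piece when $w \in \link(v)$ in \ref{i:lred-adad}, and in \ref{i:lred-adperm3}--\ref{i:lred-adperm5} first swapping a central $\dot{v}$- or $\widehat{v}$-piece across one of its two neighbours and then merging the pieces left adjacent on the other side.

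The main obstacle will be the cumulative commutator bookkeeping in parts \ref{i:lred-adperm} and \ref{i:lred-adperm2}: one must check that moving $p_m$ past the entire block $p_2 \cdots p_{m-1}$ really produces a word congruent to $p_1 p_2 \cdots p_{m-1} p_m$ modulo $K$. This reduces to writing the long commutator $[p_m, p_2 \cdots p_{m-1}]$ as a product of basic commutators $[t,x]$ with $t \in T_v$ and $x \in T_{w_j}$, each of which lies in $[\dot{F}_v, \widehat{F}_v] \subseteq \mathcal{K}$ because $w_j \in \link(v)$. Once this is verified, confirming that the final labelled subdivision is a valid polygonal representation of the modified cyclic word is a routine check on the subgroup memberships of the merged pieces.
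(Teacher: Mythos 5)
Your overall strategy is exactly the paper's: argue by contradiction from minimality, producing an equivalent cyclic word with a lexicographically smaller pair $(\widehat{P},\dot{P})$ via three moves -- deleting a piece that lies in $K$, merging adjacent pieces whose letters lie in a common $T_v$ or $\widehat{S}_v$, and transposing adjacent pieces using $[\dot{F}_v,\widehat{F}_v]\subseteq\mathcal{K}$ together with $\dot{F}_w\subseteq\widehat{F}_v$ for $w\in\link(v)$. Parts \ref{i:lred-0} and \ref{i:lred-adperm}--\ref{i:lred-adperm5} are handled correctly this way and coincide with the paper's argument; for \ref{i:lred-adperm} and \ref{i:lred-adperm2} your step-by-step swapping is fine but the ``commutator bookkeeping'' you worry about is unnecessary, since $p_2\cdots p_{m-1}\in\widehat{F}_v$ already, so the single commutator $[p_m,p_2\cdots p_{m-1}]$ is itself a generator of $[\dot{F}_v,\widehat{F}_v]$ and one multiplication suffices.

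The genuine gap is in part \ref{i:lred-1}. In Definition \ref{d:polyg-repn} a \emph{small} piece is one of finite length $L$ whose image is $\omega$-almost surely a product $g_{i,1}\cdots g_{i,L}$ with pairwise distinct supports $v_1,\ldots,v_L$; it need not have length $1$. Your argument treats only a small $\widehat{v}$-piece of length $1$, so you are implicitly assuming part of what must be proved (that small pieces have length $1$ is a \emph{consequence} of \ref{i:lred-1} combined with \ref{i:lred-0}), and you never rule out, say, a $\widehat{v}$-piece of length $2$ with two distinct supports -- which is precisely what is needed for the parenthetical claim that no piece has length $2$. The fix is the natural extension of your move, and is what the paper does: for a small $\widehat{v}$-piece $p$ with supports $w_1,\ldots,w_m\in\link(v)$, one has $(t_{w_1}(p)\cdots t_{w_m}(p))^{-1}p\in\overline{F}_\omega\cap\widehat{F}_v\subseteq K$ (using $\overline\varphi_i\circ t_{w_j}=\pi_{w_j}\circ\overline\varphi_i$ and distinctness of the supports), so $p$ may be replaced by the $m$ pieces $t_{w_1}(p),\ldots,t_{w_m}(p)$ labelled $\dot{w}_1,\ldots,\dot{w}_m$; this decreases $\widehat{P}$ by $1$ while increasing $\dot{P}$ by $m$, which the lexicographic order permits. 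With this case added, your proof matches the paper's.
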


{
\begin{figure}[ht]
\captionsetup[subfigure]{labelformat=empty}
\begin{subfigure}[b]{0.3\textwidth}
\centering
\begin{tikzpicture}
\draw[very thick,blue,|-|] (0,0) -- (1.5,0) node[midway,below] {$p$};
\fill[red!40!black] (0.1,0.2) circle (0.05);
\fill[red!90!black] (0.25,0.2) circle (0.05);
\node at (0.825,0.2) {$\cdots$};
\fill[red!60!black] (1.4,0.2) circle (0.05);
\end{tikzpicture}
\[\Downarrow\]
\begin{tikzpicture}
\fill[red!40!black] (0.1,0.0) circle (0.1) node[below] {$\!\!\!\!\!\!\!t_{w_1}(p)$};
\fill[red!90!black] (0.35,0.0) circle (0.1) node[above] {$t_{w_2}(p)$};
\node at (0.875,0.0) {$\cdots$};
\fill[red!60!black] (1.4,0.0) circle (0.1) node[below] {$t_{w_m}(p)\!\!\!\!\!\!\!$};
\end{tikzpicture}
\caption{\ref{i:lred-1}}
\end{subfigure}
\hfill
\begin{subfigure}[b]{0.3\textwidth}
\centering
\begin{tikzpicture}
\draw[very thick,red,|-|] (0,0) -- (0.7,0) node[midway,below] {$p_1$};
\fill[red!40!black] (0.85,0.0) circle (0.1) node[below] {$p_2$};
\fill[red!90!black] (1.1,0.0) circle (0.1) node[above] {$p_3$};
\node at (1.625,0.0) {$\cdots$};
\fill[red!60!black] (2.15,0.0) circle (0.1) node[below] {$p_{m-1}$};
\fill[blue] (2.4,0.0) circle (0.1) node[above] {$p_m$};
\draw[->] (2.4,0.3) to[out=90,in=90] (0.75,0.15);
\end{tikzpicture}
\[\Downarrow\]
\begin{tikzpicture}
\draw[very thick,red,|-|] (-0.3,0) -- (0.7,0) node[midway,below] {$p_1p_m$};
\fill[red!40!black] (0.85,0.0) circle (0.1) node[below] {$p_2$};
\fill[red!90!black] (1.1,0.0) circle (0.1) node[above] {$p_3$};
\node at (1.625,0.0) {$\cdots$};
\fill[red!60!black] (2.15,0.0) circle (0.1) node[below] {$p_{m-1}$};
\draw[thick,red,|-|] (-0.2,0.2) -- (0.5,0.2);
\fill[blue] (0.6,0.2) circle (0.05);
\end{tikzpicture}
\caption{\ref{i:lred-adperm}}
\end{subfigure}
\hfill
\begin{subfigure}[b]{0.3\textwidth}
\centering
\begin{tikzpicture}
\fill[blue] (0.6,0.0) circle (0.1) node[below] {$p_1$};
\fill[red!40!black] (0.85,0.0) circle (0.1) node[above] {$p_2$};
\fill[red!90!black] (1.1,0.0) circle (0.1) node[below] {$p_3$};
\node at (1.625,0.0) {$\cdots$};
\fill[red!60!black] (2.15,0.0) circle (0.1) node[below] {$p_{m-1}$};
\fill[blue] (2.4,0.0) circle (0.1) node[above] {$p_m$};
\draw[->] (2.4,0.3) to[out=90,in=90] (0.725,0.4);
\end{tikzpicture}
\[\Downarrow\]
\begin{tikzpicture}
\fill[blue] (0.6,0.0) circle (0.1) node[below] {$\!\!\!\!\!\!\!\!p_1p_m$};
\fill[red!40!black] (0.85,0.0) circle (0.1) node[above] {$p_2$};
\fill[red!90!black] (1.1,0.0) circle (0.1) node[below] {$p_3$};
\node at (1.625,0.0) {$\cdots$};
\fill[red!60!black] (2.15,0.0) circle (0.1) node[below] {$p_{m-1}$};
\end{tikzpicture}
\caption{\ref{i:lred-adperm2}}
\end{subfigure}

\bigskip

\begin{subfigure}[b]{0.3\textwidth}
\centering
\begin{tikzpicture}
\draw[very thick,red,|-|] (0,0) -- (0.7,0) node[midway,below] {$p_1$};
\draw[very thick,blue,|-|] (0.75,0) -- (1.45,0) node[midway,below] {$p_2$};
\fill[blue] (1.6,0.0) circle (0.1) node[below] {$p_3\!\!$};
\draw[->] (1.6,0.15) to[out=90,in=90] (0.725,0.15);
\end{tikzpicture}
\[\Downarrow\]
\begin{tikzpicture}
\draw[very thick,red,|-|] (-0.3,0) -- (0.7,0) node[midway,below] {$p_1p_3$};
\draw[very thick,blue,|-|] (0.75,0) -- (1.45,0) node[midway,below] {$p_2$};
\draw[thick,red,|-|] (-0.2,0.2) -- (0.5,0.2);
\fill[blue] (0.6,0.2) circle (0.05);
\end{tikzpicture}
\caption{\ref{i:lred-adperm3}}
\end{subfigure}
\hfill
\begin{subfigure}[b]{0.3\textwidth}
\centering
\begin{tikzpicture}
\fill[blue] (0.6,0.0) circle (0.1) node[below] {$\!\!p_1$};
\draw[very thick,blue,|-|] (0.75,0) -- (1.45,0) node[midway,below] {$p_2$};
\fill[blue] (1.6,0.0) circle (0.1) node[below] {$p_3\!\!$};
\draw[->] (1.6,0.15) to[out=90,in=90] (0.725,0.15);
\end{tikzpicture}
\[\Downarrow\]
\begin{tikzpicture}
\fill[blue] (0.6,0.0) circle (0.1) node[below] {$\!\!\!\!\!\!\!\!p_1p_3$};
\draw[very thick,blue,|-|] (0.75,0) -- (1.45,0) node[midway,below] {$p_2$};
\end{tikzpicture}
\caption{\ref{i:lred-adperm4}}
\end{subfigure}
\hfill
\begin{subfigure}[b]{0.3\textwidth}
\centering
\begin{tikzpicture}
\draw[very thick,blue,|-|] (-0.25,0) -- (0.45,0) node[midway,below] {$p_1$};\fill[blue] (0.6,0.0) circle (0.1) node[below] {$p_2$};
\draw[very thick,blue,|-|] (0.75,0) -- (1.45,0) node[midway,below] {$p_3$};
\draw[->] (1.1,0.1) to[out=90,in=90] (0.5,0.15);
\end{tikzpicture}
\[\Downarrow\]
\begin{tikzpicture}
\draw[very thick,blue,|-|] (-1.0,0) -- (0.45,0) node[midway,below] {$p_1p_3$};
\fill[blue] (0.6,0.0) circle (0.1) node[below] {$p_2\!\!$};
\draw[thick,blue,|-|] (-0.9,0.2) -- (-0.3,0.2);
\draw[thick,blue,|-|] (-0.25,0.2) -- (0.35,0.2);
\end{tikzpicture}
\caption{\ref{i:lred-adperm5}}
\end{subfigure}
\caption[Proof of Lemma \ref{l:red}]{Proof of Lemma \ref{l:red}. The colours represent different vertices, $v$ being blue and $w$ or $w_i$ reddish. The $\dot{u}$-pieces and $\widehat{u}$-pieces (for some $u \in V(\Gamma)$) are denoted by
\begin{tikzpicture}[baseline=-0.1cm]
\fill (0.0,0.0) circle (0.1);
\end{tikzpicture}
and
\begin{tikzpicture}[baseline=-0.1cm]
\draw[very thick,|-|] (0,0) -- (0.6,0);
\end{tikzpicture}%
, respectively.
}
\label{f:lred}
\end{figure}
}

\begin{proof}
Let $\mathcal{D}$ be a minimal polygonal representation for a word $W$. We claim that $\mathcal{D}$ satisfies the conclusions of the Lemma. We check the parts \ref{i:lred-0}--\ref{i:lred-adperm5} in order. Some of these parts are visualised in Figure \ref{f:lred}.

\begin{enumerate}[label=(\roman*)]

\item Note that if $p$ is a piece of length $0$, then $\overline\varphi_i(p) = 1$ $\omega$-almost surely, and so $p \in \overline{F}_\omega \cap \widehat{F}_u$, where $u = v$ if $p$ is a $\widehat{v}$-piece and $u$ is any vertex in $\link(v)$ if $p$ is a $\dot{v}$-piece (such a vertex $u$ exists since $\Gamma$ is connected). In particular, $p \in \mathcal{K} \subseteq K$, and so deleting the subword $p$ of $W$ and the piece $p$ from $\mathcal{D}$ results in a new word $W'$, equivalent to $W$, and a polygonal representation $\mathcal{D}'$ for $W'$. But then we either have $(\widehat{P}(W',\mathcal{D}'),\dot{P}(W',\mathcal{D}')) = (\widehat{P}(W,\mathcal{D}),\dot{P}(W,\mathcal{D})-1)$ or $(\widehat{P}(W',\mathcal{D}'),\dot{P}(W',\mathcal{D}')) = (\widehat{P}(W,\mathcal{D})-1,\dot{P}(W,\mathcal{D}))$. This contradicts minimality of $\mathcal{D}$.

\item Suppose that $p$ is a small $\widehat{v}$-piece for some $v \in V(\Gamma)$. Then there exist distinct vertices $w_1,\ldots,w_m \in \link(v)$ such that we have $\overline\varphi_i(p) = g_{i,1} \cdots g_{i,m}$ $\omega$-almost surely, where $g_{i,\ell} \in G_{w_\ell}$. In particular, we have $(t_{w_1}(p) \cdots t_{w_m}(p))^{-1} p \in \overline{F}_\omega \cap \widehat{F}_v \subseteq K$. Thus, replacing the subword $p$ of $W$ with a subword $t_{w_1}(p) \cdots t_{w_m}(p)$ results in a new word $W'$, equivalent to $W$, and replacing the $\widehat{v}$-piece $p$ in $\mathcal{D}$ with $m$ pieces $t_{w_1}(p),\ldots,t_{w_m}(p)$ (with labels $\dot{w}_1,\ldots,\dot{w}_m$, respectively) we obtain a polygonal representation $\mathcal{D}'$ for $W'$. However, then $\mathcal{D}'$ satisfies $(\widehat{P}(W',\mathcal{D}'),\dot{P}(W',\mathcal{D}')) = (\widehat{P}(W,\mathcal{D})-1,\dot{P}(W,\mathcal{D})+\ell)$, contradicting minimality of $\mathcal{D}$.

\item Suppose for contradiction that this is not the case. We then have $[p_m,p_2 \cdots p_{m-1}] \in \left[ \dot{F}_v, \widehat{F}_v \right] \subseteq K$, and so multiplying (a conjugate of) the word $W = p_1 \cdots p_m W_0$ by $[p_m,p_2 \cdots p_{m-1}]$ we get a word $W' = p_1 p_m p_2 \cdots p_{m-1} W_0$ equivalent to $W$. We define a polygonal representation $\mathcal{D}'$ for $W'$ by letting the pieces be $p_1 p_m, p_2, \ldots, p_{m-1}$ together with the pieces of $W_0$, where $p_1 p_m$ is a $\widehat{w_1}$-piece and all the other pieces have the same labels as the corresponding pieces in $\mathcal{D}$. But then $(\widehat{P}(W',\mathcal{D}'),\dot{P}(W',\mathcal{D}')) = (\widehat{P}(W,\mathcal{D}),\dot{P}(W,\mathcal{D})-1)$, again contradicting minimality of $\mathcal{D}$.

\item Suppose for contradiction that this is not the case. We construct a word $W'$ equivalent to $W$ and a polygonal representation $\mathcal{D}'$ for $W$ the same way as in part \ref{i:lred-adperm}, except for letting $p_1 p_m$ be a $\dot{v}$-piece this time. Then again $(\widehat{P}(W',\mathcal{D}'),\dot{P}(W',\mathcal{D}')) = (\widehat{P}(W,\mathcal{D}),\dot{P}(W,\mathcal{D})-1)$, contradicting minimality of $\mathcal{D}$.

\item If two adjacent pieces $p$ and $q$ have the same label, then we can concatenate them (resulting in a single piece with the same label) to obtain a new polygonal representation $\mathcal{D}'$ of $W$. But then we have $(\widehat{P}(W,\mathcal{D}'),\dot{P}(W,\mathcal{D}')) = (\widehat{P}(W,\mathcal{D}),\dot{P}(W,\mathcal{D})-1)$ if $p$ and $q$ are $\dot{v}$-pieces, and $(\widehat{P}(W,\mathcal{D}'),\dot{P}(W,\mathcal{D}')) = (\widehat{P}(W,\mathcal{D})-1,\dot{P}(W,\mathcal{D}))$ if $p$ and $q$ are $\widehat{v}$-pieces (for some $v \in V(\Gamma)$). This contradicts minimality of $\mathcal{D}$.

\item The case when $p,q$ are consecutive pieces such that $p$ is a $\widehat{v}$-piece and $q$ is a $\dot{w}$-piece (for some $w \in \link(v)$) is exactly the case $m = 2$ in part \ref{i:lred-adperm}. If instead $q,p$ are consecutive, a symmetric argument works.

\item Suppose that $p_1,p_2,p_3$ are consecutive pieces such that $p_1$, $p_2$ and $p_3$ have labels $\widehat{w}$, $\widehat{v}$ and $\dot{v}$, respectively; if instead $p_1$, $p_2$ and $p_3$ have labels $\dot{v}$, $\widehat{v}$ and $\widehat{w}$, respectively, then a symmetric argument works.  We then have $[p_3,p_2] \in \left[ \dot{F}_v, \widehat{F}_v \right] \subseteq K$, and so multiplying (a conjugate of) $W = p_1 p_2 p_3 W_0$ by $[p_3,p_2]$ we get a word $W' = p_1 p_3 p_2 W_0$ equivalent to $W$. We define a polygonal representation $\mathcal{D}'$ for $W'$ by letting the pieces be $p_1 p_3$ and $p_2$ together with the pieces of $W_0$, where $p_1 p_3$ is a $\widehat{w}$-piece and all the other pieces have the same labels as the corresponding pieces in $\mathcal{D}$. But then $(\widehat{P}(W',\mathcal{D}'),\dot{P}(W',\mathcal{D}')) = (\widehat{P}(W,\mathcal{D}),\dot{P}(W,\mathcal{D})-1)$, again contradicting minimality of $\mathcal{D}$.

\item This follows exactly as in part \ref{i:lred-adperm3}, except that the labels of the piece $p_1$ of $\mathcal{D}$ and the piece $p_1 p_3$ of $\mathcal{D}'$ are now both $\dot{v}$ instead of $\widehat{w}$.

\item Suppose that $p_1,p_2,p_3$ are consecutive pieces with labels $\widehat{v}$, $\dot{v}$ and $\widehat{v}$, respectively.  We then have $[p_2,p_3] \in \left[ \dot{F}_v, \widehat{F}_v \right] \subseteq K$, and so multiplying a conjugate of $W = p_1p_2p_3W_0$ by $[p_2,p_3]^{-1}$ we get a word $W' = p_1 p_3 p_2 W_0$ equivalent to $W$. We define a polygonal representation $\mathcal{D}'$ for $W'$ by letting the pieces be $p_1 p_3$ and $p_2$ together with the pieces of $W_0$, where $p_1 p_3$ is a $\widehat{v}$-piece and all the other pieces have the same labels as the corresponding pieces in $\mathcal{D}$. But then $(\widehat{P}(W',\mathcal{D}'),\dot{P}(W',\mathcal{D}')) = (\widehat{P}(W,\mathcal{D})-1,\dot{P}(W,\mathcal{D}))$, again contradicting minimality of $\mathcal{D}$. \qedhere

\end{enumerate}

\end{proof}

\subsection{Graphs of large girth} \label{ss:girth}

The idea of our proof of Proposition \ref{p:girth6} is to use properties of polygonal van Kampen diagrams analogous to the ones enjoyed by minimal polygonal representations. In particular, we define the following.

\begin{defn}
A polygonal van Kampen diagram is said to be \emph{minimal} if $\Delta$ satisfies the conditions \ref{i:lred-0}--\ref{i:lred-adperm5} in Lemma \ref{l:red}, with each appearance of the word `piece' replaced with `van Kampen piece'.
\end{defn}

In particular, we will refer to Lemma \ref{l:red} when talking about minimal polygonal van Kampen diagrams. Therefore, in the following discussion we will use the fact that minimal polygonal van Kampen diagrams have no trivial pieces (by Lemma \ref{l:red}\ref{i:lred-0}), and so all their van Kampen pieces are either small or large. The following results (Lemma \ref{l:mtsupplarge}, Lemma \ref{l:noam} and Corollary \ref{c:nothing}) are the main auxiliary results required in our proof of Proposition \ref{p:girth6}.

\begin{lem} \label{l:mtsupplarge}
Let $\Delta$ be a minimal polygonal van Kampen diagram, and suppose that $\Gamma$ has girth $\geq 6$. If $(C,[P,Q])$ is a component of $\Delta$ that is either trivial or minimal, then the following hold:
\begin{enumerate}[label=\textup{(\roman*)}]
\item \label{i:lmtsl-nencl} $(C,[P,Q])$ does not enclose any other components;
\item \label{i:lmtsl-large} if $C$ is supported at a van Kampen piece $g$, then $g$ is large;
\item \label{i:lmtsl-first} if $C$ is supported at a van Kampen piece $g$, then either $P \in \mathcal{I}_g(\Delta)$ and $C$ is the last component supported at $g$, or $Q \in \mathcal{I}_g(\Delta)$ and $C$ is the first component supported at $g$;
\end{enumerate}
\end{lem}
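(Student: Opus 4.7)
I would prove (i), (ii), and (iii) jointly by induction on the number of boundary points of $\partial\Delta$ lying in the arc $[P,Q]$, combining three ingredients: Lemma~\ref{l:red} (structural restrictions forced by minimality of $\Delta$), Lemma~\ref{l:nointer} (triangle-freeness ensures distinct components supported at the same van Kampen piece do not intersect), and a standard planarity argument (if a component $C'$ has boundary points that alternate with those of $C$ around $\partial\Delta$, then $C$ and $C'$ must intersect as connected subgraphs of a disk).

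The base step is (ii) for trivial components. Suppose $(C,[P,Q])$ is trivial, with $P \in \mathcal{I}_{g_1}(\Delta)$ and $Q \in \mathcal{I}_{g_2}(\Delta)$ for consecutive van Kampen pieces $g_1, g_2$, and $C$ a $v$-component. If $g_1$ were small, Lemma~\ref{l:red}\ref{i:lred-1} makes it a $\dot{u}$-piece of length~$1$, forcing $v = u$ and $P$ to be the unique boundary point on $\mathcal{I}_{g_1}(\Delta)$. Then $Q$ being a boundary point of $C$ forces $g_2$ to contain a syllable from $G_v$, which is ruled out by Lemma~\ref{l:red}\ref{i:lred-adsame} (if $g_2$ is a $\dot{v}$-piece) or by Lemma~\ref{l:red}\ref{i:lred-adad} (if $g_2$ is a $\widehat{w}$-piece with $w \in \lk(v)$); a symmetric argument handles $g_2$.

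Next I would prove (i), arguing by contradiction: among all enclosed oriented components, pick $(C^*,[P^*,Q^*])$ with $[P^*,Q^*]$ minimal under $\subsetneq$. This choice forces $(C^*,[P^*,Q^*])$ to be trivial (by the very definition of minimality when $(C,[P,Q])$ is minimal, and because $[P^*,Q^*]$ must lie within two consecutive van Kampen pieces when $(C,[P,Q])$ is itself trivial), so by the inductive hypothesis $(C^*,[P^*,Q^*])$ satisfies (i)--(iii) and in particular encloses nothing. Then I would inspect the boundary points in $[P,Q] \setminus [P^*,Q^*]$: each lies on a component $C^{**}$ which, by the minimal choice of $[P^*,Q^*]$, cannot have all its boundary points inside $[P^*,Q^*]$, so must have a boundary point outside $[P,Q]$; planarity then forces $C$ and $C^{**}$ to intersect, yielding a configuration of three components meeting in a constrained pattern. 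This configuration is excluded either by Lemma~\ref{l:nointer} directly or, via Lemma~\ref{l:interhat} together with the girth~$\geq 6$ hypothesis, by one of the items \ref{i:lred-adperm}--\ref{i:lred-adperm5} of Lemma~\ref{l:red}.

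With (i) established, (iii) follows from the same planarity principle: any other boundary point on $\mathcal{I}_g(\Delta)$ strictly inside $[P,Q]$ would lie on a distinct component $C' \neq C$ which by Lemma~\ref{l:nointer} cannot intersect $C$ and therefore, by planarity, must have all its boundary points in $[P,Q]$, contradicting (i); the same argument forbids $C$ from being supported at any van Kampen piece strictly between the ones containing $P$ and $Q$. Finally, (ii) in the minimal case reduces to the trivial case: by (iii), the unique boundary point on $\mathcal{I}_g(\Delta)$ of a small $g$ supporting $C$ must coincide with $P$ or $Q$, and the adjacent van Kampen piece inside $[P,Q]$ then plays the role of $g_2$ from the trivial-case argument, again violating Lemma~\ref{l:red}\ref{i:lred-adsame} or \ref{i:lred-adad}. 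The hardest step is the proof of (i): extracting a forbidden label configuration from the innermost enclosed trivial component $(C^*,[P^*,Q^*])$ requires a careful case analysis of how the labels of the nearby pieces interact, and the girth-$\geq 6$ hypothesis is essential to rule out short cycles in $\Gamma$ that would otherwise permit legal relabellings compatible with Lemma~\ref{l:red}.
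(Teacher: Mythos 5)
Your toolkit (planarity, Lemma~\ref{l:nointer}, Lemma~\ref{l:interhat}, Lemma~\ref{l:red}) is the right one, and your base case \ref{i:lmtsl-large} for trivial components is correct, but the argument you sketch for \ref{i:lmtsl-nencl} does not produce a contradiction in an essential sub-case. If $(C,[P,Q])$ is trivial and the innermost enclosed (necessarily trivial) component $(C^*,[P^*,Q^*])$ has its boundary points immediately adjacent to $P$ and $Q$, then $[P,Q]\setminus[P^*,Q^*]$ contains no boundary points besides $P$ and $Q$: there is no third component $C^{**}$, and your planarity mechanism has nothing to work with. The contradiction has to come from the labels of the two consecutive pieces themselves: after reducing via Lemma~\ref{l:nointer} to the case where the enclosed component is the next one supported at $g$, both $g$ and $h$ support two components of distinct colours $v\neq v'$, so each must be a $\widehat{w}$-piece with $w\in\link(v)\cap\link(v')$; square-freeness makes $w$ unique, and two adjacent $\widehat{w}$-pieces contradict Lemma~\ref{l:red}\ref{i:lred-adsame}. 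Your sketch never invokes this label-forcing step, and in the minimal case "a configuration of three components excluded by one of the items" is not yet a proof: there one needs parts \ref{i:lmtsl-large} and \ref{i:lmtsl-first} for the enclosed trivial component to locate a second, non-trivial component of the large piece $g_{j+1}$ which must intersect or coincide with $C$, and then distinct girth arguments (a closed walk of length $5$ when $j=1$; Lemma~\ref{l:interhat} plus triangle-freeness and \ref{i:lred-adsame} when $j>1$).

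The deeper problem is the order \ref{i:lmtsl-nencl} $\Rightarrow$ \ref{i:lmtsl-first} $\Rightarrow$ \ref{i:lmtsl-large}. Your derivation of \ref{i:lmtsl-first} only excludes boundary points of components \emph{distinct} from $C$ lying strictly inside $[P,Q]$ on $\mathcal{I}_g(\Delta)$; it cannot exclude $C$ being supported at an intermediate piece at which $C$ is the \emph{only} supported component, i.e.\ a small $\dot{v}$-piece strictly between the pieces containing $P$ and $Q$ -- there is no distinct $C'$ to feed into Lemma~\ref{l:nointer} and \ref{i:lmtsl-nencl}. Your subsequent reduction of \ref{i:lmtsl-large} (minimal case) to the trivial-case computation is then both circular and incorrect: even when the boundary point of $C$ on the small piece is $P$, the adjacent piece $g_2$ need not support $C$ at all (unlike in the trivial case), so $g_2$ need not contain a $G_v$-syllable and neither \ref{i:lred-adsame} nor \ref{i:lred-adad} applies. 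This configuration -- $C$ supported at a small piece -- is precisely the hardest part of the lemma. The paper handles it by taking consecutive supports $g_{j'},g_j$ of $C$, observing via \ref{i:lmtsl-nencl} and planarity that every component supported strictly between them intersects $C$, so each intermediate piece is a $\widehat{v}$-piece or a $\dot{w}$-piece with $w\in\link(v)$, and then excluding every possible labelling using the items \ref{i:lred-adperm}--\ref{i:lred-adperm5} of Lemma~\ref{l:red}; only after that does \ref{i:lmtsl-first} follow, since largeness of the supporting pieces is what prevents $C$ from being simultaneously first and last at one of them. Without an argument of this kind your proof is incomplete.
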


\begin{figure}[ht]
\begin{subfigure}[b]{0.48\textwidth}
\centering
\begin{tikzpicture}
\draw (0,0) arc (-10:10:3) node (a) {};
\draw[very thick,{|[width=5]}-{|[width=5]}] (a) arc(120:90:4) node[near start] (c1) {}  node[pos=0.65,above,yshift=-2pt] {$g$} node[pos=0.32] (cp1) {} node[at end] (b) {};
\draw[very thick,-{|[width=5]}] (b) arc(90:60:4) node[near start] (cp2) {} node[near end] (c2) {} node[midway,above] {$h$} node[at end] (c) {};
\draw (c) arc (170:190:3);
\draw[blue,very thick] (c1.center) circle (0.5pt) node[below] {$\!\!\!P$} to[out=-60,in=180] (1.9,0.6) node[below,yshift=1.5pt] {$C$} to[out=0,in=-120] (c2.center) circle (0.5pt) node[below] {$Q\!\!\!$};
\draw[red,very thick] (cp1.center) circle (0.5pt) node[above,yshift=-3pt] {$\!\!\!P'$} to[out=-60,in=180] (1.5,1.1) node[below,yshift=3pt] {$C'$} to[out=0,in=-120] (cp2.center) circle (0.5pt) node[below] {$Q'\!\!\!$};
\end{tikzpicture}
\caption{$(C,[P,Q])$ trivial, part \ref{i:lmtsl-nencl}.}
\label{f:lmtsl-triv}
\end{subfigure}
\hfill
\begin{subfigure}[b]{0.48\textwidth}
\centering
\begin{tikzpicture}
\draw (0,0) arc (-10:10:3) node[very near start] (c1) {} node[at end] (a) {};
\draw[very thick,{|[width=5]}-{|[width=5]}] (a) arc(-60:-30:2) node[midway,left] {$g_1$} node[near end] (c1) {} node[very near end] (cp1) {} node[at end] (b) {};
\draw[very thick,-{|[width=5]}] (b) arc(150:100:1.2) node[very near start] (cp2) {} node[near start] (cpp1) {} node[midway,above] {$g_2$} node[at end] (c) {};
\draw (c) arc (100:30:1.2) node (d) {};
\draw[very thick,{|[width=5]}-{|[width=5]}] (d) arc(-150:-120:2) node[very near start] (c2) {} node[midway,right] {$\,g_\ell$} node[at end] (e) {};
\draw (e) arc (170:190:3);
\draw[blue,very thick] (c1.center) circle (0.5pt) to[out=-60,in=180] (1.5,1) node[below,yshift=1.5pt] {$C$} -- (1.9,1) node (cpp2) {} to[out=0,in=-120] (c2.center) circle (0.5pt);
\draw[red,very thick] (cp1.center) circle (0.5pt) to[out=0,in=-120] (0.9,1.65) node[right] {$\!C'$} to[out=60,in=-60] (cp2.center) circle (0.5pt);
\draw[green!70!black,very thick] (cpp1.center) circle (0.5pt) to[out=-20,in=120] (1.7,1.6) node[right] {$C_+$} to[out=-60,in=90] (cpp2.center) circle (0.5pt);
\draw[green!70!black,very thick,dotted] (cpp2.center) -- (1.9,0.5);
\draw[fill=white] (cpp2) circle (0.1);
\end{tikzpicture}
\caption{$(C,[P,Q])$ minimal, part \ref{i:lmtsl-nencl}, $j = 1$.}
\label{f:lmtsl-mj1}
\end{subfigure}

\begin{subfigure}[b]{0.48\textwidth}
\centering
\begin{tikzpicture}
\draw (0,0) arc (-10:10:3) node[very near start] (c1) {} node[at end] (a) {};
\draw[very thick,{|[width=5]}-{|[width=5]}] (a) arc(-60:-30:2) node[midway,left] {$g_1$} node[very near end] (c1) {} node[at end] (a1) {};
\draw (a1) arc (-80:-30:0.5) node (b) {};
\draw[very thick,{|[width=5]}-{|[width=5]}] (b) arc(120:90:2) node[very near start] (c11) {} node[near start] (c21) {} node[midway,above] {$g_j$} node[very near end] (cp1) {} node[at end] (c) {};
\draw[very thick,-{|[width=5]}] (c) arc(90:60:2) node[very near start] (cp2) {} node[near start] (cpp1) {} node[midway,above] {$g_{j+1}$} node[at end] (d0) {};
\draw (d0) arc (-150:-100:0.5) node (d) {};
\draw[very thick,{|[width=5]}-{|[width=5]}] (d) arc(-150:-120:2) node[very near start] (c2) {} node[midway,right] {$\,g_\ell$} node[at end] (e) {};
\draw (e) arc (170:190:3);
\draw[blue,very thick] (c1.center) circle (0.5pt) to[out=-60,in=180] (1.5,1) node (c12) {} -- (1.8,1) node (c22) {} -- (2.2,1) node[below,yshift=1.5pt] {$C$} -- (2.6,1) node (cpp2) {} to[out=0,in=-120] (c2.center) circle (0.5pt);
\draw[red,very thick] (cp1.center) circle (0.5pt) to[out=-60,in=-180] (2.08,2.05) node[below] {$C'$} to[out=0,in=-120] (cp2.center) circle (0.5pt);
\draw[green!60!black,very thick] (c11.center) circle (0.5pt) to[out=-70,in=90] (c12.center) circle (0.5pt);
\draw[green!80!black,very thick] (c21.center) circle (0.5pt) to[out=-60,in=90] (c22.center) circle (0.5pt);
\draw[green!70!black,very thick] (cpp1.center) circle (0.5pt) to[out=-60,in=90] (2.6,1.6) node[right] {$\!C_+$} -- (cpp2.center) circle (0.5pt);
\draw[green!60!black,very thick,dotted] (c12.center) -- (1.5,0.5);
\draw[green!80!black,very thick,dotted] (c22.center) -- (1.8,0.5);
\draw[green!70!black,very thick,dotted] (cpp2.center) -- (2.6,0.5);
\draw[fill=white] (c12) circle (0.1);
\draw[fill=white] (c22) circle (0.1);
\draw[fill=white] (cpp2) circle (0.1);
\end{tikzpicture}
\caption{$(C,[P,Q])$ minimal, part \ref{i:lmtsl-nencl}, $j > 1$.}
\label{f:lmtsl-mj2}
\end{subfigure}
\hfill
\begin{subfigure}[b]{0.48\textwidth}
\centering
\begin{tikzpicture}
\draw (0,0) arc (-10:10:3) node[very near start] (c1) {} node[at end] (a) {};
\draw[very thick,{|[width=5]}-{|[width=5]}] (a) arc(-60:-30:2) node[midway,left] {$g_1$} node[very near end] (c1) {} node[at end] (a1) {};
\draw (a1) arc (-80:-30:0.5) node (b) {};
\draw[very thick,{|[width=5]}-{|[width=5]}] (b) arc(120:90:2) node[near end] (c3) {} node[midway,above] {$g_{j'}$} node[at end] (b1) {};
\draw (b1) arc (90:68:2) node (c) {};
\draw[very thick,{|[width=5]}-{|[width=5]}] (c) arc(68:60:2) node[midway] (c4) {} node[midway,above] {$g_j$} node[at end] (d0) {};
\draw (d0) arc (-150:-100:0.5) node (d) {};
\draw[very thick,{|[width=5]}-{|[width=5]}] (d) arc(-150:-120:2) node[very near start] (c2) {} node[midway,right] {$\,g_\ell$} node[at end] (e) {};
\draw (e) arc (170:190:3);
\draw[blue,very thick] (c1.center) circle (0.5pt) to[out=-60,in=180] (1.8,1) node (c5) {} -- (2.2,1) node[below,yshift=1.5pt] {$C$} -- (2.6,1) node (c6) {} to[out=0,in=-120] (c2.center) circle (0.5pt);
\draw[blue,very thick] (c3.center) circle (0.5pt) to[out=-100,in=90] (c5.center);
\draw[blue,very thick] (c4.center) circle (0.5pt) to[out=-120,in=90] (c6.center);
\end{tikzpicture}
\caption{$(C,[P,Q])$ minimal, part \ref{i:lmtsl-large}.}
\label{f:lmtsl-m1c}
\end{subfigure}

\caption[Proof of Lemma \ref{l:mtsupplarge}.]{Proof of Lemma \ref{l:mtsupplarge}. 
\begin{tikzpicture}[baseline=-0.1cm]
\draw[blue,very thick] (-0.25,-0.25) node[left] {$C_0$} -- (0.25,0.25);
\draw[green!70!black,very thick] (-0.25,0.25) node[left] {$C'_0$} -- (0,0);
\draw[green!70!black,very thick,dotted] (0,0) -- (0.25,-0.25);
\draw[fill=white] (0,0) circle (0.1);
\end{tikzpicture}
denotes that a component $C'_0$ either intersects or coincides with a component $C_0$.
}
\end{figure}

\begin{proof}
Let $(C,[P,Q])$ be an oriented $v$-component. We will show properties \ref{i:lmtsl-nencl}--\ref{i:lmtsl-first} when $(C,[P,Q])$ is trivial (see \ref{i:lmtsl-nencl-t}--\ref{i:lmtsl-first-t} below) and when it is minimal (see \ref{i:lmtsl-nencl-m}--\ref{i:lmtsl-first-m} below) separately.

\begin{description}

\item[If ${(C,[P,Q])}$ is trivial] Then there are two consecutive van Kampen pieces $g,h$ such that $P \in \mathcal{I}_g(\Delta)$ and $Q \in \mathcal{I}_h(\Delta)$. 

\begin{enumerate}[label=(T.\roman*)]

\item \label{i:lmtsl-nencl-t} Let $i \geq 1$ be such that $C$ is the $i$-th component supported at $g$. Suppose that $(C,[P,Q])$ encloses a component $(C',[P',Q'])$. Then we must have $P' \in \mathcal{I}_g(\Delta)$, $Q' \in \mathcal{I}_h(\Delta)$, and $C'$ must be the $j$-th component supported at $g$ for some $j > i$; see Figure \ref{f:lmtsl-triv}. But then, whenever $C''$ is the $j'$-th component supported at $g$, where $i+1 \leq j' \leq j$, we know by Lemma \ref{l:nointer} that $C$ and $C''$ do not intersect, and so $(C,[P,Q])$ encloses $C''$. In particular, we may assume, without loss of generality, that $j = i+1$.

Now let $v' \in V(\Gamma)$ be the label of $C'$. As $g$ is a geodesic, it follows that $v' \neq v$. As $\Gamma$ is square-free, it follows that $|\link(v) \cap \link(v')| \leq 1$. Therefore, as both $C$ and $C'$ are supported at both $g$ and $h$, it follows that $\link(v) \cap \link(v') = \{w\}$ and that $g$ and $h$ are both $\widehat{w}$-pieces. But this contradicts Lemma \ref{l:red}\ref{i:lred-adsame}. Thus $(C,[P,Q])$ cannot enclose any other component, as required.

\item \label{i:lmtsl-large-t} If $g$ (or $h$) is small then it is a $\dot{v}$-component by Lemma \ref{l:red}\ref{i:lred-1}, and if it is large then it is a $\widehat{w}$-component for some $w \in \link(v)$. Therefore, if $g$ and $h$ were both small, then they would be two adjacent $\dot{v}$-pieces, contradicting Lemma \ref{l:red}\ref{i:lred-adsame}; if one of them was small and the other one large, then they would be a $\dot{v}$-piece adjacent to a $\widehat{w}$-piece for some $w \in \link(v)$, contradicting Lemma \ref{l:red}\ref{i:lred-adad}. Thus both $g$ and $h$ must be large, as required.

\item \label{i:lmtsl-first-t} As $g$ and $h$ are adjacent, it is clear that $C$ is not supported at van Kampen pieces other than $g$ and $h$. If $C$ is not the first component supported at $h$, then let $C'$ be the first component supported at $h$. By Lemma \ref{l:nointer}, $C$ and $C'$ do not intersect, and so $(C,[P,Q])$ must enclose $C'$ -- but this contradicts part \ref{i:lmtsl-nencl-t}. Thus $C$ must be the first component supported at $h$; similarly, $C$ must be the last component supported at $g$.

\end{enumerate}

\item[If ${(C,[P,Q])}$ is minimal] Let $g_1,\ldots,g_\ell$ be consecutive van Kampen pieces of $\Delta$ such that $P \in \mathcal{I}_{g_1}(\Delta)$ and $Q \in \mathcal{I}_{g_\ell}(\Delta)$. 

\begin{enumerate}[label=(M.\roman*)]

\item \label{i:lmtsl-nencl-m} Suppose that $(C,[P,Q])$ encloses a $v'$-component $(C',[P',Q'])$ for some $v' \in V(\Gamma)$. As $(C,[P,Q])$ is minimal, it follows that $(C',[P',Q'])$ is trivial, and so $P' \in \mathcal{I}_{g_j}(\Delta)$ and $Q' \in \mathcal{I}_{g_{j+1}}(\Delta)$ for some $j \in \{ 1,\ldots,\ell-1 \}$. Without loss of generality, assume that $C'$ is chosen in such a way that $j$ is as small as possible. By part \ref{i:lmtsl-large-t} applied to $(C',[P',Q'])$, $g_{j+1}$ is a large van Kampen piece, and so there are at least $3$ components supported at $g_{j+1}$. Therefore, the second component $C_+$ supported at $g_{j+1}$ -- say a $v_+$-component for some $v_+ \in V(\Gamma)$ -- is not trivial by part \ref{i:lmtsl-first-t}, and so, as $(C,[P,Q])$ is minimal, $C_+$ must either intersect $C$ or coincide with $C$. As $g_{j+1}$ is a geodesic and as $C'$, $C_+$ are the first and the second components supported at $g_{j+1}$ (respectively), we also have $v_+ \neq v'$. Moreover, as $g_{j+1}$ is large, it must be a $\widehat{u}$-piece for some $u \in V(\Gamma)$ by Lemma \ref{l:red}\ref{i:lred-1}.

Suppose first that $j = 1$; see Figure \ref{f:lmtsl-mj1}. Then the components $C$ and $C'$ are supported at $g_1$, and so $g_1$ is a large van Kampen piece; therefore, by Lemma \ref{l:red}\ref{i:lred-1}, $g_1$ is a $\widehat{w}$-piece for some $w \in \link(v) \cap \link(v')$. Now if we had $C = C_+$, then we would have $v = v_+ \neq v'$, and so, as $C$ and $C'$ are supported at both $g_1$ and $g_2$, both $g_1$ and $g_2$ would have to be $\widehat{u}$-pieces for the \emph{unique} (as $\Gamma$ is square-free) vertex $u \in \link(v) \cap \link(v')$, contradicting Lemma \ref{l:red}\ref{i:lred-adsame}. On the other hand, if $C_+$ intersected $C$, then $v,w,v',u,v_+,v$ would be a closed walk on $\Gamma$ of length $5$, contradicting the fact that $\Gamma$ is triangle-free and pentagon-free. Thus we arrive at a contradiction in either case, and so we cannot have $j = 1$.

Suppose now that $j > 1$: then $\mathcal{I}_{g_j}(\Delta) \subseteq [P,Q]$, and so, by minimality of $(C,[P,Q])$, $C$ must either intersect or coincide with every non-trivial component supported at $g_j$. Moreover, $g_j$ must be a large piece by part \ref{i:lmtsl-large-t}, and so there must be at least $3$ components supported at $g_j$ by Lemma \ref{l:red}\ref{i:lred-1}. In particular, by part \ref{i:lmtsl-first-t} and the minimality of $j$, the first and the second components supported at $g_j$ are non-trivial, and so they must each either intersect or coincide with $C$; see Figure \ref{f:lmtsl-mj2}. It follows from Lemma \ref{l:interhat} that $g_j$ is a $\widehat{v}$-piece, and in particular $v' \in \link(v)$. But now if we had $C_+ = C$, then $v,v',u$ would span a triangle in $\Gamma$, contradicting the fact that $\Gamma$ is triangle-free. On the other hand, if $C_+$ intersected $C$, then -- as $v_+ \neq v'$ and $C'$, $C_+$ are both supported at $g_{j+1}$, and as $\Gamma$ is square-free -- it would follow that $g_{j+1}$ must be a $\widehat{v}$-piece, as well as $g_j$, contradicting Lemma \ref{l:red}\ref{i:lred-adsame}. Thus we cannot have $j \neq 1$ either.

\item \label{i:lmtsl-large-m} Suppose that $C$ is supported at a small van Kampen piece $g_j$, and so $g_j$ is a $\dot{v}$-piece. Then either $j > 1$ or $j < \ell$; without loss of generality, assume that $j > 1$. Let $j' < j$ be largest such that $C$ is supported at $j'$, so that $j' \geq 1$; see Figure \ref{f:lmtsl-m1c}. It follows from part \ref{i:lmtsl-nencl-m} that $(C,[P,Q])$ does not enclose any components. In particular, for each $m \in \{ j'+1,\ldots,j-1 \}$, $C$ intersects every component supported at $m$, and so $g_m$ is either a $\widehat{v}$-piece or a $\dot{w}_m$-piece for some $w_m \in \link(v)$.

Now if $g_m$ is a $\widehat{v}$-piece for some $m \in \{ j'+1,\ldots,j-1 \}$, then we must have $j = j'+2$ and $m = j'+1$: indeed, otherwise one of $g_{m-1}$ and $g_{m+1}$ is either a $\widehat{v}$-piece or a $\dot{w}$-piece for some $w \in \link(v)$, contradicting either Lemma \ref{l:red}\ref{i:lred-adsame} or Lemma \ref{l:red}\ref{i:lred-adad}. But then $g_{j'+1}$ is a $\widehat{v}$-piece adjacent to a $\dot{v}$-piece $g_{j'+2}$ and to $g_{j'}$, which is either a $\dot{v}$-piece or a $\widehat{w}$-piece for some $w \in \link(v)$ (as $C$ is supported at $j'$); this contradicts either Lemma \ref{l:red}\ref{i:lred-adperm3} or Lemma \ref{l:red}\ref{i:lred-adperm4}. On the other hand, if for all $m \in \{ j'+1,\ldots,j-1 \}$, $g_m$ is a $\dot{w}_m$-piece for some $w_m \in \link(v)$, then, as before, $g_{j'}$ is either a $\dot{v}$-piece or a $\widehat{w}$-piece for some $w \in \link(v)$, contradicting either Lemma \ref{l:red}\ref{i:lred-adperm} or Lemma \ref{l:red}\ref{i:lred-adperm2}. Thus in either case, $g_j$ cannot be small, as required.

\item \label{i:lmtsl-first-m} Suppose that $C$ is supported at $g_j$ for some $j \in \{1,\ldots,\ell\}$. If $C$ is not the first component supported at $g_j$, then the first component $C'$ supported at $g_j$ does not intersect $C$ by Lemma \ref{l:nointer}; but $(C,[P,Q])$ does not enclose $C'$ by part \ref{i:lmtsl-nencl-m}, implying that $j = 1$. Similarly, if $C$ is not the last component supported at $g_j$, then $j = \ell$. But by part \ref{i:lmtsl-large-m}, the van Kampen piece $g_j$ is large, and so $C$ cannot be both the first and the last component supported at $g_j$. As $1 \neq \ell$, it follows that either $j = 1$ and $C$ is the last component supported at $g_j$, or $j = \ell$ and $C$ is the first component supported at $g_j$, as required. \qedhere

\end{enumerate}

\end{description}
\end{proof}

\begin{lem} \label{l:noam}
Let $\Delta$ be a minimal polygonal van Kampen diagram, and suppose that $\Gamma$ has girth $\geq 6$. Then $\Delta$ has no almost minimal components.
\end{lem}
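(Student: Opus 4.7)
The plan is to argue by contradiction, mimicking the template of the proof of Lemma \ref{l:mtsupplarge}\ref{i:lmtsl-nencl}. I will suppose that $(C,[P,Q])$ is almost minimal, say a $v$-component with van Kampen pieces $g_1,\ldots,g_\ell$. Since $(C,[P,Q])$ is not minimal, it must enclose some non-trivial component, and by almost minimality every such enclosed non-trivial component is minimal. I fix a minimal enclosed $(C',[P',Q'])$, a $v'$-component with van Kampen pieces $g_a,\ldots,g_b$ where $b \geq a+2$. By Lemma \ref{l:mtsupplarge}\ref{i:lmtsl-large} and Lemma \ref{l:red}\ref{i:lred-1}, $g_a$ and $g_b$ are $\widehat{u_1}$- and $\widehat{u_2}$-pieces for some $u_1,u_2 \in \link(v')$, each of length at least $3$, so each supports at least three distinct components; moreover, $C'$ is the last component supported at $g_a$ and the first component supported at $g_b$.

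The key step will be to consider the second component $C_+$ supported at $g_b$ and, by symmetry, the penultimate component $C_-$ supported at $g_a$. Since $C_+$ is neither first nor last at $g_b$, Lemma \ref{l:mtsupplarge}\ref{i:lmtsl-first} prevents $C_+$ from being trivial or minimal with any supporting interval contained in $[P,Q]$. Combined with the almost minimality of $(C,[P,Q])$, this will force $C_+$ to have a boundary point outside $[P,Q]$. Its subtree must then cross the subtree of $C$ in the planar diagram, and since distinct components are edge-disjoint, $C_+$ must intersect $C$ at some vertex; the same conclusion will hold for $C_-$.

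Writing $C_+$ as a $w$-component and $C_-$ as a $w'$-component, I can then assemble the adjacencies $v \sim w \sim u_2 \sim v' \sim u_1 \sim w' \sim v$ in $\Gamma$: the outer edges $v \sim w$ and $v \sim w'$ come from the intersections with $C$, and the inner edges from the $\widehat{\,\cdot\,}$-labels of $g_a$ and $g_b$ together with the fact that $v' \in \link(u_1) \cap \link(u_2)$. The main obstacle will be to convert this closed walk of length $6$ into a cycle of length at most $5$ in $\Gamma$, which would contradict the girth $\geq 6$ hypothesis. In degenerate sub-cases where some of the vertices $v,w,u_2,v',u_1,w'$ coincide (for instance $u_1 = u_2$, or $v \in \{u_1,u_2\}$, or $w = w'$) I expect the shorter cycle either to appear directly or to be precluded by the adjacency restrictions of Lemma \ref{l:red}\ref{i:lred-adsame}--\ref{i:lred-adperm5}. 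In the generic sub-case (all six vertices distinct), the idea will be to invoke a further intersection argument using a component supported at an intermediate piece $g_{a+1},\ldots,g_{b-1}$, which by minimality of $(C',[P',Q'])$ is either trivial and enclosed by $(C',[P',Q'])$ or intersects $C'$, or to apply Lemma \ref{l:interhat} to the pair $(C',C_+)$ supported at $g_b$ in order to produce a chord in the $6$-walk.
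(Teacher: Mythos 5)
Your setup is sound as far as it goes (the use of Lemma \ref{l:mtsupplarge}\ref{i:lmtsl-large} and \ref{i:lmtsl-first} to see that the second component at $g_b$ and the penultimate one at $g_a$ are neither trivial nor minimal, hence not enclosed by $(C,[P,Q])$), but the argument has a genuine gap precisely where you acknowledge it: everything you actually derive is a closed walk of length $6$, namely $v \sim w \sim u_2 \sim v' \sim u_1 \sim w' \sim v$, and a $6$-cycle is perfectly compatible with girth $\geq 6$. The promised "further intersection argument" or "chord via Lemma \ref{l:interhat}" is exactly the missing content, and it is not a routine finishing touch. In the paper the contradiction does not come from shortening a hexagon at all: one first proves $v \neq v'$, then shows that \emph{every} piece strictly between the two end pieces of the enclosed minimal component is a small $\dot{w}$-piece with $w \in \link(v') \cap (\link(v)\cup\{v\})$ (this uses the choice of $C'$ making $[P,P']$ minimal, which your proposal never makes), then shows there is exactly \emph{one} such intermediate piece, and finally a case analysis on whether its label $w_1$ equals $v$ or lies in $\link(v)$ produces contradictions with Lemma \ref{l:nointer}, with pentagon-freeness (closed walks of length $5$, not $6$), and ultimately with Lemma \ref{l:red}\ref{i:lred-adperm5} (a $\dot{w}_1$-piece adjacent to two $\widehat{w}_1$-pieces). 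None of this structural analysis of the intermediate pieces appears in your outline, so the core of the proof is absent.

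A second, smaller but real flaw: from almost minimality you conclude that $C_+$ "must intersect $C$," invoking planarity and edge-disjointness of \emph{distinct} components. But you never rule out $C_+ = C$ (and likewise $C_- = C$, or the intermediate components coinciding with $C$); almost minimality only tells you that $C_+$ is not enclosed, i.e.\ that it intersects \emph{or coincides with} $C$. The coincidence cases are not exotic: in the paper's proof the possibility $C_+ = C$ (the case $w_1 = v_+ = v$) occurs and has to be excluded separately via Lemma \ref{l:nointer}, and the surviving cases $w_1 = v$, $v_\pm \in \link(v)$ versus $w_1 \in \link(v)$, $v_\pm = v$ are exactly what feeds the final contradiction. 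Your list of degenerate sub-cases ($u_1 = u_2$, $v \in \{u_1,u_2\}$, $w = w'$) does not include these, so even the case breakdown you sketch is incomplete.
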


\begin{figure}[ht]
\begin{subfigure}[b]{0.35\textwidth}
\centering
\begin{tikzpicture}
\draw (0,0) arc (-10:10:5) node[very near start] (c1) {} node[at end] (a) {};
\draw[very thick,{|[width=5]}-{|[width=5]}] (a) arc(-60:-30:2) node[midway,left] {$g'$} node[very near end] (cp1) {} node[at end] (b) {};
\draw (b) arc (-100:-80:5) node (c) {};
\draw[very thick,{|[width=5]}-{|[width=5]}] (c) arc(-150:-120:2) node[very near start] (cp2) {} node[near start] (cpp1) {} node[midway,right] {$\,h'$} node[at end] (d) {};
\draw (d) arc (170:190:5) node[very near end] (c2) {};
\draw[blue,very thick] (c1.center) circle (0.5pt) node[left] {$P$} to[out=-10,in=180] (1.6,-0.05) node[below] {$C$} to[out=0,in=-175] (2.2,0) to[out=5,in=-170] (c2.center) circle (0.5pt) node[right] {$Q$};
\draw[blue,very thick] (cp1.center) circle (0.5pt) node[left,yshift=5] {$P'\!\!\!$} to[out=-10,in=180] (1.6,2.25) node[below] {$C'$} to[out=0,in=-170] (cp2.center) circle (0.5pt) node[right,yshift=5] {$\!\!Q'$};
\draw[green!70!black,very thick] (cpp1.center) circle (0.5pt) to[out=-120,in=80] (2.2,1) node[right] {$C''$} to[out=-100,in=100] (2.2,0);
\draw[green!70!black,very thick,dotted] (2.2,0) to[out=-80,in=110] (2.35,-0.4);
\draw[fill=white] (2.2,0) circle (0.1);
\end{tikzpicture}
\caption{$v = v'$;}
\label{f:lnoam-vvp}
\end{subfigure}
\hfill
\begin{subfigure}[b]{0.37\textwidth}
\centering
\begin{tikzpicture}
\draw (0,0) arc (-10:10:5) node[very near start] (c1) {} node[at end] (a) {};
\draw[very thick,{|[width=5]}-{|[width=5]}] (a) arc(-60:-30:2) node[midway,left] {$g'$} node[very near end] (cp1) {} node[at end] (b) {};
\draw (b) arc (-75:-70:5) node (b1) {};
\draw[very thick,{|[width=5]}-{|[width=5]}] (b1) arc (-100:-80:3) node[very near start] (c11) {} node[near start] (c21) {} node[midway,above] {$g_\ell$} node[at end] (b2) {};
\draw (b2) arc (-110:-105:5) node (c) {};
\draw[very thick,{|[width=5]}-{|[width=5]}] (c) arc(-150:-120:2) node[very near start] (cp2) {} node[near start] (cpp1) {} node[midway,right] {$\,h'$} node[at end] (d) {};
\draw (d) arc (170:190:5) node[very near end] (c2) {};
\draw[blue,very thick] (c1.center) circle (0.5pt) node[left] {$P$} to[out=-10,in=180] (1.3,-0.05) node (c13) {} -- (1.6,-0.05) node (c23) {} -- (1.9,-0.05) node[below,yshift=2pt] {$C$} to[out=0,in=-170] (c2.center) circle (0.5pt) node[right] {$Q$};
\draw[red,very thick] (cp1.center) circle (0.5pt) node[left,yshift=5] {$P'\!\!\!$} to[out=-10,in=180] (1.35,2.25) node (c12) {} -- (1.5,2.25) node (c22) {} -- (1.9,2.25) node[below,yshift=2pt] {$C'$} to[out=0,in=-170] (cp2.center) circle (0.5pt) node[right,yshift=5] {$\!\!Q'$};
\draw[green!60!black,very thick] (c11.center) circle (0.5pt) to[out=-80,in=95] (c12.center) to[out=-85,in=90] (1.4,1) node[left] {$C_1\!$} to[out=-90,in=80] (c13.center);
\draw[green!80!black,very thick] (c21.center) circle (0.5pt) to[out=-80,in=95] (c22.center) to[out=-80,in=90] (1.65,1) node[right] {$\!C_2$} to[out=-90,in=85] (c23.center);
\draw[green!60!black,very thick,dotted] (c13.center) to[out=-100,in=75] (1.2,-0.4);
\draw[green!80!black,very thick,dotted] (c23.center) to[out=-95,in=80] (1.55,-0.4);
\fill (c12) circle (1pt);
\fill (c22) circle (1pt);
\draw[fill=white] (c13) circle (0.1);
\draw[fill=white] (c23) circle (0.1);
\end{tikzpicture}
\caption{$g_\ell$ large;}
\label{f:lnoam-gllarge}
\end{subfigure}
\hfill
\begin{subfigure}[b]{0.25\textwidth}
\centering
\begin{tikzpicture}
\draw (0,0) arc (-10:10:5) node[very near start] (c1) {} node[at end] (a) {};
\draw[very thick,{|[width=5]}-{|[width=5]}] (a) arc(-60:-30:2) node[near end,left] {$g'$} node[very near end] (cp1) {} node[at end] (b) {};
\draw[very thick] (b) arc (150:120:0.5) node[near start,yshift=7pt] {$\!\!\!\!g_1$} node[midway] (c11) {} node[at end] (b1) {};
\draw[very thick,{|[width=5]}-{|[width=5]}] (b1) arc (120:90:0.5) node[midway,label={[yshift=-5pt]$g_2$}] (c21) {} node[at end] (b2) {};
\draw (b2) arc (90:30:0.5) node (c) {};
\draw[very thick,{|[width=5]}-{|[width=5]}] (c) arc(-150:-120:2) node[very near start] (cp2) {} node[near start] (cpp1) {} node[near start,right] {$\,h'$} node[at end] (d) {};
\draw (d) arc (170:190:5) node[very near end] (c2) {};
\draw[blue,very thick] (c1.center) circle (0.5pt) to[out=-10,in=180] (0.9,-0.05) node (c13) {} -- (1.2,-0.05) node (c23) {} -- (1.3,-0.05) node[below,yshift=3pt] {$C\!\!\!\!\!\!\!\!$} to[out=0,in=-170] (c2.center) circle (0.5pt);
\draw[red,very thick] (cp1.center) circle (0.5pt) to[out=-10,in=180] (1,2.25) node (c12) {} -- (1.2,2.25) node (c22) {} node[below,yshift=4pt] {$C'\!\!\!\!\!\!\!\!\!\!$} to[out=0,in=-170] (cp2.center) circle (0.5pt);
\draw[green!60!black,very thick] (c11.center) circle (0.5pt) to[out=-40,in=110] (c12.center) to[out=-70,in=90] (1.15,1) node[left] {$C_1\!$} to[out=-90,in=70] (c13.center);
\draw[green!80!black,very thick] (c21.center) circle (0.5pt) to[out=-50,in=100] (c22.center) to[out=-80,in=90] (1.35,1) node[right] {$\!C_2$} to[out=-90,in=80] (c23.center);
\draw[green!60!black,very thick,dotted] (c13.center) to[out=-110,in=60] (0.7,-0.4);
\draw[green!80!black,very thick,dotted] (c23.center) to[out=-100,in=70] (1.05,-0.4);
\fill (c12) circle (1pt);
\fill (c22) circle (1pt);
\draw[fill=white] (c13) circle (0.1);
\draw[fill=white] (c23) circle (0.1);
\end{tikzpicture}
\caption{$m > 2$;}
\label{f:lnoam-m3}
\end{subfigure}

\begin{subfigure}[b]{0.22\textwidth}
\centering
\begin{tikzpicture}
\draw (0,0) arc (-10:10:5) node[very near start] (c1) {} node[at end] (a) {};
\draw[very thick,{|[width=5]}-{|[width=5]}] (a) arc(-60:-30:2) node[near end,left] {$g'$} node[very near end] (cp1) {} node[at end] (b) {};
\draw[very thick] (b) arc (120:60:0.4) node[midway,label={[yshift=-5pt]$g_1$}] (c11) {} node[at end] (c) {};
\draw[very thick,{|[width=5]}-{|[width=5]}] (c) arc(-150:-120:2) node[very near start] (cp2) {} node[near start] (cpp1) {} node[near start,right] {$\,h'$} node[at end] (d) {};
\draw (d) arc (170:190:5) node[very near end] (c2) {};
\draw[blue,very thick] (c1.center) circle (0.5pt) to[out=-10,in=180] (0.93,-0.05) node (c13) {} node[below,yshift=2pt] {$\!\!\!\!\!\!\!\!C$} -- (1.1,-0.05) node (cpp2) {} to[out=0,in=-170] (c2.center) circle (0.5pt);
\draw[red,very thick] (cp1.center) circle (0.5pt) to[out=-10,in=180] (0.93,2.3) node (c12) {} node[below,yshift=3pt] {$\!\!\!\!\!\!\!C'$} to[out=0,in=-170] (cp2.center) circle (0.5pt);
\draw[blue,very thick] (c11.center) circle (0.5pt) -- (c12.center) -- (0.93,1) node[left] {} -- (c13.center);
\draw[blue,very thick] (cpp1.center) circle (0.5pt) to[out=-110,in=90] (1.1,1) node[right] {} -- (cpp2.center);
\fill (c12) circle (1pt);
\end{tikzpicture}
\caption{$w_1 = v_+ = v$; \\~}
\label{f:lnoam-cc}
\end{subfigure}
\hfill
\begin{subfigure}[b]{0.24\textwidth}
\centering
\begin{tikzpicture}
\draw (0,0) arc (-10:10:5) node[very near start] (c1) {} node[at end] (a) {};
\draw[very thick,{|[width=5]}-{|[width=5]}] (a) arc(-60:-30:2) node[near end,left] {$g'$} node[very near end] (cp1) {} node[at end] (b) {};
\draw[very thick] (b) arc (120:60:0.4) node[midway,label={[yshift=-5pt]$g_1$}] (c11) {} node[at end] (c) {};
\draw[very thick,{|[width=5]}-{|[width=5]}] (c) arc(-150:-120:2) node[very near start] (cp2) {} node[near start] (cpp1) {} node[near start,right] {$\,h'$} node[at end] (d) {};
\draw (d) arc (170:190:5) node[very near end] (c2) {};
\draw[blue,very thick] (c1.center) circle (0.5pt) to[out=-10,in=180] (0.93,-0.05) node (c13) {} node[below,yshift=2pt] {$\!\!\!\!\!\!\!\!C$} -- (1.1,-0.05) node (cpp2) {} to[out=0,in=-170] (c2.center) circle (0.5pt);
\draw[red,very thick] (cp1.center) circle (0.5pt) to[out=-10,in=180] (0.93,2.3) node (c12) {} node[below,yshift=3pt] {$\!\!\!\!\!\!\!C'$} to[out=0,in=-170] (cp2.center) circle (0.5pt);
\draw[green!60!black,very thick] (c11.center) circle (0.5pt) -- (c12.center) -- (0.93,1) node[left] {$C_1\!\!$} -- (c13.center) -- (0.93,-0.4);
\draw[green!80!black,very thick] (cpp1.center) circle (0.5pt) to[out=-110,in=90] (1.1,1) node[right] {$\!C_+$} -- (cpp2.center) -- (1.1,-0.4);
\fill (c12) circle (1pt);
\fill (c13) circle (1pt);
\fill (cpp2) circle (1pt);
\end{tikzpicture}
\caption{$w_1,v_+ \in \link(v)$; \\~}
\label{f:lnoam-ii}
\end{subfigure}
\hfill
\begin{subfigure}[b]{0.22\textwidth}
\centering
\begin{tikzpicture}
\draw (0,0) arc (-10:10:5) node[very near start] (c1) {} node[at end] (a) {};
\draw[very thick,{|[width=5]}-{|[width=5]}] (a) arc(-60:-30:2) node[near end] (cmm1) {} node[near end,left] {$g'$} node[very near end] (cp1) {} node[at end] (b) {};
\draw[very thick] (b) arc (120:60:0.4) node[midway,label={[yshift=-5pt]$g_1$}] (c11) {} node[at end] (c) {};
\draw[very thick,{|[width=5]}-{|[width=5]}] (c) arc(-150:-120:2) node[very near start] (cp2) {} node[near start] (cpp1) {} node[near start,right] {$\,h'$} node[at end] (d) {};
\draw (d) arc (170:190:5) node[very near end] (c2) {};
\draw[blue,very thick] (c1.center) circle (0.5pt) to[out=-10,in=180] (0.76,-0.05) node (cmm2) {} node[below,yshift=2pt] {$\!\!\!\!\!\!\!\!C$} -- (0.93,-0.05) node (c13) {} -- (1.1,-0.05) node (cpp2) {} to[out=0,in=-170] (c2.center) circle (0.5pt);
\draw[red,very thick] (cp1.center) circle (0.5pt) to[out=-10,in=180] (0.93,2.3) node (c12) {} 
to[out=0,in=-170] (cp2.center) circle (0.5pt);
\draw[blue,very thick] (c11.center) circle (0.5pt) -- (c12.center) -- (0.93,1) node[left] {} -- (c13.center);
\draw[green!80!black,very thick] (cpp1.center) circle (0.5pt) to[out=-110,in=90] (1.1,1) node[right] {$\!C_+$} -- (cpp2.center) -- (1.1,-0.4);
\draw[green!60!black,very thick] (cmm1.center) circle (0.5pt) to[out=-70,in=90] (0.76,1) node[left] {$C_-\!\!$} -- (cmm2.center) -- (0.76,-0.4);
\fill (c12) circle (1pt);
\fill (cpp2) circle (1pt);
\fill (cmm2) circle (1pt);
\end{tikzpicture}
\caption{$w_1 = v$ whereas $v_-,v_+ \in \link(v)$;}
\label{f:lnoam-ici}
\end{subfigure}
\hfill
\begin{subfigure}[b]{0.22\textwidth}
\centering
\begin{tikzpicture}
\draw (0,0) arc (-10:10:5) node[very near start] (c1) {} node[at end] (a) {};
\draw[very thick,{|[width=5]}-{|[width=5]}] (a) arc(-60:-30:2) node[near end] (cmm1) {} node[near end,left] {$g'$} node[very near end] (cp1) {} node[at end] (b) {};
\draw[very thick] (b) arc (120:60:0.4) node[midway,label={[yshift=-5pt]$g_1$}] (c11) {} node[at end] (c) {};
\draw[very thick,{|[width=5]}-{|[width=5]}] (c) arc(-150:-120:2) node[very near start] (cp2) {} node[near start] (cpp1) {} node[near start,right] {$\,h'$} node[at end] (d) {};
\draw (d) arc (170:190:5) node[very near end] (c2) {};
\draw[blue,very thick] (c1.center) circle (0.5pt) to[out=-10,in=180] (0.76,-0.05) node (cmm2) {} node[below,yshift=2pt] {$\!\!\!\!\!\!\!\!C$} -- (0.93,-0.05) node (c13) {} -- (1.1,-0.05) node (cpp2) {} to[out=0,in=-170] (c2.center) circle (0.5pt);
\draw[red,very thick] (cp1.center) circle (0.5pt) to[out=-10,in=180] (0.93,2.3) node (c12) {} 
to[out=0,in=-170] (cp2.center) circle (0.5pt);
\draw[green!70!black,very thick] (c11.center) circle (0.5pt) -- (c12.center) -- (c13.center) -- (0.93,-0.4);
\draw[blue,very thick] (cpp1.center) circle (0.5pt) to[out=-110,in=90] (1.1,1) node[right] {} -- (cpp2.center);
\draw[blue,very thick] (cmm1.center) circle (0.5pt) to[out=-70,in=90] (0.76,1) node[left] {} -- (cmm2.center);
\fill (c12) circle (1pt);
\fill (c13) circle (1pt);
\draw[green!70!black,->] (0.6,1) node[left] {$C_1\!\!$} -- (0.9,1);
\end{tikzpicture}
\caption{$w_1 \in \link(v)$ and $v_- = v_+ = v$.}
\label{f:lnoam-cic}
\end{subfigure}

\caption{Ruling out impossible scenarios in the proof of Lemma \ref{l:noam}.}
\end{figure}
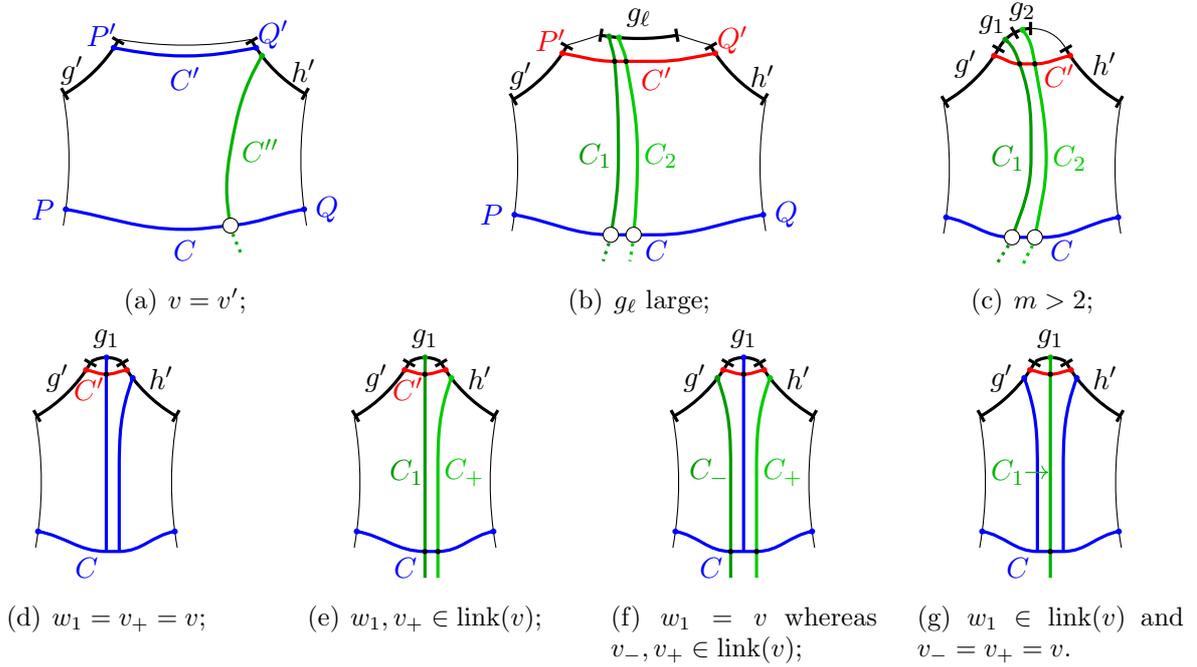

\begin{proof}

Suppose that $(C,[P,Q])$ is an almost minimal $v$-component of $\Delta$. Let $g$ and $h$ be the van Kampen pieces of $\Delta$ such that $P \in \mathcal{I}_g(\Delta)$ and $Q \in \mathcal{I}_h(\Delta)$. Then $(C,[P,Q])$ must enclose a minimal $v'$-component $(C',[P',Q'])$ (for some $v' \in V(\Gamma)$); let $g'$ and $h'$ be the van Kampen pieces of $\Delta$ such that $P' \in \mathcal{I}_{g'}(\Delta)$ and $Q' \in \mathcal{I}_{h'}(\Delta)$. Suppose, without loss of generality, that $(C',[P',Q'])$ is chosen in such a way that $[P,P']$ is as small as possible: that is, if $(C'_0,[P'_0,Q'_0])$ is any minimal component enclosed by $(C,[P,Q])$, then $[P,P'] \subseteq [P,P'_0]$.

We first claim that $v \neq v'$. Indeed, as $(C',[P',Q'])$ is minimal and $C'$ is supported at $h'$, it follows from Lemma \ref{l:mtsupplarge}\ref{i:lmtsl-large} that $h'$ is a large van Kampen piece, and so, by Lemma \ref{l:red}\ref{i:lred-1}, there are at least $3$ components supported at $h'$. Therefore, the second component supported at $h'$ -- a $v''$-component $C''$, say -- is neither trivial nor minimal by Lemma \ref{l:mtsupplarge}\ref{i:lmtsl-first}, and so it must either intersect or coincide with $C$; see Figure \ref{f:lnoam-vvp}. As $h'$ is a geodesic and $C'$ and $C''$ are the first (by Lemma \ref{l:mtsupplarge}\ref{i:lmtsl-first}) and the second components (respectively) supported at $h'$, it follows that $v' \neq v''$. If $C = C''$, then this immediately implies $v \neq v'$. On the other hand, as $C'$ and $C''$ are supported at $h'$, it follows that $h'$ must be a $\widehat{u}$-piece for some $u \in \link(v') \cap \link(v'')$. Thus, if $C$ and $C''$ intersect, then $v'' \in \link(v)$, and so the fact that $\Gamma$ is triangle-free implies that $v \neq v'$. Thus we have $v \neq v'$ in either case, as claimed.

Now let $g'=g_0,g_1,\ldots,g_m=h'$ be consecutive van Kampen pieces in $\Delta$. We claim that the pieces $g_1,\ldots,g_{m-1}$ are small. Indeed, suppose for contradiction that $g_\ell$ is large for some $\ell \in \{ 1,\ldots,m-1 \}$, and so, by Lemma \ref{l:red}\ref{i:lred-1}, there are at least three components supported at $g_\ell$. Let $C_1$ and $C_2$ be the first and the second components supported at $g_\ell$, respectively. As $(C',[P',Q'])$ is minimal and not supported at $g_\ell$, it follows from Lemma \ref{l:mtsupplarge}\ref{i:lmtsl-nencl} that $C'$ intersects both $C_1$ and $C_2$, and that neither $C_1$ nor $C_2$ is trivial; see Figure \ref{f:lnoam-gllarge}. As $C_2$ is neither the first nor the last component supported at $g_\ell$, it follows from Lemma \ref{l:mtsupplarge}\ref{i:lmtsl-first} that $C_2$ is not minimal, and so must either intersect or coincide with $C$. On the other hand, if $C_1$ is not minimal, then it must likewise either intersect or coincide with $C$; whereas if $(C_1,[P_1,Q_1])$ is minimal for some $[P_1,Q_1]$, then we cannot have $P_1 \in [P',Q']$ by minimality of $(C',[P',Q'])$, and we cannot have $P_1 \in [P,P']$ by minimality of $[P,P']$, so $P_1 \notin [P,Q]$ and $C$ must intersect $C_1$. Thus $C_1$ and $C_2$ are two components supported at $g_\ell$ and each either intersecting or coinciding with $C$ and $C'$, so by Lemma \ref{l:interhat} $g_\ell$ must be both a $\widehat{v}$-piece and a $\widehat{v'}$-piece, which is impossible as $v \neq v'$.

Thus the pieces $g_1,\ldots,g_{m-1}$ are all small, as claimed. It follows that, for each $\ell \in \{ 1,\ldots,m-1 \}$, $g_\ell$ is a $\dot{w}_\ell$-piece for some $w_\ell \in V(\Gamma)$, and there is a unique $w_\ell$-component $C_\ell$ supported at $g_\ell$. Thus, by Lemma \ref{l:mtsupplarge}\ref{i:lmtsl-nencl}, each $C_\ell$ intersects $C'$ and so $w_\ell \in \link(v')$ for each $\ell$. Moreover, it also follows from Lemma \ref{l:mtsupplarge}\ref{i:lmtsl-large} that each $C_\ell$ is neither trivial nor minimal, and so must either intersect or coincide with $C$; therefore, $w_\ell \in \link(v) \cup \{v\}$.

We now claim that $m = 2$. Indeed, as $(C',[P',Q'])$ is not trivial, we cannot have $m < 2$. On the other hand, if $m > 2$, then consider $w_1,w_2 \in \link(v')$; see Figure \ref{f:lnoam-m3}. By Lemma \ref{l:red}\ref{i:lred-adsame}, we have $w_1 \neq w_2$, and so we cannot have $w_1 = v$ and $w_2 = v$. We also cannot have $w_1 = v$ and $w_2 \in \link(v)$: indeed, otherwise $v,v',w_2$ span a triangle in $\Gamma$, contradicting the fact that $\Gamma$ is triangle-free; similarly, $w_1 \in \link(v)$ and $w_2 = v$ is impossible. Finally, we have $|\link(v) \cap \link(v')| \leq 1$ (as $\Gamma$ is square free and $v \neq v'$), so as $w_1 \neq w_2$, we cannot have $w_1,w_2 \in \link(v)$. Thus we must have $m = 2$, as claimed. In particular, the van Kampen pieces $g',g_1,h'$ are consecutive, and $g_1$ is a $\dot{w}_1$-piece for some $w_1 \in \link(v') \cap (\link(v) \cup \{v\})$.

Now by Lemma \ref{l:mtsupplarge}\ref{i:lmtsl-large} and Lemma \ref{l:mtsupplarge}\ref{i:lmtsl-first}, $C'$ is the last component supported at $g'$ and the first component supported at $h'$, and both $g'$, $h'$ are large. In particular, by Lemma \ref{l:red}\ref{i:lred-1}, there are at least $3$ components supported at each $g'$ and $h'$. Moreover, by Lemma \ref{l:mtsupplarge}\ref{i:lmtsl-first}, the penultimate component supported at $g'$ and the second component supported at $h'$ -- say a $v_-$-component $C_-$ and a $v_+$-component $C_+$, respectively -- are both not trivial and not minimal, and so they must each either intersect or coincide with $C$; in particular, $v_-,v_+ \in \link(v) \cup \{v\}$. 

Now if $w_1 = v_+ = v$ (see Figure \ref{f:lnoam-cc}), then $C_+ = C = C_1$ and in particular $C_+$ and $C'$ intersect, contradicting Lemma \ref{l:nointer} (as both $C_+$ and $C'$ are supported at $h'$); similarly, we cannot have $w_1 = v_- = v$. If $w_1,v_+ \in \link(v)$ (see Figure \ref{f:lnoam-ii}) then, as $h'$ is large, we know that $h'$ is a $\widehat{u}$-piece for some $u \in \link(v') \cap \link(v_+)$, and $v,v_+,u,v',w_1,v$ is a closed walk in $\Gamma$ of length $5$, contradicting the fact that $\Gamma$ is triangle-free and pentagon-free; similarly, we cannot have $w_1,v_- \in \link(v)$.

It follows that either $w_1 = v$ and $v_-,v_+ \in \link(v)$ (see Figure \ref{f:lnoam-ici}), or $w_1 \in \link(v)$ and $v_- = v_+ = v$ (see Figure \ref{f:lnoam-cic}). In either case, $C'$ and $C_-$ are two pieces supported at $g'$ and intersecting $C_1$, and so $g'$ is a $\widehat{w}_1$-piece by Lemma \ref{l:interhat}; similarly, $h'$ is a $\widehat{w}_1$-piece. But then $g_1$ is a $\dot{w}_1$-piece adjacent to two $\widehat{w}_1$-pieces, contradicting Lemma \ref{l:red}\ref{i:lred-adperm5}. This concludes the proof.
\end{proof}

\begin{cor} \label{c:nothing}
Let $\Delta$ be a minimal polygonal van Kampen diagram, and suppose that $\Gamma$ has girth $\geq 6$. 
Then $\Delta$ has $0$ van Kampen pieces.
\end{cor}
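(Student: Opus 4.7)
The proof is by contradiction. Suppose $\Delta$ has at least one van Kampen piece; by Lemma \ref{l:red}\ref{i:lred-0} each piece has length $\geq 1$, so $\Delta$ has internal edges and hence at least one oriented component. The first step is an auxiliary claim: $\Delta$ has no \emph{big} oriented components, where I call an oriented component big if it is neither trivial nor minimal (in the sense of Definition \ref{d:polyg-vkd}). For this, consider the poset of big oriented components of $\Delta$ ordered by inclusion of supporting intervals. If non-empty, any poset-minimal element $(C^\#,[P^\#,Q^\#])$ would enclose some non-trivial oriented component (as it is non-minimal), and by poset-minimality that enclosed component cannot also be big, hence must be minimal. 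Thus $(C^\#,[P^\#,Q^\#])$ encloses no non-trivial non-minimal component, making it almost minimal, contradicting Lemma \ref{l:noam}.

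With this in hand, let $(C^*,[P^*,Q^*])$ be a poset-minimal element in the set of all oriented components of $\Delta$. Being poset-minimal, it cannot enclose any other oriented component, so it is either trivial or minimal. Let $g$ be the van Kampen piece containing $P^*$. By Lemma \ref{l:mtsupplarge}\ref{i:lmtsl-large}, $g$ is large; by Lemma \ref{l:red}\ref{i:lred-1} there are no pieces of length two, so $g$ has length $\geq 3$ and hence supports at least three distinct components (in a geodesic word of $\GG$ each letter corresponds to a distinct hyperplane and therefore to a distinct component). By Lemma \ref{l:mtsupplarge}\ref{i:lmtsl-first}, $C^*$ is the last component supported at $g$, so there is a penultimate component $C'$ supported at $g$; let $P' \in \mathcal{I}_g$ be the boundary point of $C'$ on $g$ immediately preceding $P^*$, and let $P''$ be the next boundary point of $C'$ in clockwise cyclic order along $\partial\Delta$.

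Then $(C',[P',P''])$ is an oriented component, so it is trivial, minimal, or big. If it is trivial or minimal, then by Lemma \ref{l:mtsupplarge}\ref{i:lmtsl-first} $C'$ would be the last component supported at $g$ (since $P' \in \mathcal{I}_g$), contradicting the fact that $C^*$ is the last and $C^* \neq C'$. If it is big, this contradicts the auxiliary claim established above. Either way we reach a contradiction, proving the corollary. The main obstacle is establishing the auxiliary claim via the poset/well-foundedness argument built on Lemma \ref{l:noam}, together with verifying that the penultimate component $C'$ has at least two boundary points so that $P''$ is well-defined; the latter follows from the standard fact that components in a reduced dual van Kampen diagram for a graph product are trees (and hence have at least two leaves whenever they are non-trivial).
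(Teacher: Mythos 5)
Your first half---the auxiliary claim that $\Delta$ has no oriented components that are neither trivial nor minimal, proved by taking a poset-minimal ``big'' oriented component and observing that it would be almost minimal, contradicting Lemma \ref{l:noam}---is exactly the paper's first step. The defect is in the endgame. You set $P''$ to be the next boundary point of $C'$ in clockwise order and treat $(C',[P',P''])$ as an oriented component, but $[P',P'']$ is a supporting interval of $C'$ only when $C'$ has exactly two boundary points: by Definition \ref{d:polyg-vkd} a supporting interval must contain \emph{all} boundary points of $C'$, and any further boundary point of $C'$ lies in $[P'',P']$ rather than in $[P',P'']$. Components can perfectly well have more than two boundary points, since branch points (degree-three vertices coming from rectangular faces) are allowed---the component $C'$ in Figure \ref{f:pvkd} has four. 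So the trichotomy ``trivial, minimal, or big'' and Lemma \ref{l:mtsupplarge} do not apply to the pair $(C',[P',P''])$ as you defined it. Your closing remark does not repair this: the claim that components of a dual van Kampen diagram are trees is not established in the paper, and in any case the issue is not whether $C'$ has a second boundary point but whether the interval you chose is a supporting interval.

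The gap is local and fixable: instead of $[P',P'']$, take the supporting interval of $C'$ starting at $P'$, i.e.\ $[P',R]$ where $R$ is the boundary point of $C'$ immediately preceding $P'$ in clockwise order. Since $g$ is a geodesic it crosses each hyperplane at most once (Proposition \ref{p:geod}), so $R \notin \mathcal{I}_g(\Delta)$; by your auxiliary claim $(C',[P',R])$ is trivial or minimal, and Lemma \ref{l:mtsupplarge}\ref{i:lmtsl-first} then forces $C'$ to be the last component supported at $g$, contradicting that $C^* \neq C'$ is the last. (The same one-crossing-per-geodesic observation is needed, and omitted, when you conclude from Lemma \ref{l:mtsupplarge}\ref{i:lmtsl-first} that $C^*$ is the \emph{last} rather than possibly the first component supported at $g$.) The paper's own endgame avoids this bookkeeping: once all components are trivial or minimal, every van Kampen piece is large by Lemma \ref{l:mtsupplarge}\ref{i:lmtsl-large}, hence of length at least $3$ by Lemma \ref{l:red}\ref{i:lred-1}, and the \emph{second} component supported at any piece is then trivial or minimal but neither first nor last there, contradicting Lemma \ref{l:mtsupplarge}\ref{i:lmtsl-first} directly.
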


\begin{proof}

We claim first that all components of $\Delta$ are either minimal or trivial. Indeed, otherwise $\Delta$ has a non-minimal non-trivial component $(C,[P,Q])$ with $[P,Q]$ minimal (with respect to inclusion) among all such components. Thus $(C,[P,Q])$ cannot enclose any non-minimal non-trivial component by minimality of $[P,Q]$, and so $(C,[P,Q])$ is almost minimal. This contradicts Lemma \ref{l:noam}.

Thus, as all components of $\Delta$ are either minimal or trivial, it follows by Lemma \ref{l:mtsupplarge}\ref{i:lmtsl-large} that $\Delta$ has no small van Kampen pieces. Suppose $\Delta$ has at least one van Kampen piece. Then any piece $p$ of $\Delta$ has length $\geq 3$ by Lemma \ref{l:red}\ref{i:lred-1}. Now if $C$ is the second component of $\Delta$ supported at a piece $p$, then $C$ is a component that is either minimal or trivial, but $C$ is neither the first nor the last component supported at $p$. This contradicts Lemma \ref{l:mtsupplarge}\ref{i:lmtsl-first}.


Thus $\Delta$ has no van Kampen pieces, as required.
\end{proof}

\subsection{Proofs of Proposition \ref{p:girth6} and Theorem \ref{t:en}} \label{ss:ten}

In this subsection, we deduce Proposition \ref{p:girth6} from the results in Section \ref{ss:girth}. We then use Proposition \ref{p:girth6} and Theorem \ref{t:engood} to prove Theorem \ref{t:en}.

\begin{proof}[Proof of Proposition \ref{p:girth6}] As noted after the statement of the Proposition, it is clear that $K \subseteq \overline{F}_\omega$, and so we only need to show that $\overline{F}_\omega \subseteq K$. This is equivalent to saying that every (cyclic) word $W \in \overline{F}_\omega$ is equivalent to the trivial word $1 \in \overline{F}_\omega$ of length $0$. As the trivial word has a polygonal representation $\mathcal{D}_0$ with $0$ pieces, which is clearly minimal, it is enough to show that $\mathcal{D}_0$ is the \emph{unique} minimal polygonal representation for any word $W \in \overline{F}_\omega$.

Thus, let $\mathcal{D}$ be a minimal polygonal representation for a word $W \in \overline{F}_\omega$, and let $p_1,\ldots,p_k$ be the pieces of $\mathcal{D}$, so that $W = p_1 \cdots p_k$. Since $\overline\varphi_i(W) = 1$ $\omega$-almost surely, we may $\omega$-almost surely define a polygonal van Kampen diagram $\Delta_i$ for $\mathbf{W}_i = g_{i,1} \cdots g_{i,k}$ whose van Kampen pieces are $g_{i,1},\ldots,g_{i,k}$, where $g_{i,\ell}$ is a geodesic word over $\bigsqcup_{v \in V(\Gamma)} (G_v \setminus \{1\})$ representing $\overline\varphi_i(p_\ell)$, such that the label of a piece $g_{i,\ell}$ is $\dot{v}$ whenever $g_{i,\ell}$ represents an element in $G_v \setminus \{1\}$ for some $v \in V(\Gamma)$. Let $\mathcal{A} \subseteq \mathbb{N}$ for the set of all $i \in \mathbb{N}$ such that
\begin{itemize}
\item $\overline\varphi_i(W) = 1$;
\item for each $\ell \in \{ 1,\ldots,k \}$ and each $j \in \{ 0,1 \}$, the piece $p_\ell$ has length $j$ if and only if the van Kampen piece $g_{i,\ell}$ has length $j$;
\item for each $\ell \in \{ 1,\ldots,k \}$, the label of the piece $p_\ell$ of $\mathcal{D}$ coincides with the label of the van Kampen piece $g_{i,\ell}$ of $\Delta_i$.
\end{itemize}

It follows from Lemma \ref{l:red}\ref{i:lred-0}, Lemma \ref{l:red}\ref{i:lred-1} and the construction that $\mathcal{A}$ is an $\omega$-large subset of $\mathbb{N}$. But note that as $\mathcal{D}$ is minimal, the polygonal van Kampen diagram $\Delta_i$ is minimal for each $i \in \mathcal{A}$. It then follows from Corollary \ref{c:nothing} that $\Delta_i$ has $0$ van Kampen pieces for each $i \in \mathcal{A}$ (and so $\omega$-almost surely). Thus $\mathcal{D}$ has $0$ pieces, and so $\mathcal{D} = \mathcal{D}_0$, as required.
\end{proof}

\begin{proof}[Proof of Theorem \ref{t:en}]
If $|V(\Gamma)| = 1$, then we have $\GG \cong G_v$ for the unique vertex $v$ of $\Gamma$, and so $\GG$ is equationally noetherian by the assumption. Thus, we may assume, without loss of generality, that $|V(\Gamma)| \geq 2$.

Suppose first that $\Gamma$ is connected. We claim that $\Gamma$ is admissible. Thus, let $F$ be a finitely generated free group, and let $(\varphi_i: F \to \GG)_{i=1}^\infty$ be a sequence of linking homomorphisms. As in the beginning of Section \ref{ss:pr}, define a free group $\overline{F}$ containing $F$ and subgroups $\widehat{F}_v,\dot{F}_v \leq \overline{F}$ for each $v \in V(\Gamma)$, and extend the homomorphisms $\varphi_i$ to $\overline\varphi_i: \overline{F} \to \GG$. It follows from the construction that $\ker(\varphi_i) = \ker(\overline\varphi_i) \cap F$ for each $i$, and so $F_{\omega,(\varphi_i)} = \overline{F}_{\omega,(\overline\varphi_i)} \cap F$.

Now for any $v \in V(\Gamma)$, consider homomorphisms $\varphi_i^{(v)}: \widehat{F}_v \to \GG[\link(v)]$ given by $\varphi_i^{(v)}(g) = \overline\varphi_i(g)$. As $\Gamma$ is square-free, we have $\GG[\link(v)] \cong \mathop{*}_{w \in \link(v)} G_w$; in particular, it follows from Theorem \ref{t:sela} (and the fact that $|V(\Gamma)| < \infty$) that $\GG[\link(v)]$ is equationally noetherian. It follows that $\left(\widehat{F}_v\right)_{\omega,(\varphi_i^{(v)})} \subseteq \ker(\varphi_i^{(v)})$ $\omega$-almost surely. But it follows from the construction that $\ker(\varphi_i^{(v)}) = \ker(\overline\varphi_i) \cap \widehat{F}_v$ for each $i$, and in particular $\left(\widehat{F}_v\right)_{\omega,(\varphi_i^{(v)})} = \overline{F}_{\omega,(\overline\varphi_i)} \cap \widehat{F}_v$. Thus $\overline{F}_{\omega,(\overline\varphi_i)} \cap \widehat{F}_v \subseteq \ker(\overline\varphi_i)$ $\omega$-almost surely.

Note that, by our construction, we have $\left[ \dot{F}_v, \widehat{F}_v \right] \subseteq \ker(\overline\varphi_i)$ for each $i$ and each $v \in V(\Gamma)$. Together with the fact that $\overline{F}_{\omega,(\overline\varphi_i)} \cap \widehat{F}_v \subseteq \ker(\overline\varphi_i)$ $\omega$-almost surely for each $v \in V(\Gamma)$, this implies (by Proposition \ref{p:girth6}) that $\overline{F}_{\omega,(\overline\varphi_i)} \subseteq \ker(\overline\varphi_i)$ $\omega$-almost surely. In particular, it follows that $F_{\omega,(\varphi_i)} = \overline{F}_{\omega,(\overline\varphi_i)} \cap F \subseteq \ker(\overline\varphi_i) \cap F = \ker(\varphi_i)$ $\omega$-almost surely. Thus $\Gamma$ is admissible, as claimed. In particular, it follows from Theorem \ref{t:engood} that $\GG$ is equationally noetherian.

Finally, suppose that the graph $\Gamma$ is not connected. Then we have a partition $V(\Gamma) = A_1 \sqcup \cdots \sqcup A_m$ into non-empty subsets such that $\Gamma = \Gamma_{A_1} \sqcup \cdots \sqcup \Gamma_{A_m}$ is a disjoint union of connected subgraphs $\Gamma_{A_i}$, and so $\GG \cong \GG[A_1] * \cdots * \GG[A_m]$. By the argument above, it follows that $\GG[A_i]$ is equationally noetherian for each $i$. Therefore, by Theorem \ref{t:sela}, $\GG$ is equationally noetherian as well, as required.
\end{proof}

\bibliographystyle{amsalpha}
\bibliography{../../../all}

\end{document}